\def\timenow{\@tempcnta\time
  \@tempcntb\@tempcnta
  \divide\@tempcntb60
  \ifnum10>\@tempcntb0\fi\number\@tempcntb
  \multiply\@tempcntb60
  \advance\@tempcnta-\@tempcntb
  :\ifnum10>\@tempcnta0\fi\number\@tempcnta}
\newtheorem{theo}{Theorem}[section]
\newtheorem{prop}[theo]{Proposition}
\newtheorem{lemme}[theo]{Lemma}
\newtheorem{remarque}[theo]{Remark}
\newtheorem{fact}[theo]{Fact}
\newcommand{\egloi}{\stackrel{\textgoth{L}}{=}}
\def\tA2{\tilde{A}({\tilde L_2})}
\def \tts2{\tilde \tau^+_2(  h_t/2 ))}
\title{Path decomposition of a spectrally negative L\'evy process,
and local time of a diffusion in this environment}
\author{Gr\'egoire V\'echambre}
\address{NYU Shanghai, Office 1133, 1555 Century avenue, Pudong New Area, Shanghai 200122, China}
\email{gregoire.vechambre@ens-rennes.fr}
\subjclass[2010]{60K37, 60J55, 60G51}
\keywords{Diffusion, random potential, L\'evy process, renewal process, local time}
\date{\today\ \`a \currenttime}
\begin{document} 

\maketitle

\begin{abstract}
We study the convergence in distribution of the supremum of the local time and of the favorite site for a transient diffusion in a spectrally negative L\'evy potential. To do so, we study the $h$-valleys of a spectrally negative L\'evy process, and we prove in particular that the renormalized sequence of the $h$-minima converges to the jumping times sequence of a standard Poisson process. 
\end{abstract}

\pagestyle{myheadings}
\markboth{Right}{Path decompostion of a spectrally negative L\'evy process,
and local time of a diffusion in this environment}


\section{Introduction}

{Let $(V(x), \ x \in \mathbb{R})$ be a spectrally negative L\'evy process on $\mathbb{R}$ which is not the opposite of a subordinator (in particular, $V$ can not be a compound Poisson process), drifts to $-\infty$ at $+\infty$, and such that $V(0)=0$. }We denote the Laplace exponent of $V$ by $\Psi_V$: 
\[ \forall t, \lambda \geq 0, \ \mathbb{E} \left [ e^{\lambda V(t)} \right ] = e^{t \Psi_V(\lambda)}. \]
It is well-known, for such $V$, that $\Psi_V$ is {convex} and admits a non trivial zero that we denote here by $\kappa$, $\kappa := \inf \{ \lambda > 0, \ \Psi_V(\lambda) = 0 \} > 0$. By convexity we have $\Psi_V'(\kappa) > 0$. 

%
%
We are here interested in a diffusion in this potential $V$. Let us recall that such a diffusion $(X(t),\ t\geq 0)$ in a random c\`ad-l\`ag potential $V$ is defined informally by $X(0)=0$ and
\[ \text{d}X(t)=d\beta(t)-\frac{1}{2}V'(X(t))\text{d}t, \]
where $\beta$ is a Brownian motion independent from $V$. Rigorously, $X$ is defined by its conditional generator given $V$, 
\begin{align*} & \frac{1}{2}e^{V(x)}\frac{\text{d}}{\text{d} x}\left(e^{-V(x)}\frac{\text{d}}{\text{d} x}\right). 
\end{align*}

The fact that $V$ drifts to $- \infty$ puts us in the case where the diffusion $X$ is a.s. transient to the right (see Subsection \ref{factsandnotations} for more details). In \cite{Singh}, Singh makes the study of the asymptotic behavior of $X$. When $0 < \kappa < 1$, he proves in particular that
\begin{eqnarray}
X(t)/t^{\kappa} \overset{\mathcal{L}}{\underset{t \rightarrow + \infty}{\longrightarrow}} C \left ( 1/\mathcal{S}_{\kappa} \right )^{\kappa}, \label{cvsingh}
\end{eqnarray}
where $C$ is some explicit positive constant depending on $V$ and $\mathcal{S}_{\kappa}$ follows a completely asymmetric $\kappa$-stable distribution. Putting this in relation with the results of Kawazu and Tanaka \cite{KawazuTanaka}, we see that $\kappa$ plays a similar role as the drift of the Brownian environment, at least for the asymptotic behavior of the diffusion. In this paper, we prove that the same is true for the behavior of the local time of $X$. We denote by $(\mathcal{L}_X(t, x), t \geq 0, x \in \mathbb{R})$ the version of the local time that is continuous in time and c\`ad-l\`ag in space, and we define respectively the supremum of the local time and the favorite site until instant $t$ as
\[ \mathcal{L}_X^*(t) = \sup_{x \in \mathbb{R}} \mathcal{L}_X(t, x) \ \ \ \text{and} \ \ \ F^*(t) := \inf \{ x \in \mathbb{R}, \ \mathcal{L}_X(t, x) \vee \mathcal{L}_X(t, x-) = \mathcal{L}_X^*(t) \}. \]
We study the convergence in distribution of $\mathcal{L}_X^*(t)$ and $F^*(t)$ when $\kappa > 1$, and of $\mathcal{L}_X^*(t)$ when $0 < \kappa < 1$. When $V$ is a drifted Brownian motion, the case where $0 < \kappa < 1$ has already been studied by Andreoletti, Devulder and V\'echambre in \cite{advech}. In this case, there is a useful renewal structure obtained from a valleys decomposition of the potential and the Markov property for the diffusion. 
The limit distribution they obtain involves a $\kappa$-stable subordinator and an exponential functional of the environment conditioned to stay positive. When $0 < \kappa < 1$, we extend their result to the diffusion in $V$ and obtain a limit distribution in terms of the exponential functionals of $V$ and its dual conditioned to stay positive: 
\[ I(V^{\uparrow}) := \int_0^{+ \infty} e^{- V^{\uparrow} (t)}dt \ \ \ \text{and} \ \ \ I(\hat V^{\uparrow}) := \int_0^{+ \infty} e^{- \hat V^{\uparrow} (t)}dt. \]
{In the above definitions $\hat V$ is the dual of $V$, it is equal in law to $-V$, and $V^{\uparrow}$, $\hat V^{\uparrow}$ denote respectively $V$ and $\hat V$ conditioned to stay positive (their definitions are recalled in Subsection \ref{extrema}).} It is proved in Theorems 1.1 and 1.13 of V\'echambre \cite{foncexpovech} that these functionals are indeed finite and admit some finite exponential moments. The existence of finite exponential moments for $I(V^{\uparrow})$ and $I(\hat V^{\uparrow})$ is of fundamental interest for our generalization of the results of \cite{advech}. 

The almost sure asymptotic behavior of the local time is studied, in the discrete transient case, by Gantert and Shi \cite{GanShi}. We believe that the present work will allow us, in the future, to link the asymptotic almost sure behavior of $\mathcal{L}_X^*(t)$ with the left tail of $I(V^{\uparrow})$ {when $0 < \kappa < 1$, this is a work in preparation by the author \cite{psvech}}. This tail is investigated in \cite{foncexpovech} and can be very different, when $V$ is a general spectrally negative L\'evy process, than when $V$ is a drifted Brownian motion. As a consequence we can expect, in the L\'evy case {with $0 < \kappa < 1$}, many possible behaviors for the almost sure asymptotic of $\mathcal{L}_X^*(t)$, and this is our main motivation to generalize, here, the study of \cite{advech} to the L\'evy case. 

When $\kappa > 1$, we adopt the point of view of \cite{Singh} and link the local time to a generalized Ornstein-Uhlenbeck process. This approach also provides the convergence of the favorite site and can certainly be used to study the almost sure behavior of the supremum of the local time. 

\subsection{Main results} \label{results}


%
Our main results are convergences in distribution for the supremum of the local time. {When $\kappa > 1$,} let us define the constants $K$ and $m$ similarly as in \cite{Singh}: 
\begin{eqnarray}
K := \mathbb{E} \left [ \left ( \int_0^{+\infty} e^{V(t)} dt \right )^{\kappa -1} \right ] \ \ \ \text{and} \ \ \ m := \frac{-2}{\Psi_V(1)} > 0. \label{defdescte}
\end{eqnarray}
{ Note that $K = \mathcal{M}_V(\kappa)$, where $\mathcal{M}_V$ is the Mellin transform of the random variable $\int_0^{+\infty} e^{V(t)} dt$. According to Theorem 2.7 of \cite{bersteingamma}, $\mathcal{M}_V$ can be expressed in term of Berstein-gamma functions. This allows to write $K$ as an infinite product.} For any $\alpha, s>0$, let $\mathcal{F}(\alpha, s)$ denote the Fr\'echet distribution with parameters $\alpha$ and $s$, that is, the distribution with distribution function 
\[ \mathcal{F}(\alpha, s) \left ( [0, t] \right ) = e^{-(s/t)^{\alpha}}, \ \forall t > 0. \]
When $\kappa > 1$, the limit distribution of the supremum of the local time can be expressed as follows: 
\begin{theo} \label{kappa>1}
If $\kappa>1$, 
\[ \mathcal{L}^*_X(t) / t^{1/\kappa} \overset{\mathcal{L}}{\underset{t \rightarrow + \infty}{\longrightarrow}} \mathcal{F}(\kappa, 2 (\Gamma(\kappa) \kappa^2 K/m)^{1/\kappa}). \]
\end{theo}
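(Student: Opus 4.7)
Since $\kappa > 1$, Singh's result gives that the hitting time $H(r) := \inf\{s \ge 0 : X(s) = r\}$ satisfies $H(r)/r \to m$ almost surely as $r \to +\infty$, so $X$ is ballistic with speed $1/m$. The plan is to: (i) reduce the theorem, by a sandwich, to the asymptotics of $\sup_x \mathcal{L}_X(H(r), x)/r^{1/\kappa}$ with $r = r(t) \sim t/m$; (ii) use a Ray--Knight-type representation to rewrite $x \mapsto \mathcal{L}_X(H(r), x)$ in terms of a generalized Ornstein--Uhlenbeck (GOU) process driven by $V$; and (iii) apply extreme value theory for heavy-tailed stationary processes.

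For (i) one uses sharp deviation bounds on $H(r) - rm$ (classical in the ballistic regime $\kappa > 1$) and the fact that between $H(r_t^-)$ and $H(r_t^+)$ with $r_t^\pm = t/m \pm t^{\rho}$, $\rho < 1/\kappa$, the local time at any already-visited site increases by at most $o(t^{1/\kappa})$, since $X$ is transient and moves right of $r_t^-$ with high probability. For (ii), inverting the scale function of $X$ in the environment and applying a Lamperti-type change of time, one represents the ``local time at the frontier'' process $U_r := \mathcal{L}_X(H(r), r)$ as a GOU process whose multiplicative drift is given by $V$. Appealing to the Kesten--Goldie theory for GOU driven by spectrally negative L\'evy noise, together with the moment information on $I(V^{\uparrow})$ and $I(\hat V^{\uparrow})$ from \cite{foncexpovech}, the stationary law of $U$ has a power-law right tail of index $\kappa$, precisely
\[ \P(U_\infty > y) \sim 2^\kappa \Gamma(\kappa) \kappa^2 K \cdot y^{-\kappa}, \quad y \to +\infty. \]

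For (iii), heavy-tailed extreme value theory then yields
\[ r^{-1/\kappa} \sup_{s \le r} U_s \cvloi \mathcal{F}\bigl(\kappa,\, 2 (\Gamma(\kappa) \kappa^2 K)^{1/\kappa}\bigr), \]
and substituting $r \sim t/m$ produces the announced limit. The constants match transparently: $\Gamma(\kappa)\kappa^2 K$ is the Kesten--Goldie tail constant associated with the stationary GOU, the factor $2^\kappa$ arises from the squared-Bessel normalization in the Ray--Knight representation, and $m$ enters through the ballistic speed $1/m$ of the diffusion.

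The main obstacle will be step (ii): the classical Ray--Knight theorem is formulated for continuous potentials, whereas here $V$ has (downward) jumps, so the representation of $(\mathcal{L}_X(H(r), x))_{x \in [0,r]}$ as a GOU must be extended to the jump case---either by a smooth approximation of $V$ followed by a stability estimate for the local time, or by directly computing the generator of $(x, \mathcal{L}_X(H(r), x))$. A second delicate point is the quasi-independence needed to deploy extreme value theory on the GOU supremum; this is precisely where the $h$-valley decomposition of $V$ announced in the abstract intervenes, producing approximately i.i.d.\ excursions of $U$ between consecutive $h$-minima of $V$, each of which contributes a candidate maximum with the correct heavy tail governed by the functionals $I(V^{\uparrow})$ and $I(\hat V^{\uparrow})$.
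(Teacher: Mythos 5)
Your skeleton (reduce to hitting times, rewrite the local time via the generalized Ornstein--Uhlenbeck process $Z$, then do extreme-value analysis) is the same as the paper's, but the decisive step is carried out differently and, as written, has a genuine gap. The paper does \emph{not} use Kesten--Goldie for the stationary law of $Z$, nor the $h$-valley decomposition: it explicitly discards valleys for $\kappa>1$ (the distance $e^{\kappa h}$ between consecutive $h$-minima makes the inter-valley contributions non-negligible). Instead, the exact independence needed for the Fr\'echet limit comes from It\^o excursion theory for the Markov process $Z$ away from level $1$: the excursions form a Poisson point process with measure $n$, so $\mathbb{P}(\sup_{[0,r]}Z\le h)$ is, up to controlling $L^{-1}$ (Lemma \ref{chgtvar}, \eqref{invtl}), exactly $e^{-(r/Q)\,n(\sup\xi>h)}$, and the whole theorem reduces to Singh's estimate $n(\sup\xi>h)\sim Q\,2^{\kappa}\Gamma(\kappa)\kappa^{2}K/h^{\kappa}$. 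Your route replaces this by the tail of the \emph{stationary} law of the GOU plus ``quasi-independence from the $h$-valleys''. That is where it breaks: (a) the stationary tail $\mathbb{P}(U_\infty>y)$ is governed by the occupation measure $n[\int_0^\zeta\mathds{1}_{\xi_s>y}ds]/n[\zeta]$, not by $n(\sup\xi>y)$, so identifying your Kesten--Goldie constant with the supremum constant requires an extremal-index computation you do not supply (the constants do not ``match transparently''); and (b) the valleys of $V$ do not furnish the mixing structure for $Z$ in this regime --- the correct and exact renewal structure is at the level-$1$ crossings of $Z$, which is a different object entirely.

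Two smaller points. The obstacle you single out as the main one --- extending Ray--Knight to a jump potential --- is not an obstacle at all: the diffusion is defined by scale and time change of a Brownian motion $B$, the identity $\mathcal{L}_X(H(r),x)=e^{-V(x)}\mathcal{L}_B(\tau(B,A_V(r)),A_V(x))$ holds for c\`adl\`ag $V$, and Ray--Knight is applied to $B$, not to $X$; the representation $\mathcal{M}_2(r)\overset{\mathcal{L}}{=}\sup_{[0,r]}Z$ with $Z(x)=e^{V(x)}R(\int_0^xe^{-V(y)}dy)$ is already in \cite{Singh}. Finally, your step (i) is essentially right but over-engineered: the paper only needs monotonicity of $t\mapsto\mathcal{L}_X^*(t)$ together with $\mathbb{P}(H(r/m-r^{\alpha})\le r\le H(r/m+r^{\alpha}))\to1$ (proved via the representation of $H_+$ as $\int_0^\cdot Z$ and moment bounds on the associated L\'evy process), not a separate control of local-time increments between the two hitting times.
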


\textbf{Examples:} In some cases the parameters of the limit distribution are more explicit. 
\begin{itemize}
\item Let $W_{\kappa}$ be the $\kappa$-drifted Brownian motion: $W_{\kappa}(t) := W(t) - \frac{\kappa}{2} t$. If we choose $V = W_{\kappa}$ (for $\kappa > 1$), then $K = 2^{\kappa - 1}/\Gamma(\kappa)$ (see Example 1.1 in \cite{Singh}) and $m = 4/(\kappa - 1)$. The limit distribution of the supremum of the local time is therefore $\mathcal{F}(\kappa, 4 (\kappa^2 (\kappa - 1)/8)^{1/\kappa})$. This is precisely the second point of Theorem 1.6 of Devulder \cite{Devmaxloc}. 

\item If $V$ is such that $\kappa = 2$ then 
\[ K = \mathbb{E} \left [ \int_0^{+\infty} e^{V(t)} dt \right ] = \int_0^{+\infty} \mathbb{E} \left [ e^{V(t)} \right ] dt = \int_0^{+\infty} e^{t \Psi_V(1)} dt = \frac{-1}{\Psi_V(1)} = \frac{m}{2}, \]
and the limit distribution of $\mathcal{L}^*_X(t) / t^{1/2}$ is therefore $\mathcal{F}(2, 2 \sqrt{2})$. 
\end{itemize}

We also prove that the distribution of the favorite site is asymptotically uniform: 
\begin{theo} \label{kappa>1favsite}
If $\kappa>1$, 
\[ m F^*(t) / t \overset{\mathcal{L}}{\underset{t \rightarrow + \infty}{\longrightarrow}} \mathcal{U}, \]
where $\mathcal{U}$ denotes the uniform distribution on $[0, 1]$. 
\end{theo}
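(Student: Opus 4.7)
\textbf{Proof plan for Theorem \ref{kappa>1favsite}.}
The strategy is to reduce the problem to a hitting-time setting, decompose the environment into $h$-valleys so that the valley-wise suprema of the local time are asymptotically i.i.d.\ with a heavy tail of index $\kappa$, and then combine the uniform-argmax principle for i.i.d.\ continuous variables with the Poisson limit of $h$-minima positions (established in the first part of the paper) to obtain the uniform distribution in the limit.

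\emph{Step 1 (hitting-time reduction).} Let $\tau(r) := \inf\{s \geq 0 : X(s) = r\}$. By the law of large numbers $\tau(r)/r \to m$ a.s.\ (Singh \cite{Singh}) when $\kappa > 1$, combined with a standard sandwich $\tau(r_{-}) \leq t \leq \tau(r_{+})$ with $r_\pm$ close to $t/m$, it suffices to prove that $F^*(\tau(r))/r$ converges in law to $\mathcal{U}$ as $r \to + \infty$, since the fact that the favorite site lies in $[0,r_+]$ and the expected concentration of its location prevent erratic jumps in the short window between $\tau(r_-)$ and $\tau(r_+)$.

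\emph{Step 2 (valley decomposition and i.i.d.\ structure).} Choose a truncation depth $h = h(r) \to \infty$ slowly enough that the number $N_h(r)$ of $h$-minima in $[0, r]$ grows like $\lambda_h r$ for some intensity $\lambda_h$, and that the contribution of valleys of depth less than $h$ to the global supremum is negligible. By the Poisson convergence of the renormalized $h$-minima sequence, announced in the abstract and proved earlier in the paper, the normalized positions $(x_i^h/r)_i$ are asymptotically distributed as the order statistics of uniform i.i.d.\ variables on $[0, 1]$. Applying the strong Markov property of $X$ at the exit times of each $h$-valley, and using the spectral negativity of $V$ to ensure that $X$ exits each valley by its right boundary with overwhelming probability, one shows that the valley-wise suprema $\mathcal{Y}_i := \sup_{x \in I_i} \mathcal{L}_X(\tau(r), x)$ (where $I_i$ is the spatial support of the $i$-th valley) are, up to a vanishing error, i.i.d.\ with a common continuous heavy-tailed distribution of index $\kappa$. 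This is the same structure that underlies Theorem~\ref{kappa>1}, and the tail can be quantified via the exponential moments of $I(V^{\uparrow})$ and $I(\hat V^{\uparrow})$ from \cite{foncexpovech}.

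\emph{Step 3 (argmax uniformity).} For i.i.d.\ continuous random variables, the argmax is uniformly distributed on the index set, so $I^*/N_h(r) \Rightarrow \mathcal{U}$ with $I^* := \operatorname{argmax}_i \mathcal{Y}_i$. The favorite site $F^*(\tau(r))$ lies in the $I^*$-th valley at distance $O(h) = o(r)$ from $x^h_{I^*}$, and combining with the uniform limit of $x^h_{I^*}/r$ from Step~2 yields the desired convergence.

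\emph{Main obstacle.} The most delicate point is justifying the asymptotic i.i.d.\ structure of the $\mathcal{Y}_i$: the diffusion does not genuinely regenerate between valleys, so the residual dependence coming from the exit positions and crossing times must be controlled by a careful coupling together with the quantitative tail bounds from \cite{foncexpovech}. A secondary subtlety is the joint calibration of $h = h(r)$ so that the Poissonization of the $h$-minima positions, the negligibility of shallow-valley contributions, and the sandwich in Step~1 all hold simultaneously.
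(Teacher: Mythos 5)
Your high-level skeleton (reduce to a hitting time, partition $[0,r]$ into $o(r)$-sized blocks whose local-time suprema are asymptotically i.i.d., invoke uniformity of the argmax index, undo the reduction) is morally the right shape, but the way you implement Step 2 — through $h$-valleys, localization of the record near the $h$-minima, and tails controlled by $I(V^{\uparrow})$, $I(\hat V^{\uparrow})$ — is precisely the mechanism that fails in the regime $\kappa>1$, as is emphasized in the introduction of the paper: when $\kappa>1$ the distance between consecutive $h$-minima is of order $e^{\kappa h}$ (Theorem \ref{seqmincvverspoiss}), the time spent and the local time accumulated \emph{between} the bottoms of the valleys are no longer negligible, and the extreme values of the local time are not localized near the $h$-minima. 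Consequently your claims that ``the contribution of valleys of depth less than $h$ to the global supremum is negligible'' and that $F^*(\tau(r))$ sits ``at distance $O(h)$ from $x^h_{I^*}$'' are false here (the latter would in any case have to read $O(e^{\kappa h})$), and the tail of the block-wise supremum is not governed by the exponential functionals of the conditioned processes but by the excursion measure $n(\sup\xi>h)\sim Q\,2^{\kappa}\Gamma(\kappa)\kappa^2 K/h^{\kappa}$ of the generalized Ornstein--Uhlenbeck process $Z(x)=e^{V(x)}R(\int_0^x e^{-V})$. Your proposal never introduces $Z$, yet it is the object that makes the i.i.d./regeneration structure you need actually available: the $h$-extrema of $V$ are not regeneration points for the pair (potential, driving Bessel process), so the ``careful coupling'' you defer to in your ``main obstacle'' has no natural starting point in the valley decomposition.

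What the paper does instead is exactly the clean version of your block argument: by \eqref{expretl4} the favorite positive site at time $H(r)$ is $r-\operatorname{argmax}_{[0,r]}Z$ in law, the argmax of $Z$ lies inside the highest excursion of $Z$ above level $1$ completed before local time $r/Q$, and the local-time instant $\hat L$ of the record of a Poisson point process is exactly uniform on $[0,r/Q]$ (see \eqref{ptfentpsat5}); the law of large numbers $L^{-1}(t)\approx Qt$ then transfers this to uniformity of the spatial position. If you want to salvage your write-up, replace the $h$-valley blocks by the excursions of $Z$ (or by deterministic blocks of length $\ell(r)=o(r)$ with $r/\ell(r)\to\infty$, using the Markov property of $Z$ at the successive return times to level $1$), and drop all localization at valley bottoms. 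One further genuine gap: your Step 1 dismisses the passage from $F^*(H(r))$ to $F^*(t)$ too quickly. One must show the favorite site is frozen on $[H(r/m-r^{\alpha}),H(r/m+r^{\alpha})]$, which requires (i) the already-established record to be of order $r^{1/\kappa}$ and attained left of $r/m-2r^{\alpha}$, (ii) the diffusion not to return there, and (iii) the new local time accumulated near the front to be $o(r^{1/\kappa})$; all three rest on the convergence in Theorem \ref{kappa>1} and the Markov property, and none follows from ``concentration of the location'' alone.
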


When $0 < \kappa < 1$, we have to introduce some notations in order to express the limit distribution. Let $G_1$ and $G_2$ be two independent random variables with $G_1 \egloi I(V^{\uparrow})$ and $G_2 \egloi I(\hat V^{\uparrow})$. We define $\mathcal{R} := G_1 + G_2$. $\mathcal{R}$ is the analogous of $\mathcal{R}_{\kappa}$ defined in $(1.2)$ of \cite{advech} (if $V$ if the $\kappa$-drifted Brownian motion, then $\mathcal{R} \egloi \mathcal{R}_{\kappa}$, indeed, it is known that $W_{\kappa}^{\uparrow} \egloi \hat W_{\kappa}^{\uparrow}$, so $\mathcal{R}$ is, as $\mathcal{R}_{\kappa}$, the sum of two independent copies of $I(W_{\kappa}^{\uparrow})$). Let also {$\mathcal{C} := K / \Psi_V'(\kappa)$, where $K$ has the same meaning as in \eqref{defdescte} and where we recall that $\Psi_V'(\kappa) > 0$. $\mathcal{C}$ is linked to the right tail of the exponential functional of $V$ (see Lemma \ref{foncexpov} in Section \ref{estimates}). We put $\mathcal{C}' := 2^{\kappa} \Gamma(\kappa + 1) \mathcal{C} = 2^{\kappa} \Gamma(\kappa + 1) K/ \Psi_V'(\kappa)$.} Now, let $\mathcal{Y}_1$ be the $\kappa$-stable subordinator with Laplace exponent $\mathcal{C}' \Gamma(1-\kappa) \lambda^{\kappa}$: 
\[ \forall t, \lambda \geq 0, \ \mathbb{E} \left [ e^{- \lambda \mathcal{Y}_1(t)} \right ] =  e^{-t \mathcal{C}' \Gamma(1-\kappa) \lambda^{\kappa}}. \]
We now consider the pure jump L\'evy process $(\mathcal{Y}_1, \mathcal{Y}_2)$ where the component $\mathcal{Y}_2$ is defined multiplying each jump of $\mathcal{Y}_1$ by an independent copy of $\mathcal{R}$. We can also define $(\mathcal{Y}_1, \mathcal{Y}_2)$
from its $\kappa$-stable L\'evy measure  $\nu$ supported on $]0, +\infty[ \times ]0, +\infty[$ and defined by 
\[ \forall x > 0, y > 0, \ \nu \left ( [x, + \infty[ \times [y, + \infty[ \right ) = \frac{\mathcal{C}'}{y^{\kappa}} \mathbb{E} \left [ \mathcal{R}^{\kappa} \mathds{1}_{\mathcal{R} \leq \frac{y}{x}} \right ] + \frac{\mathcal{C}'}{x^{\kappa}} \mathbb{P} \left ( \mathcal{R} > \frac{y}{x} \right ). \]
It is easy to see that the first definition implies the second and also that the one-dimensional distributions of the L\'evy process $(\mathcal{Y}_1, \mathcal{Y}_2)$ have Laplace transform
\[ \forall t, \alpha, \beta \geq 0, \ \mathbb{E} \left [ e^{-\alpha \mathcal{Y}_1(t) - \beta \mathcal{Y}_2(t) } \right ] = e^{- t \mathcal{C}' \Gamma(1-\kappa) \mathbb{E} \left [ (\alpha + \beta \mathcal{R})^{\kappa} \right ] }. \]
As in Theorem 1.3 of \cite{advech}, our limit distribution for the supremum of the local time is a function of $(\mathcal{Y}_1, \mathcal{Y}_2)$. 
For $Z$ an increasing c\`ad-l\`ag process and $s \geq 0$, we put respectively $Z(s-)$, $Z^{\natural}(s)$ and $Z^{-1}(s)$ for respectively the left-limit of $Z$ at $s$, the largest jump of $Z$ before $s$ and the generalized inverse of $Z$ at $s$: 
\[ Z(s-) = \lim_{r \underset{<}{\rightarrow} s} Z(r), \ Z^{\natural}(s):= \sup_{0\leq r \leq s}(Z(r)-Z(r-)),\ Z^{-1}(s):= \inf\{u \geq 0,\ Z(u) > s\}, \]
where $\inf \emptyset = +\infty$ by convention. We now define the couple of random variables $( \mathcal{I}_1, \mathcal{I}_2)$: 
\begin{align*}
& \mathcal{I}_1:= \mathcal{Y}_1^{\natural}(\mathcal{Y}_2^{-1}(1) -), \ \mathcal{I}_2:= {\left [1 - \mathcal{Y}_2(\mathcal{Y}_2^{-1}(1) -)\right ]} \times \frac{\mathcal{Y}_1(\mathcal{Y}_2^{-1}(1)) - \mathcal{Y}_1(\mathcal{Y}_2^{-1}(1) -)}{\mathcal{Y}_2(\mathcal{Y}_2^{-1}(1)) - \mathcal{Y}_2(\mathcal{Y}_2^{-1}(1) -)}. 
\end{align*}

We have

\begin{theo} \label{cvdutl} 
If $0 < \kappa < 1$, $V$ has unbounded variation and $V(1) \in L^p$ for some $p>1$ then
\[ \mathcal{L}^*_X(t)/t \overset{\mathcal{L}}{\underset{t \rightarrow + \infty}{\longrightarrow}} \mathcal{I} := \max \{ \mathcal{I}_1,\mathcal{I}_2 \}. \]

\end{theo}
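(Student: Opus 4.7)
The plan is to adapt the valley-decomposition strategy of \cite{advech} to the Lévy setting, using the $h$-valleys analysis of the spectrally negative Lévy process developed earlier in this paper. Choose a depth $h_t := \log t - \varphi(t)$ with $\varphi(t) \to +\infty$ slowly, and enumerate the $h_t$-valleys $(\V_j)$ with bottoms $m_j$ and depths $\tilde L_j \geq h_t$. The convergence of the renormalized sequence of $h$-minima to the jump times of a standard Poisson process (the result announced in the abstract) ensures that the number of relevant valleys up to scale $t$ is macroscopic, and that after the diffusion $X$ reaches the bottom $m_j$ of a valley, the strong Markov property produces an essentially independent contribution for each valley.

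Next, analyze the behavior of $X$ inside a single valley via the Kawazu--Tanaka / Brox-type representation of a diffusion in a Lévy potential as a time-changed Brownian motion. For valley $j$, after renormalizing by $e^{\tilde L_j}$, the time $\tau_j$ spent crossing $\V_j$ is asymptotically $e^{\tilde L_j} \mcR^{(j)}$ with $\mcR^{(j)} \egloi \mcR = G_1 + G_2$, and the maximum local time $\ell_j^*$ realized in $\V_j$ is asymptotically $e^{\tilde L_j}$ up to multiplicative constants, with the pair $(\tau_j, \ell_j^*)$ expressible in terms of the exponential functionals of $V$ seen from $m_j$ in both directions (one side governed by $V^{\uparrow}$, the other by $\hat V^{\uparrow}$). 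The finite exponential moments for $I(V^{\uparrow})$ and $I(\hat V^{\uparrow})$ from \cite{foncexpovech} are used to show that the contributions of the shallow valleys, and of the parts of deep valleys away from their bottoms, are negligible for $\lx^*_X(t)$.

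Pass to the joint scaling limit. The sequence $(e^{\tilde L_j})_j$, normalized by $t$, converges to the jumps of a $\kappa$-stable subordinator; one identifies the constant $\mathcal{C}'\Gamma(1-\kappa)$ via the results of Bertoin--Yor \cite{Bertoinyor} applied to the exit law of $V$ from $[-h,+\infty[$. Combining with the i.i.d. multiplicative factors $\mcR^{(j)}$ describing the time costs yields a pure-jump Lévy process $(\Ym_1, \Ym_2)$ with exactly the Lévy measure $\nu$ given in the statement. After rescaling, the cumulative time elapsed after visiting $k$ valleys is $\Ym_2(k)$, so the valley being explored at time $1$ is indexed by $\Ym_2^{-1}(1)$. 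The supremum of the local time at time $t$ is attained either in some already completed deep valley (giving $\Ym_1^{\natural}(\Ym_2^{-1}(1)-) = \mathcal{I}_1$) or inside the currently explored one, which has only realized the fraction $(1 - \Ym_2(\Ym_2^{-1}(1)-))/(\Ym_2(\Ym_2^{-1}(1)) - \Ym_2(\Ym_2^{-1}(1)-))$ of its total local time contribution, giving $\mathcal{I}_2$. The maximum of the two yields $\mathcal{I}$.

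The main obstacle is the quenched localization step in the absence of Brownian explicit computations: one must show that $\lx^*_X(t)$ is asymptotically realized at the bottom of one of the deep valleys and that the error made by ignoring all other locations vanishes in probability. In the Lévy case this requires (i) controlling the atypical valleys created by large negative jumps of $V$, which is precisely where the hypotheses $0<\kappa<1$, unbounded variation, and $V(1)\in L^p$ for some $p>1$ enter, ensuring regular enough tail behavior of $\tilde L_j$ to get the stable scaling; (ii) replacing the Brownian moment estimates of \cite{advech} by the exponential moment bounds on $I(V^{\uparrow})$ and $I(\hat V^{\uparrow})$ of \cite{foncexpovech}; and (iii) establishing the asymptotic independence across valleys despite jumps of the environment, which is done by coupling the potential near each $m_j$ with independent copies of $V$ (resp.\ $\hat V$) conditioned to stay positive, using the path decomposition at the $h$-minima.
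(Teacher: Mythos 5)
Your outline follows essentially the same route as the paper: decompose the potential into standard $h_t$-valleys, localize the local time and the occupation time at the valley bottoms, approximate the per-valley contributions by an i.i.d.\ sequence whose heavy tails are governed by the Bertoin--Yor asymptotics for $\int_0^{+\infty}e^{V(u)}du$ and whose time/local-time ratio converges to $\mathcal{R}=I(V^{\uparrow})+I(\hat V^{\uparrow})$, then pass to the bivariate $\kappa$-stable limit $(\mathcal{Y}_1,\mathcal{Y}_2)$ and read off $\mathcal{I}_1\vee\mathcal{I}_2$ by continuity of the relevant functionals. The only cosmetic deviation is that the paper indexes the stable tail by the exponential functional $S_j^t$ of the slope beyond the valley rather than by the depth $e^{\tilde L_j}$; the three obstacles you single out (atypical valleys created by large negative jumps, the exponential moments of $I(V^{\uparrow})$ and $I(\hat V^{\uparrow})$, and the coupling with the conditioned processes at the $h$-minima) are precisely the ones the paper resolves.
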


This result is a generalization, for more general environments, of Theorem 1.3 of \cite{advech}. We now give some heuristics about the proof and the expression of the limit distribution. 

{The proof of this result relies on the decomposition of the potential into valleys (defined a little after), that is, holes of potential of a certain height. These valleys are independent and, with a large probability, visited successively by the diffusion with no return to a previous valley. Each valley has a bottom where the diffusion remains trapped for some time before escaping the valley. We prove that the contributions to the local time and to the time spent by the diffusion between the bottoms of two consecutive valleys are negligible, so the local maxima of the local time are localized at the bottom of the valleys, where most of the time is spent. To every visited valley we can associate 1) a peak of local time at its bottom, 2) the amount of time spent by the diffusion in this valley before escaping it. These couples of random variable associated to the successive valleys form, roughly speaking, an \textit{iid} sequence in $\mathbb{R}^2$. We show that the renormalized sum of the terms of this sequence converges to the bivariate $\kappa$-stable subordinator $(\mathcal{Y}_1, \mathcal{Y}_2)$ when $t$ goes to infinity. For large $t$, each jump of $(\mathcal{Y}_1, \mathcal{Y}_2)$ represents the contribution of a valley: the first component of the jump represents the renormalized peak of local time at the bottom of this valley and the second component represents the renormalized time taken to escape the valley. Note that the two components of the pure jump subordinator $(\mathcal{Y}_1, \mathcal{Y}_2)$ are correlated. Then, we approximate the supremum of the local time at instant $t$ by a continuous functional of the above mentioned \textit{iid} sequence. Thanks to the \textit{continuous mapping theorem} we deduce the convergence of the supremum of the local time to this functional evaluated at $(\mathcal{Y}_1, \mathcal{Y}_2)$. 

The shape of the limit distribution in Theorem \ref{cvdutl} can be explained in the following way: $\mathcal{I}_1$ is, by definition, the largest jump of $\mathcal{Y}_1$ strictly before the hitting time of $1$ by $\mathcal{Y}_2$, it represents the largest peak of local time before the penetration of the diffusion in the last valleys (the valley that contains the diffusion at time $t$). Via $\mathcal{I}_2$, the limit law also takes into consideration the contribution of the last valley. The peak of local time in this last valley is represented by $\mathcal{Y}_1(\mathcal{Y}_2^{-1}(1)) - \mathcal{Y}_1(\mathcal{Y}_2^{-1}(1) -)$ (the first component of the jump of $(\mathcal{Y}_1, \mathcal{Y}_2)$ at the hitting time of $1$ by $\mathcal{Y}_2$). We only keep a certain proportion of this peak of local time: the part that is made before instant $t$. Among the time spent in the last valley by the diffusion, the proportion of the time spent in this valley before instant $t$ is represented by ${[1 - \mathcal{Y}_2(\mathcal{Y}_2^{-1}(1) -)] /[\mathcal{Y}_2(\mathcal{Y}_2^{-1}(1)) - \mathcal{Y}_2(\mathcal{Y}_2^{-1}(1) -)]}$. Multiplying this proportion by $\mathcal{Y}_1(\mathcal{Y}_2^{-1}(1)) - \mathcal{Y}_1(\mathcal{Y}_2^{-1}(1) -)$, which represent the peak of local time in the last valley, we obtain $\mathcal{I}_2$ which represent the contribution of the last valley to the local time before instant $t$. It is natural that the limit distribution of the supremum of the local time at time $t$ is the maximum of the random variables $\mathcal{I}_1$ and $\mathcal{I}_2$. }
%
%
%
%
%
%

Our generalization of the results known in the Brownian case also yields some other results such as the convergence of the supremum of the local time before and after the last valley (the first two points of Theorem 1.5 in \cite{advech}).

{As we said before, when $0 < \kappa < 1$, our study relies deeply on the decomposition of the potential into valleys and on the localization of the contributions, to the local time and to the time spent by the diffusion, near the bottoms of these valleys. We now introduce some definitions. The notion of $h$-extrema was first introduced by Neveu et al. \cite{NevPit}, and studied in the case of drifted Brownian motion by Faggionato \cite{Faggionato}. For $h>0$, we say that $x\in\mathbb{R}$ is an $h$-minimum for $V$ if there exist $u<x<v$ such that $V(y) \wedge V(y-) \geq V(x) \wedge V(x-)$ for all $y\in[u,v]$, $V(u)\geq (V(x) \wedge V(x-))+h$ and $V(v-)\geq (V(x) \wedge V(x-))+h$. Moreover, $x$ is an $h$-maximum for $V$ if $x$ is an $h$-minimum for $-V$, and $x$ is an $h$-extremum for $V$ if it is an $h$-maximum or an $h$-minimum for $V$. 

Since $V$ is not a compound Poisson process, it is known (see Proposition VI.4, in \cite{Bertoin}) that it takes pairwise distinct values in its local extrema. Combining this with the fact that $V$ has almost surely c\`ad-l\`ag paths and drifts to $-\infty$ without being the opposite of a subordinator, we can check that the set of $h$-extrema is discrete, forms a sequence indexed by $\mathbb{Z}$, unbounded from below and above, and that the $h$-minima and $h$-maxima alternate. Typically, an $h$-minima is the bottom of what we will call an $h$-valley (see Section \ref{etudevallees}).

We denote respectively by $(m_i,\ i \in \mathbb{Z})$ and $(M_i,\ i \in \mathbb{Z})$ the increasing sequences of $h$-minima and of $h$-maxima of $V$, such that $m_{0} \leq 0<m_1$ and $m_i<M_i<m_{i+1}$ for every  $i\in\mathbb{Z}$. 

In fact, we have to make $h$ going to infinity when $t$ goes to infinity, this is why we briefly study the asymptotic 
of $(m_1, m_2, ...)$, the sequence of $h$-minima, in the following theorem. }

\begin{theo} \label{seqmincvverspoiss}

When $h$ goes to infinity, the renormalized random sequence $e^{-\kappa h}(m_1, m_2, ...)$ converges in distribution to the jumping times sequence of a standard Poisson process with parameter $q$ (which depends explicitly on the law of $V$). If $V = W_{\kappa}$, the $\kappa$-drifted Brownian motion, then $q = \kappa^2/2$. 

\end{theo}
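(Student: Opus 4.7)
The plan is to express each $h$-minimum of $V$ as the start of a ``big'' excursion of the reflected process $J := V - \underline V$ above $0$, and then to combine It\^o excursion theory with the law of large numbers for $-\underline V$. With $\underline V(t) := \inf_{s \leq t} V(s)$, the quantity $-\underline V$ serves as a local time at $0$ of the strong Markov process $J \geq 0$, and the excursions of $J$ above $0$ form a Poisson point process in local time with some $\sigma$-finite intensity $n$ on excursion space. Calling an excursion \emph{big} when its maximum is $\geq h$ and setting $\lambda_h := n(\max \geq h)$, the big excursions occur at the arrivals $L_1 < L_2 < \dots$ of a Poisson process of rate $\lambda_h$ in local time, with corresponding real times $\sigma_k := \inf\{t : -\underline V(t) \geq L_k\}$.

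I would then check, directly from the definition of an $h$-minimum, that every $\sigma_k$ with $k \geq 2$ is an $h$-minimum of $V$: the peak of the previous big excursion and the peak of the current one supply a left and a right point at height $\geq V(\sigma_k) + h$ respectively, while $V(\sigma_k) = -L_k$ is the minimum in between, because between two consecutive big excursions $J$ only performs excursions of height strictly less than $h$. The first candidate $\sigma_1$ is an $h$-minimum iff $V(0) \geq V(\sigma_1) + h$, i.e.\ iff $L_1 \geq h$; since $L_1 \sim \mathrm{Exp}(\lambda_h)$ and the asymptotics below will give $\lambda_h h = \mathrm{o}(1)$, we have $\mathbb{P}(L_1 < h) \to 0$, so $(m_k)_{k \geq 1} = (\sigma_k)_{k \geq 1}$ with probability tending to $1$, and it suffices to study the renormalized sequence $(e^{-\kappa h}\sigma_k)_k$.

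The asymptotics of the two rates are the following. The fluctuation theory of spectrally negative L\'evy processes expresses $\lambda_h$ in terms of the $0$-scale function $W$ of $V$ (up to a convention-dependent constant); combined with the classical asymptotic $W(h) \sim e^{\kappa h}/\Psi_V'(\kappa)$, which follows from the integral representation $\int_0^\infty e^{-\lambda x}W(x)\,dx = 1/\Psi_V(\lambda)$ together with the fact that $\kappa$ is the leading positive root of $\Psi_V$, this yields $e^{\kappa h}\lambda_h \to c_1 \in (0,+\infty)$. Simultaneously, the strong law of large numbers for the L\'evy process gives $-\underline V(t)/t \to |\Psi_V'(0+)| =: c_2$ a.s., equivalently $\tau(\ell)/\ell \to 1/c_2$ a.s.\ for the inverse local time $\tau$. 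On the rescaled time axis $s = e^{-\kappa h} t$, the points $e^{-\kappa h}L_k$ thus converge in distribution to a Poisson process of rate $c_1$, and the rescaled time change $u \mapsto e^{-\kappa h}\tau(e^{\kappa h} u)$ converges uniformly on compact sets to $u \mapsto u/c_2$; composing the two produces the limiting Poisson process of rate $q$, an explicit constant depending on $\Psi_V$, which specializes to $\kappa^2/2$ in the Brownian case $V = W_\kappa$ after one works out all the normalizations.

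The main technical obstacle is the upgrade from the pointwise LLN $\tau(\ell)/\ell \to 1/c_2$ to the \emph{uniform} convergence of the rescaled time change needed for joint convergence of the whole sequence; this requires a concentration estimate for the subordinator $\tau$ on windows of length of order $e^{\kappa h}$. A secondary subtlety is verifying the identification of $h$-minima with big-excursion starts in the bounded-variation case, where creeping of $V$ at its running infimum requires a short additional case distinction, which does not affect the asymptotic count.
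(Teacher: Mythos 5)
Your strategy --- realize the $h$-minima as the starting points of the excursions of $V-\underline{V}$ higher than $h$, get an exponential waiting time with parameter $\mathcal{N}(\mathcal{F}_{h,+})\sim \mathrm{const}\cdot e^{-\kappa h}$ from excursion theory, and use a law of large numbers to convert local time into real time --- is exactly the skeleton of the paper's argument (Lemmas \ref{asympminsubexp}, \ref{asympminesp}, \ref{asympminparam} and Proposition \ref{asympmin}). There are, however, two genuine gaps.

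First, $-\underline{V}$ is \emph{not} a local time at $0$ of $V-\underline{V}$ here: since $V$ has negative jumps it can jump strictly below its running infimum, so $-\underline{V}$ is discontinuous and the excursion process indexed by it is not a homogeneous Poisson point process (you are transposing the dual fact that $\overline{V}$ is a local time for $\overline{V}-V$, which uses the absence of \emph{positive} jumps). One must work with a genuine local time $L$ of $V-\underline{V}$ at $0$ --- defined artificially in the bounded-variation case, where the issue is irregularity of $0$ rather than creeping --- whose inverse $L^{-1}$ is the descending ladder time subordinator; the conversion from local time to real time then goes through $\mathbb{E}[L^{-1}(1)]$ (finite by Lemma \ref{asympminesp}), not through the LLN for $-\underline{V}(t)/t$. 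The limiting rate $q=\lim_h e^{\kappa h}\mathcal{N}(\mathcal{F}_{h,+})/\mathbb{E}[L^{-1}(1)]$ agrees with the paper's after the normalizations cancel, but as written your clock is the wrong one and the intermediate Poisson structure fails.

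Second, you only establish one inclusion of the identification $(m_k)=(\sigma_k)$: every big-excursion start is an $h$-minimum, but nothing rules out \emph{additional} $h$-minima sitting strictly inside a big excursion --- an excursion of height $\geq h$ can contain a sub-valley of depth $\geq h$ without returning to the running infimum, which would insert extra terms and shift the whole indexation. This is an event of probability of order $e^{-\delta\kappa h}$ per excursion and must be estimated; it is precisely the content of Lemma \ref{minimacoincide} (via Lemma \ref{monteavantdescente}). Once quantified it is harmless for finite-dimensional convergence, but it cannot be asserted for free. Finally, note that the paper sidesteps what you call the main technical obstacle (uniform convergence of the rescaled time change): by Cheliotis's lemma the increments $m_{i+1}-m_i$ are \textit{iid} and independent of $m_1$, so one-dimensional convergence of $e^{-\kappa h}m_1$ and $e^{-\kappa h}(m_2-m_1)$ to $\mathcal{E}(q)$ already yields the full statement, with no functional concentration estimate needed.
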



Some of the estimates used to prove this theorem will also be useful to establish the negligibility of the local time between the bottoms of two consecutive $h$-valleys when $0 < \kappa < 1$. However, the main interest of this theorem is that it informs us on the typical distance between two consecutive $h$-minima, and this provides {a} heuristic explanation for why the method based on valleys fails when $\kappa > 1$. 

As we can see in Theorem \ref{seqmincvverspoiss}, the distance between two consecutive $h$-minima is of order $e^{\kappa h}$. In the case $0 < \kappa < 1$, what happens between the bottoms of the $h$-valleys can be neglected, so the main contributions to the local time and to the time spent by the diffusion are localized at the $h$-minima and are highly correlated, this explains the form of the limit distribution for the favorite site given in Theorem 1.5 of \cite{advech}. When $\kappa > 1$, this distance between two consecutive $h$-minima is so large that the amount of time spent by the diffusion between the $h$-minima is no longer negligible compared to the amount of time spent in the bottoms of the $h$-valleys, and there are extreme values taken by the local time between the $h$-valleys. In fact, it is impossible to use the valleys to localize the large values of the local time as we do when $0 < \kappa < 1$, this explains the asymptotic uniform distribution for the favorite site. Moreover, the case $\kappa > 1$ is the case where $X$ has positive speed. The local time at time $t$ is then close to the local time at the hitting time of $t/m$ ($m$ being defined in \eqref{defdescte}) and the latter can be expressed thanks to a generalized Ornstein-Uhlenbeck process. The supremum of the local time is therefore similar to the supremum of the heights of \textit{iid} excursions of a Markov process. This explains the appearance of a Fr\'echet distribution in the limit law of the supremum of the local time. 

{The assumption that $V$ is spectrally negative is crucial in our study and there are several reasons for this: 
\begin{itemize}
\item The Laplace exponent $\Psi_V$ and its non trivial zero $\kappa$ are defined when the asymptotic tail at $+\infty$ of the L\'evy measure of $V$ is thin compared to all negative exponentials. This is the case in particular when $V$ is spectrally negative. The fact that $\kappa$ is defined is essential since the value of $\kappa$ determines many important properties of the diffusion, see for example \eqref{cvsingh} and the above results. As we said, $\kappa$ plays a similar role as the drift of the environment when the latter is Brownian. Also, $\kappa$ determines the right tail of the exponential functional $\int_0^{+\infty} e^{V(t)} dt$, and this tail is determinant in the proof of the convergence in distribution of $\mathcal{L}^*_X(t)$ when $0 < \kappa < 1$. In all cases, we see from Theorems \ref{kappa>1} and  \ref{cvdutl} that $\kappa$ appears in the limit distribution of $\mathcal{L}^*_X(t)$. 
\item Since $V$ is spectrally negative, $V^{\uparrow}$ is defined and has infinite life-time even though $V$ drifts to $-\infty$ at $+\infty$. Moreover, since $V$ is spectrally negative, $V^{\uparrow}$ (killed at some hitting time) is exactly equal in law to the ascending part of the bottom of the valleys of $V$, see Subsection \ref{loidesval}. 
\item As we see in Theorem \ref{cvdutl}, the random variable $\mathcal{R}$, whose law is the convolution of the laws of $I(V^{\uparrow})$ and $I(\hat V^{\uparrow})$, appears in the limit distribution of $\mathcal{L}^*_X(t)$ when $0 < \kappa < 1$. We have hints that the asymptotic almost sure behavior of $\mathcal{L}_X^*(t)$ is also linked to the left tail of $\mathcal{R}$ (see the work in preparation \cite{psvech}). This tail can be known very precisely when $V$ is spectrally negative, thanks to the results of \cite{foncexpovech}. In the latter paper the methods rely deeply on the fact that $V$ is spectrally negative and it is shown that the absence of positive jumps is determinant for the shape of the left tail of $I(V^{\uparrow})$ (and therefore of $\mathcal{R}$), that is, this tail would be totally different if $V$ had positive jumps. 
\item The largest part of this work consists in proving precise estimates for the environment $V$. Some of these estimates require the fact that $V$ is spectrally negative and most of them are simpler to obtain under this assumption. 
\end{itemize}
}

The rest of the paper is organized as follows. In Section \ref{kappagd} we study the case where $\kappa > 1$ and prove Theorems \ref{kappa>1} and \ref{kappa>1favsite}. In Section \ref{etudevallees} we recall the definition of the $h$-valleys of $V$ and establish some usual properties such as the independence of the consecutive slopes and the law of the bottoms of the valleys. We end this Section by proving Theorem \ref{seqmincvverspoiss}. In Section \ref{genedesres} we assume $0 < \kappa < 1$ and prove Theorem \ref{cvdutl}. In Section \ref{estimates} we prove some fundamental estimates on $V$, $V^{\uparrow}$ and $\hat V^{\uparrow}$ and some technical results. 

%

\subsection{Facts and notations} \label{factsandnotations}

We denote by $(Q, \gamma, \nu)$ the generating triplet of $V$ so $\Psi_V$ can be expressed as
\begin{eqnarray}
\Psi_V(\lambda) = \frac{Q}{2} \lambda^2 - \gamma \lambda + \int_{-\infty}^0 (e^{\lambda x} - 1 - \lambda x \mathds{1}_{|x| < 1}) \nu(dx). \label{levkhin}
\end{eqnarray}

If $V$ jumps at instant $u$ we denote $\Delta V(u) := V(u) - V(u-)$, the jump of $V$ at $u$. For $r > 0$ we define $V^{< -r}$ to be the sum of the jumps of $V$ that are less than $-r$: 
\[ \forall s \geq 0, \ V^{< -r}(s) := \sum_{0 \leq u \leq s} \Delta V(u) \mathds{1}_{ \left \{ \Delta V(u) < -r \right \} }. \]

$V$ is the sum of the processes $V - V^{< -r}$ and $V^{< -r}$ that can be seen to be independent spectrally negative L\'evy processes, thanks to the L\'evy-Khintchine formula \eqref{levkhin}. According to Corollary VII.2 of Bertoin \cite{Bertoin} we have $\mathbb{E} [ V(1) ] < 0$ so for $r$ chosen large enough we have $\mathbb{E} [ [V - V^{< -r}](1) ] < 0$ which, thanks to Corollary VII.2 of \cite{Bertoin}, implies that $V - V^{< -r}$ drifts to $-\infty$. In other words, removing the very large jumps of $V$ does not change its convergence to $-\infty$. 

For $Y$ a process and $S$ a borelian set, we denote
\[ \tau(Y, S) := \inf \left \{ t \geq 0, \ Y(t) \in S \right \}, \ \ \ \mathcal{K}(Y, S) := \sup \left \{ t \geq 0, \ Y(t) \in S \right \}. \]
We shall only write $\tau(Y, x)$ instead of $\tau(Y, \{x\})$ and $\tau(Y, x+)$ instead of $\tau(Y, [x, +\infty [)$. Since $V$ has no positive jumps we see that it reaches each positive level continuously: $\forall x > 0, \tau(V, x+) = \tau(V, x)$. Moreover, the law of the supremum of $V$ is known, it is an exponential distribution with parameter $\kappa$ (see Corollary VII.2 in \cite{Bertoin}). 

$\underline{Y}$ denotes the infimum process of $Y$: $\forall t \geq 0, \ \underline{Y}(t) := \inf_{[0, t]} Y$. The process $V - \underline{V}$ is known as the process $V$ reflected at its infimum. Note that, by Proposition VI.1 of \cite{Bertoin}, it is a c\`ad-l\`ag Markov process. The same holds for $\hat V - \underline{\hat V}$. 

If $Y$ is Markovian and $x \in \mathbb{R}$ we denote $Y_x$ for the process $Y$ starting from $x$. For $Y_0$ we shall only write $Y$. For any (possibly random) time $T > 0$, we write $Y^T$ for the process $Y$ shifted and centered at time $T$: $\forall s \geq 0, \ Y^T(s) := Y(T+s)-Y(T)$. 

Let $B$ be a Brownian motion starting at $0$ and independent from $V$. A diffusion in potential $V$ can be defined via the formula 
\begin{equation}
X(t) := A_V^{-1}(B(T_V^{-1}(t))), \label{exprediff1}
\end{equation}
where 
\[ A_V(x) := \int_0^x e^{V(u)} du \text{ and for } 0 \leq s \leq \tau \left ( B, A_V(+\infty) \right ), \ T_V(s) := \int_0^s e^{-2 V(A_V^{-1}(B(u)))} du. \]

{$A_V$ is the scale function of the diffusion $X$ (at fixed environment $V$). Note that, by the time-reversal property for L\'evy processes, the process $(-V(-(x-)), \ x \geq 0)$ has the same law as $(V(x), \ x \geq 0)$ so $V$ converges almost surely to $+\infty$ at $-\infty$. In particular we have $A_V(-\infty) = -\infty$ while $A_V(+\infty) < +\infty$. This implies that $X$ is transient toward $+\infty$. }

It is known that the local time of $X$ at $x$ until instant $t$ has the following expression:   
\begin{equation}
\mathcal{L}_X(t,x)= e^{- V(x)}\mathcal{L}_B(T_V^{-1}(t),A_V(x)), \label{expretl1}
\end{equation}
where $\mathcal{L}_B(.,.)$ is the local time of $B$. For the hitting times of $r \in \mathbb{R}$ by the diffusion $X$ we shall use the frequent notation $H(r)$ (instead of $\tau(X, r)$). {Note that we have 
\begin{align}
\mathcal{L}_X(H(r),x) & = e^{- V(x)}\mathcal{L}_B{[\tau (B, A_V(r)),A_V(x)]}, \label{expretl2} \\
H(r) & = \int_{-\infty}^r \mathcal{L}_X(H(r),x) dx = \int_{-\infty}^r e^{- V(x)}\mathcal{L}_B{[\tau(B, A_V(r)), A_V(x)]} dx. \label{expretl2.1}
\end{align}}
We denote by $H_{+}(r)$ (respectively $H_{-}(r)$) the total amount of time spent by the diffusion in $[0, +\infty [$ (respectively $]-\infty, 0]$) before $H(r)$. We have obviously $H(r) = H_{+}(r) + H_{-}(r)$. 

The quenched probability measure $P^V$ is the probability measure conditionally on the potential $V$. When we deal we events relative to the diffusion $X$, $\mathbb{P}$ represents the annealed probability measure, it is defined as $\mathbb{P} := \int P^{\omega} (.) P(V \in d \omega)$. 

We denote by $d_{VT}$ the total variation distance between two probability distributions on the same space. 

{By convention we put $\max_{1 \leq i \leq 0}... = 0$ and $\sum_{i=1}^{0}... = 0$. For example a maximum or a sum of quantities indexed by $\{1, ..., k-1\}$ is defined to equal $0$ in the case $k=1$.} 


\section{Supremum of the local time when $\kappa > 1$} \label{kappagd}

We now treat the case $\kappa > 1$. Since some of the lemmas we state in this section are true in a general context we do not assume $\kappa > 1$ yet. We thus have $\kappa \in ]0, +\infty[$ unless mentioned otherwise. As we mentioned in {the end of Subsection \ref{results}}, the valleys are of no use in this section so we have to study directly the expression of the local time. It is given by \eqref{expretl1}. {According to the main result of \cite{Singh}, if $\kappa > 1$ and $t$ is large, then $t$ is close to $H(t/m)$ where $m:=-2/\Psi_V(1)$ as in \eqref{defdescte} (a version of this fact, adapted to our context, is contained in Lemma \ref{chgtvar} below).} It is then convenient to look at the local time until {a hitting} time. It has a simpler expression given by \eqref{expretl2}. 
The supremum of the local time until instant $H(r)$ can thus be written
\begin{equation}
\mathcal{L}^*_X(H(r))= \max \{ \mathcal{M}_1(r), \mathcal{M}_2(r) \}, \label{expretl3}
\end{equation}
{where 
\[ \mathcal{M}_2(r) := \sup_{x \in [0,r]} \mathcal{L}_X(H(r),x) = \sup_{x \in [0,r]} e^{- V(x)}\mathcal{L}_B{[\tau (B, A_V(r)),A_V(x)]}, \]
and 
\begin{align}
\mathcal{M}_1(r) & := \sup_{x < 0} \mathcal{L}_X(H(r),x) = \sup_{x < 0} e^{- V(x)}\mathcal{L}_B{[\tau (B, A_V(r)),A_V(x)]} \nonumber \\
& \leq \left ( \sup_{x < 0} e^{- V(x)} \right ) \times \left (\sup_{y \in \mathbb{R}} \mathcal{L}_B{[\tau (B, A_V(+\infty)), y]} \right ) < +\infty. \label{tlneg}
\end{align}
Recall that $V$ converges almost surely to $+\infty$ at $-\infty$ (see Subsection \ref{factsandnotations}), so in particular the factor $\sup_{x < 0} e^{- V(x)}$ is finite. Also, since $A_V(+\infty) < +\infty$, as mentioned in Subsection \ref{factsandnotations}, the second factor is the supremum of the local time of a Brownian motion until a finite hitting time, it is thus also almost surely finite. This explains the finiteness of the upper bound in \eqref{tlneg}. }

We also define the favorite positive site until time $H(r)$ by
\[ F^*_+(H(r)) := \text{argmax}_{[0,r]} \mathcal{L}_X(H(r),.) := \inf \left \{ x \in [0,r], \ \mathcal{L}_X(H(r), x) \vee \mathcal{L}_X(H(r), x-) = \mathcal{M}_2(r) \right \}. \]
To lighten the notations, we often use the notation $argmax$ in this section. Now, note that $\mathcal{M}_2(r)$ is the same as $J_2(r)$ in Section 2 of \cite{Singh} with a supremum on $[0,r]$ in place of an integral on this interval. In particular, if similarly as in Section 2 of \cite{Singh} we define 
\[ Z(x) := e^{V(x)} R \left ( \int_0^x e^{-V(y)} dy \right ), \]
where $R$ is a two-dimensional squared Bessel process independent from $V$, we get
\begin{eqnarray}
\mathcal{M}_2(r) \overset{\mathcal{L}}{=} \sup_{x \in [0,r]} Z(x) \ \ \ \text{and} \ \ \ F^*_+(H(r)) \overset{\mathcal{L}}{=} r - \text{argmax}_{[0,r]} Z. \label{expretl4}
\end{eqnarray}

We can therefore prove our results using some known properties of $Z$. Let us recall from Section 5 of \cite{Singh} the definition of $L$, the local time of $Z$ for the position $1$, of $n$ the associated excursion measure, and of $L^{-1}$ the right continuous inverse of $L$. We denote by $\xi$ a generic excursion. 

\subsection{The local time at hitting times}

We now prove a lemma to justify rigorously that the local time of the diffusion at some instant can be approximated by the local time at {a hitting} time. {As an intermediary result, the lemma also proves \eqref{invtl}: that a deterministic time can be approximated by the inverse of the local time of $Z$ taken at some deterministic value. The interest is that the expressions involving $Z$ can then be studied via the excursions of $Z$, which we will systematically do. }

\begin{lemme} \label{chgtvar}

Let us denote by $Q$ the positive constant denoted by $n [\zeta]$ in Section 5 of \cite{Singh}. For $r$ large enough we have 
\begin{eqnarray}
\mathbb{P} \left ( L^{-1}(r/Q - r^{3/4}) \leq r \leq L^{-1}(r/Q + r^{3/4}) \right ) & \geq 1 - r^{-1/4}. \label{invtl}
\end{eqnarray}
Assume $\kappa > 1$. For any $\alpha \in ]\max \{ 3/4, 1/ \kappa \}, 1[$ their exists $\epsilon > 0$ such that for $r$ large enough we have
\begin{align}
\mathbb{P} \left ( H(r/m - r^{\alpha}) \leq r \leq H(r/m + r^{\alpha}) \right ) & \geq 1 - r^{-\epsilon}, \label{tpsatt} \\
\mathbb{P} \left ( \mathcal{L}^*_X(H(r/m - r^{\alpha})) \leq \mathcal{L}^*_X(r) \leq \mathcal{L}^*_X(H(r/m + r^{\alpha}) ) \right ) & \geq 1 - r^{-\epsilon}. \label{enctl}
\end{align}

\end{lemme}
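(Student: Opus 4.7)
The plan is to handle the three assertions separately, with \eqref{enctl} deduced essentially for free from \eqref{tpsatt}. For \eqref{invtl}, since $L$ is the continuous local time of the Markov process $Z$ at level $1$, its right-continuous inverse $L^{-1}$ is a subordinator whose L\'evy measure is the image of the excursion measure $n$ by the lifetime $\zeta$. In particular $\E[L^{-1}(s)] = s \cdot n[\zeta] = sQ$, and provided $n[\zeta^2]<\infty$ (which I would extract from the squared Bessel description of $Z$ underlying \cite{Singh}), $\mathrm{Var}(L^{-1}(s)) = s \cdot n[\zeta^2]$. Chebyshev's inequality applied at $s = r/Q \pm r^{3/4}$ then gives, for each sign,
\[
\P\bigl(|L^{-1}(s) - Qs| > Q r^{3/4}\bigr) \leq \frac{s \, n[\zeta^2]}{Q^2 r^{3/2}} = O(r^{-1/2}),
\]
and a union bound yields \eqref{invtl}.

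For \eqref{tpsatt}, my plan is to compare $H(r)$ to its mean $mr$ via a moment estimate. Combining the representation \eqref{exprediff1} of $X$ with the occupation time formula for $B$, and changing variables $y=A_V(x)$, one obtains
\[
H(r) = 2\int_{-\infty}^{r} e^{-V(x)} \mathcal{L}_B\bigl(\tau(B,A_V(r)),A_V(x)\bigr)\,\dd x,
\]
the $(-\infty,0]$ contribution being \pas finite and thus negligible as $r\to\infty$. By the first Ray-Knight theorem applied to $B$ (combined with a time reversal), the contribution of $[0,r]$ is identified in law with a constant times $\int_0^r Z(x)\,\dd x$, matching \eqref{expretl4}. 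The problem thus reduces to a quantitative law of large numbers for this additive functional of $Z$, which I would establish through the excursion decomposition at the level $1$: each excursion contributes an IID pair $(\zeta_i, \int_0^{\zeta_i}\xi_i(s)\,\dd s)$, and the number of excursions completed before position $r$ is close to $r/Q$ thanks to step \eqref{invtl}. Since $\kappa>1$, these IID contributions possess a moment of some order $p\in(1,\kappa)$, and a Marcinkiewicz-Zygmund-type inequality yields $\E\bigl[|H(r)-mr|^p\bigr] = O(r^{p/\kappa})$. Markov's inequality then gives
\[
\P\bigl(|H(r) - mr| > m r^{\alpha}\bigr) \leq C \, r^{-p(\alpha - 1/\kappa)},
\]
and since $\alpha>1/\kappa$ we may take $\epsilon := p(\alpha-1/\kappa)/2 > 0$, which after combining both signs and exploiting monotonicity of $r\mapsto H(r)$ yields \eqref{tpsatt}.

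Finally, \eqref{enctl} is immediate from \eqref{tpsatt}: for every $x$ the map $t\mapsto\mathcal{L}_X(t,x)$ is nondecreasing, hence so is $t\mapsto\mathcal{L}^*_X(t)$, and therefore on the event appearing in \eqref{tpsatt} the inequalities $H(r/m - r^{\alpha}) \leq r \leq H(r/m + r^{\alpha})$ transfer directly to the corresponding suprema of the local time. The hard step in this plan is the moment bound for $H(r)-mr$: the underlying LLN is classical and already implicit in \cite{Singh}, but upgrading it to a polynomial probability rate forces one to extract sharp integrability from the excursion measure of $Z$, which is particularly delicate in the regime $1<\kappa<2$ where the natural fluctuations of $H$ lie in the domain of a stable law rather than of a Gaussian.
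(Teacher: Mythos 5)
Your overall architecture is the same as the paper's: Chebyshev on the centered subordinator $L^{-1}(t)-Qt$ for \eqref{invtl}, an excursion decomposition of $\int_0^r Z$ at level $1$ combined with a moment bound and Markov's inequality for \eqref{tpsatt}, and monotonicity of $t\mapsto\mathcal{L}^*_X(t)$ for \eqref{enctl}. Two points, however, need attention.

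First, you dispose of the contribution of $]-\infty,0]$ to $H(r)$ by saying it is a.s.\ finite and "thus negligible". Almost-sure finiteness only gives $\mathbb{P}(H_-(+\infty)>mr^{\alpha}/2)=o(1)$, whereas the lemma asserts a polynomial rate $1-r^{-\epsilon}$; you therefore need a quantitative tail bound. The paper supplies exactly this through Lemma \ref{tpsdanslesnegl}, which gives $\mathbb{P}(H_-(+\infty)>r)\leq Cr^{-\kappa/(2+\kappa)}$ via the representation of $H_-(+\infty)$ as a product of an exponential functional of the (time-reversed) potential and the supremum of a Brownian local time. Without such an estimate the stated rate in \eqref{tpsatt} is not established.

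Second, your moment bound $\mathbb{E}[|H(r)-mr|^{p}]=O(r^{p/\kappa})$ for an arbitrary $p\in(1,\kappa)$ is sharper than what the standard inequalities give and is indeed, as you note, the delicate step. It is also unnecessary: von Bahr--Esseen applied to the i.i.d.\ mean-zero excursion contributions yields only $\mathbb{E}[|S_n|^{\gamma}]=O(n)$ for $\gamma\in(1,\kappa)\cap(1,2]$, but this suffices provided one chooses $\gamma>1/\alpha$ --- which is possible precisely because $\alpha>1/\kappa$ entails $1/\alpha<\kappa$ --- since then Markov's inequality gives a bound of order $r^{1-\alpha\gamma}=r^{-\epsilon}$. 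This is the route the paper takes, with the integrability $\mathbb{E}[|\tilde U(1)|^{\gamma}]<\infty$ read off from the tail asymptotics $\mathbb{P}(\tilde U(1)>x)\sim cx^{-\kappa}$ in Section 7 of \cite{Singh}; I recommend substituting this simpler argument for your unproved $O(r^{p/\kappa})$ claim. Finally, remember that the excursion decomposition at level $1$ must be run for $Z_1:=Z(\tau(Z,1)+\cdot)$ rather than $Z$ itself (since $Z(0)=0$), with $\tau(Z,1)$ controlled by the exponential bound of Proposition 4.3 of \cite{Singh}; this is minor but needed to make the i.i.d.\ structure exact.
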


\begin{proof}
According to Lemma 5.1 in \cite{Singh}, $\mathbb{E} [L^{-1}(1)] = Q$ and $L^{-1}(1)$ admits some finite positive exponential moments, so, in particular, it has moments of the second order. Let us define the L\'evy process $U(t) := L^{-1}(t) - Q t$. $U$ has finite mean equal to $0$ and moments of the second order, this implies that $\mathbb{E} [ |U(t)|^2 ] = t \mathbb{E} [ |U(1)|^2 ]$. Using Markov's inequality  we get
\begin{eqnarray}
\forall t, s > 0, \ \mathbb{P} \left ( | U(t) | > s \right ) \leq s^{- 2} \mathbb{E} \left [ | U(t)|^2 \right ] = s^{- 2} t \mathbb{E} \left [ |U(1)|^2 \right ]. \label{chgtvarest1}
\end{eqnarray}
Then, 
\begin{align*}
\mathbb{P} \left ( r > L^{-1}(r/Q + r^{3/4}) \right ) & \leq \mathbb{P} \left ( | L^{-1}(r/Q + r^{3/4}) - (r + Q r^{3/4}) | > Q r^{3/4} \right ) \\
& = \mathbb{P} \left ( | U(r/Q + r^{3/4}) | > Q r^{3/4} \right ) \leq (r/Q + r^{3/4}) \mathbb{E} \left [ |U(1)|^2 \right ] / Q^2 r^{3/2}, 
\end{align*}
where we have used \eqref{chgtvarest1} with $t = r/Q + r^{3/4}$ and $s = Q r^{3/4}$. We get a similar estimate for 

\noindent $\mathbb{P} \left ( r < L^{-1}(r/Q - r^{3/4}) \right )$ so \eqref{invtl} follows. We now turn to \eqref{tpsatt}. Until the end of this proof we assume $\kappa > 1$ {and we fix $\alpha \in ]\max \{ 3/4, 1/ \kappa \}, 1[$}. {Recall the definitions of $H_-$ and $H_+$ in Subsection \ref{factsandnotations}}, we have
\begin{align}
\mathbb{P} \left ( H(r/m - r^{\alpha}) > r \right ) & \leq \mathbb{P} \left ( H_-(r/m - r^{\alpha}) + H_+(r/m - r^{\alpha}) - (r - m r^{\alpha}) > m r^{\alpha} \right ) \nonumber \\
& \leq \mathbb{P} \left ( H_-(+\infty) > m r^{\alpha}/2 \right ) \nonumber \\
& + \mathbb{P} \left ( \int_0^{r/m - r^{\alpha}} Z(x) dx - (r - m r^{\alpha}) > m r^{\alpha}/2 \right ). \label{chgtvarest1.1}
\end{align}
We have used the fact, from Section 2 of \cite{Singh}, that $\int_0^t Z(x) dx$ has the same law as $H_+(t)$ (denoted by $J_2(t)$ there). The second term in the right hand side of \eqref{chgtvarest1.1} is less than
\begin{align*}
& \mathbb{P} \left ( \int_0^{\tau(Z, 1)} Z(x) dx > m r^{\alpha}/4 \right ) + \mathbb{P} \left ( \int_{\tau(Z, 1)}^{r/m - r^{\alpha}} Z(x) dx - (r - m r^{\alpha}) > m r^{\alpha}/4 \right ) \\
\leq & \mathbb{P} \left ( \tau(Z, 1) > m r^{\alpha}/4 \right ) + \mathbb{P} \left ( \int_{0}^{r/m - r^{\alpha}} Z_1(x) dx - (r - m r^{\alpha}) > m r^{\alpha}/4 \right ), 
\end{align*}
where we put $Z_1 := Z(\tau(Z, 1) + .)$. It is well defined since $Z$ has no positive jumps and thus reaches $[1, +\infty[$ continuously. Since $Z$ is a Markov process, $Z_1$ has indeed the same law as $Z$ starting from $1$. According to Proposition 4.3 of \cite{Singh} we have 
\begin{eqnarray}
\mathbb{P} ( \tau(Z,1) > t) \leq e^{-c t}, \label{supdeZ2}
\end{eqnarray}
for some constant $c > 0$ when $t$ is large enough. Then, according to Lemma \ref{tpsdanslesnegl} we have $\mathbb{P} ( H_-(+\infty) > m r^{\alpha}/2 ) \leq c' r^{-\frac{\alpha \kappa}{2+\kappa}}$ for some positive constant $c'$ and $r$ large enough. Putting all this into \eqref{chgtvarest1.1} we obtain for $r$ large enough
\begin{eqnarray}
\mathbb{P} \left ( H(r/m - r^{\alpha}) > r \right ) \leq \mathbb{P} \left ( \left | \int_{0}^{r/m - r^{\alpha}} Z_1(x) dx - (r - m r^{\alpha}) \right | > m r^{\alpha}/4 \right ) + c_1 r^{-\frac{\alpha \kappa}{2+\kappa}}, \label{chgtvarest1.2}
\end{eqnarray}
for some positive constant $c_1$. Let us define $\tilde U(t) := \int_0^{L^{-1}(t)} Z_1(x) dx - m L^{-1}(t)$. Note that, from Section 7 of \cite{Singh}, we can see that $\tilde U$ is a L\'evy process with finite mean equal to $0$ and such that $\mathbb{P} ( \tilde U(1) > x ) \sim_{x \rightarrow +\infty} c x^{-\kappa}$ for some positive constant $c$. {Moreover $\tilde U$ has no negative jumps so $\mathbb{E} [(\tilde U(1) \wedge (-1))^2] < +\infty$ (see Theorem 25.3 in \cite{Sato}). As a consequence, $\mathbb{E} [ |\tilde U(1)|^{\gamma} ] < +\infty$ for any $\gamma \in ]1/\alpha, \kappa \wedge (4/3)[$ (and note that we have $1/\alpha < \kappa \wedge (4/3)$ by the definition of $\alpha$ so the interval $]1/\alpha, \kappa \wedge (4/3)[$ is indeed non-empty). We choose such a $\gamma$ and use successively Markov's inequality and Von Barh-Esseen's inequality (which can be applied with our choice of $\gamma$ since $]1/\alpha, \kappa \wedge (4/3)[ \subset ]1,2[$):} 
\begin{eqnarray}
\forall t, s > 0, \ \mathbb{P} \left ( | \tilde U(t) | > s \right ) \leq s^{- \gamma} \mathbb{E} \left [ | \tilde U(t)|^{\gamma} \right ] \leq 2 s^{- \gamma} \left ( \lfloor t \rfloor \mathbb{E} \left [ | \tilde U(1)|^{\gamma} \right ] + \sup_{v \in [0,1]} \mathbb{E} \left [ | \tilde U(v)|^{\gamma} \right ] \right ). \ \ \label{chgtvarest2}
\end{eqnarray}

We now use \eqref{invtl} and get that for $y$ large enough, $y \leq L^{-1}(y/Q + y^{3/4})$ with probability greater than $1 - y^{-1/4}$. Therefore, with probability greater than $1 - y^{-1/4}$ we have 
\begin{align*}
\int_0^{y} Z_1(x) dx - m y & \leq \int_0^{L^{-1}(y/Q + y^{3/4})} Z_1(x) dx - m y = \tilde U( y/Q + y^{3/4} ) + m U( y/Q + y^{3/4} ) + mQ y^{3/4}. 
\end{align*}
Let $C$ be an arbitrary positive constant. Applying \eqref{chgtvarest2} and \eqref{chgtvarest1} with $t = y/Q + y^{3/4}$ and $s = C y^{\alpha}$ we get for $y$ large enough
\[ \mathbb{P} \left ( \int_0^{y} Z_1(x) dx - m y > C y^{\alpha} \right ) \leq c_2 \left ( y^{1 - \alpha \gamma} + y^{1 - 2 \alpha} + y^{-1/4} \right ), \]
for some positive constant $c_2$, depending on $C$. We prove a similar inequality for $my - \int_0^{y} Z_1(x) dx$ so we have actually, for $c_3$ a positive constant (depending on $C$) and $y$ large enough, 
\begin{eqnarray}
\mathbb{P} \left ( \left | \int_0^{y} Z_1(x) dx - m y \right | > C y^{\alpha} \right ) \leq c_3 \left ( y^{1 - \alpha \gamma} + y^{1 - 2 \alpha} + y^{-1/4} \right ). \label{chgtvarest3}
\end{eqnarray}
Applying \eqref{chgtvarest3} with a good choice of $C$, $y = r/m - r^{\alpha}$ and putting into \eqref{chgtvarest1.2} we get
\begin{eqnarray}
\mathbb{P} \left ( H(r/m - r^{\alpha}) > r \right ) \leq r^{-\epsilon}/2, \label{chgtvarest4}
\end{eqnarray}
where $\epsilon > 0$ is chosen to be less than $\max \{ \alpha \gamma -1, 2 \alpha - 1, \alpha \kappa/(2 + \kappa), 1/4 \}$ and $r$ is large enough. Then, 
\begin{align*}
\mathbb{P} \left ( H(r/m + r^{\alpha}) < r \right ) & \leq \mathbb{P} \left ( H_+(r/m + r^{\alpha}) < r \right ) \leq \mathbb{P} \left ( \int_{\tau(Z, 1)}^{r/m + r^{\alpha}} Z(x) dx < r \right ) \\
& \leq \mathbb{P} \left ( \int_{0}^{r/m + r^{\alpha}/2} Z_1(x) dx < r \right ) + \mathbb{P} ( \tau(Z,1) > r^{\alpha}/2), 
\end{align*}
where we recall that $Z_1 := Z(\tau(Z, 1) + .)$. Using \eqref{supdeZ2} and \eqref{chgtvarest3}, we conclude the same way as for $\mathbb{P} ( H(r/m - r^{\alpha}) > r )$: we get $\mathbb{P} \left ( H(r/m + r^{\alpha}) < r \right ) \leq r^{-\epsilon}/2$ for $r$ large enough, and combining with \eqref{chgtvarest4} we get \eqref{tpsatt}. 

Finally, \eqref{enctl} is only a consequence of \eqref{tpsatt} and of the increases of $\mathcal{L}^*_X$. 

\end{proof}

We also need an almost sure version of \eqref{invtl}: 

\begin{lemme} \label{itlog}

Almost surely, for all $t$ large enough we have
\[ Q t - t^{3/4} \leq L^{-1}(t) \leq Q t + t^{3/4}. \]

\end{lemme}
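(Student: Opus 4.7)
The plan is to promote the in-probability estimate \eqref{invtl} into an almost sure statement by a Borel--Cantelli argument along the integer sequence, using the monotonicity of $L^{-1}$ to fill in the gaps in between. As in the proof of Lemma \ref{chgtvar}, I keep the notation $U(t) := L^{-1}(t) - Qt$; since $\mathbb{E}[L^{-1}(1)] = Q$, this is a centered L\'evy process.

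The key point is to strengthen \eqref{chgtvarest1}, which only uses the second moment of $U(1)$ and therefore yields tail estimates that are not summable at $s = t^{3/4}$. Lemma 5.1 of \cite{Singh} states however that $L^{-1}$ admits finite exponential moments, so $U(1)$ has moments of all orders. For any integer $p \geq 2$, applying the Marcinkiewicz--Zygmund inequality to the i.i.d.\ increments $U(k) - U(k-1)$ provides a constant $C_p$ such that $\mathbb{E} [ |U(n)|^p ] \leq C_p n^{p/2}$ for every $n \geq 1$. Markov's inequality then gives
\[ \mathbb{P} \left ( |U(n)| > n^{3/4}/2 \right ) \leq 2^p C_p n^{-p/4}, \]
which is summable as soon as $p > 4$. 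By the Borel--Cantelli lemma, almost surely $|U(n)| \leq n^{3/4}/2$ for all $n$ large enough.

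It remains to extend the estimate from the integers to all real $t$, which is where the subordinator structure of $L^{-1}$ plays its role. For $t \in [n, n+1]$ with $n$ large, monotonicity gives $L^{-1}(n) \leq L^{-1}(t) \leq L^{-1}(n+1)$ and $Qn \leq Qt \leq Q(n+1)$, hence
\[ U(n) - Q \leq L^{-1}(t) - Qt \leq U(n+1) + Q, \]
so $|L^{-1}(t) - Qt| \leq \max (|U(n)|, |U(n+1)|) + Q \leq (n+1)^{3/4}/2 + Q \leq t^{3/4}$ for $t$ large enough, which yields the announced double inequality.

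The only mildly delicate step is the moment bound $\mathbb{E}[|U(n)|^p] \leq C_p n^{p/2}$; everything else (Markov, Borel--Cantelli, monotone interpolation) is routine. This is precisely the place where the second-moment estimate used for Lemma \ref{chgtvar} is not sufficient, and one must invoke the full strength of the exponential moments of $L^{-1}$ from \cite{Singh}.
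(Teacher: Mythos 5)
Your proof is correct, but it follows a different route from the paper. The paper's proof is a one-line appeal to the law of the iterated logarithm: since $L^{-1}(1)$ has finite exponential moments, the centered L\'evy process $U(t) = L^{-1}(t) - Qt$ has finite variance, the LIL gives $|U(t)| = O(\sqrt{t \log\log t})$ almost surely, and this is eventually dominated by $t^{3/4}$. You instead give a self-contained Borel--Cantelli argument: a Marcinkiewicz--Zygmund bound $\mathbb{E}[|U(n)|^p] \leq C_p n^{p/2}$ for the i.i.d.\ increments, Markov's inequality with $p>4$ to get summable tails at level $n^{3/4}/2$, and monotonicity of the subordinator to interpolate between integers. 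All three steps check out (in particular the interpolation $U(n) - Q \leq L^{-1}(t) - Qt \leq U(n+1) + Q$ is sound, and $(n+1)^{3/4}/2 + Q \leq n^{3/4}$ for large $n$). What each approach buys: the paper's is shorter but invokes the LIL for L\'evy processes as a black box and wastes the sharp $\sqrt{t\log\log t}$ rate, which is not needed; yours is longer but elementary, requires only moments of order greater than $4$ (far less than the exponential moments actually available), and is exactly the kind of argument one would use if the LIL reference were unavailable. Either proof is acceptable here since only the crude $t^{3/4}$ bound is used downstream.
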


\begin{proof}
Since $L^{-1}(1)$ admits moments of the second order (see the above proof) we have an iterated logarithm law. 
\end{proof}

We now study the supremum of the process $Z$ in term of its excursion measure $n$. 

\begin{lemme} \label{supdeZ}

There is $\epsilon > 0$ and $r_0 > 0$ such that for all $r \geq r_0$ and $h>1$ we have
\[ e^{-(r/Q + r^{7/8}) n(\sup \xi > h)} - r^{-\epsilon} \leq \mathbb{P} \left ( \sup_{x \in [0,r]} Z(x) \leq h \right ) \leq e^{-(r/Q - r^{7/8}) n(\sup \xi > h)} + r^{-\epsilon}. \]

\end{lemme}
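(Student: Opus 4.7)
The plan is to convert the event $\{\sup_{[0,r]} Z \leq h\}$ into an event about the excursions of $Z$ from level $1$, whose Poisson structure makes the probability explicit, then to use the approximation $L^{-1}(r/Q) \approx r$ to bridge between the real-time scale $r$ and the local-time scale.

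The first step is to record the Poisson excursion formula. At the regular point $1$ of the Markov process $Z$, the point process of excursions indexed by the local time $L$ is a Poisson point process on the local-time line with characteristic measure $n$. Since $h > 1$, any excursion staying below $1$ has $\sup \xi \leq 1 < h$, so
\[
\Bigl\{ \sup_{[0,\, L^{-1}(t)]} Z \leq h \Bigr\} = \bigl\{ \text{no excursion with local-time label } \leq t \text{ satisfies } \sup \xi > h \bigr\},
\]
and the right-hand side is a Poisson event of probability $e^{-t\, n(\sup \xi > h)}$. This identity is proved under the law of $Z$ started from $1$; to pass to the law of $Z$ used in the lemma, I apply the strong Markov property at $\tau(Z,1)$, and the exponential tail \eqref{supdeZ2} shows that the initial piece of $Z$ before hitting $1$ is negligible up to an $r^{-\epsilon}$ error.

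Next, one needs a quantitative version of the approximation $L^{-1}(r/Q \pm r^{7/8}) \approx r$. Arguing exactly as in the proof of \eqref{invtl} in Lemma \ref{chgtvar}, but with deviation $r^{7/8}$ in place of $r^{3/4}$, Markov's inequality applied to the second moment of $U(t) = L^{-1}(t) - Q t$ gives
\[
\mathbb{P}\bigl( L^{-1}(r/Q - r^{7/8}) > r \bigr) \vee \mathbb{P}\bigl( L^{-1}(r/Q + r^{7/8}) < r \bigr) \leq c\, r^{-3/4}
\]
for some constant $c > 0$ and $r$ large enough; the polynomial error $r^{-\epsilon}$ in the statement then follows for any $\epsilon \in (0, 3/4)$.

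The two bounds of the lemma then follow by sandwiching. On the event $\{ r \leq L^{-1}(r/Q + r^{7/8}) \}$, monotonicity yields $\{ \sup_{[0,\, L^{-1}(r/Q + r^{7/8})]} Z \leq h \} \subseteq \{ \sup_{[0,r]} Z \leq h \}$, which, combined with the two previous displays, produces the lower bound. Symmetrically, on $\{ r \geq L^{-1}(r/Q - r^{7/8}) \}$ the reverse inclusion $\{ \sup_{[0,r]} Z \leq h \} \subseteq \{ \sup_{[0,\, L^{-1}(r/Q - r^{7/8})]} Z \leq h \}$ gives the upper bound. The main subtle point is the rigorous handover from $\mathbb{P}$ to the law of $Z$ started at $1$ via \eqref{supdeZ2}, but since that tail is exponential it is vastly stronger than the $r^{-\epsilon}$ needed and is easily absorbed; the remaining work is only a routine refinement of the $L^{-1}$ approximation.
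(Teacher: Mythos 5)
Your proposal is correct and follows essentially the same route as the paper: reduce to the process started from $1$ via the strong Markov property at $\tau(Z,1)$ together with the exponential tail \eqref{supdeZ2}, express the no-high-excursion event before local time $t$ as a Poisson probability $e^{-t\,n(\sup\xi>h)}$, and sandwich $r$ between $L^{-1}(r/Q\pm r^{7/8})$ using the second-moment concentration of $L^{-1}(t)-Qt$. The only cosmetic difference is that you re-derive the $L^{-1}$ concentration directly at scale $r^{7/8}$, whereas the paper keeps \eqref{invtl} at scale $r^{3/4}$ and uses the slack between $r^{3/4}$ and $r^{7/8}$ to absorb the time shift by $\tau(Z,1)$.
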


\begin{proof}
Recall the notation $Z_1 := Z(\tau(Z, 1) + .)$ and that $Z_1$ has indeed, by the Markov property, the same law as $Z$ starting from $1$. For $h > 1$ we have
\begin{eqnarray}
\mathbb{P} \left ( \sup_{x \in [0,r]} Z(x) \leq h \right ) = \mathbb{P} \left ( \sup_{x \in [\tau(Z,1),r]} Z(x) \leq h \right ) \geq \mathbb{P} \left ( \sup_{x \in [0,r]} Z_1(x) \leq h \right ), \label{supdeZ1}
\end{eqnarray}
because the length of $[\tau(Z,1),r]$ is less than $r$. Let us choose $\eta \in ]0, 3/4[$. We have
\begin{eqnarray}
\mathbb{P} \left ( \sup_{x \in [\tau(Z,1),r]} Z(x) \leq h \right ) \leq \mathbb{P} \left ( \sup_{x \in [0,r-r^{\eta}]} Z_1(x) \leq h \right ) + \mathbb{P} \left ( \tau(Z,1) > r^{\eta} \right ). \label{supdeZ2.1}
\end{eqnarray}

From \eqref{supdeZ1}, \eqref{supdeZ2.1} and \eqref{supdeZ2}, we see that we only need to prove the lemma with $Z_1$ instead of $Z$ and $r^{3/4}$ instead of $r^{7/8}$. We only prove the lower bound, since the proof of the upper bound is similar. For $r$ large enough so that \eqref{invtl} is true we have
\[ \mathbb{P} \left ( \sup_{x \in [0,r]} Z_1(x) \leq h \right ) \geq \mathbb{P} \left ( \sup_{x \in [0,L^{-1}(r/Q + r^{3/4})]} Z_1(x) \leq h \right ) - r^{-1/4}. \]
From the point of view of excursions, the probability in the right hand side is only the probability that no excursion higher than $h$ occurs before $L$ exceeds $r/Q + r^{3/4}$. {Since the process, indexed by the local time $L$, of excursions of $Z_1$ is a Poisson point process} with intensity measure $n(.)$ (see for example Theorem IV.10 in \cite{Bertoin}), the number of excursions higher than $h$ before $L$ exceeds $r/Q + r^{3/4}$ follows a Poisson distribution with parameter $(r/Q + r^{3/4}) n(\sup \xi > h)$. The probability that no such excursion occurs is therefore $e^{-(r/Q + r^{3/4}) n(\sup \xi > h)}$. We thus get 
\[ \mathbb{P} \left ( \sup_{x \in [0,L^{-1}(r/Q + r^{3/4})]} Z_1(x) \leq h \right ) = e^{-(r/Q + r^{3/4}) n(\sup \xi > h)}, \]
and the result follows. 

\end{proof}

\subsection{Proof of Theorems \ref{kappa>1} and \ref{kappa>1favsite}}

In this subsection we assume that $\kappa > 1$. 

\begin{proof} of Theorem \ref{kappa>1}

According to \eqref{enctl}, we only need to prove that 
\[ \frac{\mathcal{L}^*_X(H(r))}{(r m)^{1/\kappa}} \overset{\mathcal{L}}{\underset{r \rightarrow + \infty}{\longrightarrow}} \mathcal{F}(\kappa, 2 (\Gamma(\kappa) \kappa^2 K/m)^{1/\kappa}) \]
or, equivalently, 
\begin{eqnarray}
\frac{\mathcal{L}^*_X(H(r))}{r^{1/\kappa}} \overset{\mathcal{L}}{\underset{r \rightarrow + \infty}{\longrightarrow}} \mathcal{F}(\kappa, 2 (\Gamma(\kappa) \kappa^2 K)^{1/\kappa}). \label{lacvloi0}
\end{eqnarray}
Now, recall \eqref{expretl3}. Combining it with \eqref{tlneg} and \eqref{expretl4}, we are left to prove that
\begin{eqnarray}
\forall a > 0, \ \mathbb{P} \left ( \sup_{x \in [0,r]} Z(x) \leq a r^{1/\kappa} \right ) \underset{r \rightarrow + \infty}{\longrightarrow} e^{-2^{\kappa} \Gamma(\kappa) \kappa^2 K /a^{\kappa}}. \label{lacvloi}
\end{eqnarray}

According to Proposition 5.1 of \cite{Singh} we have 
\begin{eqnarray}
n(\sup \xi > h) \underset{h \rightarrow +\infty}{\sim} Q 2^{\kappa} \Gamma(\kappa) \kappa^2 K/ h^{\kappa}. \label{singh}
\end{eqnarray}
Putting $h = a r^{1/\kappa}$ in Lemma \ref{supdeZ} and combining with \eqref{singh} (applied with $h = a r^{1/\kappa}$) we get \eqref{lacvloi} and the result follows. 

\end{proof}

\begin{proof} of Theorem \ref{kappa>1favsite}

We now study the asymptotic of the favorite site. We first prove that 
\begin{eqnarray}
F^*(H(r)) / r & \overset{\mathcal{L}}{\underset{r \rightarrow + \infty}{\longrightarrow}} \mathcal{U}. \label{ptfentpsat}
\end{eqnarray}
{$\mathcal{L}^*_X(H(r))$ converges in probability to $+\infty$ because of \eqref{lacvloi0}. Therefore, according to \eqref{expretl3} and \eqref{tlneg}, $\mathbb{P}(\mathcal{L}^*_X(H(r))= \mathcal{M}_2(r))$ converges to $1$ when $r$ goes to infinity. Then, thanks to \eqref{expretl4}, we see that \eqref{ptfentpsat} will follow if we prove that}
\begin{eqnarray}
\text{argmax}_{[0,r]} Z / r & \overset{\mathcal{L}}{\underset{r \rightarrow + \infty}{\longrightarrow}} \mathcal{U}. \label{ptfentpsatbis}
\end{eqnarray}

{The idea is, in $\text{argmax}_{[0,r]} Z$, to replace $Z$ by $Z_1$ and $r$ by $L^{-1}(r/Q)$ (mainly thanks to \eqref{invtl}). As a consequence the $\text{argmax}$ can be linked to the position of the highest excursion of $Z_1$ which is uniform.} Let us choose $\gamma \in ]0, 1/8 \kappa[$. Similarly as in the proof of Lemma \ref{supdeZ}, we have
\[ \mathbb{P} \left ( \sup_{x \in [0,L^{-1}(r/Q - r^{7/8})]} Z_1(x) > r^{1/\kappa - \gamma} \right ) = 1-e^{-(r/Q - r^{7/8}) n(\sup \xi > r^{1/\kappa - \gamma})}, \]
and using \eqref{singh} with $h = r^{1/\kappa - \gamma}$, we get that this probability converges to $1$ when $r$ goes to infinity: 
\begin{eqnarray}
\mathbb{P} \left ( \sup_{x \in [0,L^{-1}(r/Q - r^{7/8})]} Z_1(x) > r^{1/\kappa - \gamma} \right ) \underset{r \rightarrow + \infty}{\longrightarrow} 1. \label{ptfentpsat1}
\end{eqnarray}

Since $L^{-1}(r/Q - r^{7/8})$ is a stopping time for $Z_1$ and is such that $Z_1(L^{-1}(r/Q - r^{7/8}))=1$, we have that
\[ \mathbb{P} \left ( \sup_{x \in [L^{-1}(r/Q - r^{7/8}),L^{-1}(r/Q + r^{7/8})]} Z_1(x) < r^{1/\kappa - \gamma} \right ) = \mathbb{P} \left ( \sup_{x \in [0,L^{-1}(2r^{7/8})]} Z_1(x) < r^{1/\kappa - \gamma} \right ), \]
and we prove, similarly as for \eqref{ptfentpsat1}, that the probability in the right hand side converges to 1. We thus have 
\begin{eqnarray}
\mathbb{P} \left ( \sup_{x \in [L^{-1}(r/Q - r^{7/8}),L^{-1}(r/Q + r^{7/8})]} Z_1(x) < r^{1/\kappa - \gamma} \right ) \underset{r \rightarrow + \infty}{\longrightarrow} 1. \label{ptfentpsat2}
\end{eqnarray}

Putting \eqref{ptfentpsat1} and \eqref{ptfentpsat2} together, we get that, with a probability converging to $1$, $\text{argmax}_{[0,.]} Z_1$ stays constant on $[L^{-1}(r/Q - r^{7/8}),L^{-1}(r/Q + r^{7/8})]$: 
\begin{align}
\mathbb{P} \left ( \forall x \in [L^{-1}(r/Q - r^{7/8}),L^{-1}(r/Q + r^{7/8})], \text{argmax}_{[0,x]} Z_1 = \text{argmax}_{[0,L^{-1}(r/Q)]} Z_1 \right ) \underset{r \rightarrow + \infty}{\longrightarrow} 1. \label{ptfentpsat3}
\end{align}

Recall the notation $Z_1 := Z(\tau(Z, 1) + .)$ and that $Z_1$ has indeed, by the Markov property, the same law as $Z$ starting from $1$. Note that, on $\{ \tau(Z, 1) < r \}$, we have $\text{argmax}_{[0,r]} Z = \text{argmax}_{[0,r-\tau(Z, 1)]} Z_1$. Then, choose $\eta \in ]0, 3/4[$. According to \eqref{supdeZ2}, $\tau(Z, 1) < r^{\eta} < r$ with a probability converging to $1$ and, applying \eqref{invtl} we get
\[ L^{-1}(r/Q - r^{7/8}) \leq L^{-1}((r - r^{\eta})/Q - (r - r^{\eta})^{3/4}) \leq r - r^{\eta} \leq r \leq L^{-1}(r/Q + r^{7/8}), \]
with a probability converging to $1$. {As a consequence we have $L^{-1}(r/Q - r^{7/8}) \leq r-\tau(Z, 1) \leq L^{-1}(r/Q + r^{7/8})$ with a probability converging to $1$. Combining with \eqref{ptfentpsat3} and the fact that $\text{argmax}_{[0,r]} Z = \text{argmax}_{[0,r-\tau(Z, 1)]} Z_1$ (with a large probability),} we thus get
\begin{eqnarray}
\mathbb{P} \left ( \text{argmax}_{[0,r]} Z = \text{argmax}_{[0,L^{-1}(r/Q)]} Z_1 \right ) \underset{r \rightarrow + \infty}{\longrightarrow} 1. \label{ptfentpsat4}
\end{eqnarray}

Let us denote $\hat L := L(\text{argmax}_{[0,L^{-1}(r/Q)]} Z_1)$. {We consider the Poisson point process of excursions of $Z_1$ associated with the local time $L$ and the excursions corresponding to jumps occurring in $[0, r/Q]$. $\hat L$ then represents the instant in $[0, r/Q]$ when occurs the jump corresponding to the highest of these excursions . }It is a well-known property of Poisson point processes that it follows a uniform distribution on $[0, r/Q]$: 
\begin{eqnarray}
Q \hat L /r \overset{\mathcal{L}}{=} \mathcal{U}. \label{ptfentpsat5}
\end{eqnarray}
For the process $Z_1$, this excursion begins at time $L^{-1}(\hat L -)$ and ends at time $L^{-1}(\hat L)$ so 
\begin{align*}
L^{-1}(\hat L -) \leq \text{argmax}_{[0,L^{-1}(r/Q)]} Z_1 \leq L^{-1}(\hat L). 
\end{align*}

Then, note that arbitrary high excursions of $Z_1$ arise if we wait long enough. As a consequence, $L^{-1}(\hat L -)$ converges almost surely to $+\infty$ when $r$ goes to infinity. We can thus apply Lemma \ref{itlog} and get that almost surely, for $r$ large enough: 
\[ Q \hat L - \hat L^{3/4} \leq L^{-1}(\hat L -) \leq \text{argmax}_{[0,L^{-1}(r/Q)]} Z_1\leq L^{-1}(\hat L) \leq Q \hat L + \hat L^{3/4}. \]

Combining with \eqref{ptfentpsat5}, we deduce that
\[ \text{argmax}_{[0,L^{-1}(r/Q)]} Z_1 / r \overset{\mathcal{L}}{\underset{t \rightarrow + \infty}{\longrightarrow}} \mathcal{U}. \]
Putting this together with \eqref{ptfentpsat4}, we get \eqref{ptfentpsatbis} and \eqref{ptfentpsat} follows. We now have to prove the result for the favorite site until a deterministic time, instead of the hitting time $H(r)$. For this, let us choose $\alpha$ as in Lemma \ref{chgtvar} and $\gamma \in ]0, (1-\alpha)/ \kappa[$. We prove that, with high probability, the favorite site remains constant between times $H(r/m - r^{\alpha})$ and $H(r/m + r^{\alpha})$, that is, 
\begin{eqnarray}
\mathbb{P} \left ( \forall x \in [H(r/m - r^{\alpha}),H(r/m + r^{\alpha})], \ F^*(x) = F^*(H(r/m) \right ) \underset{r \rightarrow + \infty}{\longrightarrow} 1. \label{ptnormal1}
\end{eqnarray}

First, note that, as a consequence of \eqref{ptfentpsat}, 
\begin{eqnarray}
\mathbb{P} \left ( F^* (H(r/m - r^{\alpha})) \leq r/m - 2 r^{\alpha} \right ) \underset{r \rightarrow + \infty}{\longrightarrow} 1, \label{ptnormal2}
\end{eqnarray}
and, as a consequence of \eqref{lacvloi0} we have
\begin{eqnarray}
\mathbb{P} \left ( \mathcal{L}^*_X(H(r/m - r^{\alpha})) > r^{1/ \kappa - \gamma} \right ) \underset{r \rightarrow + \infty}{\longrightarrow} 1. \label{ptnormal3}
\end{eqnarray}
Then, since $H(r/m - r^{\alpha})$ is a stopping time for the diffusion $X$ we have
\begin{eqnarray}
\mathbb{P} \left ( \inf_{[H(r/m - r^{\alpha}), +\infty[} X > r/m - 2r^{\alpha} \right ) = \mathbb{P} \left ( \inf_{[0, +\infty[} X > -r^{\alpha} \right ) \underset{r \rightarrow + \infty}{\longrightarrow} 1. \label{ptnormal4}
\end{eqnarray}
Using similarly the Markov property together with \eqref{lacvloi0}, we get
\begin{align}
& \mathbb{P} \left ( \sup_{x \in [r/m - 2r^{\alpha}, r/m + r^{\alpha}]} \mathcal{L}_X(H(r/m + r^{\alpha}), x) \leq r^{1/ \kappa - \gamma} \right ) \nonumber \\
= & \mathbb{P} \left ( \sup_{x \in [0, 3r^{\alpha}]} \mathcal{L}_X(H(3r^{\alpha}), x) \leq r^{1/ \kappa - \gamma} \right ) \underset{r \rightarrow + \infty}{\longrightarrow} 1. \label{ptnormal5}
\end{align}

The four estimates \eqref{ptnormal2}, \eqref{ptnormal3}, \eqref{ptnormal4} and \eqref{ptnormal5} tell us that with high probability when $r$ is large: at time $H(r/m - r^{\alpha})$, the supremum of the local time has been reached {at a position located before the position $r/m - 2 r^{\alpha}$ and this supremum} is larger than $r^{1/ \kappa - \gamma}$. Moreover, the diffusion will never reach back $]-\infty, r/m - 2 r^{\alpha}]$, and at time $H(r/m + r^{\alpha})$, the supremum of the local time on $[r/m - 2 r^{\alpha}, +\infty [$ is less than $r^{1/ \kappa - \gamma}$. As a consequence, with a probability converging to $1$, the favorite site does not move between {times} $H(r/m - r^{\alpha})$ and $H(r/m + r^{\alpha})$, this proves \eqref{ptnormal1}. Then, \eqref{ptnormal1} together with \eqref{tpsatt} give
\[ \mathbb{P} \left ( F^*(r) = F^*(H(r/m)) \right ) \underset{r \rightarrow + \infty}{\longrightarrow} 1. \]
This together with \eqref{ptfentpsat} proves the sought convergence in distribution for the favorite site. 

\end{proof}

\section{Path decomposition of a spectrally negative L\'evy process} \label{etudevallees}

\subsection{$h$-valleys, scale function, and some processes conditioned to stay positive} \label{extrema}

{Recall from the discussion before Theorem \ref{seqmincvverspoiss} the notion of $h$-extrema. Recall also that $(m_i,\ i \in \mathbb{Z})$ and $(M_i,\ i \in \mathbb{Z})$ are respectively the increasing sequences of $h$-minima and of $h$-maxima of $V$, such that $m_{0} \leq 0<m_1$ and $m_i<M_i<m_{i+1}$ for every  $i\in\mathbb{Z}$. }

As in \cite{Faggionato}, we define the \textit{classical $h$-valleys} as the fragments of the trajectory of $V$ between two $h$-maxima, translated at the $h$-minima between them: the $i^{th}$ classical $h$-valley is the process \[ \left ( V^{(i)}(x), \ M_{i-1} \leq x \leq M_i \right ) \ \ \ \textit{where} \ \ \ V^{(i)} := V(x)-V(m_i), \ \ \forall x\in\mathbb{R}. \]

{Recall that $\hat V$ is the dual of $V$, it is equal in law to $-V$.} In order to state the law of these valleys, we need to recall some definitions about $V$ and $\hat{V}$ conditioned to stay positive but first, let us recall a useful fact. 
\begin{fact} \label{vnb}

Let $Y$ be a spectrally negative L\'evy process which is not the opposite of a subordinator, then it is regular for $]0, +\infty[$ and the regularity for $]-\infty, 0[$ is equivalent with $Y$ being of unbounded variation. 

\end{fact}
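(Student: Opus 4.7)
The plan is to split on the variation type of $Y$, since the hypothesis ``spectrally negative and not the opposite of a subordinator'' behaves very differently in each case.

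Suppose first that $Y$ has bounded variation. Then I can decompose $Y(t)=d\,t+J(t)$, where $d\in\mathbb{R}$ is the drift and $J$ is a pure-jump process whose jumps are all negative (by spectral negativity). I first rule out $d\leq 0$: since $J$ is non-increasing, $d\leq 0$ would make $Y$ non-increasing, so $-Y$ would be a subordinator, contradicting the hypothesis; hence $d>0$. Then I invoke the classical short-time asymptotic for bounded variation L\'evy processes (Bertoin, Chapter III), which gives $Y(t)/t\to d$ almost surely as $t\to 0^+$. Since $d>0$, $Y(t)>0$ for all sufficiently small $t>0$ almost surely, which immediately yields regularity for $]0,+\infty[$ and failure of regularity for $]-\infty, 0[$. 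This is the easier half of the equivalence.

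If instead $Y$ has unbounded variation, I want to show regularity for \emph{both} half-lines. Spectral negativity alone ensures that $]0,+\infty[$ is reached continuously. Beyond that, unbounded variation forces $Y$ to oscillate across $0$ in every right-neighborhood of $0$. The cleanest route is to invoke Rogozin's criterion, or equivalently to apply the Wiener--Hopf factorization to a spectrally one-sided L\'evy process: regularity of a half-line is characterized by a divergence condition on the characteristic exponent, and for a spectrally negative process, unbounded variation translates into exactly the divergence needed on both sides. This gives regularity for $]-\infty, 0[$ as well as for $]0,+\infty[$. Combining the two cases yields both statements of the Fact.

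The main obstacle is not any single estimate but rather justifying the short-time asymptotic $Y(t)/t\to d$ in the bounded variation case and the oscillation property in the unbounded variation case; both are classical results in Bertoin's monograph that the author would most likely invoke directly, which explains why this is labeled a \emph{Fact}. The only nontrivial organizational point is noticing that the hypothesis ``not the opposite of a subordinator'' is used precisely to guarantee $d>0$ in the bounded variation decomposition.
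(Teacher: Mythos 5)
Your argument is correct, but it is more self-contained than what the paper does: the paper's proof of this Fact is a two-line citation, invoking Theorem VII.1 of Bertoin for regularity of $]0,+\infty[$ and Corollary VII.5 of Bertoin for the equivalence between regularity of $]-\infty,0[$ and unbounded variation. Your case split essentially reconstructs the content behind those citations. In the bounded variation case your reasoning is exactly the standard one: the decomposition $Y(t)=dt+J(t)$ with $J$ non-increasing, the observation that ``not the opposite of a subordinator'' forces $d>0$, and the short-time law $Y(t)/t\to d$ a.s., which simultaneously gives regularity upward and irregularity downward. In the unbounded variation case, appealing to Rogozin's theorem (unbounded variation implies $0$ is regular for both half-lines) is a legitimate and classical route; one minor imprecision is your remark that ``spectral negativity alone ensures that $]0,+\infty[$ is reached continuously'' --- that is a creeping statement, not a regularity statement, but it is harmless since Rogozin already covers regularity for both half-lines in that case. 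What the paper's approach buys is brevity; what yours buys is an explanation of \emph{why} the cited results hold and, in particular, why the hypothesis ``not the opposite of a subordinator'' is exactly what is needed (to exclude $d\leq 0$ in the bounded variation decomposition). Both are acceptable proofs of the Fact.
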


\begin{proof}

The regularity for $]0, +\infty[$ and the condition for the regularity for $]-\infty, 0[$ are stated respectively in Theorem VII.1 and in Corollary VII.5 of \cite{Bertoin}. 

\end{proof}

{
Let $W$ be the scale function of $V$, defined as in Section VII.2 of Bertoin \cite{Bertoin}. It satisfies 
\begin{eqnarray}
\forall 0 < x < y, \ \mathbb{P} \left ( \tau(V_x,y) < \tau(V_x,]-\infty, 0]) \right ) = W (x) / W (y). \label{scalefct}
\end{eqnarray}
According to Theorem VII.8 in \cite{Bertoin}, this function is continuous, increasing, and for any $\lambda > \kappa$, 
\[ \int_{0}^{+\infty} e^{-\lambda x} W(x) dx = \frac{1}{\Psi_V(\lambda)} < +\infty. \]

We now explain how the scale function $W$ allows to define $V^{\uparrow}$, that is, $V$ conditioned to stay positive (see \cite{Bertoin}, Section VII.3 for proofs and more details). The relation 
\[ \forall t \geq 0, \ x, y > 0, \ p^{\uparrow}_t(x, dy) := W(y) \times \mathbb{P} \left ( V_x(t) \in dy, \ \inf_{[0, t]} V_x > 0 \right ) / W(x), \]
defines a Markovian semigroup. 
For any $x > 0$, we denote by $V^{\uparrow}_x$ the Markovian process starting from $x$, taking values in $]0, +\infty[$ and whose transition semigroup is $p^{\uparrow}_t$. $V^{\uparrow}_x$ is commonly called $V$ \textit{conditioned to stay positive starting from $x$}. This is justified by the fact that for any $ y> x > 0$ the process $( V_x^{\uparrow}(t), \ 0 \leq t \leq \tau(V_x^{\uparrow}, y))$ is equal in distribution to the process $( V_x(t), \ 0 \leq t \leq \tau(V_x, y))$ conditionally on $\{ \tau(V_x, y) < \tau(V_x, ]- \infty, 0]) \}$. 
It is not possible to condition in the usual {way} $V_x$ to remain positive for ever because this occurs with a probability equal to $0$ (since in our case $V$ drifts to $-\infty$). However, $V^{\uparrow}_x$ is well defined, has infinite life-time, and it is possible to show that $V^{\uparrow}_x$ converges almost surely to $+\infty$. 

It is possible to show that there exists a process $V^{\uparrow}_0$, that we denote by $V^{\uparrow}$, starting from $0$ and which is the limit in distribution of the processes $V^{\uparrow}_x$ as $x$ goes to $0$. $V^{\uparrow}$ is called $V$ \textit{conditioned to stay positive}, it is a Feller process whose restriction to $]0, +\infty[$ of the transition laws is given by $p^{\uparrow}_t$. Moreover, note that for any positive $x$, we have from the Markov property and the absence of positive jumps that the process $V^{\uparrow}$, shifted at $\tau(V^{\uparrow}, x)$, is equal in law to $V^{\uparrow}_x$. 

We also recall the definition of $V$ conditioned to drift to $+\infty$ (see \cite{Bertoin}, Section VII.1 for proofs and more details). Note that, since $\kappa$ is a zero of $\Psi_V$, the process $e^{\kappa V(t)}$ is a martingale with expectation $1$. This allows to define the convolution semigroup of probability measures $(p^{\sharp}_t)_{t \geq 0}$ via the relation 
\begin{eqnarray}
p^{\sharp}_t (dy) := e^{\kappa y} \mathbb{P} \left ( V(t) \in dy \right ). \label{convsgsharp}
\end{eqnarray}
The convolution semigroup $(p^{\sharp}_t)_{t \geq 0}$ is associated to a spectrally negative L\'evy process that we denote by $V^{\sharp}$ and which is commonly called \textit{$V$ conditioned to drift to $+\infty$}. This is justified by the fact that 
\begin{itemize}
\item $V^{\sharp}$ drifts to $+\infty$, 
\item For any $y > 0$ the process $( V^{\sharp}(t), \ 0 \leq t \leq \tau(V^{\sharp}, y))$ is equal in distribution to the process $( V(t), \ 0 \leq t \leq \tau(V, y))$ conditionally on $\{ \tau(V, y) < +\infty \}$.  
\end{itemize}
It is not difficult to see from \eqref{convsgsharp} that the Laplace exponent $\Psi_{V^{\sharp}}$ of $V^{\sharp}$ is obtained by translation of $\Psi_V$: $\Psi_{V^{\sharp}} = \Psi_V(\kappa + .)$. 
As a consequence $\Psi'_{V^{\sharp}}(0) > 0$ which, according to Corollary VII.2 in \cite{Bertoin}, proves that $V^{\sharp}$ drifts to $+\infty$ as stated above. It is also proven that $V^{\uparrow} = (V^{\sharp})^{\uparrow}$. As a consequence, for any $x > 0$, $V^{\uparrow}_x$ is equal in law to $V^{\sharp}_x$ conditionally on $\{ \inf_{[0, +\infty[} V^{\sharp}_x > 0 \}$. We will sometimes use this fact to prove results about $V^{\uparrow}$ in Section \ref{estimates}. 
}

We now define $\hat V^{\uparrow}$, that is, $\hat V$ conditioned to stay positive. According to Fact \ref{vnb}, $V$ is regular for $]0, +\infty[$, so $\hat{V}$ is for $]-\infty, 0[$. Moreover, $\hat V$ drifts to $+\infty$. We can thus define the Markov family $( \hat V^{\uparrow}_x, \ x > 0 )$ as in Doney \cite{Doney}, Chapter 8. {It can be seen from there that for any $x > 0$ the process $\hat V^{\uparrow}_x$ is Markovian and has infinite life-time.} If moreover $V$ has unbounded variation then $\hat V$ is regular for $]0, +\infty[$, and from Theorem 25 of \cite{Doney}, we have that $\hat V^{\uparrow}_0$, that we shall denote by $\hat V^{\uparrow}$, is well defined. 

Here again, for any $x \geq 0$, the process $\hat V^{\uparrow}_x$ must be seen as $\hat V$ \textit{conditioned to stay positive and starting from $x$}. Note that, since $\hat V$ converges almost surely to infinity, for $x > 0$, $\hat V^{\uparrow}_x$ is equal in law to $\hat V_x$ conditioned in the usual {way} to remain positive.

\subsection{Law of the valleys} \label{loidesval}

We now prove some facts about the law of the consecutive valleys near their bottom. In order to delimit the bottom of the valleys we define
\begin{equation*}
    \tau_i^-(h) := \sup \{x < m_i,\  V^{(i)}(x) \geq h\},
\
    \tau_i(h):=  \inf \{x > m_i,\  V^{(i)}(x)=h\}. 
\end{equation*}
For any $i \in \mathbb{N}^*$, let
{$P_1^{(i)}$ } be the truncated process $(V^{(i)}(m_i-x),\ 0 \leq x \leq m_i-\tau_i^-(h))$, and
${P_2}^{(i)}$ the truncated process $(V^{(i)}(m_i+x),\  0\leq x \leq \tau_i(h)-m_i)$. We have: 

\begin{prop}\label{Fact_Williams} (valleys decomposition) \\
Assume that $V$ has unbounded variation. All the processes of the family $(P_j^{(i)}, \ i \geq 1, \ j \in \{ 1,2 \})$, are independent and :
\begin{itemize}
\item For all $i \geq 2$, the law of $P_1^{(i)}$ is absolutely continuous with respect to the law of the process $(\hat{V}^{\uparrow}(x))_{0 \leq x \leq \tau(\hat{V}^{\uparrow}, h+)}$ and has density $c_h /(1-e^{-\kappa \hat{V}^{\uparrow}(\tau(\hat{V}^{\uparrow}, h+))})$ with respect to this law, where $c_h$ is a constant depending on $h$ {and that converges to $1$ when $h$ goes to infinity. }
\item For all $i \geq 1$, $P_2^{(i)}$ is equal in law to $(V^{\uparrow}(x))_{0 \leq x \leq \tau(V^{\uparrow}, h)}$ (this statement is true even if $V$ has bounded variation). 
\end{itemize}
Moreover, the density $c_h /(1-e^{-\kappa \hat{V}^{\uparrow}(\tau(\hat{V}^{\uparrow}, h+))})$ is bounded by $2$ for $h$ deterministically large enough and it converges to $1$ when $h$ goes to infinity. 
\end{prop}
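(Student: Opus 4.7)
The plan is to combine the strong Markov property of $V$ at the $h$-extrema with a time-reversal (duality) argument and the exponential law of $\sup V$.

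For the post-minimum pieces $P_2^{(i)}$, I would apply the strong Markov property at $m_i$: the shifted process $(V(m_i + x) - V(m_i))_{x \geq 0}$ has the law of $V$ and is independent of the past. The defining condition that $m_i$ be an $h$-minimum forces this shifted process to stay nonnegative until it hits $h$, which happens at time $\tau_i(h) - m_i$. Conditioning on this event and using the standard Doob-$h$-transform description of $V^{\uparrow}$ as $V$ conditioned to remain nonnegative (Bertoin, Section VII.3, already invoked for the definition of $V^{\uparrow}$), one identifies the resulting killed law with $(V^{\uparrow}(x))_{0 \leq x \leq \tau(V^{\uparrow}, h)}$. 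Since $V$ has no positive jumps, $V^{\uparrow}$ reaches $h$ continuously, so there is no overshoot on this side and the stopping level is exactly $h$. This argument is valid for every $i \geq 1$ and does not require unbounded variations, which accounts for the parenthetical remark in the statement.

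For the pre-minimum pieces $P_1^{(i)}$ with $i \geq 2$, I would time-reverse the segment $[M_{i-1}, m_i]$ of $V$. By L\'evy process duality, the reversed path has the law of $\hat V$. The unbounded variations hypothesis, via Fact \ref{vnb} and Theorem 24 of \cite{Doney} (cited above), ensures that $\hat V^{\uparrow} = \hat V^{\uparrow}_0$ is well-defined, so after reversal the path behaves as $\hat V^{\uparrow}$ run until it first enters $[h, +\infty)$. Because $\hat V$ has only positive jumps, this hitting time is $\tau(\hat V^{\uparrow}, h+)$, typically with an overshoot $y := \hat V^{\uparrow}(\tau(\hat V^{\uparrow}, h+)) \geq h$. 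The correction density $c_h / (1 - e^{-\kappa y})$ encodes the additional conditioning that $M_{i-1}$ is a genuine $h$-maximum of $V$, which in original time requires that, starting from the top at height $y$ above $V(m_i)$, the continuation of $V$ to the left of $M_{i-1}$ go down by at least $y$ before ever going back up. Using the Markov property at $M_{i-1}$ and the fact that $\sup V$ is exponentially distributed with rate $\kappa$ (Corollary VII.2 of \cite{Bertoin}, recalled in Section \ref{factsandnotations}), this event has conditional probability $1 - e^{-\kappa y}$, and $c_h$ is then fixed as the normalization constant. Independence between the two halves of a fixed valley, and across distinct valleys, follows by iterating the strong Markov argument at the successive $h$-minima, and by noting that the $h$-maxima are stopping times for both $V$ and its time-reversed dual, so the reversed pre-minimum pieces decouple.

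For the density bound, $y \geq h$ gives $e^{-\kappa y} \leq e^{-\kappa h}$, so the density is dominated by $c_h / (1 - e^{-\kappa h})$; since it integrates to $1$ and the random factor $(1 - e^{-\kappa y})^{-1}$ tends to $1$ in $L^1$ as $h \to \infty$, we get $c_h \to 1$, which in turn gives the bound by $2$ for $h$ large enough. The main obstacle I anticipate is the rigorous bookkeeping for the time reversal and for the derivation of the density: one has to verify the duality for the particular segment between $M_{i-1}$ and $m_i$ (handling the jump at $M_{i-1}$ and the regularity conditions from Fact \ref{vnb}), and to express the conditioning on $M_{i-1}$ being an $h$-maximum as a Radon--Nikodym derivative rather than a direct conditioning on a zero-probability event.
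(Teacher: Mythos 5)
Your overall architecture (post-minimum piece $=V^{\uparrow}$ killed at level $h$, time reversal for the pre-minimum piece, a Radon--Nikodym density expressed through the overshoot) is the paper's, but two of your steps do not work as stated. First, $m_i$ and $M_{i-1}$ are not stopping times: whether a point is an $h$-extremum depends on the whole future of the path, so you cannot ``apply the strong Markov property at $m_i$'', nor ``iterate the strong Markov argument at the successive $h$-minima'' to get independence, nor assert that the $h$-maxima are stopping times for the reversed process. The paper routes everything through the reflected process $V-\underline V$: after a genuine stopping time, $m_i$ is the left endpoint of the first excursion of $V-\underline V$ exceeding height $h$; the independence and stationarity of the slopes come from the renewal structure established in Lemmas 1 and 4 of Cheliotis; and the identification of that excursion, killed at level $h$, with $V^{\uparrow}$ killed at $\tau(V^{\uparrow},h)$ is Proposition VII.15 of Bertoin. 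It is not a ``conditioning on the event of staying nonnegative until hitting $h$'' --- that event is null when $V$ has unbounded variations, which is precisely why the excursion-theoretic identification is needed.

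Second, and more seriously, your mechanism for the density points in the wrong direction. If the reversed pre-minimum piece were $\hat V^{\uparrow}$ \emph{further conditioned} on an event whose conditional probability given the path is $1-e^{-\kappa y}$, its law would have density proportional to $1-e^{-\kappa y}$ with respect to the $\hat V^{\uparrow}$ law, not $c_h/(1-e^{-\kappa y})$. The correct relation is the reverse: the reversed slope is the first excursion of $\hat V-\underline{\hat V}$ reaching $[h,+\infty[$, and it is the law of $\hat V^{\uparrow}$ (which must stay positive \emph{forever}) that is obtained from that excursion law by biasing with $1-e^{-\kappa\,\xi(\tau(\xi,h+))}$, the probability that the continuation from the overshoot level never returns below $0$; inverting gives the stated density. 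The paper obtains this identity from Proposition 4.7 of Duquesne, via the potential measure $\hat{\mathcal I}([0,x])=(1-e^{-\kappa x})/\kappa$ of the ladder height process of $\hat V$ --- this is the essential technical input on this side, since Bertoin's Proposition VII.15 is unavailable ($\hat V$ is not spectrally negative), and it cannot be replaced by a Markov-property-at-$M_{i-1}$ heuristic (the event you describe concerns the adjacent slope and, by the independence of slopes, cannot bias $P_1^{(i)}$ at all). Your argument for the final assertion (the bound by $2$ and the convergence of the density to $1$) is essentially the paper's and is fine once the density formula is in hand.
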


\begin{remarque}
$P_1^{(1)}$ may be a part of the so-called \textit{central slope}, so its law is different. 
\end{remarque}

\begin{proof}
We assume that $V$ has unbounded variation, recall from Fact \ref{vnb} that this implies a regularity condition. In Lemma 4 of Cheliotis \cite{Cheliotis2006715}, the assertion that $\underline{\lim}_{t \rightarrow + \infty} V(t) = -\infty, \overline{\lim}_{t \rightarrow + \infty} V(t) = +\infty$ can be dropped. Indeed, because of the regularity condition satisfied by $V$, the stopping times $\underline \tau_{k}$ and $\overline \tau_{k}$ in \cite{Cheliotis2006715} are almost surely finite for $V$. This lemma is therefore true in our context. As a consequence, the proof of Lemma 1 of \cite{Cheliotis2006715} also applies in our context. We thus get the fact that the slopes are independent and that (except the central slope), all descending (respectively ascending) slopes have the same law. 

For the law of the ascending slopes (not covering the origin) until their hitting time of $h$, we use the classical argument that can be found, for example, in the proof of Theorem 2 in \cite{Faggionato}: $P_2^{(i)}$ is equal in law to the first excursion higher than $h$ of $V-\underline{V}$, before the hitting time of $h$ of this excursion. Using Proposition VII.15 of \cite{Bertoin} (which does not require $V$ to have unbounded variation), we prove that the latter is equal in law to $(V^{\uparrow}(x))_{0 \leq x \leq \tau(V^{\uparrow}, h)}$, that is, $V^{\uparrow}$ killed when hitting $h$. 

Then, by the time-reversal property, the descending slopes (not covering the origin) of $V$ are equal in law to the ascending slopes of $\hat V$ so, here again, we get that $P_1^{(i)}$ is equal in law to the first excursion higher than $h$ of $\hat V-\underline{\hat V}$, before the hitting time of $[h, +\infty[$ of this excursion. Unfortunately, since $\hat V$ is not spectrally negative, we can no longer apply the previous argument to determine the law of the excursion. 
However, since $]- \infty, 0[$ and $]0, + \infty[$ are regular for $\hat{V}$ (because they are for $V$) and $\hat{V}$ does not drift to $-\infty$, we can apply Proposition 4.7 of Duquesne \cite{Duquesne2003339} but we first need to introduce some notations. 

{Let $(\hat{L}^{-1}, \hat{I})$ denote the descending ladder process of $\hat{V}$, $\hat{L}(t) := \underline{\hat{V}}(t)$ is a local time at $0$ of $\hat{V} - \underline{\hat{V}}$ (this comes from Theorem VII.1 of \cite{Bertoin} and duality, since $\hat{V}$ is spectrally positive) and $\hat{L}^{-1}$ is the inverse of this local time. For any $t \geq 0$, $\hat{I}(t) := -\underline{\hat{V}}(\hat{L}^{-1}(t))$. We denote by $\hat{\mathcal{N}}$ the excursion measure of $\hat{V} - \underline{\hat{V}}$ associated with $\hat{L}$, and for $\xi$ an excursion, let $\zeta(\xi) := \inf \{ s > 0, \ \xi(s) = 0 \}$ denote its life-time. 

We denote by $\hat{\mathcal{I}}$ the potential measure of $\hat{I}$, $\hat{\mathcal{I}}([0, x]) := \mathbb{E} [ \int_0^{\hat{L}(+\infty)} \mathds{1}_{\hat{I}(t) \in [0, x]} dt ]$ for $x \geq 0$. By the expression of the potential measure of the ascending ladder height process of a spectrally negative L\'evy process, page 191 of \cite{Bertoin}, and by duality we have} $\hat{\mathcal{I}}([0, x]) = (1-e^{-\kappa x})/ \kappa$ for any $x \geq 0$. Proposition 4.7 of \cite{Duquesne2003339} yields that for any measurable function $G$ defined on the space of c\`ad-l\`ag functions from $[0, +\infty[$ to $\mathbb{R}$ with possibly finite life-time, we have
\begin{align*}
\mathbb{E} \left [ G \left ( (\hat{V}^{\uparrow}(s))_{0 \leq s \leq \tau(\hat{V}^{\uparrow}, h+)} \right ) \right ] & = \hat{\mathcal{N}} \left ( G \left ( (\xi(s))_{0 \leq s \leq \tau(\xi, h+)} \right ) \hat{\mathcal{I}}([0, \xi (\tau(\xi, h+))[) \big | \tau(\xi, h+) < \zeta(\xi) \right ) \\
& \times \hat{\mathcal{N}} \left ( \xi, \tau(\xi, h+) < \zeta(\xi) \right ) \\
& = \hat{\mathcal{N}} \left ( \frac{(1-e^{-\kappa \xi(\tau(\xi, h+)) })}{c_h} G \left ( (\xi(s))_{0 \leq s \leq \tau(\xi, h+)} \right ) \big | \tau(\xi, h+) < \zeta(\xi) \right )
\end{align*}
where we have set $c_h := \kappa / \hat{\mathcal{N}} \left ( \xi, \tau(\xi, h+) < \zeta(\xi) \right )$. As a consequence, for any measurable function $F$, we get that 
\[ \hat{\mathcal{N}} \left ( F \left ( (\xi(s))_{0 \leq s \leq \tau(\xi, h+)} \right ) | \tau(\xi, h+) < \zeta(\xi) \right ) = \mathbb{E} \left [ \frac{c_h}{1-e^{-\kappa \hat{V}^{\uparrow}(\tau(\hat{V}^{\uparrow}, h+))}} F \left ( (\hat{V}^{\uparrow}(s))_{0 \leq s \leq \tau(\hat{V}^{\uparrow}, h+)} \right ) \right ]. \]
That is, the law of the first excursion of $\hat{V} - \underline{\hat{V}}$ higher than $h$, killed when reaching $[h, +\infty[$, is absolutely continuous with respect to the law of the process $(\hat{V}^{\uparrow}(s))_{0 \leq s \leq \tau(\hat{V}^{\uparrow}, h+)}$ and has density $c_h /(1-e^{-\kappa \hat{V}^{\uparrow}(\tau(\hat{V}^{\uparrow}, h+))})$ with respect to this law. 
%

{For the last assertion of the proposition, note that $c_h = \kappa/\hat{\mathcal{N}} ( \xi, \tau(\xi, h+) < \zeta(\xi) )$ increases, when $h$ goes to infinity, to $c_{\infty} :=  \kappa/\hat{\mathcal{N}} ( \xi, \zeta(\xi) = + \infty )$. $\hat{\mathcal{N}} ( \xi, \zeta(\xi) = + \infty )$ is the measure of the set of infinite excursions above $0$ for $\hat{V} - \underline{\hat{V}}$, it is strictly positive since $\hat{V}$ converges almost surely to infinity. Also, we have almost surely that $\hat{V}^{\uparrow}(\tau(\hat{V}^{\uparrow}, h+)) \geq h$. Hence, 
\[ c_h / \left ( 1-e^{-\kappa \hat{V}^{\uparrow}(\tau(\hat{V}^{\uparrow}, h+))} \right ) \leq c_{\infty} /(1-e^{-\kappa h}) \leq 2 c_{\infty}, \]
where the last inequality is true for $h$ deterministically large enough.} Then, 
\[ \mathbb{P} \left ( c_h/ \left ( 1-e^{-\kappa \hat{V}^{\uparrow}(\tau(\hat{V}^{\uparrow}, h+))} \right ) \underset{h \rightarrow +\infty}{\longrightarrow} c_{\infty} \right ) = 1. \]
{By the dominated convergence theorem and the fact that $c_h /(1-e^{-\kappa \hat{V}^{\uparrow}(\tau(\hat{V}^{\uparrow}, h+))})$ is normalized at $1$ (because it is a density), we get $c_{\infty} = 1$ and the last assertion of the proposition follows. }
\end{proof}

\subsection{Standard valleys} \label{coin}

Since $V$ drifts to $-\infty$, the descending phases of $V$ between two $h$-minima are quite important and have to be taken in consideration for the study of the diffusion in $V$. Therefore, as in \cite{advech}, we here define a new sequence $(\tilde m_i)_{i \geq 1}$ of $h$-minima that are separated by descending phases of $V$ and we then show that for a large number of indices, this sequence coincides with the sequence $(m_i)_{i \geq 1}$ with an overwhelming probability. Our definition of the standard valleys {is} similar to the one given in Section 2.2 of \cite{advech} but we have to improve it to get a definition more adapted to our context (where $V$ can jump). We first introduce some notations. 

$\delta \in ]0, 1/2[$ is defined once and for all in the paper and can be chosen as small as we want. In this subsection $h$ is a fixed positive number such that $e^{(1-\delta)\kappa h} \geq h$. 
We define  $\tilde \tau_0(h) = \tilde L_0:=0$ and recursively for $i \geq 1$, 
\begin{eqnarray}
     \tilde L_i^{\sharp}
&:= &
    \inf\{x>\tilde L_{i-1},\ V(x)\leq V(\tilde L_{i-1})-e^{(1-\delta)\kappa h}   \},
\nonumber\\
    \tilde \tau_i(h)
& := &
    \inf \big\{x \geq   \tilde L_i^{\sharp},\ V(x)-\inf\nolimits_{[\tilde L_i^{\sharp},x]}V = h\big\},
\nonumber\\
    \tilde{m}_i
&:= &
    \inf\big\{x \geq \tilde L_i^{\sharp},\ V(x)=\inf\nolimits_{[\tilde L_i^{\sharp},\tilde \tau_i(h)]} V \big\},
\nonumber\\
    \tilde L_i
&:= &
    \inf\{x>\tilde \tau_i(h),\ V(x) {- V(\tilde{m}_i)} \leq h/2 \}
\nonumber\\
&= &
    {\inf\{x>\tilde \tau_i(h),\ V(x) - V(\tilde \tau_i(h)) \leq -h/2 \},}
\nonumber\\
    \tilde \tau_i^-(a)
& := &
    \sup \{x < \tilde m_i,\  V(x)-V(\tilde{m}_i) \geq a\}, \ \forall a \in [0, h],
\nonumber\\
    \tilde \tau_i^+(a)
& := &
    \inf \{x > \tilde m_i,\  V(x)-V(\tilde{m}_i) = a\}, \ \forall a \in [0, h].
\nonumber
\end{eqnarray}
Note that all these random variables depend on $h$, even if this does not appear in the notations. We also introduce the equivalent of $V^{(i)}$ for the $\tilde m_i, i \in \mathbb{N}^*$ as follows: 
\[ \tilde V^{(i)}(x) := V(x)-V(\tilde m_i), \ \ \forall x\in\mathbb{R}. \]
We call $i^{th}$ \textit{standard valley} the re-centered truncated potential
$(\tilde V^{(i)} (x),\ \tilde L_{i-1} \leq x \leq \tilde L_{i} )$. 

\begin{remarque} \label{iid}
The random times $\tilde L_i^{\sharp}$, $\tilde \tau_i(h)$, and $\tilde L_{i}$ are stopping times. As a consequence, the sequence $(\tilde V^{(i)} (x + \tilde m_i),\ \tilde L_{i-1} - \tilde m_i \leq x \leq \tilde L_{i} - \tilde m_i)_{i \geq 1}$ is \textit{iid}. 
\end{remarque}

Our definitions take in consideration the absence of positive jumps for $V$, in particular $\tilde \tau_i(h) < + \infty$ and $V(\tilde \tau_i(h)-) = V(\tilde \tau_i(h))=V(\tilde m_i)+h$. We can see that the $\tilde m_i$, $i\in\mathbb{N}^*$, are $h$-minima. The next lemma, which is the analogous of Lemma 2.3 of Andreoletti, Devulder \cite{AndDev}, shows that, with high probability, the sequence $(\tilde m_i)_{i \geq 1}$ coincides with the sequence $(m_i)_{i \geq 1}$ for indices $i \leq n$ when $n$ does not grow too fast with $h$. 


\begin{lemme} \label{minimacoincide} 

There is $h_0 > 0$ such that for all $n \geq 1$ and $h \geq h_0$, 
\[ \mathbb{P} \left ( \mathcal{V}_{n, h} := \overset{n}{\underset{i=1}{\cap}} \left \{ m_i = \tilde{m}_i \right \} \right ) \geq 1 - n e^{- \delta \kappa h /3}. \]

\end{lemme}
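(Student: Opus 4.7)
The plan is to exploit the i.i.d.\ structure of the standard valleys provided by Remark \ref{iid} and reduce the statement to bounding the probability of a ``bad event'' on a single standard valley. For each $i$ I would define an event $\mathcal{B}_i$ measurable with respect to the segment $(V(x), \tilde L_{i-1} \leq x \leq \tilde L_i)$ whose complement guarantees that $\tilde m_i$ is genuinely the next $h$-minimum of $V$ to the right of $\tilde L_{i-1}$, namely that $\tilde m_i$ is an $h$-minimum of $V$ and that no other $h$-minimum of $V$ lies in $(\tilde L_{i-1}, \tilde L_i)$. A short induction on $i$ gives $\mathcal{V}_{n,h} \supseteq \cap_{i=1}^n \mathcal{B}_i^c$, so a union bound together with stationarity (Remark \ref{iid}) reduces the problem to showing $\mathbb{P}(\mathcal{B}_1) \leq e^{-\delta \kappa h/3}$ for $h$ large enough.

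The failure of the ``good event'' on a valley splits naturally into three pieces according to which subinterval of $[\tilde L_{i-1}, \tilde L_i]$ misbehaves. On the descent phase $[\tilde L_{i-1}, \tilde L_i^{\sharp}]$, two things could go wrong: either $V$ overshoots its target level $V(\tilde L_{i-1}) - e^{(1-\delta)\kappa h}$ so much that $\inf_{[\tilde L_{i-1}, \tilde L_i^{\sharp}]} V < V(\tilde m_i)$, spoiling the left-hand condition in the definition of an $h$-minimum at $\tilde m_i$; or $V$ performs an upward excursion of height at least $h$ followed by a further descent of depth at least $h$, producing a spurious $h$-minimum inside $(\tilde L_{i-1}, \tilde L_i^{\sharp})$. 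On $[\tilde L_i^{\sharp}, \tilde m_i]$ nothing can go wrong thanks to the very choice of $\tilde m_i$ as the leftmost point where the infimum is attained. Finally, on $[\tilde \tau_i(h), \tilde L_i]$, the only way to create an additional $h$-minimum is for $V$ to climb by an extra $h$ above $V(\tilde \tau_i(h))$ before coming back down to level $h/2$.

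Each of these three sub-events will be estimated via classical fluctuation identities for spectrally negative L\'evy processes. The third one is the simplest: the strong Markov property at $\tilde \tau_i(h)$ and the fact that $\sup_{t \geq 0} V(t)$ is exponentially distributed with parameter $\kappa$ (recalled in Subsection \ref{factsandnotations}) give a bound of order $e^{-\kappa h/2}$. For the two descent-phase events, I would use the strong Markov property at $\tilde L_i^{\sharp}$ together with the overshoot identity at the downcrossing of level $V(\tilde L_{i-1}) - e^{(1-\delta)\kappa h}$, and for the upward-excursion event I would use a time-reversal argument connecting the descent to an ascending process through the dual $\hat V$ and again invoke the exponential tail of the supremum on each attempt. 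Balancing the exponents and using $e^{(1-\delta)\kappa h} \geq h$, the three contributions add up to at most $e^{-\delta \kappa h /3}$ for $h$ large, yielding the lemma after summing over $i$. The main technical obstacle will be the ``dip below $V(\tilde m_i)$'' scenario on the descent phase: controlling jointly the overshoot at the downcrossing of level $-e^{(1-\delta)\kappa h}$ and the subsequent infimum attained at $\tilde m_i$ is where the exponent $\delta \kappa h /3$ in the statement actually emerges, rather than the more optimistic $\delta \kappa h$ one might naively expect.
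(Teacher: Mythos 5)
Your overall architecture --- reduce to independent per-valley events via Remark \ref{iid} and a union bound, then locate the possible spurious $h$-minima inside each segment and bound each location by the exponential law of $\sup V$ iterated along the descent --- is the same as the paper's, and the two genuinely random failure events you list (an excursion of $V-\underline{V}$ of height $h$ during the long descent from $\tilde L_{i-1}$ to $\tilde L_i^{\sharp}$, and a further rise after $\tilde{\tau}_i(h)$ before $V$ returns to level $h/2$ above $V(\tilde m_i)$) are exactly the events $E_i^2$ and $E_{i+1}^1$ that the paper bounds with Lemma \ref{monteavantdescente}. However, there is one substantive misconception. The scenario you single out as the main technical obstacle and as the source of the exponent $\delta\kappa h/3$ --- the overshoot at $\tilde L_i^{\sharp}$ being so large that $\inf_{[\tilde L_{i-1},\tilde L_i^{\sharp}]}V < V(\tilde m_i)$ --- is deterministically impossible. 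Since $\tilde L_i^{\sharp}$ is a first-passage time below $V(\tilde L_{i-1})-e^{(1-\delta)\kappa h}$, one has $V(\tilde L_i^{\sharp})=\inf_{[\tilde L_{i-1},\tilde L_i^{\sharp}]}V$, while by definition $V(\tilde m_i)=\inf_{[\tilde L_i^{\sharp},\tilde{\tau}_i(h)]}V\leq V(\tilde L_i^{\sharp})$; a large overshoot only pushes $V(\tilde L_i^{\sharp})$ further \emph{down}, which helps. Together with $e^{(1-\delta)\kappa h}\geq h$ this shows $\tilde m_i$ is always an $h$-minimum ($u=\tilde L_{i-1}$ is a valid left witness), with no estimate required. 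The exponent actually comes from the event you mention second: for the running infimum to fall by $e^{(1-\delta)\kappa h}$ takes of order $e^{(1-\delta)\kappa h}/(\delta h)$ independent ``attempts'', each producing an $h$-rise of $V-\underline{V}$ with probability at most $e^{-\kappa(1-\delta/2)h}$, and the product is of order $e^{-\delta\kappa h/2}$, absorbed into $e^{-\delta\kappa h/3}$. This is precisely Lemma \ref{monteavantdescente}, proved by iterating the strong Markov property; no time reversal is needed (and the paper deliberately avoids time reversal in such arguments, the dual of $V$ being spectrally positive).

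Two smaller points. First, your event for the phase $[\tilde{\tau}_i(h),\tilde L_i]$ is too small: a spurious $h$-minimum there does not require $V$ to climb $h$ above the absolute level $V(\tilde{\tau}_i(h))$; it suffices that $V-\underline{V}$, restarted at $\tilde{\tau}_i(h)$, rises to $h$ before the restarted infimum drops by $h/2$ (e.g.\ $V$ may first fall to $V(\tilde m_i)+0.6h$ and then rise by $h$ from there). The correct event still has probability $O(e^{-\kappa h/2})$, so the final bound is unaffected, but the complement of your event does not exclude all spurious minima. Second, when assembling the events you must check that a witnessing interval $[x,v]$ of a spurious $h$-minimum cannot straddle $\tilde L_{i-1}$ or $\tilde L_i^{\sharp}$; this holds because those are running-infimum points of $V$, and it is this deterministic path argument that justifies localizing each failure to a single sub-interval before any probability is computed.
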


\begin{proof}

As we said before, $\{\tilde m_i,\ i\in\mathbb{N}^*\}\subset \{m_i, \ i\in\mathbb{N}^*\}$.
Hence on the complementary $\mathcal{V}_{n, h}^c$ of $\mathcal{V}_{n, h}$,
considering the smallest $1\leq i\leq n$ such that $m_i \neq \tilde m_i$, we have
$m_{i-1} = \tilde m_{i-1} < m_i <\tilde m_{i}$. 

For this $i$, we can not have $m_i \in [\tilde L_{i}^{\sharp}, \tilde m_{i}[$. Indeed, in this case, $m_i - \tilde L_{i}^{\sharp}$ would be the starting point of an excursion higher than $h$ for $V^{\tilde L_{i}^{\sharp}} - \underline V^{\tilde L_{i}^{\sharp}}$, but from the definition of $\tilde m_{i}$, $\tilde m_{i} - \tilde L_{i}^{\sharp}$ is the starting point of the first excursion higher than $h$ for $V^{\tilde L_{i}^{\sharp}} - \underline V^{\tilde L_{i}^{\sharp}}$, which contradicts $m_i<\tilde m_{i}$. 
Hence, $m_i \in ]\tilde m_{i-1}, \tilde L_{i}^{\sharp}[$. 

$\tilde m_{i-1}$ is an $h$-minimum and there cannot be any $h$-maximum belonging to $[\tilde m_{i-1}, \tilde \tau_{i-1}(h)[$ so, as the $h$ minima and the $h$ maxima alternate, necessarily we have $m_i > \tilde \tau_{i-1}(h)$. Hence $m_i \in ]\tilde \tau_{i-1}(h), \tilde L_{i}^{\sharp}[$. 

Since $m_i$ is an $h$-minimum, there is $v_i>m_i$ such that
$V(m_i)=\inf_{[m_i,v_i]} V$ and
$V(v_i) = V(m_i)+h$. Since by definition of $\tilde L_{i}^{\sharp}$ we have $V(\tilde L_{i}^{\sharp})=\inf_{[\tilde \tau_{i-1}(h), \tilde L_{i}^{\sharp}]} V$, we cannot have $\tilde L_{i}^{\sharp} \in ]m_i, v_i]$ so we must have 
\[ \tilde \tau_{i-1}(h) < m_i < v_i < \tilde L_{i}^{\sharp}. \]
For similar reasons we can neither have $\tilde L_{i-1} \in ]m_i, v_i]$ so we have 
\[ \text{either   } \tilde \tau_{i-1}(h) < m_i < v_i < \tilde L_{i-1} \ \text{   or   } \ \tilde L_{i-1} \leq m_i < v_i < \tilde L_{i}^{\sharp}. \]
We have 
\[ \mathcal{V}_{n, h}^c = \left ( \overset{n}{\underset{i=1}{\cap}} \left \{ m_i = \tilde{m}_i \right \} \right )^c \subset \overset{n}{\underset{i=1}{\cup}} \left ( E_i^1 \cup E_i^2 \right ), \]
where
\begin{align*}
E_i^1 & := \left \{ \tau \left ( V^{\tilde \tau_{i-1}(h)} - \underline{V}^{\tilde \tau_{i-1}(h)},h \right ) < \tau \left ( V^{\tilde \tau_{i-1}(h)}, ]-\infty, -h/2] \right ) \right \}, \\
E_i^2 & := \left \{ \tau \left ( V^{\tilde L_{i-1}} - \underline{V}^{\tilde L_{i-1}},h \right ) < \tau \left ( V^{\tilde L_{i-1}}, ]-\infty, -e^{(1-\delta)\kappa h}] \right ) \right \}. 
\end{align*}
{Indeed, $\tau ( V^{\tilde \tau_{i-1}(h)} - \underline{V}^{\tilde \tau_{i-1}(h)},h ) + \tilde \tau_{i-1}(h)$ represents the instant when is achieved the first ascend of $V$ from its minimum after $\tilde \tau_{i-1}(h)$ and, from the definition of $\tilde L_{i-1}$, 
 we have $\tilde L_{i-1} = \tau(V^{\tilde \tau_{i-1}(h)}, ]-\infty, -h /2]) +  \tilde \tau_{i-1}(h)$. Similarly, $\tau ( V^{\tilde L_{i-1}} - \underline{V}^{\tilde L_{i-1}},h ) + \tilde L_{i-1}$ represents the instant when is achieved the first ascend of $V$ from its minimum after $\tilde L_{i-1}$ and, from the definition of $\tilde L_{i}^{\sharp}$, we have $\tilde L_{i}^{\sharp} = \tau ( V^{\tilde L_{i-1}}, ]-\infty, -e^{(1-\delta)\kappa h}] ) + \tilde L_{i-1}$. }Since $\tilde \tau_{i-1}(h)$ and $\tilde L_{i-1}$ are stopping times, $V^{\tilde \tau_{i-1}(h)}$ and $V^{\tilde L_{i-1}}$ are equal in law to $V$. Since moreover $e^{(1-\delta)\kappa h} \geq h/2$, we get for $h$ large enough 
\begin{eqnarray}
1 - \mathbb{P} \left ( \overset{n}{\underset{i=1}{\cap}} \left \{ m_i = \tilde{m}_i \right \} \right ) \leq 2 n \mathbb{P} \left ( \tau \left (V - \underline{V},h \right ) < \tau \left (V, ]-\infty, -e^{(1-\delta)\kappa h}] \right ) \right ). \label{minimacoincide5}
\end{eqnarray}
{Then, according to Lemma \ref{monteavantdescente} applied with $a=h$, $b=e^{(1-\delta)\kappa h}$ and $\eta = \delta /2$ we have 
\[ \mathbb{P} \left ( \tau \left (V - \underline{V},h \right ) < \tau \left (V, ]-\infty, -e^{(1-\delta)\kappa h}] \right ) \right ) \leq \left ( 1 + 2 e^{(1-\delta)\kappa h}/\delta h \right ) e^{-\kappa (1-\delta /2) h}. \]
Let us define $h_0$ large enough so that the latter is less than $e^{- \delta \kappa h /3}/2$ whenever $h\geq h_0$. Combining with \eqref{minimacoincide5} we obtain 
\[ \forall h \geq h_0, \ 1 - \mathbb{P} \left ( \overset{n}{\underset{i=1}{\cap}} \left \{ m_i = \tilde{m}_i \right \} \right ) \leq n e^{- \delta \kappa h /3}, \]
which yields the results. }

\end{proof}

We now make use of the preceding lemma to precise the law of the bottoms of the standard valleys. First, for any $i \in \mathbb{N}^*$, let us define 
\begin{align*}
& \tilde P_1^{(i)} := (\tilde V^{(i)}(\tilde m_i-x),\ 0 \leq x \leq \tilde m_i- \tilde \tau_i^-(h)), \ \tilde P_2^{(i)} := (\tilde V^{(i)}(\tilde m_i+x),\  0\leq x \leq \tilde \tau_i(h)- \tilde m_i), \\
& \tilde P_3^{(i)} := (\tilde V^{(i)}(\tilde \tau_i(h)+x),\  0\leq x \leq \tilde L_i - \tilde \tau_i(h)). 
\end{align*}
Note that for any index $i$, $m_i = \tilde m_i$ implies $P_1^{(i)} = \tilde P_1^{(i)}$ and $P_2^{(i)} = \tilde P_2^{(i)}$. 

\begin{prop} \label{standardwilliams}

Assume that $V$ has unbounded variation. All the processes of the family $(\tilde P_j^{(i)}, \ i \geq 1, \ j \in \{ 1, 2, 3 \})$ are independent and for all $i \geq 1$, 
\begin{align*}
& d_{VT} \left ( \tilde P_1^{(i)}, P_1^{(2)} \right ) \leq 2 e^{- \delta \kappa h /3}, \ \ \ \ \ \ \ \ \ \ \ \ \ \tilde P_2^{(i)} \egloi (V^{\uparrow}(x))_{0 \leq x \leq \tau(V^{\uparrow}, h)}, \\
& \tilde P_3^{(i)} \egloi (h + V(x))_{0 \leq x \leq \tau(V, ]-\infty, -h/2])}. 
\end{align*}
The statements about the laws of $\tilde P_2^{(i)}$ and $\tilde P_3^{(i)}$ do not require the hypothesis of unbounded variation. The statements about the law of $\tilde P_1^{(i)}$ is true only for $h$ large enough. 

\end{prop}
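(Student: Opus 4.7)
The plan is to use the iid property of Remark \ref{iid} to reduce each distributional claim to a single index $i$, and to deduce the independence across $i$ as well as the independence of $\tilde P_1^{(i)}, \tilde P_2^{(i)}, \tilde P_3^{(i)}$ for a fixed $i$ by applying the strong Markov property of $V$ at the stopping times $\tilde L_{i-1}$, $\tilde L_i^\sharp$, $\tilde\tau_i(h)$ and $\tilde L_i$, combined with the Williams-type decomposition of the excursion of $V-\underline V$ that straddles $\tilde m_i$, which is already the engine behind Proposition \ref{Fact_Williams}.

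For $\tilde P_3^{(i)}$, I would use the strong Markov property at $\tilde\tau_i(h)$: since $V$ has no positive jumps, $V(\tilde\tau_i(h))=V(\tilde m_i)+h$, and the shifted process $(V(\tilde\tau_i(h)+x)-V(\tilde\tau_i(h)))_{x\geq 0}$ is a copy of $V$ independent of $\mathcal{F}_{\tilde\tau_i(h)}$. Since $\tilde L_i-\tilde\tau_i(h)$ is the first time this shifted process enters $]-\infty,-h/2]$, adding $h$ identifies the law of $\tilde P_3^{(i)}$ as $(h+V(x))_{0\leq x\leq\tau(V,]-\infty,-h/2])}$. For $\tilde P_2^{(i)}$, I apply the strong Markov property at $\tilde L_i^\sharp$: the shifted process $(V(\tilde L_i^\sharp+x)-V(\tilde L_i^\sharp))_{x\geq 0}$ is again a copy of $V$, and $\tilde P_2^{(i)}$ coincides, in this shifted process, with the first excursion of $V-\underline V$ higher than $h$, killed at its hitting time of $h$. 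Proposition VII.15 of \cite{Bertoin} identifies this with $(V^{\uparrow}(x))_{0\leq x\leq\tau(V^{\uparrow},h)}$, exactly as in the proof of the second bullet of Proposition \ref{Fact_Williams}; in particular the argument does not require unbounded variations.

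For the total variation bound on $\tilde P_1^{(i)}$, I would first note that the iid property of Remark \ref{iid} reduces the task to bounding $d_{VT}(\tilde P_1^{(2)}, P_1^{(2)})$, since the bound will then transfer to every $i\geq 1$. On the event $\{m_2=\tilde m_2\}\supset \mathcal{V}_{2,h}$ of Lemma \ref{minimacoincide}, the two truncated trajectories coincide pointwise, because $\tau_2^-(h)$ and $\tilde\tau_2^-(h)$ then agree. Hence
\[ d_{VT}(\tilde P_1^{(2)}, P_1^{(2)}) \leq \mathbb{P}\bigl(m_2\neq \tilde m_2\bigr) \leq 2\,e^{-\delta\kappa h/3}, \]
the last inequality following from the proof of Lemma \ref{minimacoincide} specialised to one index (the factor $2$ absorbs the two descent events $E_i^1$, $E_i^2$ controlled there).

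The only really delicate step is the independence of $\tilde P_1^{(i)}$ and $\tilde P_2^{(i)}$ for a fixed $i$. This is the analogue, in the standard-valley setting, of the Williams-type decomposition used in the first bullet of Proposition \ref{Fact_Williams}: conditionally on the first excursion of $V^{\tilde L_i^\sharp}-\underline V^{\tilde L_i^\sharp}$ that rises above $h$, the pre- and post-minimum segments of that excursion are independent by excursion theory for spectrally negative Lévy processes, and $\tilde P_1^{(i)}$ and $\tilde P_2^{(i)}$ can be read off these two segments. Independence across $i$ is then immediate from the strong Markov property at $\tilde L_i$, closing the proof.
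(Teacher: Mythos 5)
Your proof is correct and follows essentially the same route as the paper: Remark \ref{iid} for the reduction to a single index and for the independence across indices, the Markov property at $\tilde\tau_i(h)$ for $\tilde P_3^{(i)}$, Proposition VII.15 of \cite{Bertoin} applied to the first excursion of $V-\underline V$ higher than $h$ for $\tilde P_2^{(i)}$, and Lemma \ref{minimacoincide} with $n=2$ combined with the coupling bound on total variation for $\tilde P_1^{(i)}$. The only loose phrasing is in your "delicate step": $\tilde m_i$ is the \emph{starting point} of the first excursion of $V^{\tilde L_i^{\sharp}}-\underline V^{\tilde L_i^{\sharp}}$ higher than $h$, not an interior minimum of it, so the independence of $\tilde P_1^{(i)}$ and $\tilde P_2^{(i)}$ comes from that excursion being independent of the path preceding it, which is exactly the one-line argument the paper uses.
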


\begin{proof}
By Remark \ref{iid} the sequence $(\tilde P_1^{(i)}, \tilde P_2^{(i)}, \tilde P_3^{(i)})_{i \geq 1}$ is \textit{iid} so, for the independence, we only need to prove that for any $i \geq 1$, $\tilde P_1^{(i)}$, $\tilde P_2^{(i)}$ and $\tilde P_3^{(i)}$ are independent. 
$\tilde P_2^{(i)}$ is, after {the stopping time $\tilde L_i^{\sharp}$}, the first excursion of $V - \underline{V}$ greater than $h$, considered up to its hitting time of $h$. It is therefore independent from the previous slopes and, using again Proposition VII.15 of \cite{Bertoin} (which does not require $V$ to have unbounded variation), we see that it has the same law as $(V^{\uparrow}(x))_{0 \leq x \leq \tau(V^{\uparrow}, h)}$. Also, the Markov property applied at the stopping time $\tilde \tau_i(h)$ gives the asserted law of $\tilde P_3^{(i)}$ and its independence from $(\tilde P_1^{(i)}, \tilde P_2^{(i)})$. 

For the assertion on $\tilde P_1^{(i)}$, {note that $d_{VT} ( \tilde P_1^{(2)}, P_1^{(2)} ) \leq \mathbb{P} (\tilde P_1^{(2)} \neq P_1^{(2)})$ and} for $h$ large enough $\mathbb{P} (\tilde P_1^{(2)} \neq P_1^{(2)}) \leq 2 e^{- \delta \kappa h /3}$ according to Lemma \ref{minimacoincide} (applied with $n := 2$). Then, recall that for any $i \geq 1$, $\tilde P_1^{(i)}$ is equal in law to $\tilde P_1^{(2)}$ according to Remark \ref{iid}. The assertion on $\tilde P_1^{(i)}$ follows. 

\end{proof}

{We now consider the first ascend of $h$ from the minimum, after $\tilde \tau_i(h)$, 
\begin{align}
\tilde \tau_{i+1}^*(h) & := \inf \left \{ u\geq \tilde \tau_i(h),\ V(u)-\inf_{[\tilde \tau_i(h),u]} V = h \right \}, \label{ascend} \\ 
\tilde{m}_{i+1}^* & := \inf \left \{ u \geq \tilde \tau_i(h),\ V(u)=\inf_{[\tilde \tau_i(h),\tilde \tau_{i+1}^*(h)]} V \right \}. \label{min}
\end{align}
Note that $\tilde \tau_{i+1}^*(h) - \tilde \tau_{i}(h) = \tau(V^{\tilde \tau_{i}(h)}-\underline{V}^{\tilde \tau_{i}(h)}, h)$, so $\tilde \tau_{i+1}^*(h) - \tilde \tau_{i}(h)$ belongs to the first excursion higher than $h$ of the process $V^{\tilde \tau_{i}(h)}-\underline{V}^{\tilde \tau_{i}(h)}$ and $\tilde{m}_{i+1}^* - \tilde \tau_{i}(h)$ is the starting point of this excursion. }We prove that $\tilde \tau_{i+1}^*(h)$ and $\tilde{m}_{i+1}^*$ coincide with $\tilde\tau_{i+1}(h)$ and $\tilde m_{i+1}$ with a high probability. 

\begin{lemme} \label{tpscoinc}

There is a positive constant $c$ such that for $h$ large enough, 
\[ \forall i \geq 1, \ \mathbb{P} \left ( \tilde \tau_{i}^*(h) = \tilde\tau_{i}(h), \ \tilde m_{i}^* = \tilde m_{i} \right ) \geq 1 - e^{-c h}. \]

\end{lemme}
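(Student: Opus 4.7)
My plan is to apply the strong Markov property of $V$ at the stopping time $\tilde\tau_{i-1}(h)$ (with the convention $\tilde\tau_0(h)=0$) and reduce the event $\{\tilde\tau_i^*(h)=\tilde\tau_i(h),\ \tilde m_i^*=\tilde m_i\}$ to an event involving only the independent copy $V'(t):=V(\tilde\tau_{i-1}(h)+t)-V(\tilde\tau_{i-1}(h))$, which will then be controlled by Lemma~\ref{monteavantdescente}.

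First, I would prove a purely deterministic reduction. Under the standing assumption $e^{(1-\delta)\kappa h}\geq h$, one has $V(\tilde\tau_{i-1}(h))\leq h/2$ for every $i\geq 1$. This is trivial for $i=1$ since $V(0)=0$, and for $i\geq 2$ it follows from $V(\tilde m_{i-1})\leq V(\tilde L_{i-1}^{\sharp})\leq V(\tilde L_{i-2})-e^{(1-\delta)\kappa h}\leq h/2-e^{(1-\delta)\kappa h}\leq -h/2$, so that $V(\tilde\tau_{i-1}(h))=V(\tilde m_{i-1})+h\leq h/2$. By right-continuity of $V$, this forces $\tilde L_{i-1}=\tilde\tau_{i-1}(h)$ almost surely, and hence after shifting by $\tilde\tau_{i-1}(h)$ the random time $\tilde L_i^{\sharp}$ reduces to the plain hitting time $\sigma:=\tau(V',]-\infty,-e^{(1-\delta)\kappa h}])$.

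The key reduction is then to show that on the good event $\mathcal G:=\{\tau(V'-\underline{V'},h)>\sigma\}$ the desired equalities hold. Indeed, $V'(s)>-e^{(1-\delta)\kappa h}$ for every $s<\sigma$, whereas $V'(\sigma)\leq -e^{(1-\delta)\kappa h}$, so $\underline{V'}(\sigma)=V'(\sigma)=\inf_{[0,\sigma]}V'$. Consequently, for every $x\geq\sigma$ one has $\inf_{[0,x]}V'=\inf_{[\sigma,x]}V'$, and therefore the two searches for the first $h$-ascent of $V'$ (the one starting from $0$, defining $\tilde\tau_i^*(h)$ and $\tilde m_i^*$, and the one starting from $\sigma$, defining $\tilde\tau_i(h)$ and $\tilde m_i$) produce the same stopping time and the same minimum location. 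It then remains to estimate $\mathbb P(\mathcal G^c)=\mathbb P(\tau(V-\underline V,h)<\tau(\underline V,]-\infty,-e^{(1-\delta)\kappa h}]))$, which is precisely the quantity controlled by Lemma~\ref{monteavantdescente} applied with $a=h$, $b=e^{(1-\delta)\kappa h}$ and $\eta=\delta/2$. This yields a bound of the form $e^{-ch}$ for some $c>0$ and $h$ large enough, exactly as in the proof of Lemma~\ref{minimacoincide}.

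The main obstacle I anticipate is the first step above: without the deterministic identification $\tilde L_{i-1}=\tilde\tau_{i-1}(h)$ holding uniformly in $i$, one cannot identify $\tilde L_i^{\sharp}$ with the plain hitting time $\sigma$, and the event $\{\tau(V'-\underline{V'},h)\leq\tilde L_i^{\sharp}\}$ would then have to be controlled also on the extra interval $[\sigma,\tilde L_i^{\sharp}]$, on which $V'$ may climb back above $-e^{(1-\delta)\kappa h}$ and create a spurious $h$-ascent. Such a case analysis would require iterating the strong Markov property at $\sigma$ and would very likely produce a worse constant $c$; the miracle that makes the proof short is that the large drop $e^{(1-\delta)\kappa h}$ built into the definition of $\tilde L_i^{\sharp}$ automatically overwhelms the comparison level $h/2$ appearing in $\tilde L_{i-1}$.
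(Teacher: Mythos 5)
Your deterministic core --- that $V(\tilde L_i^{\sharp})=\inf_{[\tilde\tau_{i-1}(h),\,\tilde L_i^{\sharp}]}V$ forces the search for the first $h$-ascent started at $\tilde\tau_{i-1}(h)$ and the one started at $\tilde L_i^{\sharp}$ to coincide whenever no $h$-ascent occurs before $\tilde L_i^{\sharp}$ --- is exactly the observation the paper uses. The gap is in your first step. The definition of $\tilde L_{i-1}$ must be read relative to the bottom of the valley, i.e.\ $\tilde L_{i-1}=\inf\{x>\tilde\tau_{i-1}(h):\ V(x)-V(\tilde m_{i-1})\leq h/2\}$ (equivalently $\tilde V^{(i-1)}(x)\leq h/2$): this is forced by Proposition \ref{standardwilliams}, where $\tilde P_3^{(i)}$ is a nondegenerate process whose law is that of $h+V$ killed when $V$ hits $]-\infty,-h/2]$, and by the proof of Lemma \ref{noreturn}, which uses $V(\tilde L_i)-V(\tilde\tau_i(h))\leq -h/2$. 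Since $V(\tilde\tau_{i-1}(h))=V(\tilde m_{i-1})+h$, one always has $\tilde L_{i-1}>\tilde\tau_{i-1}(h)$: between these two times $V$ must descend by $h/2$. Your identification $\tilde L_{i-1}=\tilde\tau_{i-1}(h)$ is therefore false, and with it the identification $\tilde L_i^{\sharp}=\tilde\tau_{i-1}(h)+\sigma$; note moreover that $\tilde L_i^{\sharp}$ is anchored at the level $V(\tilde L_{i-1})-e^{(1-\delta)\kappa h}$, and $V(\tilde L_{i-1})$ can lie well below $V(\tilde\tau_{i-1}(h))-h/2$ because of a negative jump, so in general $\tilde L_i^{\sharp}>\tilde\tau_{i-1}(h)+\sigma$ strictly.

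Consequently your good event $\mathcal G$ does not yield the conclusion: on $\mathcal G$ a spurious $h$-ascent of $V-\underline V$ (measured from $\tilde\tau_{i-1}(h)$) can still occur on the uncovered stretch $\left]\tilde\tau_{i-1}(h)+\sigma,\ \tilde L_i^{\sharp}\right]$, in which case $\tilde\tau_i^*(h)<\tilde L_i^{\sharp}\leq\tilde\tau_i(h)$. The paper's proof is precisely the two-step analysis you hoped to avoid: it writes $\{\tilde\tau_i^*(h)\neq\tilde\tau_i(h)\}\subset\{\tilde\tau_i^*(h)\leq\tilde L_{i-1}\}\cup\{\tilde L_{i-1}<\tilde\tau_i^*(h)\leq\tilde L_i^{\sharp}\}$, bounds the first event by the Markov property at $\tilde\tau_{i-1}(h)$ together with Lemma \ref{monteavantdescente} applied with $a=h$, $b=h/2$ (giving $2e^{-\kappa h/2}$), and the second by a second application of the Markov property at $\tilde L_{i-1}$ (using $V(\tilde L_{i-1})=\inf_{[\tilde\tau_{i-1}(h),\,\tilde L_{i-1}]}V$) together with Lemma \ref{monteavantdescente} applied with $a=h$, $b=e^{(1-\delta)\kappa h}$ (giving $e^{-\delta\kappa h/3}$). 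Your single application of Lemma \ref{monteavantdescente} only controls a variant of the second event; the decomposition at $\tilde L_{i-1}$ and the second Markov step are missing and cannot be dispensed with.
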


\begin{proof}

Note that $\tilde m_{i}^* = \tilde m_{i}$ whenever $\tilde \tau_{i}^*(h) = \tilde\tau_{i}(h)$. We thus only prove the latter. Fix $i \geq 2$ and recall the definitions of $\tilde \tau_{i}^*(h)$ and $\tilde\tau_{i}(h)$: {$\tilde \tau_{i}^*(h)$ is the first ascend of $V$ from its minimum after $\tilde \tau_{i-1}(h)$ while $\tilde\tau_{i}(h)$ is the first ascend of $V$ from its minimum after $\tilde L_{i}^{\sharp}$. By definition we} have $V(\tilde L_{i}^{\sharp}) = \inf_{[\tilde \tau_{i-1}(h), \tilde L_{i}^{\sharp}]} V$ so, if $\tilde L_{i}^{\sharp} < \tilde \tau_{i}^*(h)$, we must have $\tilde \tau_{i}^*(h) = \tilde\tau_{i}(h)$. Therefore, 
\begin{eqnarray}
\left \{ \tilde \tau_{i}^*(h) \neq \tilde\tau_{i}(h) \right \} \subset \left \{ \tilde \tau_{i}^*(h) \leq \tilde L_{i}^{\sharp} \right \} = \left \{ \tilde \tau_{i}^*(h) \leq \tilde L_{i-1} \right \} \cup \left \{ \tilde L_{i-1} < \tilde \tau_{i}^*(h) \leq \tilde L_{i}^{\sharp} \right \}. \label{tpscoinc1}
\end{eqnarray}
{Recall that $\tilde L_{i-1} - \tilde \tau_{i-1}(h) = \tau(V^{\tilde \tau_{i-1}(h)}, ]-\infty, -h /2])$. Since $\tilde \tau_{i}^*(h) - \tilde \tau_{i-1}(h) = \tau(V^{\tilde \tau_{i-1}(h)}-\underline{V}^{\tilde \tau_{i-1}(h)}, h)$ we get 
\[ \mathbb{P} \left ( \tilde \tau_{i}^*(h) \leq \tilde L_{i-1} \right ) = \mathbb{P} \left ( \tau(V^{\tilde \tau_{i-1}(h)}-\underline{V}^{\tilde \tau_{i-1}(h)}, h) < \tau(V^{\tilde \tau_{i-1}(h)}, ]-\infty, -h /2]) \right ). \]
Then, from the Markov property, $V^{\tilde \tau_{i-1}(h)}$ is equal in law to $V$. Combining with Lemma \ref{monteavantdescente} applied with $a = h, b = h/2$ and $\eta = 1/4$ we get 
\begin{eqnarray}
\mathbb{P} \left ( \tilde \tau_{i}^*(h) \leq \tilde L_{i-1} \right ) = \mathbb{P} \left ( \tau(V-\underline{V}, h) < \tau(V, ]-\infty, -h /2]) \right ) \leq 3 e^{- 3 \kappa h /4}. \label{tpscoinc2}
\end{eqnarray}}
Then, $V(\tilde L_{i-1}) = \inf_{[\tilde \tau_{i-1}(h), \tilde L_{i-1}]} V$ so, if $\tilde \tau_{i}^*(h) > \tilde L_{i-1}$ we have $\tilde \tau_{i}^*(h) - \tilde L_{i-1} = \tau ( V^{\tilde L_{i-1}} - \underline{V}^{\tilde L_{i-1}}, h)$. {Recall that $\tilde L_{i}^{\sharp} - \tilde L_{i-1} = \tau ( V^{\tilde L_{i-1}}, ]-\infty, -e^{(1-\delta)\kappa h}] )$. We thus get
\[ \mathbb{P} \left ( \tilde L_{i-1} < \tilde \tau_{i}^*(h) \leq \tilde L_{i}^{\sharp} \right ) \leq \mathbb{P} \left ( \tau \left ( V^{\tilde L_{i-1}}-\underline{V}^{\tilde L_{i-1}}, h \right ) < \tau \left (V^{\tilde L_{i-1}}, ]-\infty, -e^{(1-\delta) \kappa h}] \right ) \right ). \]
Then, from the Markov property, $V^{\tilde L_{i-1}}$ is equal in law to $V$. Combining with Lemma \ref{monteavantdescente} applied with $a=h$, $b=e^{(1-\delta)\kappa h}$ and $\eta = \delta /2$ we get 
\begin{align}
\mathbb{P} \left ( \tilde L_{i-1} < \tilde \tau_{i}^*(h) \leq \tilde L_{i}^{\sharp} \right ) & \leq \mathbb{P} \left ( \tau \left ( V-\underline{V}, h \right ) < \tau \left (V, ]-\infty, -e^{(1-\delta) \kappa h}] \right ) \right ) \nonumber \\ 
& \leq \left ( 1 + 2 e^{(1-\delta)\kappa h}/\delta h \right ) e^{-\kappa (1-\delta /2) h}. \label{tpscoinc3}
\end{align}
For $h$ large enough the latter is less than $e^{- \delta \kappa h /3}$, so putting together \eqref{tpscoinc1}, \eqref{tpscoinc2} and \eqref{tpscoinc3} we get the result when $i \geq 2$. }If $i = 1$, \eqref{tpscoinc1} and \eqref{tpscoinc3} are true but recall that, by definition, $\tilde \tau_{0}(h) = \tilde L_0 =0$, so $\{ \tilde \tau_{1}^*(h) \leq \tilde L_{0} \} = \emptyset$. The result is therefore also true for $i = 1$. 

\end{proof}

\subsection{Exponential functionals of the bottom of a standard valley}

Let $J(h)$ be the exponential functional of the bottom of the first standard valley: 
\[ J(h) := \int_{\tilde \tau_1^-(h/2)}^{\tilde \tau_1^+(h/2)} e^{- \tilde V^{(1)}(u)} du, \]
{where $\tilde \tau_1^-(.)$ and $\tilde \tau_1^+(.)$ have the same meaning as in the beginning of Subsection \ref{coin}. 

This functional will appear in Section \ref{genedesres} for the study of the behavior of the diffusion near the bottom of the valleys. Indeed, we will see that the amount of time spent in the bottom of a valley is approximately the local time at the bottom of the valley multiplied by an independent factor having the same law as this functional. It is thus crucial to determine the behavior of the distribution of $J(h)$ when $h$ is large. Recall from the Introduction (just after Theorem \ref{kappa>1favsite}) that $\mathcal{R}$ is defined to be a random variable whose law is the convolution of the law of $I(V^{\uparrow})$ and of the law of $I(\hat V^{\uparrow})$. The results of Subsections \ref{loidesval} and \ref{coin} allow to think that $J(h)$ becomes close to $\mathcal{R}$ when $h$ is large enough. We now prove a very tight convergence of $J(h)$ to $\mathcal{R}$ using Propositions \ref{Fact_Williams}, \ref{standardwilliams} and the existence, proved in Theorems 1.1 and 1.13 of \cite{foncexpovech}, of some finite exponential moments for $I(V^{\uparrow})$ and $I(\hat V^{\uparrow})$. This result is crucial in Section \ref{genedesres} where we make $\mathcal{R}$ appear in the limit distribution of the supremum of the local time. }


%
%

\begin{prop} \label{cvr}

Assume that $V$ has unbounded variation. The family of random variables $( J(h) )_{h > 0}$ converges in distribution to $\mathcal{R}$ as $h$ goes to infinity, moreover there exists a positive $\lambda_0$ such that
\begin{eqnarray}
\forall \lambda < \lambda_0, \ \ \ \mathbb{E} \left [ e^{\lambda J(h)} \right ] \underset{h \rightarrow +\infty}{\longrightarrow}  \mathbb{E} \left [ e^{\lambda \mathcal{R}} \right ], \label{cvrlapl}
\end{eqnarray}
and the above quantities are all finite. As a consequence, the moments of any positive order of $J(h)$ converge to those of $\mathcal{R}$ when $h$ goes to infinity. 

\end{prop}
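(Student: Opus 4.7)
The plan is to decompose at the bottom $\tilde m_1$ of the first standard valley, writing $J(h) = J_1(h) + J_2(h)$ with descending piece $J_1(h) := \int_{\tilde\tau_1^-(h/2)}^{\tilde m_1} e^{-\tilde V^{(1)}(u)}\,du$ and ascending piece $J_2(h) := \int_{\tilde m_1}^{\tilde\tau_1^+(h/2)} e^{-\tilde V^{(1)}(u)}\,du$. By Proposition~\ref{standardwilliams}, $J_1(h)$ and $J_2(h)$ are independent. It therefore suffices to prove that $J_1(h) \to I(\hat V^{\uparrow}) \egloi G_2$ and $J_2(h) \to I(V^{\uparrow}) \egloi G_1$ both in distribution and in Laplace transform for $\lambda$ in a right-neighbourhood of $0$, after which independence gives convergence of $J(h)$ to $\mathcal{R}=G_1+G_2$ together with the Laplace identity \eqref{cvrlapl}.

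The ascending piece is straightforward: from $\tilde P_2^{(1)} \egloi (V^{\uparrow}(x))_{0\leq x\leq \tau(V^{\uparrow},h)}$ in Proposition~\ref{standardwilliams} one obtains $J_2(h) \egloi \int_0^{\tau(V^{\uparrow},h/2)} e^{-V^{\uparrow}(u)}\,du$, a family which increases almost surely to $I(V^{\uparrow})$ as $h\to\infty$ because $V^{\uparrow}$ drifts to $+\infty$. Monotone convergence combined with the existence of finite exponential moments for $I(V^{\uparrow})$ from \cite{foncexpovech} yields $\mathbb{E}[e^{\lambda J_2(h)}] \uparrow \mathbb{E}[e^{\lambda I(V^{\uparrow})}] < +\infty$ for $\lambda < \lambda_0$. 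The descending piece is subtler: the change of variables $u = \tilde m_1 - x$ rewrites $J_1(h) = \int_0^{\tau(\tilde P_1^{(1)},[h/2,+\infty[)} e^{-\tilde P_1^{(1)}(x)}\,dx$, and Proposition~\ref{standardwilliams} places the law of $\tilde P_1^{(1)}$ within total variation $2e^{-\delta\kappa h/3}$ of that of $P_1^{(2)}$, which itself by Proposition~\ref{Fact_Williams} is the law of $\hat V^{\uparrow}$ killed at $\tau(\hat V^{\uparrow}, h+)$ weighted by a density $g_h := c_h/(1-e^{-\kappa \hat V^{\uparrow}(\tau(\hat V^{\uparrow}, h+))})$ bounded by $2$ and tending a.s.\ to $1$. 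Setting $\tilde J_1(h) := \int_0^{\tau(\hat V^{\uparrow}, h/2+)} e^{-\hat V^{\uparrow}(x)}\,dx$, which increases a.s.\ to $I(\hat V^{\uparrow})$, the distributional convergence of $J_1(h)$ to $I(\hat V^{\uparrow})$ follows by combining a Strassen coupling for the TV bound with dominated convergence applied to the bounded density.

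The delicate step is upgrading this to Laplace convergence, because TV closeness alone cannot control unbounded exponentials. Using $\tilde P_1^{(1)}\egloi \tilde P_1^{(2)}$ from Remark~\ref{iid} and the event $\mathcal{V}_{2,h}$ of Lemma~\ref{minimacoincide} on which $\tilde P_1^{(2)}=P_1^{(2)}$, I would split
\[
\mathbb{E}[e^{\lambda J_1(h)}] = \mathbb{E}[e^{\lambda J_1^{(2)}(h)} \mathbf{1}_{\mathcal{V}_{2,h}}] + \mathbb{E}[e^{\lambda J_1(h)} \mathbf{1}_{\mathcal{V}_{2,h}^c}],
\]
where $J_1^{(2)}(h)$ denotes the same functional evaluated on $P_1^{(2)}$. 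The first summand is dominated by $\mathbb{E}[g_h\, e^{\lambda \tilde J_1(h)}] \leq 2\mathbb{E}[e^{\lambda I(\hat V^{\uparrow})}] < +\infty$ uniformly in $h$ via the bound $g_h \leq 2$ and \cite{foncexpovech}, and converges to $\mathbb{E}[e^{\lambda I(\hat V^{\uparrow})}]$ by dominated convergence using $g_h \to 1$ a.s. The second summand is made negligible by a H\"older inequality combining $\mathbb{P}(\mathcal{V}_{2,h}^c) \leq 2e^{-\delta\kappa h/3}$ with a uniform exponential bound on $e^{p\lambda J_1(h)}$ for some $p>1$ still keeping $p\lambda < \lambda_0$, itself obtainable by iterating the same decomposition. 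Multiplying the two independent Laplace transforms yields \eqref{cvrlapl}, and convergence of moments of any positive order is then a standard consequence of Laplace convergence on a neighbourhood of~$0$. The main obstacle throughout is precisely this joint handling of the Radon--Nikodym density $g_h$ and the TV error in a way that preserves the uniform exponential integrability inherited from \cite{foncexpovech}.
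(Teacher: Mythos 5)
Your decomposition of $J(h)$ at $\tilde m_1$, the treatment of the ascending piece by monotone convergence, and the total-variation/density argument for the distributional convergence of the descending piece all match the paper's proof. The one place where your argument does not close is the uniform exponential integrability of the descending piece, i.e. the bound $\sup_h \mathbb{E}[e^{\lambda_0 J_1(h)}] < \infty$ needed to upgrade convergence in law to \eqref{cvrlapl}. You propose to control the contribution of $\mathcal{V}_{2,h}^c$ by H\"older's inequality, combining $\mathbb{P}(\mathcal{V}_{2,h}^c) \leq 2e^{-\delta\kappa h/3}$ with ``a uniform exponential bound on $e^{p\lambda J_1(h)}$ for some $p>1$ \ldots obtainable by iterating the same decomposition''. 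This is circular: the uniform-in-$h$ bound at exponent $p\lambda$ is precisely the statement you are trying to prove, at a strictly larger parameter. Iterating pushes the required exponent to $p^k\lambda$, which eventually leaves the range where $I(\hat V^{\uparrow})$ is known (from \cite{foncexpovech}) to have finite exponential moments, so the induction never terminates, and no a priori control of the law of $\tilde P_1^{(1)}$ on the bad event is available to start it.

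The paper closes this step differently. Since $\{\tilde m_i\}\subset\{m_i\}$, the slope $\tilde P_1^{(2)}$ (equal in law to $\tilde P_1^{(1)}$ by Remark \ref{iid}) always coincides with $P_1^{(i)}$ for \emph{some} $i\geq 2$, and all the $P_1^{(i)}$, $i\geq 2$, share the same known law, absolutely continuous with density bounded by $2$ with respect to the killed $\hat V^{\uparrow}$ (Proposition \ref{Fact_Williams}). One therefore partitions over the value of $i$, applies Cauchy--Schwarz on each piece with $i>2$, bounds the resulting second moment by $2\,\mathbb{E}[e^{2\lambda_0 I(\hat V^{\uparrow})}]<\infty$ (which is why $\lambda_0$ is chosen so that $2\lambda_0$ still gives a finite exponential moment), and sums the series using $\mathbb{P}(\tilde P_1^{(2)}=P_1^{(i)})\leq e^{-\delta\kappa(i-2)h/3}$. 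The point is that Cauchy--Schwarz is applied to functionals of the $P_1^{(i)}$, whose law is explicitly controlled, and never to $J_1(h)$ itself; this is what breaks the circularity. With that substitution the rest of your argument goes through as written.
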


\begin{proof}

We consider a probability space on which are defined two independent processes $Z_1$ and $Z_2$ with $Z_1 \overset{\mathcal{L}}{=} \hat V^{\uparrow}$ and $Z_2 \overset{\mathcal{L}}{=} V^{\uparrow}$. We define 
\begin{align*}
\tilde{I}(h) & := \int_0^{\tau (Z_1, h/2 +)} e^{-Z_1(x)}dx + \int_0^{\tau (Z_2, h/2)} e^{- Z_2(x)}dx, \ \forall h > 0 \\
\tilde{\mathcal{R}} & := \int_0^{+\infty} e^{-Z_1(x)}dx + \int_0^{+\infty} e^{- Z_2(x)}dx. 
\end{align*}
We have trivially the equality in law $\tilde{\mathcal{R}} \overset{\mathcal{L}}{=} \mathcal{R}$ and the almost sure increase of the $\tilde{I}(h)$ to $\tilde{\mathcal{R}}$. Then, from the definitions of $\tilde P_1^{(1)}$ and $\tilde P_2^{(1)}$ in Subsection \ref{coin}, 
%
\begin{eqnarray}
J(h) = \int_{0}^{\tau (\tilde P_1^{(1)}, h/2 +)} e^{- \tilde P_1^{(1)}(u)} du + \int_{0}^{\tau(\tilde P_2^{(1)}, h/2)} e^{- \tilde P_2^{(1)}(u)} du. \label{exprjhpi}
\end{eqnarray}
According to Proposition \ref{standardwilliams}, the two terms are independent and the second is equal in law to the second term of $\tilde{I}(h)$. It is easy to see that, if we have four real random variables $A$, $B$, $C_A$ and $C_B$ where $A$ and $C_A$ (respectively $B$ and $C_B$) are defined on the same probability space and independent, and such that $C_A$ and $C_B$ have the same law, then we have this inequality for the total variation distance: 
\[ d_{VT}(C_A + A, C_B + B) \leq d_{VT}(A, B). \]

In our case, we thus have 
\begin{eqnarray}
d_{VT} \left (J(h), \tilde{I}(h) \right ) \leq d_{VT} \left (\int_{0}^{\tau (\tilde P_1^{(1)}, h/2 +)} e^{- \tilde P_1^{(1)}(u)} du, \int_0^{\tau (Z_1, h/2 +)} e^{- Z_1(x)}dx \right ). \label{tvtef1}
\end{eqnarray}
Then, we see that if $A$ and $B$ are random variables (valued on a metric space $E$), and $f$ is a mesurable mapping from $E$ to $\mathbb{R}$, we have
\[ d_{VT} \left (f(A), f(B) \right ) \leq d_{VT} \left (A, B \right ). \]

In our case, this yields
\begin{align}
d_{VT} \left (\int_0^{\tau(\tilde P_1^{(1)}, h/2+)} e^{- \tilde P_1^{(1)}(u)} du, \int_0^{\tau (Z_1, h/2 +)} e^{- Z_1(x)}dx \right ) \leq d_{VT} \left ( \tilde P_1^{(1)}, \left ( Z_1(x) \right )_{0 \leq x \leq \tau(Z_1, h+)} \right ). \label{tvtef2}
\end{align}
From the triangular inequality and Proposition \ref{standardwilliams}, we have for $h$ large enough, 
\begin{align}
d_{VT} \left ( \tilde P_1^{(1)}, \left ( Z_1(x) \right )_{0 \leq x \leq \tau(Z_1, h+)} \right ) & \leq d_{VT} \left ( \tilde P_1^{(1)}, P_1^{(2)} \right ) + d_{VT} \left ( P_1^{(2)}, \left ( Z_1(x) \right )_{0 \leq x \leq \tau(Z_1, h+)} \right ) \nonumber \\
& \leq 2 e^{- \delta \kappa h /3} + d_{VT} \left ( P_1^{(2)}, \left ( Z_1(x) \right )_{0 \leq x \leq \tau(Z_1, h+)} \right ). \label{tvtef2.1}
\end{align}

Now, according to Proposition \ref{Fact_Williams}, $P_1^{(2)}$ is absolutely continuous with respect to the law of the process $(\hat{V}^{\uparrow}(s))_{0 \leq s \leq \tau(\hat{V}^{\uparrow}, h+)} \overset{\mathcal{L}}{=} ( Z_1(x) )_{0 \leq x \leq \tau(Z_1, h+)}$ and has density $c_h/(1-e^{-\kappa \hat{V}^{\uparrow}(\tau(\hat{V}^{\uparrow}, h+))})$ with respect to this law. It is known that, if a random variable $B$ has a density $d_B$ with respect to a random variable $A$ (both valued in the same metric space), then, their total variation distance is expressed as follows :
\[ d_{VT} \left (A, B \right ) = \frac1{2} \int \left | 1 - d_B(x) \right | \times \mathbb{P} \left ( A \in dx \right ) . \]
That is, in our case, 
\begin{eqnarray}
d_{VT} \left ( P_1^{(2)}, \left ( Z_1(x) \right )_{0 \leq x \leq \tau(Z_1, h+)} \right ) = \frac1{2} \mathbb{E} \left [ \left | 1 - \frac{c_h}{1-e^{-\kappa \hat{V}^{\uparrow}(\tau(\hat{V}^{\uparrow}, h+))}} \right | \right ]. \label{tvtef3}
\end{eqnarray}

Now combining \eqref{tvtef1}, \eqref{tvtef2}, \eqref{tvtef2.1} and \eqref{tvtef3} we get for large $h$, 
\[ d_{VT} \left (J(h), \tilde{I}(h) \right ) \leq 2 e^{- \delta \kappa h /3} + \frac1{2} \mathbb{E} \left [ \left | 1 - \frac{c_h}{1-e^{-\kappa \hat{V}^{\uparrow}(\tau(\hat{V}^{\uparrow}, h+))}} \right | \right ]. \]
Because of the last assertion in Proposition \ref{Fact_Williams} and dominated convergence, the right hand side converges to $0$ when $h$ goes to infinity. 
From this and the convergence of $( \tilde{I}(h) )_{h > 0}$ to $\tilde{\mathcal{R}}$ we deduce that $(J(h))_{h>0}$ converges in distribution to $\tilde{\mathcal{R}}$ (and therefore to $\mathcal{R}$). 
In order to justify \eqref{cvrlapl}, it only remain to prove a uniform integrability condition. In particular, \eqref{cvrlapl} will follow if we prove the existence of a positive $\lambda_0$ and a positive finite constant $C$ such that
\begin{eqnarray}
\forall h > 0, \ \ \ \mathbb{E} \left [ e^{\lambda_0 J(h)} \right ] \leq C. \label{tvtef4.00}
\end{eqnarray}
Thanks to Theorems 1.1 and 1.13 of \cite{foncexpovech}, the positive random variables $I(V^{\uparrow})$ and $I(\hat V^{\uparrow})$ admit some finite exponential moments. We can therefore choose a positive $\lambda_0$ such that 
\begin{eqnarray}
\mathbb{E} \left [ e^{\lambda_0 I(V^{\uparrow})} \right ] < +\infty \ \ \ \text{and} \ \ \ \mathbb{E} \left [ e^{2 \lambda_0 I(\hat V^{\uparrow})} \right ] < +\infty. \label{tvtef4.0}
\end{eqnarray}
{Let us fix such a positive $\lambda_0$ and $h > 0$. Recall \eqref{exprjhpi} and the fact that, according to Proposition \ref{standardwilliams}, $\tilde P_1^{(1)}$ and $\tilde P_2^{(1)}$ are independent. As a consequence }
\begin{align}
\mathbb{E} \left [ e^{\lambda_0 J(h)} \right ] & = \mathbb{E} \left [ \exp \left ( \lambda_0 \int_0^{\tau(\tilde P_1^{(1)}, h/2+)} e^{-\tilde P_1^{(1)}(u)} du \right ) \right ] \times \mathbb{E} \left [ \exp \left ( \lambda_0  \int_0^{\tau(\tilde P_2^{(1)}, h/2)} e^{-\tilde P_2^{(1)}(u)} du \right ) \right ]. \label{tvtef4}
\end{align}
Then, according to Proposition \ref{standardwilliams}, 
\begin{align}
\mathbb{E} \left [ \exp \left ( \lambda_0  \int_0^{\tau(\tilde P_2^{(1)}, h/2)} e^{-\tilde P_2^{(1)}(u)} du \right ) \right ] = \mathbb{E} \left [ \exp \left ( \lambda_0 \int_0^{\tau(V^{\uparrow}, h/2)} e^{-V^{\uparrow}(u)} du \right ) \right ] \leq \mathbb{E} \left [ e^{\lambda_0 I(V^{\uparrow})} \right ]. \label{tvtef5}
\end{align}

From Remark \ref{iid} and the fact that $(\tilde m_i)_{i \geq 1}$ is a subsequence of $(m_i)_{i \geq 1}$ we have, 
\begin{align*}
& \mathbb{E} \left [ \exp \left ( \lambda_0 \int_0^{\tau(\tilde P_1^{(1)}, h/2+)} e^{-\tilde P_1^{(1)}(u)} du \right ) \right ] = \mathbb{E} \left [ \exp \left ( \lambda_0 \int_0^{\tau(\tilde P_1^{(2)}, h/2+)} e^{-\tilde P_1^{(2)}(u)} du \right ) \right ] \\
 = & \mathbb{E} \left [ \exp \left ( \lambda_0 \int_0^{\tau(P_1^{(2)}, h/2+)} e^{-P_1^{(2)}(u)} du \right ) \mathds{1}_{\tilde P_1^{(2)} = P_1^{(2)}} \right ] + \sum_{i > 2} \mathbb{E} \left [ \exp \left ( \lambda_0 \int_0^{\tau(P_1^{(i)}, h/2+)} e^{-P_1^{(i)}(u)} du \right ) \mathds{1}_{\tilde P_1^{(2)} = P_1^{(i)}} \right ] \\
 \leq & \mathbb{E} \left [ \exp \left ( \lambda_0 \int_0^{\tau(P_1^{(2)}, h/2+)} e^{-P_1^{(2)}(u)} du \right ) \right ] + \sum_{i > 2} \sqrt{\mathbb{E} \left [ \exp \left ( 2 \lambda_0 \int_0^{\tau(P_1^{(i)}, h/2+)} e^{-P_1^{(i)}(u)} du \right ) \right ]} \sqrt{\mathbb{P} \left ( \tilde P_1^{(2)} = P_1^{(i)} \right )}. 
\end{align*}
Now, according to Proposition \ref{Fact_Williams}, {the law of $P_1^{(i)}$ is, for any $i \geq 2$, absolutely continuous with respect to the law of the process $(\hat{V}^{\uparrow}(x))_{0 \leq x \leq \tau(\hat{V}^{\uparrow}, h+)}$ and the density is bounded by $2$ when $h$ is large enough. As a consequence the two expectations in the above expression} are (for large $h$) less than respectively $2 \mathbb{E} [ e^{\lambda_0 I(\hat V^{\uparrow})} ]$ and  $2 \mathbb{E} [ e^{2 \lambda_0 I(\hat V^{\uparrow})} ]$. Using the arguments of the proof of Lemma \ref{minimacoincide} we can prove that for $h$ large enough $\mathbb{P} ( \tilde P_1^{(2)} = P_1^{(i)} ) \leq e^{- \delta \kappa (i - 2) h /3}$. This proves that
\begin{eqnarray}
\mathbb{E} \left [ \exp \left ( \lambda_0 \int_0^{\tau(\tilde P_1^{(1)}, h/2+)} e^{-\tilde P_1^{(1)}(u)} du \right ) \right ] \leq 3 \sqrt{\mathbb{E} \left [ e^{2 \lambda_0 I(\hat V^{\uparrow})} \right ]}, \label{tvtef6}
\end{eqnarray}
for $h$ large enough. Now, combining \eqref{tvtef4}, \eqref{tvtef5}, \eqref{tvtef6} and \eqref{tvtef4.0}, we get \eqref{tvtef4.00} so \eqref{cvrlapl} is proved as well. 

\end{proof}

\subsection{Asymptotic of the sequence of $h$-minima} \label{firstmin}

In this subsection, we are interested in the asymptotic distance between the $h$-minima when $h$ goes to infinity. This leads to estimates that are useful to study the local time of the diffusion in $V$ outside the bottoms of the valleys, and Theorem \ref{seqmincvverspoiss} is also proved in the end of this subsection. 
First, we define the first ascend of $h$ for $V - \underline{V}$: 
\[ \tau^*(h) := \inf \left \{ u \geq 0, \ (V- \underline{V})(u) = h \right \}, \ m^*(h) := \inf \left \{ u \geq 0, \ V(u) = \underline{V}(\tau^*(h)) \right \}. \]
{Note that, since $\tilde \tau_{0}(h) = 0$, $\tau^*(h)$ and $m^*(h)$ coincide almost surely with respectively $\tilde \tau_{1}^*(h)$ and $\tilde m_{1}^*$, defined in \eqref{ascend} and \eqref{min}. }We study $m^*(h)$ by the mean of excursion theory. Let $\mathcal{F}$ denote the space of excursions, that is,  c\`ad-l\`ag functions from $[0, +\infty[$ to $\mathbb{R}$, starting at zero and killed at the first positive instant when they reach $0$. Note that this instant can possibly be infinite. For $\xi \in \mathcal{F}$, recall the notation $\zeta(\xi) := \inf \{ s > 0, \ \xi(s) = 0 \}$ for the length of the excursion $\xi$. Also, let $\mathcal{F}_{h, -}$ and $\mathcal{F}_{h, +}$ denote respectively the set of excursions whose height is strictly less than $h$ and the set of excursions higher than $h$: 
\[ \mathcal{F}_{h, -} := \left \{ \xi \in \mathcal{F}, \ \sup_{[0, \zeta]} \xi < h \right \}, \ \ \ \ \ \mathcal{F}_{h, +} := \left \{ \xi \in \mathcal{F}, \ \sup_{[0, \zeta]} \xi \geq h \right \}. \]

Recall from Subsection \ref{factsandnotations} that $V - \underline{V}$ is a c\`ad-l\`ag Markov process. With the help of Fact \ref{vnb}, we see that $\{0\}$ is instantaneous for $V - \underline{V}$ (and it is regular if and only if $V$ has unbounded variation). Excursion theory above $0$ can thus be applied to $V - \underline{V}$ ({see Section IV of \cite{Bertoin}}). Let $L$ be a local time at $0$ for $V - \underline{V}$, $\mathcal{N}$ the associated excursion measure, and $L^{-1}$ the right continuous inverse of $L$. Then, the excursions, indexed by $L$, above $0$ of $V - \underline{V}$ form a Poisson point process on $\mathcal{F}$ with intensity measure $\mathcal{N}$. In the irregular case (when $V$ has bounded variation) the local time $L$ has to be defined artificially as in \cite{Bertoin}, Section IV.5. In this case, the excursion measure is proportional to the law of the first excursion and in particular the total mass of the excursion measure is finite. 

Let us define $S^{h, -}$ and $S^{h, +}$ to be two independent pure jump subordinators with L\'evy measure respectively $\zeta \mathcal{N}(\mathcal{F}_{h, -} \cap .)$ and $\zeta \mathcal{N}(\mathcal{F}_{h, +} \cap .)$, the image measures of respectively $\mathcal{N}(\mathcal{F}_{h, -} \cap .)$ and $\mathcal{N}(\mathcal{F}_{h, +} \cap .)$ by $\zeta$. Since $\zeta \mathcal{N}(\mathcal{F}_{h, -} \cap .) + \zeta \mathcal{N}(\mathcal{F}_{h, +} \cap .) = \zeta \mathcal{N}$, the L\'evy-Khintchine formula yields that $S := S^{h, -} + S^{h, +}$ is a pure jump subordinator with L\'evy measure $\zeta \mathcal{N}$, it is therefore equal in law to $L^{-1}$. We also define $T_h$ to be an exponential random variable with parameter $\mathcal{N}(\mathcal{F}_{h, +})$, independent from $S^{h, -}$, $S^{h, +}$ and $S$. We can now express $m^*(h)$ in term of these objects. 

\begin{lemme} \label{asympminsubexp}

\[ m^*(h) \overset{\mathcal{L}}{=} S^{h, -}(T_h), \ \ \ \text{and} \ \ \ \tau^*(h) - m^*(h) \overset{\mathcal{L}}{=} \tau(V^{\uparrow}, h). \]

\end{lemme}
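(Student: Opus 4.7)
The plan is to read off both identities from the excursion theory of the reflected process $V-\underline V$ above $0$. Recall that the excursions above $0$ of $V-\underline V$ form a Poisson point process $(e_s)_{s\geq 0}$ on $\mathcal F$ with intensity $\mathcal N$, and that for every $s$ at which an excursion occurs one has $L^{-1}(s)-L^{-1}(s-)=\zeta(e_s)$. The key observation is that the first ascent of $h$ for $V-\underline V$ takes place during the first excursion whose height is at least $h$: if we set
\[
\sigma_h:=\inf\{s>0,\ e_s\in\mathcal F_{h,+}\},
\]
then $m^*(h)=L^{-1}(\sigma_h-)$ (the starting time of this excursion in the original time scale) and $\tau^*(h)=L^{-1}(\sigma_h-)+\tau(e_{\sigma_h},h)$.

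First I would decompose the Poisson point process of excursions into two independent Poisson point processes on the disjoint sets $\mathcal F_{h,-}$ and $\mathcal F_{h,+}$, with intensities $\mathcal N(\cdot\cap\mathcal F_{h,-})$ and $\mathcal N(\cdot\cap\mathcal F_{h,+})$ respectively (the latter with finite mass $\mathcal N(\mathcal F_{h,+})<\infty$, since $V-\underline V$ reaches $h$ in finite time a.s.). Then $\sigma_h$ is the first arrival time of the second point process and is therefore exponential with parameter $\mathcal N(\mathcal F_{h,+})$, and it is independent of the first point process. Summing the jumps of $L^{-1}$ coming from the small excursions gives a subordinator with L\'evy measure $\zeta\mathcal N(\cdot\cap\mathcal F_{h,-})$, namely a copy of $S^{h,-}$, evaluated up to the independent time $\sigma_h$. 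Since no jump of the small-excursion process occurs at time $\sigma_h$, one has $L^{-1}(\sigma_h-)=S^{h,-}(\sigma_h)\overset{\mathcal L}{=}S^{h,-}(T_h)$, which is the first equality.

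For the second equality, I would identify the law of the first excursion of $V-\underline V$ attaining $h$, read up to its hitting time of $h$. Under the conditional law given $\{e_{\sigma_h}\in\mathcal F_{h,+}\}$, the excursion $e_{\sigma_h}$ has distribution $\mathcal N(\cdot\mid\mathcal F_{h,+})$, and the piece up to $\tau(e_{\sigma_h},h)$ is exactly what was identified in the proof of Proposition~\ref{Fact_Williams} via Proposition~VII.15 of \cite{Bertoin} as $(V^{\uparrow}(x))_{0\leq x\leq\tau(V^{\uparrow},h)}$; independence from $\sigma_h$ and from the small-excursion process is immediate from the Poisson decomposition. This yields $\tau^*(h)-m^*(h)\overset{\mathcal L}{=}\tau(V^{\uparrow},h)$ and, simultaneously, the joint independence of the two components.

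The only delicate point is the case where $V$ has bounded variations, in which $0$ is irregular for $V-\underline V$ and the local time $L$ must be defined artificially as in \cite{Bertoin}, Section IV.5; there the excursion measure has finite mass and the excursion point process is a genuine sequence of \emph{iid} excursions interspersed with \emph{iid} sojourns at $0$, but the splitting/thinning argument and the identification of the law of the excursion reaching $h$ go through unchanged, so the conclusion is the same in both regimes.
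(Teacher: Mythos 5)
Your proposal is correct and follows essentially the same route as the paper: the paper likewise writes $m^*(h)$ as the sum of the lengths of the sub-$h$ excursions occurring before local time $L(\tau^*(h))$, notes that this local time is exponential with parameter $\mathcal N(\mathcal F_{h,+})$ and independent of the small-excursion process, identifies the resulting sum with $S^{h,-}(T_h)$ via the L\'evy--Khintchine representation, and obtains the law of $\tau^*(h)-m^*(h)$ by identifying the first excursion higher than $h$, killed at its hitting time of $h$, with $V^{\uparrow}$ killed at $h$ through Proposition VII.15 of \cite{Bertoin}. Your extra remarks on the thinning of the point process and on the bounded-variation case are consistent with the paper's setup and do not change the argument.
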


\begin{proof} 

Considering $(e_t)_{t \geq 0}$, the process of excursions indexed by $L$ of $V - \underline{V}$, we have that $L(\tau^*(h))$ is the instant when occurs the first jumps belonging to $\mathcal{F}_{h, +}$, and this jump corresponds to the excursions having $m^*(h)$ as starting point. We can thus write
\begin{eqnarray}
m^*(h) = \sum_{0 \leq t \leq L(\tau^*(h))} \zeta(e_t) \mathds{1}_{e_t \in \mathcal{F}_{h, -}}. \label{asympminsubexp1}
\end{eqnarray}
{The restriction $(e_t \mathds{1}_{e_t \in \mathcal{F}_{h, -}})_{t \geq 0}$ of the Poisson point process $(e_t)_{t \geq 0}$ to the subset $\mathcal{F}_{h, -}$ is a Poisson point process with intensity measure $\mathcal{N}(\mathcal{F}_{h, -} \cap .)$. Then, the process $(\zeta(e_t) \mathds{1}_{e_t \in \mathcal{F}_{h, -}})_{t \geq 0}$ of the images, by the measurable function $\zeta$, of the points of $(e_t \mathds{1}_{e_t \in \mathcal{F}_{h, -}})$ is a Poisson point process on $\mathbb{R}_+$ with intensity measure $\zeta \mathcal{N}(\mathcal{F}_{h, -} \cap .)$. As a consequence the process in the right hand side of \eqref{asympminsubexp1}, $\sum_{0 \leq t \leq .} \zeta(e_t) \mathds{1}_{e_t \in \mathcal{F}_{h, -}}$, is the sum of the jumps of a Poisson point process on $\mathbb{R}_+$ with intensity measure $\zeta \mathcal{N}(\mathcal{F}_{h, -} \cap .)$. From the L\'evy-Ito representation, it has the same law as the subordinator $S^{h, -}$. Also, since $L(\tau^*(h))$ is the instant of the first jump of $(e_t)_{t \geq 0}$ in $\mathcal{N}(\mathcal{F}_{h, +})$, it follows an exponential distribution with parameter $\mathcal{N}(\mathcal{F}_{h, +})$ and is independent from the process $(e_t \mathds{1}_{e_t \in \mathcal{F}_{h, -}})_{t \geq 0}$. As a consequence $m^*(h)$ has the same law as $S^{h, -}$ taken at $T_h$ (an independent exponential time with parameter $\mathcal{N}(\mathcal{F}_{h, +})$). This yields the result for $m^*(h)$. }

Then, $(V(x + m^*(h)) - V(m^*(h)), \ 0 \leq x \leq \tau^*(h) - m^*(h))$ is, considered up to its hitting time of $h$, the first excursion higher than $h$ of $V - \underline{V}$. As mentioned in the proofs of Propositions \ref{Fact_Williams} and \ref{standardwilliams}, the latter is equal in law to $(V^{\uparrow}(x))_{0 \leq x \leq \tau(V^{\uparrow}, h)}$. The result about $\tau^*(h) - m^*(h)$ follows. 

\end{proof}

We are now left to study $S$ and $\mathcal{N}(\mathcal{F}_{h, +})$, the parameter of $T_h$. For $S$, we have the following lemma :

\begin{lemme} \label{asympminesp}

The random variable $L^{-1}(1)$ (or equivalently $S(1)$) admits some finite exponential moments. 


\end{lemme}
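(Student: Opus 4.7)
The plan is to identify $L^{-1}(1)$ with (a constant multiple of) the first-passage time of $V$ at $-1$, and then to bound the exponential moments of this hitting time via the Wald exponential martingale. Since $V$ has no positive jumps, $\underline{V}$ is continuous and non-increasing; in the unbounded-variation case, Fact~\ref{vnb} gives the regularity of $\{0\}$ for $V-\underline{V}$, and because $-\underline{V}$ grows only on the contact set $\{V=\underline{V}\}$, it is, up to a positive multiplicative constant $c$, a version of the local time $L$ at $0$. With this normalization one has $L^{-1}(1)=c^{-1}\tau(V,-1)$ in distribution, and since existence of a finite exponential moment is preserved by scaling, it suffices to establish this property for $\tau(V,-1)$. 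The bounded-variation case reduces to the same question, because then $L^{-1}$ is a compound Poisson subordinator whose jump-size distribution is stochastically dominated by $\tau(V,-\delta)$ for a small $\delta>0$.

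The core computation is then the classical one. Because $V$ drifts to $-\infty$, Corollary~VI.2 of \cite{Bertoin} gives $\Psi_V'(0^+)=\mathbb{E}[V(1)]<0$; combined with the convexity of $\Psi_V$ and the identities $\Psi_V(0)=\Psi_V(\kappa)=0$, this forces $\Psi_V(\theta)<0$ for every $\theta\in(0,\kappa)$. Fix such a $\theta$ and set $\lambda_0:=-\Psi_V(\theta)>0$. The Wald martingale $M_s:=\exp(\theta V(s)-s\,\Psi_V(\theta))$ is non-negative with $\mathbb{E}[M_0]=1$, so optional stopping at the bounded time $\tau(V,-1)\wedge n$ gives $\mathbb{E}[M_{\tau(V,-1)\wedge n}]=1$ for every $n\in\mathbb{N}$. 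Because $V$ crosses every lower level continuously and drifts to $-\infty$, we have $V(\tau(V,-1))=-1$ and $\tau(V,-1)<+\infty$ almost surely, whence $M_{\tau(V,-1)\wedge n}$ converges almost surely to $\exp(-\theta+\lambda_0\,\tau(V,-1))$. Fatou's lemma then yields
\[ \mathbb{E}\!\left[e^{\lambda_0\,\tau(V,-1)}\right]\leq e^{\theta}<+\infty, \]
which, combined with the reduction of the first paragraph, proves the lemma.

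The only slightly delicate step is the identification of $L$ with a multiple of $-\underline{V}$ in the unbounded-variation case and its analogue in the bounded-variation case; the remainder of the argument is a textbook application of the exponential martingale and Fatou's lemma, and requires no ideas beyond those already used in the paper (and in Section~5 of \cite{Singh}, where an analogous exponential-integrability statement is established).
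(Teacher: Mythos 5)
There is a genuine gap at the very first step. You claim that, since $V$ has no positive jumps, $\underline{V}$ is continuous and that $-\underline{V}$ is (up to a constant) a version of the local time $L$ at $0$ of $V-\underline{V}$, so that $L^{-1}(1)\overset{\mathcal{L}}{=}c^{-1}\tau(V,-1)$. This is false, and the direction of the jumps is exactly the wrong one: spectral negativity makes the \emph{supremum} $\overline{V}$ continuous (so $\overline{V}$ is a local time for the process reflected at its supremum, and the ascending ladder height is a pure drift), but the \emph{infimum} $\underline{V}$ jumps downward each time $V$ overshoots its running infimum by a negative jump. Hence $-\underline{V}$ is not a continuous additive functional and cannot be the local time of $V-\underline{V}$; the descending ladder height process here is a genuine subordinator with jumps, $L^{-1}(1)$ is the sum of the lengths of the excursions occurring up to local time $1$ (not the first passage time below a fixed level), and the reduction to $\tau(V,-1)$ collapses. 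For the same reason your later assertion that ``$V$ crosses every lower level continuously'' so that $V(\tau(V,-1))=-1$ is wrong (there is an overshoot unless $V$ creeps downward), which also breaks the Fatou step: from $\mathbb{E}[e^{\theta V(\tau)+\lambda_0\tau}]\leq 1$ with $V(\tau)\leq -1$ possibly much smaller than $-1$ you cannot conclude $\mathbb{E}[e^{\lambda_0\tau}]\leq e^{\theta}$. (The exponential integrability of $\tau(V,]-\infty,-1])$ itself is true, but is more safely obtained from $\mathbb{P}(\tau>t)\leq\mathbb{P}(V(t)>-1)\leq e^{\theta}e^{t\Psi_V(\theta)}$, which is the paper's Lemma \ref{tpsatteinthatv}; the issue is that this is not the quantity the lemma is about.)

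What the paper does instead is work directly with the excursion measure: it shows $\mathcal{N}(\zeta(\xi)>t)\leq c_1e^{-c_2t}$ by splitting according to whether the excursion reaches height $1$ before time $1$, applying the Markov property inside the excursion and the bound $\mathbb{P}(V(t-1)>-1)\leq e^{\lambda}e^{(t-1)\Psi_V(\lambda)}$ for $\lambda\in\,]0,\kappa[$; the exponential tail of the L\'evy measure $\zeta\mathcal{N}$ of the subordinator $L^{-1}$ then yields finite exponential moments of $L^{-1}(1)$ via Theorem 25.3 of \cite{Sato}. If you want to salvage a hitting-time route, you would need to control the excursion lengths (equivalently, return times of $V$ to its past infimum), not $\tau(V,-1)$, and that is essentially the excursion-measure estimate above.
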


\begin{proof}

{According to Theorem VII.4(ii) of \cite{Bertoin} we have 
\begin{eqnarray}
\forall \alpha \geq 0, \ \mathbb{E} \left [ e^{-\alpha L^{-1}(1)} \right ] = \exp \left ( c \alpha / \Phi_{V}(\alpha) \right ), \label{laplaceinvtl}
\end{eqnarray}
where $\Phi_{V} (\alpha) := \inf \{ x \geq \kappa, \Psi_{V}(x) = \alpha \}$ and $c$ is a positive constant. 

$V(1)$ has its Laplace transform, as well as its Laplace exponent $\Psi_{V}$, defined on $[0, +\infty[$. Then, since $\Psi_{V}(\kappa) = 0$ and $\Psi'_{V}(\kappa) > 0$, the holomorphic local inversion theorem yields that 
$\Phi_{V}$ extends on a neighborhood of $0$. As a consequence, the right hand side of \eqref{laplaceinvtl} extends on a neighborhood of $0$. This proves that $L^{-1}(1)$ has a Laplace transform defined on a neighborhood of $0$ so the result is proved. }

\end{proof}

The next lemma deals with the asymptotic behavior of $\mathcal{N}(\mathcal{F}_{h, +})$. 

\begin{lemme} \label{asympminparam}

\[ \mathcal{N}(\mathcal{F}_{h, +}) = e^{-\kappa h} \mathcal{N}(\mathcal{F}_{1, +}) \times \left (e^{\kappa} - \mathbb{E} \left [ e^{\kappa V_1(\tau(V_1, ]-\infty, 0]))} \right ] \right ) + \mathcal{O} (e^{-2\kappa h}). \]

\end{lemme}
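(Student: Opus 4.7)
The plan is to reduce $\mathcal{N}(\mathcal{F}_{h,+})$ to a one-sided exit probability for $V$ and then compute that probability with the exponential martingale $e^{\kappa V(t)}$ (which is a martingale because $\Psi_V(\kappa)=0$).

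First I would apply the strong Markov property of $V-\underline V$ (Proposition~VI.1 of \cite{Bertoin}) under the excursion measure at the time $\tau(\xi,1)$. Since $V$ has no positive jumps, neither does $V-\underline V$, so on $\{\sup\xi\ge 1\}$ the excursion $\xi$ hits $1$ continuously and $\tau(\xi,1)<\zeta(\xi)$. During a single excursion the infimum $\underline V$ stays constant, so the shifted piece $\xi(\tau(\xi,1)+\cdot)$ is just $V$ re-centered to start at $1$, killed when it first enters $]-\infty,0]$; that is, it is distributed as $V_1$ stopped at $\tau(V_1,]-\infty,0])$. Therefore, for $h\ge 1$,
\[
   \mathcal{N}(\mathcal{F}_{h,+})
   \;=\; \mathcal{N}(\mathcal{F}_{1,+})\,p_h,
   \qquad p_h\;:=\;\mathbb{P}\bigl(\tau(V_1,h)<\tau(V_1,]-\infty,0])\bigr).
\]

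Next I would compute $p_h$ by optional stopping. Let $T:=\tau(V_1,h)\wedge\tau(V_1,]-\infty,0])$. This stopping time is a.s.\ finite because $V$ drifts to $-\infty$, and $e^{\kappa V_1(t\wedge T)}\le e^{\kappa h}$ is uniformly bounded. Since $V$ has no positive jumps, $V_1(T)=h$ on $\{T=\tau(V_1,h)\}$, so optional stopping gives
\[
   e^{\kappa} \;=\; e^{\kappa h}\,p_h + A_h,\qquad
   A_h:=\mathbb{E}\bigl[e^{\kappa V_1(\tau(V_1,]-\infty,0]))}\mathds{1}_{\tau(V_1,]-\infty,0])<\tau(V_1,h)}\bigr],
\]
i.e.\ $p_h=e^{-\kappa h}(e^{\kappa}-A_h)$. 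In particular $A_h\ge0$ yields $p_h\le e^{\kappa-\kappa h}$.

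Finally I would quantify the convergence of $A_h$. Since $\tau(V_1,h)\to\infty$ almost surely as $h\to\infty$, monotone convergence gives $A_h\uparrow \mathbb{E}[e^{\kappa V_1(\tau(V_1,]-\infty,0]))}]$. Using $V_1(\tau(V_1,]-\infty,0]))\le 0$, the defect is controlled by
\[
   0\;\le\;\mathbb{E}[e^{\kappa V_1(\tau(V_1,]-\infty,0]))}]-A_h
   \;\le\;\mathbb{P}(\tau(V_1,h)<\tau(V_1,]-\infty,0]))
   \;=\;p_h\;\le\; e^{\kappa-\kappa h}.
\]
Substituting back gives $p_h=e^{-\kappa h}(e^{\kappa}-\mathbb{E}[e^{\kappa V_1(\tau(V_1,]-\infty,0]))}])+\mathcal{O}(e^{-2\kappa h})$, and multiplying by the finite constant $\mathcal{N}(\mathcal{F}_{1,+})$ yields the lemma.

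The main obstacle will be making the first step rigorous, i.e.\ the excursion-theoretic identity $\mathcal{N}(\mathcal{F}_{h,+})=\mathcal{N}(\mathcal{F}_{1,+})p_h$, in both the regular case and the irregular (bounded-variation) one where $\mathcal{N}$ is built by convention; but once one knows that $V-\underline V$ is strong Markov and that $\underline V$ is constant along each excursion, the rest is an elementary martingale computation.
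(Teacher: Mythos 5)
Your proposal is correct and follows essentially the same route as the paper: the same excursion-theoretic reduction $\mathcal{N}(\mathcal{F}_{h,+})=\mathcal{N}(\mathcal{F}_{1,+})p_h$, the same identity $e^{\kappa}=e^{\kappa h}p_h+A_h$, and the same $\mathcal{O}(e^{-\kappa h})$ bound on the defect $\mathbb{E}[e^{\kappa V_1(\tau(V_1,]-\infty,0]))}]-A_h$. The only cosmetic difference is that you obtain the key identity by optional stopping of the exponential martingale at the two-sided exit time, whereas the paper gets it by decomposing $\mathbb{P}(\sup V_1>h)=e^{-\kappa(h-1)}$ via the strong Markov property at $\tau(V_1,]-\infty,0])$ — two equivalent formulations of the same computation.
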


\begin{proof}
We fix $h > 1$. From the Markov property applied at the hitting time of $1$ in the excursions belonging to $\mathcal{F}_{1, +}$ we get
\begin{eqnarray}
\mathcal{N}(\mathcal{F}_{h, +}) / \mathcal{N}(\mathcal{F}_{1, +}) = \mathbb{P} \left ( \tau \left (V_1, h \right ) < \tau \left (V_1, ]-\infty, 0] \right ) \right ) =: p_h. \label{asympminparam1}
\end{eqnarray}

{Recall from Subsection \ref{factsandnotations} that $\sup_{[0, +\infty[} V$ follows an exponential distribution with parameter $\kappa$.} Then, 
\begin{align*}
e^{-\kappa(h-1)} & = \mathbb{P} \left ( \sup_{[0, +\infty[} V_1 > h \right ) \\
& = p_h + \mathbb{P} \left ( \tau(V_1, ]-\infty, 0]) < \tau(V_1, h), \ \sup_{[0, +\infty[} V_1(\tau(V_1, ]-\infty, 0]) + .) > h \right ) \\
& {= p_h + \mathbb{P} \left ( \tau(V_1, ]-\infty, 0]) < \tau(V_1, h), \ \sup_{[0, +\infty[} V_1^{\tau(V_1, ]-\infty, 0])} > h - V_1(\tau(V_1, ]-\infty, 0])) \right )} \\
& = p_h + \mathbb{E} \left [ e^{-\kappa {[h - V_1(\tau(V_1, ]-\infty, 0]))]}} \mathds{1}_{\tau(V_1, ]-\infty, 0]) < \tau(V_1, h)} \right ], 
\end{align*}
where we have used the Markov property at the stopping time $\tau(V_1, ]-\infty, 0])$. We get
\begin{align*}
p_h & = e^{-\kappa h} \left ( e^{\kappa} - \mathbb{E} \left [ e^{\kappa V_1(\tau(V_1, ]-\infty, 0]))} \mathds{1}_{\tau(V_1, ]-\infty, 0]) < \tau(V_1, h)} \right ] \right ) \\
& = e^{-\kappa h} \left ( e^{\kappa} - \mathbb{E} \left [ e^{\kappa V_1(\tau(V_1, ]-\infty, 0]))} \right ] + \mathbb{E} \left [ e^{\kappa V_1(\tau(V_1, ]-\infty, 0]))} \mathds{1}_{\tau(V_1, ]-\infty, 0]) > \tau(V_1, h)} \right ] \right ). 
\end{align*}
{Combining with \eqref{asympminparam1} we get 
\begin{align*}
\mathcal{N}(\mathcal{F}_{h, +}) & = e^{-\kappa h} \mathcal{N}(\mathcal{F}_{1, +}) \times \left (e^{\kappa} - \mathbb{E} \left [ e^{\kappa V_1(\tau(V_1, ]-\infty, 0]))} \right ] \right ) \\ 
& + e^{-\kappa h} \mathcal{N}(\mathcal{F}_{1, +}) \times \mathbb{E} \left [ e^{\kappa V_1(\tau(V_1, ]-\infty, 0]))} \mathds{1}_{\tau(V_1, ]-\infty, 0]) > \tau(V_1, h)} \right ] 
\end{align*}
and it only remains to bound the last term.} Since $e^{\kappa V_1(\tau(V_1, ]-\infty, 0]))}$ is almost surely less than $1$ we have
\[ 0 \leq \mathbb{E} \left [ e^{\kappa V_1(\tau(V_1, ]-\infty, 0]))} \mathds{1}_{\tau(V_1, ]-\infty, 0]) > \tau(V_1, h)} \right ] \leq \mathbb{P} \left ( \tau(V_1, h) < +\infty \right ) = \mathbb{P} \left ( \sup_{[0, +\infty[} V_1 > h \right ) = e^{-\kappa (h-1)}, \]
and the result follows. 

\end{proof}

{
\begin{remarque} \label{autrecte}
Recall the spectrally negative L\'evy process $V^{\sharp}$ (known as $V$ conditioned to drift to $+\infty$) defined in Subsection \ref{extrema}. Let $(L^{\sharp, -1}, I^{\sharp})$ denote the descending ladder process of $V^{\sharp}$: $L^{\sharp}$ is a local time at $0$ of $V^{\sharp} - \underline{V^{\sharp}}$ and $L^{\sharp, -1}$ is the inverse of this local time. For any $t \geq 0$, $I^{\sharp}(t) := -\underline{V^{\sharp}}(L^{\sharp, -1}(t))$. We denote by $\mathcal{I}^{\sharp}$ the potential measure of $I^{\sharp}$, $\mathcal{I}^{\sharp}([0, x]) := \mathbb{E} [ \int_0^{L^{\sharp}(+\infty)} \mathds{1}_{I^{\sharp}(t) \in [0, x]} dt ]$ for $x \geq 0$. Then, the constant $e^{\kappa} - \mathbb{E} [ e^{\kappa V_1(\tau(V_1, ]-\infty, 0]))} ]$ in Lemma \ref{asympminparam} can be re-expressed as $e^{\kappa} \mathcal{I}^{\sharp}([0,1]) / \mathcal{I}^{\sharp}([0,+\infty[)$ or $e^{\kappa} \mathbb{P} (\inf_{[0, +\infty[} V_1^{\sharp} > 0 )$. Indeed, from the proof of the lemma, we have that 
\begin{eqnarray}
e^{\kappa} - \mathbb{E} \left [ e^{\kappa V_1(\tau(V_1, ]-\infty, 0]))} \right ] = \lim_{h \rightarrow +\infty} e^{\kappa h} p_h, \label{autrecte1}
\end{eqnarray}
where $p_h = \mathbb{P} ( \tau (V_1, h ) < \tau (V_1, ]-\infty, 0] ) )$. According to \eqref{scalefct} we have $p_h = W(1)/W(h)$, where $W$ is the scale function of $V$ defined in the beginning of Subsection \ref{extrema}. According to (9.4.6) of \cite{Doney} there is a positive constant $c$ such that for any $x > 0$ we have $W(x) = c e^{\kappa x} \mathcal{I}^{\sharp}([0,x])$. As a consequence 
\[ \lim_{h \rightarrow +\infty} e^{\kappa h} p_h = \lim_{h \rightarrow +\infty} e^{\kappa h} \frac{W(1)}{W(h)} = \lim_{h \rightarrow +\infty} e^{\kappa h} \frac{e^{\kappa} \mathcal{I}^{\sharp}([0,1])}{e^{\kappa h} \mathcal{I}^{\sharp}([0,h])} = e^{\kappa} \frac{\mathcal{I}^{\sharp}([0,1])}{\mathcal{I}^{\sharp}([0,+\infty[)}. \]
Putting together with \eqref{autrecte1} we get that $e^{\kappa} - \mathbb{E} [ e^{\kappa V_1(\tau(V_1, ]-\infty, 0]))} ] = e^{\kappa} \mathcal{I}^{\sharp}([0,1])/\mathcal{I}^{\sharp}([0,+\infty[)$, as we claimed. Also, according to (9.4.4) of \cite{Doney} we have that for any $x \geq 0$, $W(x) = e^{\kappa x} W^{\sharp}(x)$, where $W^{\sharp}$ is the scale function of $V^{\sharp}$. We thus get 
\[ \lim_{h \rightarrow +\infty} e^{\kappa h} p_h = \lim_{h \rightarrow +\infty} e^{\kappa h} \frac{e^{\kappa} W^{\sharp}(1)}{e^{\kappa h} W^{\sharp}(h)} = \lim_{h \rightarrow +\infty} e^{\kappa} \mathbb{P} \left ( \inf_{[0, \tau (V_1^{\sharp}, h )]} V_1^{\sharp} > 0 \right ) = e^{\kappa} \mathbb{P} \left (\inf_{[0, +\infty[} V_1^{\sharp} > 0 \right ). \]
Putting together with \eqref{autrecte1} we get that $e^{\kappa} - \mathbb{E} [ e^{\kappa V_1(\tau(V_1, ]-\infty, 0]))} ] = e^{\kappa} \mathbb{P} (\inf_{[0, +\infty[} V_1^{\sharp} > 0 )$, as we claimed. 

\end{remarque}

}

We can now get the asymptotic behavior of $m^*(h)$. 

\begin{prop} \label{asympmin}

\[ e^{-\kappa h} \ m^*(h) \overset{\mathcal{L}}{\underset{h \rightarrow +\infty}{\longrightarrow}} \mathcal{E}(q), \]
where $\mathcal{E}(q)$ is the exponential distribution with parameter 
\[ q := \mathcal{N}(\mathcal{F}_{1, +}) \times \left (e^{\kappa} - \mathbb{E} \left [ e^{\kappa V_1(\tau(V_1, ]-\infty, 0]))} \right ] \right ) \big / \int_0^{+\infty} \zeta(x) \mathcal{N}(dx). \]

\end{prop}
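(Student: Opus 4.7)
My plan is to compute the Laplace transform of $e^{-\kappa h} m^*(h)$ explicitly, using Lemma \ref{asympminsubexp}, and show it converges to the Laplace transform of $\mathcal{E}(q)$.

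First, by Lemma \ref{asympminsubexp}, $m^*(h) \overset{\mathcal{L}}{=} S^{h,-}(T_h)$. Let $\Phi_h$ denote the Laplace exponent of the subordinator $S^{h,-}$, so that
\[ \Phi_h(\mu) = \int_{\mathcal{F}_{h,-}} (1-e^{-\mu \zeta(\xi)}) \mathcal{N}(d\xi). \]
Since $T_h$ is exponential with parameter $\mathcal{N}(\mathcal{F}_{h,+})$ and is independent from $S^{h,-}$, conditioning on $T_h$ gives
\[ \mathbb{E}\left[e^{-\lambda e^{-\kappa h} m^*(h)}\right] = \mathbb{E}\left[e^{-T_h \Phi_h(\lambda e^{-\kappa h})}\right] = \frac{\mathcal{N}(\mathcal{F}_{h,+})}{\mathcal{N}(\mathcal{F}_{h,+}) + \Phi_h(\lambda e^{-\kappa h})}. \]
I would then multiply numerator and denominator by $e^{\kappa h}$ and study the two terms $e^{\kappa h}\mathcal{N}(\mathcal{F}_{h,+})$ and $e^{\kappa h}\Phi_h(\lambda e^{-\kappa h})$ separately.

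The first is immediate from Lemma \ref{asympminparam}:
\[ e^{\kappa h}\mathcal{N}(\mathcal{F}_{h,+}) \underset{h \to +\infty}{\longrightarrow} p := \mathcal{N}(\mathcal{F}_{1,+})\left(e^{\kappa} - \mathbb{E}[e^{\kappa V_1(\tau(V_1, ]-\infty, 0]))}]\right). \]
For the second term, I rewrite
\[ e^{\kappa h}\Phi_h(\lambda e^{-\kappa h}) = \int_{\mathcal{F}_{h,-}} \frac{1-e^{-\lambda e^{-\kappa h}\zeta(\xi)}}{e^{-\kappa h}} \mathcal{N}(d\xi). \]
The integrand is pointwise dominated by $\lambda \zeta(\xi)$ (using $1-e^{-u}\leq u$), and since $\mathcal{F}_{h,-}$ increases as $h\to\infty$ to the set of excursions of finite height (a set of full $\mathcal{N}$-measure, because $\mathcal{N}(\mathcal{F}_{h,+}) \to 0$), the pointwise limit of the integrand is $\lambda \zeta(\xi)$ on this set. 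The crucial integrability of the dominating function is provided by Lemma \ref{asympminesp}: since $L^{-1}(1)$ has finite exponential moments it has finite expectation, and this expectation equals $\int_0^{+\infty}\zeta(x)\mathcal{N}(dx)$. Dominated convergence then gives
\[ e^{\kappa h}\Phi_h(\lambda e^{-\kappa h}) \underset{h \to +\infty}{\longrightarrow} \lambda \int_0^{+\infty} \zeta(x) \mathcal{N}(dx). \]

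Combining these two limits,
\[ \mathbb{E}\left[e^{-\lambda e^{-\kappa h} m^*(h)}\right] \underset{h \to +\infty}{\longrightarrow} \frac{p}{p + \lambda \int \zeta\, d\mathcal{N}} = \frac{q}{q+\lambda}, \]
with $q = p/\int_0^{+\infty}\zeta(x)\mathcal{N}(dx)$ exactly as stated, and this is the Laplace transform of $\mathcal{E}(q)$. Convergence of Laplace transforms then yields the claimed convergence in distribution. The main obstacle is the dominated convergence step, and in particular the observation that the uniform dominating function $\lambda \zeta$ is $\mathcal{N}$-integrable, which is precisely what Lemma \ref{asympminesp} guarantees; all the rest reduces to assembling Lemmas \ref{asympminsubexp}, \ref{asympminesp} and \ref{asympminparam}.
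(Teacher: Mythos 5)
Your proof is correct, but it takes a genuinely different route from the paper's. The paper also reduces to $S^{h,-}(T_h)$ via Lemma \ref{asympminsubexp}, but then argues directly on the random variable: it factors $e^{-\kappa h}S^{h,-}(T_h)$ as $\bigl(e^{-\kappa h}/\mathcal{N}(\mathcal{F}_{h,+})\bigr)\times\bigl(\mathcal{N}(\mathcal{F}_{h,+})\,T_h\bigr)\times\bigl(S^{h,-}(T_h)/T_h\bigr)$, notes that the middle factor is an exact standard exponential, handles the first factor by Lemma \ref{asympminparam}, and for the third proves $S^{h,-}(T_h)/T_h\to\mathbb{E}[S(1)]$ by combining the law of large numbers for L\'evy processes (applied to $S$, made legitimate by Lemma \ref{asympminesp}) with a separate argument showing that the remainder $S^{h,+}(T_h)/T_h$ vanishes; Slutsky's lemma then concludes. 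You instead compute the Laplace transform of $e^{-\kappa h}m^*(h)$ in closed form via the L\'evy exponent $\Phi_h$ of $S^{h,-}$ and the exponential law of $T_h$, and pass to the limit by dominated convergence, with the same three lemmas as inputs. Your route is arguably more economical: it dispenses with the law of large numbers for subordinators and with the separate treatment of $S^{h,+}$, the only delicate point being the domination $e^{\kappa h}(1-e^{-\lambda e^{-\kappa h}\zeta})\le\lambda\zeta$ with $\zeta\in L^1(\mathcal{N})$ — which you correctly identify as the content of Lemma \ref{asympminesp} — together with the observation (which you also make, using $\mathcal{N}(\mathcal{F}_{h,+})\to 0$) that infinite-height excursions form an $\mathcal{N}$-null set. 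Both are complete proofs of the proposition.
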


The parameter $q$ here is the one appearing in Theorem \ref{seqmincvverspoiss}. If $V = W_{\kappa}$, the $\kappa$-drifted Brownian motion, the calculations can be made more explicit. Using $(2.7)$ of \cite{Faggionato}, we can prove that the parameter $q$ in the above proposition (and therefore in Theorem \ref{seqmincvverspoiss}) equals $\kappa^2/2$. 

\begin{proof} of Proposition \ref{asympmin}

Thanks to Lemma \ref{asympminsubexp}, we are reduced to study $S^{h, -}(T_h)$. {Let us fix $M > 0$ arbitrary. For any $h \in [M, +\infty[$ and $t \geq 0$ we have $S^{M, -}(t) \leq S^{h, -}(t) \leq S(t)$, so in particular 
\[ \frac{S^{M, -}(T_h)}{T_h} \leq \frac{S^{h, -}(T_h)}{T_h} \leq \frac{S(T_h)}{T_h}. \]
According to Lemma \ref{asympminparam} the parameter of $T_h$, $\mathcal{N}(\mathcal{F}_{h, +})$, converges to $0$ when $h$ goes to infinity so $\mathbb{P} (T_h > t)$ converges to $1$ when $h$ goes to infinity, for any $t > 0$. According to Lemma \ref{asympminesp}, $S(1)$ and, therefore, $S^{M, -}(1)$ have finite expectation so we can use the law of large numbers for L\'evy processes (see for example Theorem 36.5 in \cite{Sato}). We deduce that $S(t)/t$ and $S^{M, -}(t)/t$ converge almost surely (and therefore in probability) to respectively $\mathbb{E} [S(1)]$ and $\mathbb{E} [S^{M, -}(1)]$. We deduce that 
\[ \mathbb{E} [S^{M, -}(1)] \overset{\mathbb{P}}{\underset{h \rightarrow +\infty}{\longleftarrow}} \frac{S^{M, -}(T_h)}{T_h} \leq \frac{S^{h, -}(T_h)}{T_h} \leq \frac{S(T_h)}{T_h} \overset{\mathbb{P}}{\underset{h \rightarrow +\infty}{\longrightarrow}} \mathbb{E} [S(1)]. \]
By monotone convergence $\mathbb{E} [S^{M, -}(1)]$ converges to $\mathbb{E} [S(1)]$ when $M$ goes to infinity. Combining with the above inequality we deduce that} 
\begin{eqnarray}
S^{h, -}(T_h) / T_h \overset{\mathbb{P}}{\underset{h \rightarrow +\infty}{\longrightarrow}} \mathbb{E} [S(1)]. \label{asympmin3}
\end{eqnarray}

Recall also that $\mathbb{E} [S(1)] = \mathbb{E} [L^{-1}(1)] = \int_0^{+\infty} x \ \zeta \mathcal{N}(dx)$. Then, 
\[ e^{-\kappa h} S^{h, -}(T_h) = (e^{-\kappa h}/\mathcal{N}(\mathcal{F}_{h, +})) \times (\mathcal{N}(\mathcal{F}_{h, +}) \times T_h) \times (S^{h, -}(T_h) / T_h). \]
Combining Lemma \ref{asympminparam}, the fact that $\mathcal{N}(\mathcal{F}_{h, +})$ is the parameter of $T_h$, \eqref{asympmin3}, and Slutsky's Lemma, we get that $e^{-\kappa h} S^{h, -}(T_h)$ converges to an exponential distribution with parameter 
\[ q := \mathcal{N} ( \mathcal{F}_{1, +}) \times \left (e^{\kappa} - \mathbb{E} \left [ e^{\kappa V_1(\tau(V_1, ]-\infty, 0]))} \right ] \right ) \big / \int_0^{+\infty} \zeta(x) \mathcal{N}(dx). \]
Thanks to Lemma \ref{asympminsubexp} we get the result. 

\end{proof}

We can now prove Theorem \ref{seqmincvverspoiss}. 

\begin{proof} of Theorem \ref{seqmincvverspoiss}

As we said in the proof of Proposition \ref{Fact_Williams}, Lemma 1 of \cite{Cheliotis2006715} applies here, so the random variables $(m_{i+1} - m_i)_{i \geq 1}$ are \textit{iid}, and $m_1$ is also independent from this sequence. To prove Theorem \ref{seqmincvverspoiss}, we thus only need to prove that the random variables $e^{-\kappa h} m_1$ and $e^{-\kappa h}(m_2 - m_1)$ converge in distribution to an exponential distribution with parameter $q$. Then, note that according to Lemma \ref{minimacoincide} applied with $n = 2$ we have $\mathbb{P}(m_1 = \tilde m_1, m_2 = \tilde m_2) \longrightarrow_{h \rightarrow +\infty} 1$, so we only need to prove the convergence of $e^{-\kappa h} \tilde m_1$ and $e^{-\kappa h}(\tilde m_2 - \tilde m_1)$. Now, according to Subsection \ref{coin}, 
\[ \tilde m_2 - \tilde m_1 = (\tilde \tau_1(h) - \tilde m_1) + (\tilde L_1 - \tilde \tau_1(h)) + (\tilde L_2^{\sharp} - \tilde L_1) + (\tilde m_2 - \tilde L_2^{\sharp}). \]
For the first term we use Proposition \ref{standardwilliams}, we get that it has the same law as $\tau(V^{\uparrow}, h)$. For the second and third terms we use the fact that, by definition, $\tilde L_1 = \tau(V^{\tilde \tau_1(h)}, ]-\infty, -h/2]) + \tilde \tau_1(h)$, $\tilde L_2^{\sharp} = \tau(V^{\tilde L_1}, ]-\infty, -e^{(1-\delta)\kappa h}]) + \tilde L_1$ and the Markov property at the stopping times $\tilde \tau_1(h)$, $\tilde L_1$, we get that these terms have the same law as respectively $\tau(V, ]-\infty, -h/2])$ and $\tau(V, ]-\infty, -e^{(1-\delta)\kappa h}])$. For the {fourth} term we use the definitions of $\tilde m_2$ and $m^*(h)$ and the Markov property at the stopping time $\tilde L_2^{\sharp}$, we get that this term has the same law as $m^*(h)$. 
By Lemma \ref{vposlapltpsatt}, $e^{-\kappa h} \tau(V^{\uparrow}, h)$ converges to $0$ in probability when $h$ goes to infinity. By Lemma \ref{tpsatteinthatv}, $e^{-\kappa h} \tau(V, ]-\infty, -h/2])$ and $e^{-\kappa h} \tau(V, ]-\infty, -e^{(1-\delta)\kappa h}])$ converge to $0$ in probability when $h$ goes to infinity. As a consequence, the first, second and third terms, renormalized by $e^{-\kappa h}$, converge in probability to $0$ when $h$ goes to infinity. Proposition \ref{asympmin} gives the convergence in distribution of $e^{-\kappa h} \ m^*(h)$ and, therefore, of the last term renormalized by $e^{-\kappa h}$. Combining with Slutsky's Lemma we get that $e^{-\kappa h}(\tilde m_2 - \tilde m_1)$ converge in distribution to an exponential distribution with parameter $q$. 

For $\tilde m_1$ it is even simpler. {Recall from Subsection \ref{coin} that, by definition, $\tilde L_0 = 0$ so we have $\tilde m_1 = (\tilde L_1^{\sharp} - \tilde L_0) + (\tilde m_1 - \tilde L_1^{\sharp})$}, and we can conclude the same way. 

\end{proof}

\section{Supremum of the local time when $0 < \kappa < 1$} \label{genedesres}

We now generalize, in the context of the diffusion in $V$, the arguments of \cite{advech} to prove the convergence of the supremum of the local time when $0 < \kappa < 1$. 
First, let us recall some definitions from \cite{AndDev} and \cite{advech}. 

The diffusion moves across valleys. {When we look at the trajectory of the diffusion until a fixed time $t$, the height $h_t$ of the valleys that we consider has to be suitably chosen to ensure that with a large probability: 1) the diffusion has spent most of its time in the bottom of the $h_t$-valleys, 2) the important peaks of local time have been made only in the bottoms of the $h_t$-valleys, 3) the diffusion has never gone back to an $h_t$-valley after leaving it. As a consequence $h_t$ cannot be too small nor too large with respect to the time scale $t$, and we have to make $h_t$ grow with time $t$. We define }
\begin{align}
h_t := \log(t) - \phi(t), \ \text{where $\phi$ is any fixed function that satisfies} \ \log ( \log (t) ) << \phi(t) << \log(t). \label{defhtetphi}
\end{align}
{It will be proved by Facts \ref{lemtps}, \ref{analog3.3} of Subsection \ref{proofmainth} and Lemmas \ref{noreturn}, \ref{sortparladroite} (applied with $h=h_t$) of Subsection \ref{neghalfline} that our constraints are satisfied with such a definition of $h_t$.} We also define $N_t$, the index of the largest $h_t$-minima visited by $X$ until time $t$, 
\begin{eqnarray}
N_t:=\max \left\{ k\in\mathbb{N},\ \sup_{ 0 \leq s \leq t}X(s) \geq m_{k} \right \}. \label{defnt}
\end{eqnarray}

{The random number $N_t$ is the number of valleys visited until time $t$. Instead of working with the $N_t$ first $h_t$-valleys, it is more convenient to rather prove properties of a large deterministic number, $n_t$, of $h_t$-valleys ($n_t$ being chosen in such a way that it is greater than $N_t$ with a large probability). We define $n_t := \lfloor e^{\kappa (1+\delta) \phi(t)} \rfloor$. In the remainder, most of the estimates that we prove are true with a large probability and simultaneously for the $n_t$ first $h_t$-valleys. It will be proved by Lemma \ref{nbvalleesvisit} of Subsection \ref{reprise} that $N_t \leq n_t$ with a large probability. 



In all this section we assume that $\delta$ (defined in Subsection \ref{coin}) has been chosen small enough so that $(1+3\delta)\kappa<1$, and we assume that the hypothesis of Theorem \ref{cvdutl} are satisfied: $V$ has unbounded variation and there exists $p>1$ such that $V (1) \in L^p$. 
As a consequence, all the results of Sections \ref{etudevallees} and \ref{estimates} apply here. }

\subsection{Proof of Theorem \ref{cvdutl}} \label{proofmainth}

{We now outline the proof of Theorem \ref{cvdutl} and show how it can be reduced to the proof of two Propositions. As in \cite{advech} the idea is first to make appear, for any fixed $t$, three independent sequences of \textit{iid} positive random variables $( e_i )_{i \geq 1}$, $( S_i^t )_{i \geq 1}$ and $(R_i^t)_{i \geq 1}$, so that $e_i S_i^t$ represents the peak of local time in the $i^{th}$ valley and $e_i S_i^t R_i^t$ represents the escaping time from the $i^{th}$ valley. The precise definitions of $e_i$, $S_i^t$ and $R_i^t$ will be given in Subsection \ref{maincontibcommeiid}. A key point in the proof of Theorem \ref{cvdutl} is to approximate $\mathcal{L}_X^*(t) /t$ by a functional of the sequence $( e_i S_i^t,  e_i S_i^t R_i^t)_{i \geq 1}$. First, for any $a \geq 0$ let us define
\[ \mathcal{N}_a := \min \left \{ j \geq 1, \ \sum_{i=1}^{j} e_i S_i^t R_i^t > a \right \}. \]
} 
Our approximation of $\mathcal{L}_X^*(t) /t$ is similar to Proposition 5.1 of \cite{advech} and can be stated as follows:


\begin{prop} \label{analogue5.1}
For any $\eta \in ]0, 1/2[, t > 0$ and $\alpha \geq 0$, let us define the distribution functions
\[ \mathcal{P}_{\eta, t}^{\pm}(\alpha) := \mathbb{P} \left ( {\max_{ 1\leq i \leq \mathcal{N}_{t(1-\eta)} - 1} \frac{e_i S_i^t}{t} \leq \alpha_t^{\pm} }, \left (1- \frac1{t} \sum_{i=1}^{\mathcal{N}_{t(1-\eta)} - 1} e_i S_i^t R_i^t \right )\frac{e_{\mathcal{N}_{t(1-\eta)}} S_{\mathcal{N}_{t(1-\eta)}}^t} {e_{\mathcal{N}_{t(1-\eta)}} S_{\mathcal{N}_{t(1-\eta)}}^t R_{\mathcal{N}_{t(1-\eta)}}^t} \leq \alpha_t^{\pm} \right ), \]
where $\alpha^{\pm}_t:=\alpha (1\pm (\log \log t)^{-1/2})$. Then, for all $t$ large enough we then have
\begin{align*}
\mathcal{P}_{\eta, t}^-(\alpha) -v(\eta,t) \leq \mathbb{P} \left( \mathcal{L}_X^*(t)/t \leq \alpha  \right) \leq \mathcal{P}_{\eta, t}^{+}(\alpha) + v(\eta,t), 
\end{align*}
where $v$ is a positive function such that $\lim_{\eta \rightarrow 0} \limsup_{t \rightarrow + \infty} v(\eta,t) = 0$. 

\end{prop}


{Recall the bivariate $\kappa$-stable subordinator $(\mathcal{Y}_1,\mathcal{Y}_2)$ defined just before Theorem \ref{cvdutl}. The next step in the proof of Theorem \ref{cvdutl} is to identify the objects in $\mathcal{P}_{\eta, t}^{\pm}(\alpha)$ as continuous functionals of a process $(Y_1, Y_2)^t$ that converges in distribution to $(\mathcal{Y}_1,\mathcal{Y}_2)$ when $t$ goes to infinity.} Let $(D([0, +\infty[, \mathbb{R}^2), J_1)$ be the space of c\`ad-l\`ag functions taking values in $\mathbb{R}^2$, equipped with the $J_1$-Skorokhod topology. If, as in Section 1.2 of \cite{advech}, we define $(Y_1, Y_2)^t \in D([0, +\infty[, \mathbb{R}^2)$ by
\begin{eqnarray}
\forall s \geq 0, \ (Y_1, Y_2)^t(s) := \frac1{t} \sum_{j=1}^{\lfloor s e^{\kappa \phi(t)} \rfloor} (e_j S_j^t , e_j S_j^t R_j^t), \label{defy1y2t}
\end{eqnarray}
then we have: 
\begin{prop} \label{propcvsub}
$(Y_1, Y_2)^t$ converges in distribution to $(\mathcal{Y}_1,\mathcal{Y}_2)$ in $(D([0, +\infty[, \mathbb{R}^2), J_1)$. 
\end{prop}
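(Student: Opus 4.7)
The plan is to recognize $(Y_1, Y_2)^t$ as a partial-sum càdlàg process built from the iid sequence of standard-valley contributions, and then to apply a classical functional limit theorem for triangular arrays of iid random vectors converging to a pure-jump Lévy process. By the iid structure of the standard valleys (Remark \ref{iid} together with Proposition \ref{standardwilliams}), the random vectors $(e_j S_j^t, e_j S_j^t R_j^t)_{j \geq 1}$ are iid, so that $(Y_1, Y_2)^t(s)$ is a sum of $\lfloor s e^{\kappa \phi(t)} \rfloor$ iid copies of $(e_1 S_1^t, e_1 S_1^t R_1^t)/t$. Classical criteria (e.g.\ Jacod--Shiryaev VII.3.7, or Skorokhod's theorems for sums of iid random vectors) then reduce the $J_1$-convergence to: (i) Laplace-transform convergence at a single time, and (ii) tightness, which for nondecreasing partial-sum processes is automatic from convergence of the marginals.

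The heart of the proof, and the step I expect to be the main obstacle, is the one-valley Laplace asymptotic
\[ e^{\kappa \phi(t)}\,\mathbb{E}\!\left[1 - e^{-(\alpha + \beta R_1^t)\,e_1 S_1^t / t}\right] \xrightarrow[t \to +\infty]{} \mathcal{C}'\,\Gamma(1-\kappa)\,\mathbb{E}\!\left[(\alpha + \beta \mathcal{R})^{\kappa}\right]. \]
Indeed, raising the one-valley Laplace transform to the power $\lfloor s e^{\kappa \phi(t)} \rfloor$ then produces precisely $\exp(-s\, \mathcal{C}' \Gamma(1-\kappa) \mathbb{E}[(\alpha + \beta \mathcal{R})^\kappa])$, the Laplace transform of $(\mathcal{Y}_1(s), \mathcal{Y}_2(s))$ announced in the Introduction. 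The informal picture is that, by the quenched time-change \eqref{expretl1} and Proposition \ref{standardwilliams}, the pair $(e_1 S_1^t, e_1 S_1^t R_1^t)$ is well approximated by a functional of $\tilde V^{(1)}$ whose time-spent component equals $e^{h_t}$ times the exponential functional $J(h_t)$ of Subsection 3.4, and whose local-time factor produces the $\mathcal{R} = G_1 + G_2$ contribution. Proposition \ref{cvr} (with its uniform exponential-moment bound) allows one to interchange limit and Laplace functional, so that $\alpha + \beta \mathcal{R}$ emerges inside the exponent. The scaling $e^{h_t}/t = e^{-\phi(t)} \to 0$ keeps each single contribution small, and invoking Corollary 5 of Bertoin--Yor for the $\kappa$-stable tail of the exponential functional of $V$ killed at a suitable hitting time delivers the prefactor $\mathcal{C}$. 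Averaging against the squared-Bessel density $e^{-u/2}\,du$ as in Singh's setup then transforms $\mathcal{C}$ into $\mathcal{C}' = (\mathcal{C}/2)\int_0^{+\infty} u^\kappa e^{-u/2}\,du$, while the identity $\int_0^{+\infty} (1 - e^{-\lambda u})\,\kappa u^{-1-\kappa}\,du = \Gamma(1-\kappa)\lambda^{\kappa}$ produces the $\Gamma(1-\kappa)$ factor and the $\kappa$-th power. The most delicate point is the uniform integrability required to push the convergence of $J(h_t)$ inside the Laplace functional while tracking the joint dependence with $R_1^t$; this is where the finite exponential moments furnished by Proposition \ref{cvr} and the estimates of Section \ref{estimates} become indispensable.

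Once the Laplace-transform convergence of the one-dimensional marginals is established, independence of increments on disjoint blocks of valleys extends it to convergence of all finite-dimensional distributions. Since $(Y_1, Y_2)^t$ is a partial-sum process with nonnegative summands and the limit $(\mathcal{Y}_1, \mathcal{Y}_2)$ is a Lévy process (hence stochastically continuous), tightness in $(D([0, +\infty[, \mathbb{R}^2), J_1)$ follows from the standard criterion for triangular-array partial sums of iid nonnegative vectors (e.g.\ Theorem 16.14 of Kallenberg). Combining finite-dimensional convergence with tightness yields the claimed convergence in distribution in the $J_1$-Skorokhod topology.
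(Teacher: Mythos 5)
Your proposal is correct in substance and rests on the same two pillars as the paper's argument, but it packages the triangular-array criterion differently. The paper reduces Proposition \ref{propcvsub} entirely to Lemma \ref{lemproba}, which establishes \emph{uniform tail} (i.e.\ L\'evy-measure) asymptotics for the single-valley vector: $e^{\kappa\phi(t)}\,\mathbb{P}(e_1S_1^t/t>x,\ e_1S_1^tR_1^t/t>y)\to\nu([x,+\infty[\times[y,+\infty[)$ uniformly on appropriate ranges, after which it invokes the proof of Proposition 1.4 of \cite{advech} verbatim for tightness and identification of the limit. You instead phrase the criterion through the one-valley Laplace asymptotic $e^{\kappa\phi(t)}\,\mathbb{E}[1-e^{-(\alpha+\beta R_1^t)e_1S_1^t/t}]\to\mathcal{C}'\Gamma(1-\kappa)\mathbb{E}[(\alpha+\beta\mathcal{R})^{\kappa}]$; for nonnegative, nondecreasing iid partial-sum arrays these two formulations are equivalent (via $\mathbb{E}[1-e^{-\lambda Z}]=\lambda\int_0^\infty e^{-\lambda z}\mathbb{P}(Z>z)\,dz$ and a Tauberian/Breiman argument conditioning on $R_1^t$), and both feed on exactly the same inputs: the $\kappa$-stable tail of $\int_0^{+\infty}e^{V}$ from Corollary 5 of \cite{Bertoinyor} (Lemma \ref{foncexpov}), the comparison of $S_1^t/t$ with $e^{-\phi(t)}\int_0^{+\infty}e^{V}$, and the convergence $R_1^t\to\mathcal{R}$ with uniformly bounded moments from Proposition \ref{cvr}. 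Your route is marginally cleaner for identifying the limit law (the Laplace transforms simply multiply), whereas the paper's uniform tail estimates are reused elsewhere (e.g.\ in Lemma \ref{nbvalleesvisit} and in the repartition-function approximations of Subsection \ref{approxderep}), which is why the paper proves the stronger, uniform statement. Two caveats: the step you correctly flag as the heart of the matter — justifying the interchange of the $h_t\to\infty$ limit of $R_1^t$ with the small-$\lambda$ Tauberian asymptotic of $e_1S_1^t/t$ — is precisely the content of Lemma \ref{lemproba} and remains only sketched in your proposal, so it would need to be written out (your appeal to the exponential moments of Proposition \ref{cvr} is the right tool); and the density of $e_1$ is that of an exponential variable of parameter $1/2$ (the law of $\mathcal{L}_{B}(\tau(B,1),0)$), not a squared-Bessel density, though this does not affect the computation of $\mathcal{C}'=\mathcal{C}\,\mathbb{E}[e_1^{\kappa}]$.
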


{Recall the notations defined just before Theorem \ref{cvdutl}. The objects in $\mathcal{P}_{\eta, t}^{\pm}(\alpha)$ can be easily written in term of functionals of $(Y_1, Y_2)^t$. 
{\begin{align*}
\mathcal{P}_{\eta, t}^{\pm}(\alpha) = \mathbb{P} & \left ( \max \left \{ Y_1^{t, \natural} ( Y_2^{t, -1}(1-\eta)- ), \right. \right. \\ 
& \left. \left. \left [ 1- Y_2^{t}(Y_2^{t, -1}(1-\eta)-) \right ] \times \frac{Y_1^{t}(Y_2^{t, -1}(1-\eta)) - Y_1^{t}(Y_2^{t, -1}(1-\eta)-)} {Y_2^{t}(Y_2^{t, -1}(1-\eta)) - Y_2^{t}(Y_2^{t, -1}(1-\eta)-)} \right \} \leq \alpha_t^{\pm} \right ) \\
= \mathbb{P} & \left ( \max \left \{ J_{I, 1-\eta}^- \left [ (Y_1, Y_2)^t \right ], \right. \right. \\ 
& \left. \left. \left [ 1 - \tilde K_{I, 1-\eta}^- \left [ (Y_1, Y_2)^t \right ] \right ] \times \frac{K_{I, 1-\eta} \left [ (Y_1, Y_2)^t \right ] - K_{I, 1-\eta}^- \left [ (Y_1, Y_2)^t \right ]}{\tilde K_{I, 1-\eta} \left [ (Y_1, Y_2)^t \right ] - \tilde K_{I, 1-\eta}^- \left [ (Y_1, Y_2)^t \right ]} \right \} \leq \alpha_t^{\pm} \right ), 
\end{align*}}
where the functionals $J_{I, a}^-$, $K_{I, a}$, $K_{I, a}^-$, $\tilde K_{I, a}$ and $\tilde K_{I, a}^-$ are defined in Subsection 4.3 of \cite{advech}. Thanks to Lemma 4.5 there, we see that a realization of the $\kappa$-stable subordinator $(\mathcal{Y}_1,\mathcal{Y}_2)$ is almost surely a point of continuity of these functionals. As a consequence, using Proposition \ref{propcvsub} and the \textit{continuous mapping theorem}, we get, when $t$ goes to infinity, the convergence of $\mathcal{P}_{\eta, t}^{\pm}(\alpha)$ to 
{\begin{align*}
& \mathbb{P} \left ( \max \left \{ J_{I, 1-\eta}^- \left [ (\mathcal{Y}_1,\mathcal{Y}_2) \right ], \ \left [ 1 - \tilde K_{I, 1-\eta}^- \left [ (\mathcal{Y}_1,\mathcal{Y}_2) \right ] \right ] \times \frac{K_{I, 1-\eta} \left [ (\mathcal{Y}_1,\mathcal{Y}_2) \right ] - K_{I, 1-\eta}^- \left [ (\mathcal{Y}_1,\mathcal{Y}_2) \right ]}{\tilde K_{I, 1-\eta} \left [ (\mathcal{Y}_1,\mathcal{Y}_2) \right ] - \tilde K_{I, 1-\eta}^- \left [ (\mathcal{Y}_1,\mathcal{Y}_2) \right ]} \right \} \leq \alpha \right ) \\
= & \mathbb{P} \left ( \max \left \{ \mathcal{Y}_1^{\natural}(\mathcal{Y}_2^{-1}(1-\eta)-), \ \left [ 1- \mathcal{Y}_2(\mathcal{Y}_2^{-1}(1-\eta)-) \right ] \times \frac{\mathcal{Y}_1(\mathcal{Y}_2^{-1}(1-\eta)) - \mathcal{Y}_1(\mathcal{Y}_2^{-1}(1-\eta)-)} {\mathcal{Y}_2(\mathcal{Y}_2^{-1}(1-\eta)) - \mathcal{Y}_2(\mathcal{Y}_2^{-1}(1-\eta)-)} \right \} \leq \alpha \right ). 
\end{align*}}
Then, we have almost surely that for all $\eta$ small enough, $\mathcal{Y}_2^{-1}(1-\eta) = \mathcal{Y}_2^{-1}(1)$ (since almost surely $\mathcal{Y}_2(\mathcal{Y}_2^{-1}(1)-) < 1$). As a consequence, when $\eta$ goes to $0$, the above expression converges to $\mathbb{P} (\max \{ \mathcal{I}_1, \mathcal{I}_2 \} \leq \alpha)$, where $\mathcal{I}_1$ and $\mathcal{I}_2$ are defined just before Theorem \ref{cvdutl}. We have obtained that, modulo Proposition \ref{propcvsub}, $\lim_{\eta \rightarrow 0} \lim_{t \rightarrow + \infty} \mathcal{P}_{\eta, t}^{\pm}(\alpha) = \mathbb{P} (\max \{ \mathcal{I}_1, \mathcal{I}_2 \} \leq \alpha)$.

As a consequence, Theorem \ref{cvdutl} will follow if we prove Propositions \ref{analogue5.1} and \ref{propcvsub}. For Proposition \ref{analogue5.1} we first need to show that, outside the bottoms of the $n_t$ first standard $h_t$-valleys, the amount of time spent by the diffusion and the local time are negligible compared to $t$. This is the object of the next two facts that are taken from \cite{AndDev} and \cite{advech} (however, the extension of these results to our context requires some precautions, this is why we give some details of proofs in Subsection \ref{justoffacts}). The reason to prove the negligibility compared to $t$ is because $t$ is the expected renormalization for $\mathcal{L}^*_X(t)$ and, obviously, the total amount of time spent by the diffusion until time $t$. Note that what we neglect compared to $t$ can however be very large when $t$ is large. 
Recall from Subsection \ref{factsandnotations} that for any $r \in \mathbb{R}$, $H(r) := \tau(X, r)$, the hitting times of $r$ by the diffusion $X$. We have }


\begin{fact} \label{lemtps}
There exists a positive constant $C > 0$ such that for $t$ large enough, 
\[ \mathbb{P} \left( \mathcal{A}^1_t := \bigcap_{j=1}^{ n_t}\left\{0\leq H(\tilde m_{j})-\sum_{i=1}^{j-1}\left ( H(\tilde L_i)-H(\tilde m_i) \right ) \leq \frac{2t}{\log h_t} \right\}\right) \geq 1 - C e^{-\phi(t)} n_t \log h_t. \]
Recall that by convention $\sum_{i=1}^{0}... = 0$. Note that the lower bound converges to $1$ since $n_t \sim e^{\kappa (1+\delta) \phi(t)}$, {$\log(h_t) << e^{2\kappa \delta \phi(t)}$, and $(1+3\delta)\kappa <1$ (because of the choice of $\delta$, made in the beginning of this section). }
\end{fact}


Before stating the next fact, which proves that the local time is negligible {(compared to $t$)} outside the "deep bottoms" of the standard valleys, we need to define what we mean exactly by "deep bottom". We define 
\begin{eqnarray}
\mathcal{D}_j := [\tilde \tau_j^-((\phi(t))^2), \tilde \tau_j^+((\phi(t))^2)], \label{defdj}
\end{eqnarray}
where $\tilde \tau_j^-(.)$ and $\tilde \tau_j^+(.)$ have the same meaning as in the beginning of Subsection \ref{coin}. Note that the definition we give for $\mathcal{D}_j$ is different from the one in Section 3.2.2 of \cite{advech}. The need for a different definition comes from the fact that, in our case, the potential $V$ is allowed to make negative jumps. Because of this, some arguments given in \cite{advech}, with their definition of $\mathcal{D}_j $, fail to hold in our case. 

\begin{fact} \label{analog3.3}
Recall the definitions of $\tau^*(h)$ and $m^*(h)$ in Subsection \ref{firstmin}. 
There are positive constants $C_1, C_2, C_3$ such that for $t$ large enough, 
\begin{align}
\mathbb{P} \left( \sup_{x \in [0, m^*(h_t)]} \mathcal{L}_X (H(\tau^*(h_t)),x) > t e^{(\kappa (1+3\delta )-1)\phi(t)} \right) & \leq \frac{C_1}{n_t e^{\kappa \delta \phi(t)}}, \\
\mathbb{P} \left( \mathcal{A}^2_t := \bigcap_{j=0}^{n_t - 1}\left\{ \sup_{x \in \mathbb{R}} \left ( \mathcal{L}_X (H(\tilde m_{j+1}),x) - \mathcal{L}_X (H(\tilde L_j),x) \right ) \leq t e^{(\kappa (1+3\delta )-1)\phi(t)} \right \} \right) & \geq 1 - \frac{C_2}{e^{\kappa \delta \phi(t)}}, \label{negltl2} \\
\mathbb{P} \left( \mathcal{A}^3_t := \bigcap_{j=1}^{ n_t}\left\{ \sup_{x \in [\tilde{L}_{j-1}, \tilde{L}_j] \cap \mathcal{D}_j^c} \left ( \mathcal{L}_X (H(\tilde L_j),x) - \mathcal{L}_X (H(\tilde m_{j}),x) \right ) \leq t e^{-2 \phi(t)}, \right \} \right) & \geq 1-\frac{C_3 n_t}{e^{2 \phi(t)}}. \label{negltl3}
\end{align}
{Recall that $\kappa (1+3\delta )-1<0$ because of the choice of $\delta$ made in the beginning of this section. }
\end{fact}

The next step in the proof of Proposition \ref{analogue5.1} is to show that the main contributions to the local time and to the time spent by the diffusion in the bottoms of the standard valleys can be approximated by the sequence $( e_i S_i^t,  e_i S_i^t R_i^t)_{i \geq 1}$ :

\begin{prop} \label{approxparliid}

The sequences $( e_j )_{j \geq 1}$, $( S_j^t )_{j \geq 1}$ and $(R_j^t)_{j \geq 1}$ are \textit{iid} and mutually independent. For any $\epsilon \in ]0, \max \{1/8, (1- (1+\delta) \kappa)/2 \} [$ there exists a positive constant $c$ (depending on $\delta$ and $\epsilon$) such that for $t$ large enough, 
\begin{align*}
\mathbb{P} \left ( \mathcal{A}^4_t := \cap_{j=1}^{n_t} \left \{ (1-e^{- \epsilon h_t/7}) e_j S_j^t \leq \mathcal{L}_X(H(\tilde{L}_j), \tilde m_j) \leq (1+e^{- \epsilon h_t/7}) e_j S_j^t \right \} \right ) & \geq 1 - e^{-c h_t}, \\
\mathbb{P} \left ( \mathcal{A}^5_t := \cap_{j=1}^{n_t} \left \{ (1-e^{- \epsilon h_t/7}) e_j S_j^t R_j^t \leq H(\tilde L_j)-H(\tilde m_j) \leq (1+e^{- \epsilon h_t/7}) e_j S_j^t R_j^t \right \} \right ) & \geq 1 - e^{-c h_t}. 
\end{align*}

{Note that we have indeed $(1- (1+\delta) \kappa)/2 > 0$ because in the beginning of this section $\delta$ was chosen such that $\kappa (1+3\delta )<1$. }
\end{prop}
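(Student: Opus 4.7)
The strategy, following Proposition~5.2 of \cite{advech}, is to construct $e_j, S_j^t, R_j^t$ from independent pieces of the data, then to verify the announced approximations by Brownian excursion analysis. Using \eqref{exprediff1}--\eqref{expretl2}, the trajectory of $X$ on $[H(\tilde L_{j-1}), H(\tilde L_j)]$ is encoded by $B$ on the time interval $[\tau(B, A_V(\tilde L_{j-1})), \tau(B, A_V(\tilde L_j))]$; these time intervals are disjoint for distinct $j$, so the strong Markov property of $B$ makes them independent. On the $j$-th such interval, I define $e_j$ to be the (conditionally on $V$) normalized exponential variable built from the Brownian local time at $A_V(\tilde m_j)$, $S_j^t$ as a function of $V$ restricted to the slope $\tilde P_3^{(j)}$ (which dominates the integral $\int_{\tilde m_j}^{\tilde L_j} e^{V(x) - V(\tilde m_j)} dx$), and $R_j^t$ as the exponential functional $\int_{\mathcal{D}_j} e^{V(\tilde m_j) - V(x)} dx$, which depends only on $\tilde P_1^{(j)}$ and $\tilde P_2^{(j)}$. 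Mutual independence across $j$ then follows from the iid structure (Remark~\ref{iid}) and the Brownian Markov property; within a single $j$, $e_j$ depends only on $B$, while $S_j^t$ and $R_j^t$ depend on disjoint slopes, themselves independent by Proposition~\ref{standardwilliams}.

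For $\mathcal{A}_t^2$, the key identity is that conditionally on $V$ and on $B$ having entered the interval $[A_V(\tilde L_{j-1}), A_V(\tilde L_j)]$ at the level $A_V(\tilde m_j)$, the Brownian local time $\mathcal{L}_B(\tau(B, A_V(\tilde L_j)), A_V(\tilde m_j))$ is exactly exponentially distributed with mean $2(A_V(\tilde L_j) - A_V(\tilde m_j))$. Discarding the negligible contribution from $B$ entering the interval elsewhere, and discarding the small contribution of $\int_{\mathcal{D}_j} e^{V - V(\tilde m_j)}$ to the conditional mean (both controlled by Fact~\ref{analog3.3} and the fact that $A_V(\tilde L_j) - A_V(\tilde m_j)$ dominates this contribution by a factor exponential in $h_t$), this yields $\mathcal{L}_X(H(\tilde L_j), \tilde m_j) = e_j S_j^t (1 + O(e^{-\epsilon h_t/7}))$. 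For $\mathcal{A}_t^3$, the occupation-time formula gives $H(\tilde L_j) - H(\tilde m_j) = \int_{\mathbb{R}} [\mathcal{L}_X(H(\tilde L_j), x) - \mathcal{L}_X(H(\tilde m_j), x)] dx$, with the integrand essentially supported on $\mathcal{D}_j$ by Fact~\ref{analog3.3}. On $\mathcal{D}_j$, the continuity in space of the Brownian local time, combined with \eqref{expretl2}, gives $\mathcal{L}_X(H(\tilde L_j), x) = \mathcal{L}_X(H(\tilde L_j), \tilde m_j) e^{V(\tilde m_j) - V(x)} (1 + O(e^{-\epsilon h_t/7}))$ uniformly; integration then yields $H(\tilde L_j) - H(\tilde m_j) = e_j S_j^t R_j^t (1 + O(e^{-\epsilon h_t/7}))$.

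The union bound over $j \leq n_t$ is feasible because $n_t \leq e^{\kappa(1+\delta)\phi(t)}$ with $\phi(t) \ll h_t$, so any per-valley error of order $e^{-c' h_t}$ with $c'$ strictly larger than $c$ suffices. The main obstacle is the sharp continuity estimate for the Brownian local time in space with multiplicative error $e^{-\epsilon h_t/7}$: the negative jumps of $V$ can create isolated deep dips of $V$ inside $\mathcal{D}_j$ that the naive oscillation bound cannot handle. This must be dealt with by restricting to the typical event that $J(h_t)$ and the related one-sided integrals do not exceed $e^{\epsilon h_t/14}$, which occurs with probability $1 - e^{-c'' h_t}$ by the exponential moment bound of Proposition~\ref{cvr} (itself relying on the exponential moments of $I(V^{\uparrow})$ and $I(\hat V^{\uparrow})$ from \cite{foncexpovech}), and by quantitative modulus-of-continuity estimates for the Brownian local time adapted to this good event.
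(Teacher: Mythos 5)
Your overall architecture is the right one and is essentially the paper's: reduce to an iid sequence of valley contributions via the strong Markov property of $B$ and Remark \ref{iid}, read off $e_j$ as an exponential Brownian local time at $A_V(\tilde m_j)$, approximate the normalization $A^j(\tilde L_j)$ by $S_j^t$ and the normalized occupation integral by $R_j^t$, then union-bound over $j \leq n_t$. The gaps are in where you place the cuts defining $S_j^t$ and $R_j^t$, and they are not cosmetic, because the claimed multiplicative precision $1 \pm e^{-\epsilon h_t/7}$ depends on exactly these cuts. Defining $S_j^t$ from $\tilde P_3^{(j)}$ alone fails: $A^j(\tilde L_j) = \int_{\tilde m_j}^{\tilde L_j} e^{\tilde V^{(j)}}$ splits into the ascending-slope part, equal in law to $\int_0^{\tau(V^{\uparrow}, h_t)} e^{V^{\uparrow}}$, and the $\tilde P_3^{(j)}$ part, equal in law to $e^{h_t} \int_0^{\tau(V, ]-\infty, -h_t/2])} e^{V}$; both are $e^{h_t}$ times nondegenerate $O(1)$ random prefactors and neither dominates the other, so discarding the ascending slope produces a relative error of order one, not $e^{-\epsilon h_t/7}$. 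The paper's $S_j^t := \int_{\tilde \tau_j^+(h_t/2)}^{\tilde L_j} e^{\tilde V^{(j)}(u)} du$ keeps the top half of the ascending slope (only $\int_{\tilde m_j}^{\tilde \tau_j^+(h_t/2)} e^{\tilde V^{(j)}} \leq e^{4h_t/6}$ is discarded, against $S_j^t \geq e^{5h_t/6} - e^{4h_t/6}$), and independence of $S_j^t$ from $R_j^t$ is then not a consequence of the independence of the slopes $\tilde P_i^{(j)}$ but of the Markov property of $V^{\uparrow}$ applied at the hitting time of $h_t/2$ by $\tilde P_2^{(j)}$.

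Symmetrically, taking $R_j^t = \int_{\mathcal{D}_j} e^{-\tilde V^{(j)}}$ with $\mathcal{D}_j = [\tilde \tau_j^-((\phi(t))^2), \tilde \tau_j^+((\phi(t))^2)]$ is too narrow a window: the occupation integral picks up a contribution of relative size roughly $h_t e^{-(\phi(t))^2/2}$ from $[\tilde \tau_j^-(h_t/2), \tilde \tau_j^+(h_t/2)] \setminus \mathcal{D}_j$, and since $(\phi(t))^2$ may be $o(h_t)$ this is far larger than $e^{-\epsilon h_t/7}$. The correct window is up to level $h_t/2$, i.e. $R_j^t = \int_{\tilde \tau_j^-(h_t/2)}^{\tilde \tau_j^+(h_t/2)} e^{-\tilde V^{(j)}}$ (this is also the $J(h_t)$ of Proposition \ref{cvr}, which is what guarantees $R_1^t \to \mathcal{R}$ with bounded exponential moments). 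The price of the wider window is that the exponential functionals $\int e^{-\tilde V^{(j)}}$ over $[\tilde L_{j-1}, \tilde \tau_j^-(h_t/2)]$ and $[\tilde \tau_j^+(h_t/2), \tilde L_j]$ must be bounded by $e^{-\epsilon h_t}$ directly; this is where the real work lies (in the paper, Lemmas \ref{boundj0} and \ref{boundj2}, resting on the new estimates for $\hat V^{\uparrow}$ and the pre-valley, Lemmas \ref{minoprevalley}, \ref{majoprevalley} and \ref{tpsatteintvdown}), and it cannot be replaced by Fact \ref{analog3.3}: that fact gives absolute bounds of order $t e^{-2\phi(t)}$ on the local time outside $\mathcal{D}_j$, whereas here the outer contributions must be small relative to $e_j S_j^t R_j^t$, which also forces a matching lower bound on the central integral over $[\tilde \tau_j^-(h_t/2), \tilde \tau_j^+(h_t/2)]$ (obtained in the paper from the left tail of $e_j$ and of $\int_{\tau(V^{\uparrow}, \epsilon h_t/16)}^{\tau(V^{\uparrow}, \epsilon h_t/8)} e^{-V^{\uparrow}}$). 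With these two repairs your sketch becomes essentially the paper's proof.
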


This proposition is proved in the following subsection. For the end of the proof of Proposition \ref{analogue5.1}, the idea is simply to use the previous steps to translate "$\mathcal{L}_X^*(t) /t \leq \alpha$" in term of events only involving the sequence $( e_i S_i^t,  e_i S_i^t R_i^t)_{i \geq 1}$. We do this in Subsection \ref{approxderep}, following the arguments of the proof of Proposition 5.1 of \cite{advech}.  

Finally, Proposition \ref{propcvsub} is roughly speaking a Donsker Theorem for heavy tailed random variables, its proof relies on the study of the right tail of the random variables $e_1 S_1^t$ and $e_1 S_1^t R_1^t$. This is done in Subsection \ref{reprise}. 

\subsection{Proof of Proposition \ref{approxparliid} and consequences} \label{maincontibcommeiid}

We now prove that $(\mathcal{L}_X(H(\tilde{L}_j), \tilde m_j), H(\tilde L_j)-H(\tilde m_j))_{j \geq 1}$ can be approximated by an \textit{iid} sequence. This generalizes Proposition 3.5 of \cite{advech} to our setting. First, let us adapt to our context the proof of the first point of Lemma 3.6 in \cite{advech}, {all the details are given for the sake of clarity. 

Let us define $X_{\tilde m_j} := X(. + H(\tilde m_j))$ which is, according to the Markov property at $H(\tilde m_j)$, a diffusion in the environment $V$ starting from $\tilde m_j$. We also define for any $r \in \mathbb{R}$, $H_{X_{\tilde m_j}}(r) := \tau(X_{\tilde m_j}, r)$, the hitting time of $r$ by $X_{\tilde m_j}$ and $\mathcal{L}_{X_{\tilde m_j}}(.,.)$, the local time of $X_{\tilde m_j}$. Also, still by the Markov property, we have that conditionally on the environment $V$, the processes $(X_{\tilde m_j}(t), 0 \leq t \leq H_{X_{\tilde m_j}}(\tilde L_j))$ are independent (because $\tilde m_j < \tilde L_j < \tilde m_{j+1}$). We now apply \eqref{expretl2} and \eqref{expretl2.1} with $r=\tilde L_j$ to the processes $(X_{\tilde m_j}(t), 0 \leq t \leq H_{X_{\tilde m_j}}(\tilde L_j))$. We obtain for each $j \geq 1$, 
\begin{align*}
\mathcal{L}_X(H(\tilde{L}_j), \tilde m_j) & = \mathcal{L}_{X_{\tilde m_j}}(H_{X_{\tilde m_j}}(\tilde L_j),\tilde m_j) = \mathcal{L}_{B^j} \left [ \tau \left ( B^j, A^j(\tilde{L}_j) \right ), 0 \right ] \\
H(\tilde L_j)-H(\tilde m_j) & = H_{X_{\tilde m_j}}(\tilde L_j) = \int_{-\infty}^{\tilde{L}_j} \mathcal{L}_{X_{\tilde m_j}}(H_{X_{\tilde m_j}}(\tilde L_j), u) d u \\
& = \int_{-\infty}^{\tilde{L}_j} e^{-\tilde V^{(j)}(u)} \mathcal{L}_{B^j} \left [ \tau \left ( B^j, A^j(\tilde{L}_j) \right ),A^j(u) \right ] d u, 
\end{align*}
where $A^j(u) := \int_{\tilde{m}_j}^{u} e^{\tilde V^{(j)}(x)}dx$ and $(B^j, j \geq 1)$ is a sequence of \textit{iid} standard Brownian motions starting at $0$, and independent from $V$. Note that, since the integrand in the above expression is $\mathcal{L}_{X_{\tilde m_j}}(H_{X_{\tilde m_j}}(\tilde L_j),u)$, this integrand is null on points that are not visited by $X_{\tilde m_j}$ before $H_{X_{\tilde m_j}}(\tilde L_j)$. According to Lemma \ref{sortparladroite} applied with $h=h_t$, we have $\mathbb{P} ( H_{X_{\tilde m_j}}(\tilde L_{j-1}) > H_{X_{\tilde m_j}}(\tilde L_j) ) \geq 1 - e^{- \kappa \delta h_t/6}$ for all $j \geq 1$ {(where $\delta$ has been defined in Subsection \ref{coin} and satisfies $\kappa (1+3\delta )<1$ as assumed in the beginning of this section)}. We thus get 
\begin{eqnarray}
\mathbb{P} \left ( \left ( \mathcal{L}_X(H(\tilde{L}_j), \tilde m_j), H(\tilde L_j)-H(\tilde m_j) \right )_{1 \leq j \leq n_t} = \left (\tilde{\mathcal{L}}_j, \tilde h_j \right )_{1 \leq j \leq n_t} \right ) \geq 1 - n_t e^{-\kappa \delta h_t /6}, \label{approxiid2}
\end{eqnarray}
where 
\begin{eqnarray}
(\tilde{\mathcal{L}}_j, \tilde h_j) := \left ( \mathcal{L}_{B^j} \left [ \tau \left ( B^j, A^j(\tilde{L}_j) \right ), 0 \right ], \int_{\tilde{L}_{j-1}}^{\tilde{L}_j} e^{-\tilde V^{(j)}(u)} \mathcal{L}_{B^j} \left [ \tau \left ( B^j, A^j(\tilde{L}_j) \right ),A^j(u)\right ] d u \right ). \label{defljhj}
\end{eqnarray}
Note that, thanks to Remark \ref{iid} (applied with $h=h_t$) and to the fact that the $B^j$'s are \textit{iid} Brownian motions, the sequence $(\tilde{\mathcal{L}}_j, \tilde h_j)_{j \geq 1}$ is \textit{iid}. Let us scale the Brownian motion $B^j$ in the following way: $\tilde B^j := B^j( (A^j(\tilde{L}_j))^2 . )/ A^j(\tilde{L}_j)$. Note that, conditionally on $V$, $(\tilde B^j, j \geq 1)$ is a sequence of \textit{iid} standard Brownian motions. This implies that the sequence $(\tilde B^j, j \geq 1)$ is independent from $V$, even though $V$ appears in the expression defining it. To avoid overuse of notations we now write $B^j$ instead of $\tilde B^j$. With this notation we have 
\begin{align}
(\tilde{\mathcal{L}}_j, \tilde h_j) = A^j(\tilde{L}_j) \times \left ( \mathcal{L}_{B^j} \left [ \tau(B^j, 1), 0 \right ], \int_{\tilde{L}_{j-1}}^{\tilde{L}_j} e^{-\tilde V^{(j)}(u)} \mathcal{L}_{B^j} \left [ \tau(B^j, 1),A^j(u)/A^j(\tilde{L}_j) \right ] d u \right ). \label{approxiid3}
\end{align}
}

Note that $\mathcal{L}_X(H(\tilde{L}_j), \tilde m_j) / A^j(\tilde{L}_j) = \mathcal{L}_{B^j} (\tau(B^j, 1), 0) =: e_j$ follows an exponential distribution with parameter $1/2$. Also, since $e_j$ only depends on $B^j$, the sequence $(e_j, j \geq 1)$ is \textit{iid} and independent from $V$. In order to prove Proposition \ref{approxparliid}, we are now reduced to give approximations for $\tilde h_j$ and $A^j(\tilde{L}_j)$. For this, we first prove lemmas to bound exponential functionals of the environment. 

Recall the definitions of $\tilde \tau_j^-(.)$ and $\tilde \tau_j^+(.)$ in Subsection \ref{coin}, we have: 

\begin{lemme} \label{boundj0}

Choose $\epsilon$ such that $0 < \epsilon < \max \{1/8, (1- (1+\delta) \kappa)/2 \}$. Then, 
\[ \forall j \geq 1, \ \mathbb{P} \left( \int_{\tilde{L}_{j-1}}^{\tilde \tau_j^-(h_t / 2)} e^{-\tilde V^{(j)}(u)} du > e^{-\epsilon h_t} \right) \leq e^{-c h_t}, \]
for $t$ large enough and some positive constant $c$ depending on $\delta$ and $\epsilon$. {Recall that we have indeed $(1- (1+\delta) \kappa)/2 > 0$ because in the beginning of this section $\delta$ was chosen such that $\kappa (1+3\delta )<1$. }
\end{lemme}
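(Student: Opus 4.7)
The strategy is to decompose the integral at the stopping time $\tilde L_j^\sharp$, writing $I=I_1+I_2$ with $I_1:=\int_{\tilde L_{j-1}}^{\tilde L_j^\sharp} e^{-\tilde V^{(j)}(u)}\,du$ and $I_2:=\int_{\tilde L_j^\sharp}^{\tilde\tau_j^-(h_t/2)} e^{-\tilde V^{(j)}(u)}\,du$, and to bound each piece by $e^{-\epsilon h_t}/2$ on a high-probability event, concluding by a union bound.

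For $I_2$ I would use the time reversal $W(x):=V(\tilde m_j-x)-V(\tilde m_j)$ around the minimum. By Proposition \ref{standardwilliams} together with Remark \ref{iid}, the restriction $(W(x))_{0\le x\le \tilde m_j-\tilde\tau_j^-(h_t)}$ has a density bounded by $2$ relative to the law of $\hat V^{\uparrow}$ killed at its first entrance into $[h_t,+\infty[$, modulo a total-variation error of order $e^{-\delta\kappa h_t/3}$. Substituting $x=\tilde m_j-u$ converts $I_2$ into $\int_{\sigma(h_t/2)}^{\sigma^\sharp} e^{-W(x)}\,dx$ with $\sigma(y):=\inf\{x:W(x)\ge y\}$ and $\sigma^\sharp:=\tilde m_j-\tilde L_j^\sharp$. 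The strong Markov property of $\hat V^{\uparrow}$ at $\sigma(h_t/2)$ shows that this integral is stochastically dominated by $e^{-h_t/2}$ times a quantity with finite exponential moments (obtained from the exponential functional $I(\hat V^{\uparrow}_y)$ for $y=\hat V^{\uparrow}(\sigma(h_t/2))\ge h_t/2$, whose moments are controlled in the same way as $I(\hat V^{\uparrow})$ by Theorem 1.11 of \cite{foncexpovech}). A Markov-inequality step then gives $I_2\le e^{-\epsilon h_t}/2$ with probability at least $1-e^{-ch_t}$, using $\epsilon<1/4$.

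For $I_1$ I use the Markov property of $V$ at $\tilde L_{j-1}$: the shifted process $V^{\tilde L_{j-1}}(s):=V(\tilde L_{j-1}+s)-V(\tilde L_{j-1})$ is an independent copy of $V$, and a change of variable yields $I_1=e^{V(\tilde m_j)-V(\tilde L_{j-1})}\int_0^{T} e^{-V^{\tilde L_{j-1}}(s)}\,ds$ with $T:=\tilde L_j^\sharp-\tilde L_{j-1}$. The definition of $\tilde L_j^\sharp$ forces $V^{\tilde L_{j-1}}(s)>-e^{(1-\delta)\kappa h_t}$ for $s<T$, whereas $V(\tilde m_j)-V(\tilde L_{j-1})\le V(\tilde L_j^\sharp)-V(\tilde L_{j-1})\le -e^{(1-\delta)\kappa h_t}$, so the two leading double-exponentials cancel and $I_1\le T\cdot e^{-(V(\tilde L_j^\sharp)-V(\tilde m_j))}$. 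The missing decay factor then comes from the event $\{V(\tilde L_j^\sharp)-V(\tilde m_j)\ge Ch_t\}$ for $C$ chosen larger than the polynomial exponent in the tail bound on $T$; this event I control via Proposition \ref{standardwilliams} applied to the reversed left slope (the further descent past $\tilde L_j^\sharp$ is essentially the overshoot of $\hat V^{\uparrow}$ above $h_t$, which has sufficient mass above $Ch_t$). Combined with a polynomial-in-$e^{h_t}$ tail estimate for $T$ furnished by the hitting-time lemmas of Section \ref{estimates}, this yields $I_1\le e^{-\epsilon h_t}/2$ on a good event of probability $1-e^{-ch_t}$.

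The principal obstacle is the quantitative control of this ``further descent'' $V(\tilde L_j^\sharp)-V(\tilde m_j)$, which couples the overshoot at the first passage of $V$ below $V(\tilde L_{j-1})-e^{(1-\delta)\kappa h_t}$ with the depth of the subsequent $h_t$-valley minimum. The spectrally negative structure of $V$, combined with Proposition \ref{standardwilliams} and the exponential-moment bound for $I(\hat V^{\uparrow})$, provides the correct tools, but merging the good events for $I_1$, $I_2$ and for $T$ into a single exponentially small failure probability requires careful bookkeeping.
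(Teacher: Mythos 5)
Your decomposition point is $\tilde L_j^{\sharp}$, whereas the paper splits at $\tilde\tau_j^-(h_t)$, and this difference is not cosmetic: it is exactly where your argument breaks. Proposition \ref{standardwilliams} identifies the law (up to a bounded density and a total-variation error) of $\tilde P_1^{(j)}$ only, i.e.\ of the reversed slope on $[0,\tilde m_j-\tilde\tau_j^-(h_t)]$, killed at its \emph{first} passage above $h_t$. Your $I_2$ runs all the way to $\sigma^{\sharp}=\tilde m_j-\tilde L_j^{\sharp}$, which generically lies strictly beyond that first passage time, and the law of the reversed path on $[\tilde m_j-\tilde\tau_j^-(h_t),\sigma^{\sharp}]$ is not described by $\hat V^{\uparrow}$ at all — the paper explicitly points out (before Lemma \ref{minoprevalley} and again at the end of Section \ref{estimates}) that time reversal is unavailable for this stretch and that this is the reason for the new technical Lemma \ref{minoprevalley}. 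So the strong Markov/exponential-functional argument you apply to $I_2$ only covers $\int_{\tilde\tau_j^-(h_t)}^{\tilde\tau_j^-(h_t/2)}$ (and even there you need Lemma \ref{vdownrestegrand} to keep $\hat V^{\uparrow}$ above $h_t/4$ after hitting $[h_t/2,+\infty[$, since it can drift back down continuously); the middle stretch $[\tilde L_j^{\sharp},\tilde\tau_j^-(h_t)]$ is left uncontrolled.

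The second gap is in your bound for $I_1$. The algebraic cancellation of the double exponentials giving $I_1\le T\,e^{-(V(\tilde L_j^{\sharp})-V(\tilde m_j))}$ is correct, but you then ask for $V(\tilde L_j^{\sharp})-V(\tilde m_j)\ge Ch_t$ with $C$ larger than the exponent governing $T$, to be obtained from the ``overshoot of $\hat V^{\uparrow}$ above $h_t$''. This fails twice: the law of the reversed slope beyond its first passage of $h_t$ is again not identified, and, more fatally, an overshoot does \emph{not} exceed $Ch_t$ with probability $1-e^{-ch_t}$ — for a potential with bounded jumps the overshoot is bounded, so the event $\{V(\tilde L_j^{\sharp})-V(\tilde m_j)\ge Ch_t\}$ for large $C$ has probability tending to $0$, not to $1$. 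What actually saves both of your pieces is Lemma \ref{minoprevalley}: it shows directly (by a forward-in-time renewal argument on the excursions of $V-\underline V$, not by time reversal) that $\inf_{[\tilde L_j^{\sharp},\tilde\tau_j^-(h_t)]}\tilde V^{(j)}>(1-\epsilon)h_t$ with probability $1-e^{-\kappa\epsilon h_t/3}$; combined with Lemma \ref{majoprevalley} for the length $\tilde\tau_j^-(h_t)-\tilde L_{j-1}\le e^{(1+\delta)\kappa h_t}$, this gives the bound $e^{((1+\delta)\kappa+\epsilon-1)h_t}$ for everything to the left of $\tilde\tau_j^-(h_t)$, which is where the constraint $\epsilon<(1-(1+\delta)\kappa)/2$ comes from. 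Without that lemma (or an equivalent substitute), the ``careful bookkeeping'' you defer to cannot close the argument.
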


\begin{proof}

Fix $j \geq 1$. We have 
\begin{eqnarray}
\int_{\tilde{L}_{j-1}}^{\tilde \tau_j^-(h_t / 2)} e^{-\tilde V^{(j)}(u)} du = \int_{\tilde{L}_{j-1}}^{\tilde \tau_j^-(h_t)} e^{-\tilde V^{(j)}(u)} du + \int_{\tilde \tau_j^-(h_t)}^{\tilde \tau_j^-(h_t / 2)} e^{-\tilde V^{(j)}(u)} du. \label{boundj0j21}
\end{eqnarray}

The first term of the right hand side is less than 
\[ \left ( \tilde \tau_j^-(h_t) - \tilde{L}_{j-1} \right ) \times \sup_{u \in [\tilde{L}_{j-1}, \tilde \tau_j^-(h_t)]} e^{-\tilde V^{(j)}(u)}. \]
{According to Lemma \ref{majoprevalley} (applied with $h=h_t$), the first factor is less than $e^{(1+\delta)\kappa h_t}$ with a probability greater than $1-e^{-\delta \kappa h_t /2}$ when $t$ is large enough. From the definition of $\tilde{L}_{j}^{\sharp}$ we have $V(\tilde L_{j}^{\sharp})=\inf_{[\tilde L_{j-1}, \tilde L_{j}^{\sharp}]} V$ so the second factor is equal to $\sup_{u \in [\tilde L_{j}^{\sharp}, \tilde \tau_j^-(h_t)]} e^{-\tilde V^{(j)}(u)}$. We thus apply Lemma \ref{minoprevalley} with $\alpha = 1, \eta = \epsilon, h=h_t$ and we get that the second factor is less than $e^{-(1-\epsilon)h_t}$ with a probability greater than $1 - e^{-\kappa \epsilon h_t /3}$ when $t$ is large enough. In conclusion, there is a positive constant $c_1$ (depending on $\delta$ and $\epsilon$) such that for $t$ large enough, }
\begin{eqnarray}
\mathbb{P} \left( \int_{\tilde{L}_{j-1}}^{\tilde \tau_j^-(h_t)} e^{-\tilde V^{(j)}(u)} du > e^{-\epsilon h_t}/2 > e^{((1+\delta) \kappa +\epsilon - 1)h_t} \right) \leq e^{-c_1 h_t}. \label{boundj0j22}
\end{eqnarray}

{Then, note that $\int_{\tilde \tau_j^-(h_t)}^{\tilde \tau_j^-(h_t / 2)} e^{-\tilde V^{(j)}(u)} du$ is a function of $\tilde P_1^{(j)}$ (defined in Subsection \ref{coin}) and, according to Proposition \ref{standardwilliams} (applied with $h=h_t$), we have $d_{VT}( \tilde P_1^{(j)}, P_1^{(2)}) \leq 2 e^{- \delta \kappa h_t /3}$ (where $P_1^{(2)}$ is defined in Subsection \ref{loidesval}). Recall that, according to Proposition \ref{Fact_Williams} (applied with $h=h_t$), the law of $P_1^{(2)}$ is absolutely continuous with respect to the law of the process $(\hat{V}^{\uparrow}(x))_{0 \leq x \leq \tau(\hat{V}^{\uparrow}, h_t+)}$ and the density is bounded by $2$ when $h_t$ is large enough. As a consequence, we get that for $t$ large enough, }
\[ \mathbb{P} \left ( \int_{\tilde \tau_j^-(h_t)}^{\tilde \tau_j^-(h_t / 2)} e^{-\tilde V^{(j)}(u)} du > e^{-\epsilon h_t}/2 \right ) \leq 2 \mathbb{P} \left( \int_{\tau(\hat V^{\uparrow}, [h_t/2, +\infty[)}^{\tau(\hat V^{\uparrow}, [h_t, +\infty[)} e^{-\hat V^{\uparrow}(u)} du > e^{-\epsilon h_t}/2 \right) + 2 e^{- \delta \kappa h_t /3}. \]
The integral on the right hand side is less than
\[ \tau \left ( \hat V^{\uparrow}, [h_t, +\infty[ \right ) \times \sup_{u \in [\tau(\hat V^{\uparrow}, [h_t/2, +\infty[), +\infty[} e^{-\hat V^{\uparrow}(u)}. \]
Then, according to Lemma \ref{tpsatteintvdown} applied with $y=h_t$, $r=e^{h_t /8}$ {we have $\mathbb{P}(\tau ( \hat V^{\uparrow}, [h_t, +\infty[ ) \leq e^{h_t /8} ) \geq 1 - {K_0 [\exp(-\kappa h_t) + \exp (K_1 h_t -K_2 e^{h_t /8})]}$, where $K_0, K_1, K_2$ are positive constants. According to Lemma \ref{vdownrestegrand} applied with $a = h_t/4, b = h_t/2$, $z=0$ we have $\mathbb{P} ( \inf_{[\tau(\hat V^{\uparrow}, [h_t/2, +\infty[), +\infty[} \hat V^{\uparrow} \geq h_t/4 ) \geq 1 - e^{-\kappa h_t/4} / (1-e^{-\kappa h_t/2})$. In conclusion, there is a positive constant $c_2$ such that for $t$ large enough, }
\begin{eqnarray}
\mathbb{P} \left( \int_{\tilde \tau_j^-(h_t)}^{\tilde \tau_j^-(h_t / 2)} e^{-\tilde V^{(j)}(u)} du > e^{-\epsilon h_t}/2 > e^{(1/8 - 1/4)h_t} \right) \leq e^{-c_2 h_t}. \label{boundj0j23}
\end{eqnarray}

The combination of \eqref{boundj0j21}, \eqref{boundj0j22} and \eqref{boundj0j23} yields the result. 

\end{proof}

\begin{lemme} \label{boundj2}

Choose $\epsilon$ such that $0 < \epsilon < 1/8$. There is a positive constant $c$ such that for $t$ large enough, 
\[ \forall j \geq 1, \ \mathbb{P} \left( \int_{\tilde \tau_j^+(h_t / 2)}^{\tilde L_j} e^{-\tilde V^{(j)}(u)} du > e^{-\epsilon h_t} \right) \leq e^{-c h_t}. \]

\end{lemme}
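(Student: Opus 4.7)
The plan is to mirror the structure of Lemma \ref{boundj0} by splitting the integral at the point $\tilde\tau_j(h_t)$, thereby separating the contribution of the tail of the ascending slope $\tilde P_2^{(j)}$ from that of the descending slope $\tilde P_3^{(j)}$. Concretely, I would write
\[ \int_{\tilde \tau_j^+(h_t / 2)}^{\tilde L_j} e^{-\tilde V^{(j)}(u)} du = I_1 + I_2, \]
with $I_1 := \int_{\tilde \tau_j^+(h_t / 2)}^{\tilde \tau_j(h_t)} e^{-\tilde V^{(j)}(u)} du$ and $I_2 := \int_{\tilde \tau_j(h_t)}^{\tilde L_j} e^{-\tilde V^{(j)}(u)} du$, and bound each separately using Proposition \ref{standardwilliams} together with the estimates of Section \ref{estimates}.

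For $I_2$, Proposition \ref{standardwilliams} gives $\tilde P_3^{(j)} \egloi (h_t + V(x))_{0 \leq x \leq \tau(V, ]-\infty, -h_t/2])}$. Changing variable $u = \tilde \tau_j(h_t) + v$ then writes $I_2$ in law as $e^{-h_t}\int_0^{\tau(V, ]-\infty, -h_t/2])} e^{-V(v)}dv$. Strictly before the crossing time $V(v) > -h_t/2$, so off a null set the integrand is at most $e^{h_t/2}$, yielding $I_2 \leq e^{-h_t/2}\, \tau(V, ]-\infty, -h_t/2])$ in distribution. Since $V$ drifts to $-\infty$ linearly, a tail estimate on $\tau(V, ]-\infty, -h_t/2])$ obtained from Lemma \ref{tpsatteinthatv} (applied via duality to $\hat V$) shows that this hitting time is at most $e^{h_t/4}$ with probability at least $1 - e^{-c_1 h_t}$, giving $I_2 \leq e^{-h_t/4}$.

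For $I_1$, Proposition \ref{standardwilliams} gives $\tilde P_2^{(j)} \egloi (V^{\uparrow}(x))_{0 \leq x \leq \tau(V^{\uparrow}, h_t)}$; since $V^{\uparrow}$ has no positive jumps, the level $h_t/2$ is reached continuously, so after $u = \tilde m_j + v$ the integral $I_1$ has the same law as
\[ \int_{\tau(V^{\uparrow}, h_t/2)}^{\tau(V^{\uparrow}, h_t)} e^{-V^{\uparrow}(v)} dv \leq \bigl(\tau(V^{\uparrow}, h_t) - \tau(V^{\uparrow}, h_t/2)\bigr) \exp\Bigl(-\inf_{[\tau(V^{\uparrow}, h_t/2),\tau(V^{\uparrow}, h_t)]} V^{\uparrow}\Bigr). \]
I would then apply analogs for $V^{\uparrow}$ of Lemma \ref{tpsatteintvdown} and Lemma \ref{vdownrestegrand} (used for $\hat V^{\uparrow}$ in the proof of Lemma \ref{boundj0}): with a suitable choice of parameters these should give $\tau(V^{\uparrow}, h_t) - \tau(V^{\uparrow}, h_t/2) \leq e^{h_t/8}$ and $\inf V^{\uparrow} \geq h_t/4$ on this interval, each with probability at least $1 - e^{-c_2 h_t}$. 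This bounds $I_1$ by $e^{-h_t/8}$.

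A union bound then combines the two estimates: for any $\epsilon < 1/8$ the total integral is at most $e^{-\epsilon h_t}$ with probability $1 - e^{-c h_t}$ for a suitable $c > 0$. The main obstacle is the control of $\inf V^{\uparrow}$ between the two hitting times: the only automatic lower bound is $V^{\uparrow} \geq 0$, which is too weak here. Because $V^{\uparrow}$ can have arbitrarily large negative jumps, one must invoke a tail estimate on the depth of downward excursions of $V^{\uparrow}$ from its running supremum, obtained in parallel with the analogous bound for $\hat V^{\uparrow}$ via the ladder-process description of conditioned L\'evy processes; this is the place where the constant $1/8$, rather than something larger, enters.
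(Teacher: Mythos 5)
Your proof follows the paper's argument exactly: the same split of the integral at $\tilde\tau_j(h_t)$, the same bound $I_2 \leq e^{-h_t/2}\,\tau(V,]-\infty,-h_t/2])$ controlled by Lemma \ref{tpsatteinthatv}, and the same bound of $I_1$ by the product of the hitting time of $h_t$ by $V^{\uparrow}$ and the exponential of minus its infimum after $\tau(V^{\uparrow},h_t/2)$, with the identical numerology $e^{h_t/8}\cdot e^{-h_t/4}$ and $e^{-h_t/2}\cdot e^{h_t/4}$ explaining the threshold $1/8$. The ``analogs'' you say must still be established are in fact already available in the paper: Lemma \ref{vposlapltpsatt} gives the tail of $\tau(V^{\uparrow},h_t)$, and Lemma \ref{vuprestegrand} (applied with $a=h_t/4$, $b=h_t/2$, after the Markov property at $\tau(V^{\uparrow},h_t/2)$, which is reached continuously since $V^{\uparrow}$ has no positive jumps) gives exactly the needed control of $\inf V^{\uparrow}$, so no new estimate is required.
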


\begin{proof}

Fix $j \geq 1$. We have 
\begin{eqnarray}
\int_{\tilde \tau_j^+(h_t / 2)}^{\tilde L_j} e^{-\tilde V^{(j)}(u)} du = \int_{\tilde \tau_j^+(h_t / 2)}^{\tilde \tau_j(h_t)} e^{-\tilde V^{(j)}(u)} du + \int_{\tilde \tau_j(h_t)}^{\tilde L_j} e^{-\tilde V^{(j)}(u)} du. \label{boundj21}
\end{eqnarray}

According to Proposition \ref{standardwilliams} (applied with $h=h_t$), the fist term on the right hand side is equal in law to $\int_{\tau(V^{\uparrow}, h_t/2)}^{\tau(V^{\uparrow}, h_t)} e^{-V^{\uparrow}(u)} du$ which is less than 
\[ \tau(V^{\uparrow}, h_t) \times \sup_{u \in [\tau(V^{\uparrow}, h_t/2), +\infty[} e^{-V^{\uparrow}(u)}. \]
{According to Lemma \ref{vposlapltpsatt} applied with $y=h_t, r=e^{h_t /8}$, the first factor is less than $e^{h_t /8}$ with a probability greater than $1-\exp(K_1 h_t - K_2 e^{h_t /8})$ (for some positive constants $K_1$ and $K_2$). By the Markov property applied to $V^{\uparrow}$ at time $\tau(V^{\uparrow}, h_t/2)$, the second factor is equal in law to $\sup_{u \in [0, +\infty[} e^{-V_{h_t/2}^{\uparrow}(u)}$. We thus apply Lemma \ref{vuprestegrand} with $a = h_t/4, b = h_t/2$ and we get that the second factor is less than $e^{-h_t/4}$ with a probability greater than $1- K_4 e^{-K_3 h_t/4}$ (for some positive constants $K_3$ and $K_4$). In conclusion, there is a positive constant $c_1$ such that for $t$ large enough, }
\begin{eqnarray}
\mathbb{P} \left( \int_{\tilde \tau_j^+(h_t / 2)}^{\tilde \tau_j(h_t)} e^{-\tilde V^{(j)}(u)} du > e^{-\epsilon h_t}/2 > e^{(1/8 - 1/4)h_t} \right) \leq e^{-c_1 h_t}. \label{boundj22}
\end{eqnarray}

{By definition we have $\tilde L_j = \tau(\tilde V^{(j)}(\tilde \tau_j(h_t) + .), ]-\infty, h_t /2]) + \tilde \tau_{j}(h_t)$ so the second term in the right hand side of \eqref{boundj21} is less than $e^{-h_t/2}(\tilde L_j - \tilde \tau_j(h_t))$. Because of Proposition \ref{standardwilliams} (applied with $h=h_t$), $\tilde L_j - \tilde \tau_j(h_t)$ is equal in law to $\tau(V, ]-\infty, -h_t/2])$. Applying Lemma \ref{tpsatteinthatv} with $y = h_t/2$ and $r = e^{h_t/4}$ we get that $\tau(V, ]-\infty, -h_t/2])$ is less than $e^{h_t/4}$ with a probability greater than $1-\exp(K_5 h_t/2 - K_6 e^{h_t/4})$, where $K_5$ and $K_6$ are some positive constants. Putting all this together we get that for $t$ large enough, }
\begin{eqnarray}
\mathbb{P} \left( \int_{\tilde \tau_j(h_t)}^{\tilde L_j} e^{-\tilde V^{(j)}(u)} du > e^{-\epsilon h_t}/2 > e^{(1/4 - 1/2)h_t} \right) \leq e^{-h_t}. \label{boundj23}
\end{eqnarray}

The combination of \eqref{boundj21}, \eqref{boundj22} and \eqref{boundj23} yields the result. 

\end{proof}

Recall the definition of $A^j(.)$ {a little before \eqref{approxiid2}}. We have 

\begin{lemme} \label{truccentral}

Choose $\epsilon$ such that $0 < \epsilon < 1/4$. There is a positive constant $c$ (depending on $\epsilon$) such that for $t$ large enough, 
\[ \forall j \geq 1, \ \mathbb{P} \left ( \sup_{u \in [\tilde \tau_j^-(h_t /2), \tilde \tau_j^+(h_t /2)]} \left | A^j(u) / A^j(\tilde{L}_j) \right | \leq e^{-(1-2\epsilon) h_t/2} \right ) \geq 1 - e^{-c h_t}. \]
\end{lemme}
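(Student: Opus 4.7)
The plan is to bound the numerator $\sup_u|A^1(u)|$ and the denominator $A^1(\tilde L_1)$ of the ratio separately; by Remark~\ref{iid} it suffices to treat $j=1$.

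For the numerator, the definitions of $\tilde\tau_1^\pm(h_t/2)$ together with the fact that $V$ has no positive jumps ensure that $\tilde V^{(1)}(x)\leq h_t/2$ for Lebesgue-almost every $x\in[\tilde\tau_1^-(h_t/2),\tilde\tau_1^+(h_t/2)]$. Since $A^1$ is monotone on each side of $\tilde m_1$ (increasing on the right, and $|A^1|$ increasing as $u$ decreases on the left), one obtains
\[
\sup_{u\in[\tilde\tau_1^-(h_t/2),\tilde\tau_1^+(h_t/2)]}|A^1(u)|\leq e^{h_t/2}\bigl[(\tilde m_1-\tilde\tau_1^-(h_t/2))+(\tilde\tau_1^+(h_t/2)-\tilde m_1)\bigr].
\]
Propositions \ref{Fact_Williams} and \ref{standardwilliams} identify the bracketed lengths, up to total variation error of order $e^{-\delta\kappa h_t/3}$, with $\tau(\hat V^\uparrow,[h_t/2,+\infty[)$ and $\tau(V^\uparrow,h_t/2)$ respectively. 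Lemma \ref{vposlapltpsatt} applied with $y=h_t/2$, $r=e^{\epsilon h_t/4}$ (and its counterpart for $\hat V^\uparrow$) bounds each by $e^{\epsilon h_t/4}$ with failure probability at most $e^{-c_1 h_t}$, so the numerator is at most $2e^{h_t/2+\epsilon h_t/4}$ with that probability.

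For the denominator, $A^1(\tilde L_1)\geq\int_{\tilde m_1}^{\tilde\tau_1(h_t)}e^{\tilde V^{(1)}(x)}dx$, which by Proposition \ref{standardwilliams} is equal in law to $\int_0^{\tau(V^\uparrow,h_t)}e^{V^\uparrow(u)}du$. I would restrict this integral to $[\tau(V^\uparrow,h_t-1),\tau(V^\uparrow,h_t)]$. Applying Lemma \ref{vuprestegrand} with $(a,b)=(h_t-2,h_t-1)$ gives $V^\uparrow\geq h_t-2$ on this subinterval with probability at least $1-e^{-c_2 h_t}$. For its length $\Delta$, the Markov property at $\tau(V^\uparrow,h_t-1)$ identifies $\Delta$ in law with the time for $V^\uparrow_{h_t-1}$ to rise by $1$; since the starting level is large, the $h$-transform defining $V^\uparrow_{h_t-1}$ is essentially the identity on short horizons, so the problem reduces to bounding $\mathbb{P}(\sup_{[0,\delta]}V\geq 1)$. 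Because $V$ has no positive jumps, $\sup_{[0,\delta]}V$ has $L^2$-norm of order $\sqrt{\delta}$ (controlled by the Brownian part, the drift, and the compensated small-jump martingale separately), so Chebyshev yields $\mathbb{P}(\Delta<e^{-\epsilon h_t/4})\leq c_0 e^{-\epsilon h_t/4}$. Consequently $A^1(\tilde L_1)\geq e^{h_t-2-\epsilon h_t/4}$ with probability $\geq 1-e^{-c_3 h_t}$.

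Combining the two bounds, the ratio is at most $2e^2 e^{-h_t/2+\epsilon h_t/2}$, which is $\leq e^{-(1-2\epsilon)h_t/2}$ once $h_t$ is large enough, and a union bound over the three bad events gives the claimed probability. The main obstacle is the lower bound on $\Delta$: one has to justify that the $h$-transform defining $V^\uparrow_{h_t-1}$ is close to the identity on horizons of size $e^{-\epsilon h_t/4}$ and establish a short-time $L^2$ estimate on $\sup V$, both relying on the no-positive-jumps structure rather than on the hypothesis $V(1)\in L^p$, which plays no role at this scale. An alternative route, avoiding the $h$-transform discussion, would be to argue by time-reversal of $V^\uparrow$ at $\tau(V^\uparrow,h_t)$ and analyze the last excursion of $V^\uparrow$ above $h_t-1$ directly.
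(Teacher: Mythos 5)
Your treatment of the numerator is fine: bounding $\sup|A^j|$ on $[\tilde\tau_j^-(h_t/2),\tilde\tau_j^+(h_t/2)]$ by $e^{h_t/2}$ times the length of the interval, and controlling the two lengths through Propositions \ref{Fact_Williams} and \ref{standardwilliams} together with Lemmas \ref{vposlapltpsatt} and \ref{tpsatteintvdown}, is essentially an inlined proof of the estimates \eqref{0MajorationAVallee} and \eqref{d0MajorationAVallee}, which is what the paper invokes directly (with $h=h_t/2$, $\eta=\epsilon$) before using the monotonicity of $A^j$.

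The gap is in your lower bound for the denominator. You restrict $\int_0^{\tau(V^{\uparrow},h_t)}e^{V^{\uparrow}}$ to the last rise from $h_t-1$ to $h_t$ and then need its duration $\Delta$ to exceed $e^{-\epsilon h_t/4}$; this forces you into a short-horizon analysis of $V^{\uparrow}_{h_t-1}$ (the claim that the $h$-transform is "essentially the identity", plus an $L^2$ bound on $\sup_{[0,\delta]}V$) that is nowhere established and that you yourself flag as the main obstacle. Note in particular that a naive $L^2$ bound on $\sup_{[0,\delta]}V$ is not available under the standing hypothesis (only $V(1)\in L^p$ for some $p>1$); one would have to first remove the large negative jumps. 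None of this is necessary: estimate \eqref{03.10b} applied with $\alpha=1-\eta/2$ and $\omega=1$ already shows that $\tau(V^{\uparrow},h_t)-\tau(V^{\uparrow},(1-\eta/2)h_t)\geq 1$ with probability $1-e^{-c\eta h_t/2}$, so restricting the integral to that interval — on which $V^{\uparrow}\geq(1-\eta)h_t$ by Lemma \ref{vuprestegrand} with $a=(1-\eta)h_t$, $b=(1-\eta/2)h_t$ — gives $A^j(\tilde L_j)\geq e^{(1-\eta)h_t}$ with the required probability. This is exactly the content of \eqref{0MinorationAVallee}, which the paper cites with $h=h_t$ and $\eta=\epsilon/2$; substituting it for your last-unit-rise argument closes the proof, since $e^{(1+\epsilon)h_t/2-(1-\epsilon/2)h_t}=e^{-(1-2\epsilon)h_t/2}$.
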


\begin{proof}

For any $j \geq 1$ we have 
\[ A^j(\tilde{L}_j) \geq \int_{\tilde m_j}^{\tilde \tau_j(h_t)} e^{\tilde V^{(j)}(u)} du \overset{\mathcal{L}}{=} \int_0^{\tau(V^{\uparrow}, h_t)} e^{V^{\uparrow}(u)} du \ \ \ \text{and} \ \ \ A^j(\tilde \tau_j^+(h_t /2)) \overset{\mathcal{L}}{=} \int_0^{\tau(V^{\uparrow}, h_t /2)} e^{V^{\uparrow}(u)} du \]
where we have used Proposition \ref{standardwilliams} (with $h=h_t$) for the equalities in law. {\eqref{0MinorationAVallee} applied with $h = h_t$, $\eta = \epsilon/2$ yields $\mathbb{P}(A^j(\tilde{L}_j) > e^{(1-\epsilon/2) h_t}) \geq 1-e^{- K \epsilon h_t / 2}$ for some positive constant $K$ and when $t$ is large enough. \eqref{0MajorationAVallee} applied with $h = h_t/2$, $\eta = \epsilon$ yields $\mathbb{P}(A^j(\tilde \tau_j^+(h_t /2)) \leq e^{(1+\epsilon)h_t/2}) \geq 1-e^{-h_t}$ for $t$ large enough. In conclusion, we get the existence of a positive constant $c_1$ such that for $t$ large enough, }
\begin{eqnarray}
\mathbb{P} \left ( 0 \leq A^j(\tilde \tau_j^+(h_t /2)) / A^j(\tilde{L}_j) \leq e^{-(1-2\epsilon) h_t/2} \right ) \geq 1 - e^{-c_1 h_t}. \label{approxdeu1}
\end{eqnarray}
{Then, $A^j(\tilde \tau_j^-(h_t / 2)) = \int_{\tilde m_j}^{\tilde \tau_j^-(h_t / 2)} e^{\tilde V^{(j)}(u)} du$. That is, $A^j(\tilde \tau_j^-(h_t / 2))$ is a function of $\tilde P_1^{(j)}$ and, according to Proposition \ref{standardwilliams} (applied with $h=h_t$), we have $d_{VT}( \tilde P_1^{(j)}, P_1^{(2)}) \leq 2 e^{- \delta \kappa h_t /3}$. Recall that, according to Proposition \ref{Fact_Williams} (applied with $h=h_t$), the law of $P_1^{(2)}$ is absolutely continuous with respect to the law of the process $(\hat{V}^{\uparrow}(x))_{0 \leq x \leq \tau(\hat{V}^{\uparrow}, h_t+)}$ and the density is bounded by $2$ when $h_t$ is large enough. As a consequence, we get that for $t$ large enough, }
\begin{align*}
\mathbb{P} \left ( A^j(\tilde \tau_j^-(h_t / 2)) \leq - e^{(1 + \epsilon) h_t/2} \right ) & \leq 2 \mathbb{P} \left( \int_0^{\tau(\hat V^{\uparrow}, h_t /2+)} e^{\hat V^{\uparrow}(u)} du  \geq e^{(1 + \epsilon) h_t/2} \right) + 2 e^{- \delta \kappa h_t /3} \\
& \leq 2 e^{- c_4 h_t} + 2 e^{- \delta \kappa h_t /3}. 
\end{align*}
The last inequality comes from \eqref{d0MajorationAVallee} applied with $h = h_t/2$, $\eta = \epsilon$, it is true for some positive constant $c_4$ and $t$ large enough. Now combining with our lower bound for  $A^j(\tilde{L}_j)$, that is, $\mathbb{P}(A^j(\tilde{L}_j) > e^{(1-\epsilon/2) h_t}) \geq 1-e^{- K \epsilon h_t / 2}$, we get\begin{eqnarray}
\mathbb{P} \left ( - e^{-h_t (1 - 2\epsilon)/2} \leq A^j(\tilde \tau_j^-(h_t / 2)) / A^j(\tilde{L}_j) \leq 0 \right ) \geq 1 - e^{-c_5 h_t}, \label{approxdeu2}
\end{eqnarray}
for some positive constant $c_5$, and when $t$ is large enough. The combination of \eqref{approxdeu1} and \eqref{approxdeu2} with the increase of $A^j(.)$ yields the result. 

\end{proof}

We now approximate $\tilde h_j$, {where $(\tilde{\mathcal{L}}_j, \tilde h_j)$ is defined in \eqref{defljhj}. }

\begin{lemme} \label{approxdeu}
Choose $\epsilon$ such that $0 < \epsilon < \max \{1/8, (1- (1+\delta) \kappa)/2 \}$. There is a positive constant $c$ (depending on $\delta$ and $\epsilon$) such that for all $t$ large enough we have
\begin{equation}\label{eqPropIV}
\forall j \geq 1, \ \mathbb{P} \left( \left | \tilde h_j - A^j(\tilde{L}_j) R_j^t e_j \right |
            \leq
        e^{-\epsilon h_t/6} A^j(\tilde{L}_j) R_j^t e_j \right)
\geq
    1- e^{- c h_t}, 
\end{equation}
where
\[ R_j^t := \int_{\tilde{\tau}_j^-(h_t/2)}^{ \tilde{\tau}_j^+(h_t/2)} e^{-\tilde V^{(j)}(u)}  \textnormal{d}u. \]
\end{lemme}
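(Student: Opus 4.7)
My plan is to decompose, for each $j \geq 1$, the difference $\tilde h_j - A^j(\tilde L_j) R_j^t e_j$ according to the partition of $[\tilde L_{j-1}, \tilde L_j]$ into the left tail $[\tilde L_{j-1}, \tilde\tau_j^-(h_t/2)]$, the bottom $[\tilde\tau_j^-(h_t/2), \tilde\tau_j^+(h_t/2)]$, and the right tail $[\tilde\tau_j^+(h_t/2), \tilde L_j]$. Writing $\mathcal{L}^j(x) := \mathcal{L}_{B^j}(\tau(B^j, 1), x)$, so that $e_j = \mathcal{L}^j(0)$, formula~\eqref{approxiid3} (reading $A^j(\tilde L_1)$ there as $A^j(\tilde L_j)$) gives
\[
\tilde h_j / A^j(\tilde L_j) - R_j^t e_j = T_1 + T_2 + T_3,
\]
where $T_1$ and $T_3$ are the integrals of $e^{-\tilde V^{(j)}(u)} \mathcal{L}^j(A^j(u)/A^j(\tilde L_j))$ over the two tails, and $T_2 = \int_{\tilde\tau_j^-(h_t/2)}^{\tilde\tau_j^+(h_t/2)} e^{-\tilde V^{(j)}(u)} [\mathcal{L}^j(A^j(u)/A^j(\tilde L_j)) - \mathcal{L}^j(0)] du$. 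The goal is to show that each $T_i$ is much smaller than $R_j^t e_j$ on an event of probability at least $1 - e^{-ch_t}$.

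The tail terms $T_1, T_3$ are the easier ones. Lemmas~\ref{boundj0} and~\ref{boundj2} bound the two outer integrals of $e^{-\tilde V^{(j)}}$ by $e^{-\epsilon h_t}$ with probability $\geq 1 - e^{-ch_t}$, while the standard fact that $\sup_x \mathcal{L}^j(x)$ has exponential moments (it is the supremum of a Brownian local time up to the hitting time of~$1$) gives $\sup_x \mathcal{L}^j(x) \leq h_t^2$ with probability $\geq 1 - e^{-ch_t}$. Hence $|T_1| + |T_3| \leq h_t^2 e^{-\epsilon h_t}$ on this event.

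The central term $T_2$ is the heart of the proof. Fixing a small auxiliary $\epsilon' > 0$, Lemma~\ref{truccentral} yields $\sup_u |A^j(u)/A^j(\tilde L_j)| \leq e^{-(1-2\epsilon')h_t/2}$ on the bottom of the valley, with probability $\geq 1 - e^{-ch_t}$. I then invoke the H\"older regularity of the Brownian local time at~$0$: for any $\beta < 1/2$, there is a random constant $C^j$ with exponential moments such that $|\mathcal{L}^j(x) - \mathcal{L}^j(0)| \leq C^j |x|^\beta$ for $|x| \leq 1$. This follows from classical moment bounds for the modulus of continuity of Brownian local time (Barlow--Yor type inequalities) applied up to the stopping time $\tau(B^j, 1)$. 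Taking $\beta = 1/3$, this yields $|T_2| \leq h_t e^{-(1-2\epsilon')h_t/6} R_j^t$ with probability $\geq 1 - e^{-ch_t}$.

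It remains to convert the absolute bounds into the relative bound~\eqref{eqPropIV}. Right-continuity of $\tilde V^{(j)}$ at $\tilde m_j$, with $\tilde V^{(j)}(\tilde m_j) \leq 0$, gives a nondegenerate neighborhood of $\tilde m_j$ inside $[\tilde\tau_j^-(h_t/2), \tilde\tau_j^+(h_t/2)]$ on which $e^{-\tilde V^{(j)}} \geq 1/2$, whence $R_j^t \geq c_0 > 0$ on an event of probability $\geq 1 - e^{-ch_t}$. Since $e_j \sim \mathrm{Exp}(1/2)$, we also have $\mathbb{P}(e_j \geq e^{-\epsilon h_t/12}) \geq 1 - e^{-ch_t}$. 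Intersecting all these events and choosing $\epsilon'$ sufficiently small delivers $|T_1+T_2+T_3| \leq e^{-\epsilon h_t/6} R_j^t e_j$, and multiplying by $A^j(\tilde L_j)$ gives~\eqref{eqPropIV}. The main obstacle is the step bounding $T_2$: one must quantify the H\"older modulus of $\mathcal{L}^j$ near $0$ on a scale that shrinks like $e^{-(1-2\epsilon')h_t/2}$, with sub-exponential failure probability; everything else reduces to the environment estimates of Section~\ref{estimates} combined with the independence provided by Remark~\ref{iid}.
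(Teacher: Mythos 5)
Your decomposition of $[\tilde L_{j-1},\tilde L_j]$ into the two tails and the bottom is exactly the paper's ($\mathcal{J}^j_0,\mathcal{J}^j_1,\mathcal{J}^j_2$ in the paper's notation), and your treatment of the tails and of the central term is essentially equivalent: the paper controls $\mathcal{J}^j_1$ via a \emph{relative} continuity estimate for the Brownian local time (estimate (7.11) of the cited reference, which directly yields an error of the form $e^{-(1-2\epsilon)h_t/6}\,e_j R_j^t$), whereas you use an additive H\"older bound $|\mathcal{L}^j(x)-\mathcal{L}^j(0)|\leq C^j|x|^{\beta}$ and then divide out $e_j$ using $\mathbb{P}(e_j\leq e^{-\epsilon h_t/12})\leq e^{-ch_t}$. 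That variant is workable (one does need tail bounds on $C^j$ decaying faster than any polynomial to reach the $1-e^{-ch_t}$ level, which Ray--Knight/Barlow--Yor arguments do provide), so this is not where the problem lies.

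The genuine gap is your lower bound on $R_j^t$. You claim that right-continuity of $\tilde V^{(j)}$ at $\tilde m_j$ produces a nondegenerate neighbourhood on which $e^{-\tilde V^{(j)}}\geq 1/2$ and hence $R_j^t\geq c_0>0$ \emph{with failure probability at most $e^{-ch_t}$}. This is false: the length of that neighbourhood is a nondegenerate random variable (essentially $\tau(V^{\uparrow},\log 2)$), and more to the point $R_j^t$ converges in distribution to $\mathcal{R}=I(V^{\uparrow})+I(\hat V^{\uparrow})$ (Proposition \ref{cvr}), whose left tail is strictly positive at every level $c_0>0$. Hence $\mathbb{P}(R_j^t<c_0)$ tends to the fixed positive number $\mathbb{P}(\mathcal{R}<c_0)$ and does not decay in $h_t$ at all. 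The correct substitute, which is what the paper does in \eqref{approxdeu0.3}, is a lower bound that itself decays with $h_t$: writing $R_j^t\geq\int_{\tilde\tau_j^+(\epsilon h_t/16)}^{\tilde\tau_j^+(\epsilon h_t/8)}e^{-\tilde V^{(j)}}\geq e^{-\epsilon h_t/8}\,(\tau(V^{\uparrow},\epsilon h_t/8)-\tau(V^{\uparrow},\epsilon h_t/16))$ in law, and invoking \eqref{03.10b} to see that this time increment exceeds $1$ up to probability $e^{-c h_t}$, one gets $R_j^t\geq e^{-\epsilon h_t/8}$ with the required failure probability. Your remaining exponent bookkeeping still closes with this weaker bound (the tails are $O(h_t^2 e^{-\epsilon h_t})$, versus a required threshold of order $e^{-\epsilon h_t(1/6+1/8+1/12)}$), so the error is local and repairable, but as written the step is wrong and must be replaced by the quantitative hitting-time argument.
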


\begin{proof}

Our proof has the same spirit as the one of Lemma 4.7 of \cite{AndDev} but relies on our estimates. We thus give all the details. For any $j \geq 1$, we use the expression \eqref{approxiid3} for $\tilde h_j / A^j(\tilde{L}_j)$ that can be cut into three parts: 
\begin{align}
\tilde h_j / A^j(\tilde{L}_j) & = \int_{\tilde{L}_{j-1}}^{\tilde L_j} e^{-\tilde V^{(j)}(u)} \mathcal{L}_{B^j}\left ( \tau (B^j, 1),A^j(u)/A^j(\tilde{L}_j) \right ) \textnormal{d}u \nonumber \\
& = \int_{\tilde{L}_{j-1}}^{\tilde \tau_j^-(h_t / 2)} + \int_{\tilde \tau_j^-(h_t / 2)}^{\tilde \tau_j^+(h_t / 2)} + \int_{\tilde \tau_j^+(h_t / 2)}^{\tilde L_j} e^{-\tilde V^{(j)}(u)} \mathcal{L}_{B^j}\left ( \tau (B^j, 1),A^j(u)/A^j(\tilde{L}_j) \right ) \textnormal{d}u \nonumber \\
& =: \mathcal{J}^j_0+\mathcal{J}^j_1+\mathcal{J}^j_2. \label{approxdeuori}
\end{align}

We start by bounding $\mathcal{J}^j_2$ and $\mathcal{J}^j_0$: 
\[ \mathcal{J}^j_2 \leq \left ( \sup_{x \in [0, 1]} \mathcal{L}_{B^j} \left ( \tau (B^j, 1), x \right ) \right ) \times \int_{\tilde \tau_j^+(h_t / 2)}^{\tilde L_j} e^{-\tilde V^{(j)}(u)} du. \]
Thanks to estimate (7.12) of \cite{advech} applied with $x=e^{\epsilon h_t/2}$ {we have $\mathbb{P} (\sup_{x \in [0, 1]} \mathcal{L}_{B^j} ( \tau (B^j, 1), x ) \leq e^{\epsilon h_t/2}) \geq 1 - 4 \exp(-e^{\epsilon h_t/2}/2)$. Combining with Lemma \ref{boundj2}, we get the existence of a positive constant $c_1$ such that for $t$ is large enough, }
\begin{eqnarray}
\mathbb{P} \left ( \mathcal{J}^j_2 < e^{- \epsilon h_t/2} \right ) \geq 1 - e^{-c_1 h_t}. \label{approxdeu3}
\end{eqnarray}

For $\mathcal{J}^j_0$: 
\[ \mathcal{J}^j_0 \leq \left ( \sup_{x \in ]-\infty, 0]} \mathcal{L}_{B^j} \left ( \tau (B^j, 1), x \right ) \right ) \times \int_{\tilde{L}_{j-1}}^{\tilde \tau_j^-(h_t / 2)} e^{-\tilde V^{(j)}(u)} du. \]
Thanks to estimate (7.13) of \cite{advech} applied with $x=e^{\epsilon h_t/2}$ {we have $\mathbb{P} (\sup_{x \in ]-\infty, 0]} \mathcal{L}_{B^j} ( \tau (B^j, 1), x ) \leq e^{\epsilon h_t/2}) \geq 1 - 4 e^{-\epsilon h_t/2}$. Combining with Lemma \ref{boundj0}, we get the existence of a positive constant $c_2$ such that for $t$ is large enough, }
\begin{eqnarray}
\mathbb{P} \left ( \mathcal{J}^j_0 < e^{- \epsilon h_t/2} \right ) \geq 1 - e^{-c_2 h_t}. \label{approxdeu3.1}
\end{eqnarray}

{For $\mathcal{J}^j_1$, recall the definition of $e_j$ in the beginning of this subsection and apply estimate (7.11) of \cite{advech} with $\delta = e^{-(1 - 2\epsilon) h_t /2}$, $\epsilon = e^{-(1 - 2\epsilon) h_t /6}$. We get 
\[ \mathbb{P} \left ( \sup_{z \in [-e^{-(1 - 2\epsilon) h_t /2}, e^{-(1 - 2\epsilon) h_t /2}]} \left | \mathcal{L}_{B^j}\left ( \tau (B^j, 1), z \right ) - e_j \right | \leq e^{-(1 - 2\epsilon) h_t /6} e_j \right ) \geq 1 - C e^{-(1 - 2\epsilon) h_t /60}, \] 
where $C$ is some positive constant. Combining with Lemma \ref{truccentral} we get
\[ \mathbb{P} \left ( \sup_{u \in [\tilde \tau_j^-(h_t/2), \tilde \tau_j^+(h_t/2)]} \left | \mathcal{L}_{B^j}\left ( \tau (B^j, 1),A^j(u)/A^j(\tilde{L}_j) \right ) - e_j \right | \leq e^{-(1 - 2\epsilon) h_t /6} e_j \right ) \geq 1 - e^{-c_6 h_t}, \] 
for some positive constant $c_6$, when $t$ is large enough. Putting in the expression of $\mathcal{J}^j_1$ we deduce that for $t$ large enough 
\begin{eqnarray}
\mathbb{P} \left ( \left | \mathcal{J}^j_1 - e_j \int_{\tilde \tau_j^-(h_t/2)}^{\tilde \tau_j(h_t/2)} e^{- \tilde V^{(j)}(u)} du \right | \leq e^{-(1 - 2\epsilon) h_t /6} e_j \int_{\tilde \tau_j^-(h_t/2)}^{\tilde \tau_j^+(h_t/2)} e^{- \tilde V^{(j)}(u)} du \right ) \geq 1 - e^{-c_6 h_t}. \label{approxdeu4}
\end{eqnarray}
Recall that by definition $R_j^t := \int_{\tilde{\tau}_j^-(h_t/2)}^{ \tilde{\tau}_j^+(h_t/2)} e^{-\tilde V^{(j)}(u)}  \textnormal{d}u$. }
Then, 
\[ \int_{\tilde \tau_j^-(h_t/2)}^{\tilde \tau_j^+(h_t/2)} e^{- \tilde V^{(j)}(u)} du \geq \int_{\tilde \tau_j^+(\epsilon h_t/16)}^{\tilde \tau_j^+(\epsilon h_t/8)} e^{- \tilde V^{(j)}(u)} du \overset{\mathcal{L}}{=} \int_{\tau(V^{\uparrow}, \epsilon h_t/16)}^{\tau(V^{\uparrow}, \epsilon h_t/8)} e^{- V^{\uparrow}(u)} du, \]
where we have used Proposition \ref{standardwilliams} (applied with $h=h_t$) for the equality in law. We thus get
\begin{eqnarray}
\mathbb{P} \left ( \int_{\tilde \tau_j^-(h_t/2)}^{\tilde \tau_j^+(h_t/2)} e^{- \tilde V^{(j)}(u)} du \leq e^{- \epsilon h_t/8} \right ) \leq \mathbb{P} \left ( \tau(V^{\uparrow}, \epsilon h_t/8) - \tau(V^{\uparrow}, \epsilon h_t/16) \leq 1 \right ) \leq e^{-c_7 h_t}, \label{approxdeu0.3}
\end{eqnarray}
when $t$ is large enough, according to \eqref{03.10b} applied with $h=h_t, \alpha = \epsilon /8$, $\omega = \epsilon /16$, and where $c_7$ is a positive constant. 

We have $\mathbb{P}(e_j \leq e^{- \epsilon h_t/8}) \underset{t \rightarrow +\infty}{\sim} e^{- \epsilon h_t/8}/2$, so combining with \eqref{approxdeu4} and \eqref{approxdeu0.3} we get, for some positive constant $c_8$ and $t$ large enough, 
\begin{eqnarray}
\mathbb{P} \left ( \mathcal{J}^j_1 > e^{ - \epsilon h_t/5} \right ) \geq 1 - e^{-c_8 h_t}. \label{approxdeu5}
\end{eqnarray}


Combining \eqref{approxdeu5} with \eqref{approxdeu3} and \eqref{approxdeu3.1} we get {that "$\mathcal{J}^j_0 + \mathcal{J}^j_2 \leq 2 e^{- \epsilon h_t/2} = 2 e^{- 3 \epsilon h_t/10} \times e^{ - \epsilon h_t/5}$ and $\mathcal{J}^j_1 > e^{ - \epsilon h_t/5}$" with probability at least $1 - e^{-c_9 h_t}$ for some positive constant $c_9$, and when $t$ is large enough. As a consequence, for $t$ large enough, 
\begin{eqnarray}
\mathbb{P} \left ( \mathcal{J}^j_0 + \mathcal{J}^j_2 \leq 2 e^{ - 3 \epsilon h_t/10} \mathcal{J}^j_1 \right ) \geq 1 - e^{-c_9 h_t}. \label{approxdeu6}
\end{eqnarray}
Now putting together \eqref{approxdeuori}, \eqref{approxdeu6}, and \eqref{approxdeu4}, we get the existence of a positive constant $c$ such that for $t$ large enough the following inequalities hold with probability greater than $1 - e^{-c h_t}$: 
\begin{align*}
\tilde h_j / A^j(\tilde{L}_j) & \geq \mathcal{J}^j_1 \geq (1 - e^{-(1 - 2\epsilon) h_t /6}) R_j^t e_j \text{   and   } \\
\tilde h_j / A^j(\tilde{L}_j) & = \mathcal{J}^j_0+\mathcal{J}^j_1+\mathcal{J}^j_2 \leq ( 1+2 e^{ - 3 \epsilon h_t/10} ) \mathcal{J}^j_1 \leq ( 1+2 e^{ - 3 \epsilon h_t/10} ) \times (1 + e^{-(1 - 2\epsilon) h_t /6}) R_j^t e_j. 
\end{align*}
Since, for $t$ large enough, we have $(1 - e^{-(1 - 2\epsilon) h_t /6}) \geq (1-e^{-\epsilon h_t/6})$ and $( 1+2 e^{ - 3 \epsilon h_t/10} ) \times (1 + e^{-(1 - 2\epsilon) h_t /6}) \leq (1+e^{-\epsilon h_t/6})$, we have proved \eqref{eqPropIV}, as required. }

\end{proof}

We now approximate $A^j(\tilde{L}_j)$. 

%
%
\begin{lemme} \label{approxa}

For each $j \geq 1$, $S_j^t := \int_{\tilde \tau_j^+(h_t /2)}^{\tilde L_j} e^{\tilde V^{(j)}(u)} du$ is independent from $R_j^t$ and such that
\[ \mathbb{P} \left ( S_j^t \leq A^j(\tilde{L}_j) \leq (1 + e^{-h_t / 7}) S_j^t \right ) \geq 1 - e^{-c h_t},  \]
for some positive constant $c$, when $t$ is large enough. 

\end{lemme}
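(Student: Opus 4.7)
The plan is to write $A^j(\tilde L_j) = S_j^t + I_1$ with $I_1 := \int_{\tilde m_j}^{\tilde \tau_j^+(h_t/2)} e^{\tilde V^{(j)}(u)} du$, observe that $S_j^t \leq A^j(\tilde L_j)$ is automatic, and establish the upper inequality by showing $I_1 \leq e^{-h_t/7} S_j^t$ on an event of probability at least $1-e^{-c h_t}$.

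For the size bound on $I_1$, I would invoke Proposition \ref{standardwilliams} to identify the law of $(\tilde V^{(j)}(\tilde m_j + x))_{0 \leq x \leq \tilde \tau_j(h_t)-\tilde m_j}$ with that of $(V^{\uparrow}(x))_{0 \leq x \leq \tau(V^{\uparrow}, h_t)}$, which reduces $I_1$ in distribution to $\int_0^{\tau(V^{\uparrow}, h_t/2)} e^{V^{\uparrow}(x)} dx$. The upper estimate \eqref{0MajorationAVallee}, already exploited in the proof of Lemma \ref{truccentral}, then gives $I_1 \leq e^{(1+\eta) h_t/2}$ with probability at least $1 - e^{-c h_t}$ for any small $\eta > 0$. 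In the opposite direction I would use $S_j^t \geq \int_{\tilde m_j}^{\tilde \tau_j(h_t)} e^{\tilde V^{(j)}(u)} du - I_1$; the subtracted integral has, under the same identification, the distribution of $\int_0^{\tau(V^{\uparrow}, h_t)} e^{V^{\uparrow}(x)} dx$ and is therefore at least $e^{(1-\eta') h_t}$ with high probability by the lower estimate \eqref{0MinorationAVallee}. Combining, $S_j^t \geq e^{(1-\eta')h_t}/2$ on the intersection of the two good events and thus $I_1/S_j^t \leq 2 e^{-(1/2 - \eta/2 - \eta') h_t}$, which is at most $e^{-h_t/7}$ once $\eta$ and $\eta'$ are chosen so that $\eta/2 + \eta' < 5/14$.

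The independence statement splits into two parts. First, $e_j = \mathcal{L}_{B^j}(\tau(B^j,1),0)$ is built from the Brownian motion $B^j$, which is independent of $V$, so it is automatically independent of the $V$-measurable pair $(R_j^t, S_j^t)$. It remains to prove $R_j^t \perp S_j^t$. Here $R_j^t$ is a functional of $\tilde P_1^{(j)}$ killed at its first hitting of height $h_t/2$ together with $\tilde P_2^{(j)}$ on $[0, \tau(\tilde P_2^{(j)}, h_t/2)]$, whereas $S_j^t$ is a functional of $\tilde P_2^{(j)}$ on $[\tau(\tilde P_2^{(j)}, h_t/2), \tau(\tilde P_2^{(j)}, h_t)]$ together with $\tilde P_3^{(j)}$. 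Proposition \ref{standardwilliams} gives the independence of $\tilde P_1^{(j)}, \tilde P_2^{(j)}, \tilde P_3^{(j)}$, and the strong Markov property of $V^{\uparrow}$ applied at the stopping time $\tau(V^{\uparrow}, h_t/2)$ (at which $V^{\uparrow}$ takes the deterministic value $h_t/2$, since it has no positive jumps) decouples the two pieces of $\tilde P_2^{(j)}$ appearing on either side. Assembling these facts yields the asserted independence.

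The main obstacle will be the careful bookkeeping of which subprocess each quantity depends on, so that the strong Markov splitting of $\tilde P_2^{(j)}$ at height $h_t/2$ produces a clean independence statement; the quantitative size estimates on $I_1$ and $S_j^t$ are immediate consequences of the exponential functional bounds already established in Section \ref{estimates}.
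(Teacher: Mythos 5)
Your proposal is correct and follows essentially the same route as the paper: the identical decomposition $A^j(\tilde L_j) = S_j^t + \int_{\tilde m_j}^{\tilde \tau_j^+(h_t/2)} e^{\tilde V^{(j)}(u)}\,du$, the same pair of bounds from \eqref{0MajorationAVallee} and \eqref{0MinorationAVallee} via Proposition \ref{standardwilliams} (the paper takes $\eta = 1/3$ and $\eta = 1/6$, well within your constraint $\eta/2 + \eta' < 5/14$), and the same independence argument splitting $\tilde P_2^{(j)}$ at the stopping time $\tau(\tilde P_2^{(j)}, h_t/2)$ by the Markov property of $V^{\uparrow}$. No gaps.
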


\begin{proof}

Note that, for any $j \geq 1$, $R_j^t$ is a function of $\tilde P_1^{(j)}$ and $(\tilde P_2^{(j)}(x), 0 \leq x \leq \tau (\tilde P_2^{(j)}, h_t /2))$ while $S_j^t$ is a function of $(\tilde P_2^{(j)}(x), \tau (\tilde P_2^{(j)}, h_t /2) \leq x \leq \tau (\tilde P_2^{(j)}, h_t))$ and $\tilde P_3^{(j)}$. According to Proposition \ref{standardwilliams} (applied with $h=h_t$) the processes $\tilde P_1^{(j)}$, $\tilde P_2^{(j)}$, and $\tilde P_3^{(j)}$ are independent and $\tilde P_2^{(j)}$ is equal in law to $(V^{\uparrow}(x), 0 \leq x \leq \tau(V^{\uparrow}, h))$. In particular, we can apply the Markov property to $\tilde P_2^{(j)}$ at time $\tau (\tilde P_2^{(j)}, h_t /2)$ and we get that $(\tilde P_2^{(j)}(x), 0 \leq x \leq \tau (\tilde P_2^{(j)}, h_t /2))$ and $(\tilde P_2^{(j)}(x), \tau (\tilde P_2^{(j)}, h_t /2) \leq x \leq \tau (\tilde P_2^{(j)}, h_t))$ are independent. We conclude that $S_j^t$ and $R_j^t$ are independent. 
Then, from the definitions of $S_j^t$ and $A^j(\tilde{L}_j)$ we have
\begin{eqnarray}
S_j^t \leq A^j(\tilde{L}_j) = \int_{\tilde m_j}^{\tilde \tau_j^+(h_t / 2)} e^{\tilde V^{(j)}(u)} du + S_j^t. \label{approxa0}
\end{eqnarray}
{According to Proposition \ref{standardwilliams} (applied with $h=h_t$) the first term in the right hand side is equal in law to $\int_0^{\tau ( V^{\uparrow}, h_t /2 )} e^{V^{\uparrow}(u)} d u$.} Applying estimate \eqref{0MajorationAVallee} with $h = h_t/2$, $\eta = 1/3$, we get
\begin{eqnarray}
\mathbb{P} \left ( \int_{\tilde m_j}^{\tilde \tau_j^+(h_t / 2)} e^{\tilde V^{(j)}(u)} du \leq e^{4 h_t / 6} \right ) \geq 1-e^{-h_t}, \label{approxa1}
\end{eqnarray}
when $t$ is large enough. Then, 
\[ S_j^t \geq \int_{\tilde \tau_j^+(h_t / 2)}^{\tilde \tau_j(h_t)} e^{\tilde V^{(j)}(u)} du = \int_{\tilde m_j}^{\tilde \tau_j(h_t)} e^{\tilde V^{(j)}(u)} du - \int_{\tilde m_j}^{\tilde \tau_j^+(h_t/2)} e^{\tilde V^{(j)}(u)} du. \]

{According to Proposition \ref{standardwilliams} (applied with $h=h_t$) the first term in the right hand side is equal in law to $\int_0^{\tau ( V^{\uparrow}, h_t)} e^{V^{\uparrow}(u)} d u$ while the second is equal in law to $\int_0^{\tau ( V^{\uparrow}, h_t /2 )} e^{V^{\uparrow}(u)} d u$. For the first term we apply estimate \eqref{0MinorationAVallee} with $h = h_t$, $\eta = 1/6$, and for the second term we apply \eqref{0MajorationAVallee} with $h = h_t/2$, $\eta = 1/3$. We get the existence of a constant $c_1$ such that for $t$ large enough $\mathbb{P} (\int_{\tilde m_j}^{\tilde \tau_j(h_t)} e^{\tilde V^{(j)}(u)} du \geq e^{5 h_t/6 }, \int_{\tilde m_j}^{\tilde \tau_j^+(h_t/2)} e^{\tilde V^{(j)}(u)} du \leq e^{4 h_t/6 }) \geq 1-e^{- c_1 h_t}$. As a consequence, for $t$ large enough, }
\begin{eqnarray}
\mathbb{P} \left ( S_j^t \geq e^{5 h_t/6 } - e^{4 h_t/6 } \right ) \geq 1-e^{- c_1 h_t}. \label{approxa2}
\end{eqnarray}
Putting together $(\ref{approxa1})$ and $(\ref{approxa2})$ we get that 
\[ \mathbb{P} \left ( \int_{\tilde m_j}^{\tilde \tau_j^+(h_t / 2)} e^{\tilde V^{(j)}(u)} du / S_j^t \leq e^{-h_t/7} \right ) \geq 1-e^{- c_2 h_t}, \]
for some positive constant $c_2$, when $t$ is large enough. This, combined with $(\ref{approxa0})$, yields the result. 

\end{proof}

{\begin{proof} of Proposition \ref{approxparliid}

Recall from the beginning of this subsection that the sequence $(B^j)_{j \geq 1}$ is \textit{iid} and, according to Remark \ref{iid} (applied with $h=h_t$) the sequence $(\tilde V^{(j)} (x + \tilde m_j),\ \tilde L_{j-1} - \tilde m_j \leq x \leq \tilde L_{j} - \tilde m_j)_{j \geq 1}$ is \textit{iid}. Then, note that for each $j \geq 1$, $e_j$ is some function, not depending on $j$, of the Brownian motion $B^j$, and $(S_j^t, R_j^t)$ is some function, not depending on $j$, of the valley $(\tilde V^{(j)} (x + \tilde m_j),\ \tilde L_{j-1} - \tilde m_j \leq x \leq \tilde L_{j} - \tilde m_j)$. We deduce that the sequences $( e_j )_{j \geq 1}$, and $( S_j^t, R_j^t)_{j \geq 1}$ are \textit{iid}. Moreover, these two sequences are independent since $( S_j^t, R_j^t)_{j \geq 1}$ is a function of $V$ and, as we said in the beginning of this subsection, the sequence $(e_j)_{j \geq 1}$ is independent from $V$. According to Lemma \ref{approxa}, we have that for each $j \geq 1$ the random variables $S_j^t$ and $R_j^t$ are independent so we deduce that the sequences $( e_j )_{j \geq 1}$, $( S_j^t )_{j \geq 1}$ and $(R_j^t)_{j \geq 1}$ are \textit{iid} and mutually independent, as claimed in the proposition. 

Choose $\epsilon \in ]0, \max \{1/8, (1- (1+\delta) \kappa)/2 \}[$. Applying \eqref{approxiid2}, \eqref{approxiid3}, 
Lemma \ref{approxdeu} and Lemma \ref{approxa} simultaneously for all $j \in \{1,...,n_t\}$, we obtain the existence of a positive constant $c_1$ (depending on $\delta$ and $\epsilon$) such that for $t$ large enough, the following relations hold with probability greater than $1-n_t e^{-c_1 h_t}$: 
\begin{eqnarray*}
\forall j \geq 1, \ \mathcal{L}_X(H(\tilde{L}_j), \tilde m_j) = A^j(\tilde{L}_j) e_j \left |
    \begin{array}{ll}
        \leq (1 + e^{-h_t / 7}) e_j S_j^t, \\ 
        \geq e_j S_j^t, 
    \end{array}
\right . 
\end{eqnarray*} 
\begin{eqnarray*} 
H(\tilde L_j)-H(\tilde m_j) = \tilde h_j \left |
    \begin{array}{ll}
        \leq (1 + e^{-\epsilon h_t/6}) A^j(\tilde{L}_j) R_j^t e_j \leq (1 + e^{-\epsilon h_t/6}) \times (1 + e^{-h_t / 7}) e_j S_j^t R_j^t, \\
        \geq (1 - e^{-\epsilon h_t/6}) A^j(\tilde{L}_j) R_j^t e_j \geq (1 - e^{-\epsilon h_t/6}) e_j S_j^t R_j^t. 
    \end{array}
\right . 
\end{eqnarray*} 
When $t$ is large enough, the factors $(1 + e^{-h_t / 7})$ and $(1 + e^{-\epsilon h_t/6}) \times (1 + e^{-h_t / 7})$ are smaller than $(1 + e^{- \epsilon h_t / 7})$ while the factors $1$ and $(1 - e^{-\epsilon h_t/6})$ are greater than $(1 - e^{- \epsilon h_t / 7})$. We thus obtain that the inequalities asserted in Proposition \ref{approxparliid} hold, for $t$ large enough, with a probability greater than $1-n_t e^{-c_1 h_t}$. Finally, note that $n_t \sim e^{\kappa (1+\delta) \phi(t)}$ {(see the definition of $n_t$ in the beginning of this section)} where $\phi(t) << \log(t) \sim h_t$. As a consequence, there is a positive constant $c$ such that $n_t e^{-c_1 h_t} \leq e^{-c h_t}$ when $t$ is large enough. The proposition follows. 

\end{proof}

In order to translate "$\mathcal{L}_X^*(t) /t \leq \alpha$" in term of events only involving the sequence $( e_i S_i^t,  e_i S_i^t R_i^t)_{i \geq 1}$, we need to compare $N_t$, the number of $h_t$-valleys visited until instant $t$ (see \eqref{defnt}), with the overshoots of $\sum_{i=1}^{.} e_i S_i^t R_i^t$. We now do this thanks to Proposition \ref{approxparliid}. Recall the definition of the overshoots $\mathcal{N}_a$ in the beginning of Subsection \ref{proofmainth}. We have :} 

\begin{lemme} \label{overshoot}

Fix $\epsilon$ as in Proposition \ref{approxparliid} and $\eta \in ]0, 1[$. Assume that $t$ is so large such that $(1-e^{- \epsilon h_t/7})^{-1} < (1+\eta)$ and $(1+e^{- \epsilon h_t/7})^{-1} (1 - 2/\log(h_t)) \geq (1-\eta)$. Recall the definitions of the events $\mathcal{V}_{n_t, h_t}$, $\mathcal{A}^1_t$ and $\mathcal{A}^5_t$ introduced in respectively Lemma \ref{minimacoincide}, Fact \ref{lemtps} and Proposition \ref{approxparliid}. Then
\[ \mathcal{V}_{n_t, h_t} \cap \left \{ N_t < n_t \right \} \cap \mathcal{A}^1_t \cap \mathcal{A}^5_t \subset \left \{ \mathcal{N}_{(1-\eta)t} \leq N_t \leq \mathcal{N}_{(1+\eta)t} \right \}. \]

\end{lemme}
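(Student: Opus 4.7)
The strategy is to unwind the definition of $N_t$ in terms of hitting times and then replace those hitting times by the \emph{iid}-friendly quantities $e_i S_i^t R_i^t$, paying only the small multiplicative losses $(1\pm e^{-\epsilon h_t/7})$ and the additive loss $2t/\log h_t$. The two numerical conditions on $t$ in the statement are exactly what is needed to absorb these three losses into a single factor $(1\pm \eta)$.

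\emph{Step 1: translating $N_t$ into hitting times of $\tilde m$.} By definition, $N_t<n_t$ is equivalent to $H(m_{N_t})\le t<H(m_{N_t+1})$ with $N_t+1\le n_t$. On $\mathcal V_{n_t,h_t}$, Lemma~\ref{minimacoincide} gives $m_i=\tilde m_i$ for all $1\le i\le n_t$, so this becomes
\[
H(\tilde m_{N_t})\le t<H(\tilde m_{N_t+1}).
\]

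\emph{Step 2: controlling $H(\tilde m_j)$ by telescoping sums.} Applying the definition of $\mathcal A^1_t$ to $j=N_t+1$ and to $j=N_t$ (both lie in $\{1,\dots,n_t\}$), we obtain
\[
\sum_{i=1}^{N_t-1}\bigl(H(\tilde L_i)-H(\tilde m_i)\bigr)\le H(\tilde m_{N_t})\le t
\]
and
\[
t<H(\tilde m_{N_t+1})\le \sum_{i=1}^{N_t}\bigl(H(\tilde L_i)-H(\tilde m_i)\bigr)+\frac{2t}{\log h_t}.
\]

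\emph{Step 3: replacing valley times by $e_iS_i^tR_i^t$.} On $\mathcal A^3_t$ the two bracketed differences above are framed by $(1\pm e^{-\epsilon h_t/7})e_iS_i^tR_i^t$. Using the upper half of $\mathcal A^3_t$ in the second inequality of Step~2 and the lower half in the first inequality gives
\[
\sum_{i=1}^{N_t}e_iS_i^tR_i^t\;>\;\frac{t\bigl(1-2/\log h_t\bigr)}{1+e^{-\epsilon h_t/7}}\qquad\text{and}\qquad \sum_{i=1}^{N_t-1}e_iS_i^tR_i^t\;\le\;\frac{t}{1-e^{-\epsilon h_t/7}}.
\]

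\emph{Step 4: conclusion.} The hypotheses on $t$ force the right-hand sides above to be bounded below by $(1-\eta)t$ and above by $(1+\eta)t$ respectively. Hence
\[
\sum_{i=1}^{N_t}e_iS_i^tR_i^t>(1-\eta)t,\qquad \sum_{i=1}^{N_t-1}e_iS_i^tR_i^t\le (1+\eta)t,
\]
and the very definition of $\mathcal N_a=\min\{j\ge 0,\ \sum_{i=1}^j e_iS_i^tR_i^t>a\}$ then yields $N_t\ge \mathcal N_{(1-\eta)t}$ and $N_t\le \mathcal N_{(1+\eta)t}$, as desired. (The case $N_t=0$ is trivial for the upper bound.)

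The argument is essentially bookkeeping: the substantive inputs (which are the truly hard results) have already been established, namely the replacement of $m_i$ by $\tilde m_i$, the smallness of the residual part of $H(\tilde m_j)$ beyond the valley excursions, and the iid-approximation of the $(\tilde h_j)$ by $e_jS_j^tR_j^t$. The only point that requires a tiny bit of care is making sure all indices involved lie in $\{1,\dots,n_t\}$, which is guaranteed by the hypothesis $N_t<n_t$.
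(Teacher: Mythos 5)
Your proof is correct and follows essentially the same route as the paper's: the paper phrases the argument as two chains of implications valid for every $k \le n_t$ (namely $N_t \ge k \Rightarrow \mathcal{N}_{(1+\eta)t} \ge k$, and $\mathcal{N}_{(1-\eta)t} \ge k \Rightarrow N_t \ge k$), whereas you specialize directly to $k = N_t$ and $k = N_t+1$; the bookkeeping with $\mathcal{A}^1_t$, $\mathcal{A}^3_t$ and the two numerical hypotheses on $t$ is otherwise identical.

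One loose end: your parenthetical dismissal of the case $N_t=0$ is aimed at the wrong inequality. For the upper bound $N_t \le \mathcal{N}_{(1+\eta)t}$ that case is indeed trivial, but for the lower bound $\mathcal{N}_{(1-\eta)t} \le N_t$ it is not: since $\mathcal{N}_{(1-\eta)t} \ge 1$ always (the empty sum is $0$, not $>(1-\eta)t$), the claimed inclusion would be false if $N_t=0$ could occur on the event. Moreover your Steps 1--2 (writing $H(m_{N_t}) \le t$ and applying $\mathcal{A}^1_t$ at $j=N_t$) tacitly assume $N_t \ge 1$. The fix is immediate and worth stating: $\mathcal{A}^1_t$ applied at $j=1$ gives $H(\tilde m_1) \le 2t/\log h_t \le t$, and on $\mathcal{V}_{n_t,h_t}$ this forces $N_t \ge 1$, so the degenerate case never arises on the event under consideration.
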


Even though it is used in the following subsection, an other interest of this lemma is that we expect it to be useful in the future study of the almost sure behavior of $\mathcal{L}_X^*(t)$. Indeed, for the almost sure behavior, the contribution of the last valley can be sometimes omitted, sometimes totally included, so we are left to study some behavior of $\sum_{i=1}^{N_t - 1} e_i S_i^t R_i^t$ or $\sum_{i=1}^{N_t} e_i S_i^t R_i^t$. The above lemma allows to replace $N_t$ by some $\mathcal{N}_a$, which is more convenient since it only depends on the sequence $(e_i S_i^t R_i^t, \ i \geq 1)$. 

\begin{proof}

Recall that by convention $\sum_{i=1}^{0}... = 0$. Assume that the event $\mathcal{V}_{n_t, h_t} \cap \left \{ N_t < n_t \right \} \cap \mathcal{A}^1_t \cap \mathcal{A}^5_t$ is realized. Then, for any $k \in \{1,...,n_t\}$ we have
\begin{align*}
N_t \geq k & \Rightarrow H(m_k) \leq t \Rightarrow H(\tilde m_k) \leq t \Rightarrow \sum_{i=1}^{k-1}\left ( H(\tilde L_i)-H(\tilde m_i) \right ) \leq t \\
& \Rightarrow \sum_{i=1}^{k-1} e_i S_i^t R_i^t \leq t (1-e^{- \epsilon h_t/7})^{-1} \leq (1+\eta)t \Rightarrow \mathcal{N}_{(1+\eta)t} \geq k, 
\end{align*}
and
\begin{align*}
\mathcal{N}_{(1-\eta)t} \geq k & \Rightarrow \sum_{i=1}^{k-1} e_i S_i^t R_i^t \leq (1-\eta)t \Rightarrow \sum_{i=1}^{k-1}\left ( H(\tilde L_i)-H(\tilde m_i) \right ) \leq (1-\eta) (1+e^{- \epsilon h_t/7}) t \\
& \Rightarrow H(\tilde m_k) \leq t \left [ (1-\eta) (1+e^{- \epsilon h_t/7}) + 2/\log(h_t) \right ] \leq t \Rightarrow H(m_k) \leq t \Rightarrow N_t \geq k. \\
\end{align*}

We have thus proved that $\mathcal{N}_{(1-\eta)t} \leq N_t \leq \mathcal{N}_{(1+\eta)t}$ is satisfied on $\mathcal{V}_{n_t, h_t} \cap \left \{ N_t < n_t \right \} \cap \mathcal{A}^1_t \cap \mathcal{A}^5_t$. 

\end{proof}

\subsection{Proof of Proposition \ref{analogue5.1}} \label{approxderep}

We now use the preceding results of this section to approximate the distribution function of $\mathcal{L}_X^*(t)/t$ by a distribution function involving the sequence $(e_i S_i^t, e_i S_i^t R_i^t, \ i \geq 1)$. 
We first state three facts. 

{Recall the definition of $\mathcal{D}_j$ in \eqref{defdj}. We also recall the notations $X_{\tilde m_j} := X(. + H(\tilde m_j))$ and $H_{X_{\tilde m_j}}(r) := \tau(X_{\tilde m_j}, r)$, the fact that, according to the Markov property at $H(\tilde m_j)$, $X_{\tilde m_j}$ is a diffusion starting from $\tilde m_j$, 
and the fact that $\mathcal{L}_{X_{\tilde m_j}}(.,.)$ denotes the local time of $X_{\tilde m_j}$. The first fact says that the supremum of the local time on $\mathcal{D}_j$ can be approximated by the local time at $\tilde m_j$. 

\begin{fact} \label{analogue(5.22)}

There is a positive constant $c$ such that for $t$ large enough, 
\[ \mathbb{P} \left ( \mathcal{A}^7_t := \cap_{j=1}^{n_t} \left \{ \sup_{y \in \mathcal{D}_j} \mathcal{L}_{X_{\tilde m_j}}(H_{X_{\tilde m_j}}(\tilde L_j),y) \leq (1+e^{-h_t/9}) \mathcal{L}_X(H(\tilde L_j),\tilde m_j) \right \} \right ) \geq 1 - e^{-ch_t}. \]

\end{fact}
}

The next two facts come from Lemmas 5.2 and 5.3 of \cite{advech}. 

\begin{fact} \label{analogue5.2}

Fix $\epsilon$ as in Proposition \ref{approxparliid}. For any $k \geq 1$ let us define the distribution functions, depending on $t$, 
 \begin{align*}
F_{\gamma, k}(x) &:= \mathbb{P} \left(\max_{ 1\leq j \leq k-1} \mathcal{L}_X(H(\tilde L_j),\tilde m_j) \leq \gamma t, \ \sum_{j=1}^{k-1} \left ( H(\tilde L_j)-H(\tilde m_j) \right ) \leq xt \right), \\
F^{\pm}_{\gamma, k}(x) &:= \mathbb{P} \left(\max_{ 1\leq j \leq k-1} e_j S_j^t \leq \gamma t (1 \pm 2e^{- \epsilon h_t/7}), \ \sum_{j=1}^{k-1} e_j S_j^t R_j^t \leq x t (1 \pm 2 e^{- \epsilon h_t/7})  \right). 
\end{align*} 
Then, there is a positive constant $c$ such that for all $t$ large enough: 
 \begin{align*}
\forall \ 1 \leq k \leq n_t, \forall \ 0<x \leq 1, \forall \ \gamma>0, \ F^-_{\gamma, k}(x) -e^{-c h_t} \leq F_{\gamma, k}(x)  \leq F^+_{\gamma, k}(x) +e^{-c h_t}. 
\end{align*} 
{Note that the conventions $\max_{1 \leq i \leq 0}... = 0$ and $\sum_{i=1}^{0}... = 0$ imply that $F_{\gamma, 1}$ and $F^{\pm}_{\gamma, 1}$ are constant equal to $1$.} 

\end{fact}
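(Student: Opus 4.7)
The plan is to deduce Fact~\ref{analogue5.2} directly from the multiplicative approximations supplied by Proposition~\ref{approxparliid}. Let $\mathcal{G}_t := \mathcal{A}^2_t \cap \mathcal{A}^3_t$ denote the intersection of the two good events from that proposition; then $\mathbb{P}(\mathcal{G}_t^c) \leq 2 e^{-c_0 h_t}$ for some constant $c_0 > 0$, and on $\mathcal{G}_t$ one has simultaneously, for every $1 \leq j \leq n_t$,
\[
(1-e^{-\epsilon h_t/7}) e_j S_j^t \leq \mathcal{L}_X(H(\tilde L_j),\tilde m_j) \leq (1+e^{-\epsilon h_t/7}) e_j S_j^t,
\]
\[
(1-e^{-\epsilon h_t/7}) e_j S_j^t R_j^t \leq H(\tilde L_j)-H(\tilde m_j) \leq (1+e^{-\epsilon h_t/7}) e_j S_j^t R_j^t.
\]

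Let $E$ denote the event defining $F_{\gamma,k}(x)$ and $E^{\pm}$ those defining $F^{\pm}_{\gamma,k}(x)$. For the upper bound, I would check that on $E \cap \mathcal{G}_t$ the sandwich gives $\max_{j \leq k-1} e_j S_j^t \leq \gamma t/(1-e^{-\epsilon h_t/7})$ and $\sum_{j=1}^{k-1} e_j S_j^t R_j^t \leq xt/(1-e^{-\epsilon h_t/7})$. Since $1/(1-a) \leq 1 + 2a$ for $a \in (0,1/2]$, once $t$ is large enough we obtain $E \cap \mathcal{G}_t \subset E^+$, whence
\[
F_{\gamma,k}(x) \leq \mathbb{P}(E \cap \mathcal{G}_t) + \mathbb{P}(\mathcal{G}_t^c) \leq F^+_{\gamma,k}(x) + 2 e^{-c_0 h_t}.
\]
For the lower bound, on $E^- \cap \mathcal{G}_t$ the sandwich yields
\[
\max_{j \leq k-1} \mathcal{L}_X(H(\tilde L_j),\tilde m_j) \leq (1+e^{-\epsilon h_t/7})(1-2e^{-\epsilon h_t/7}) \gamma t \leq \gamma t,
\]
using the elementary inequality $(1+a)(1-2a) \leq 1$ for $a \geq 0$, and similarly for the sum. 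Thus $E^- \cap \mathcal{G}_t \subset E$, and
\[
F^-_{\gamma,k}(x) \leq \mathbb{P}(E^- \cap \mathcal{G}_t) + \mathbb{P}(\mathcal{G}_t^c) \leq F_{\gamma,k}(x) + 2 e^{-c_0 h_t}.
\]
Absorbing the factor $2$ by replacing $c_0$ with any $c < c_0$ gives the conclusion for $t$ large enough.

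There is essentially no obstacle here: the argument is pure bookkeeping, the only care needed being to verify the two numerical inequalities $1/(1-a) \leq 1+2a$ and $(1+a)(1-2a) \leq 1$ for $a = e^{-\epsilon h_t/7}$, which are valid as soon as $h_t$ is large enough, i.e.\ as soon as $t$ is large enough. Note that the proof does not require the parameters $\gamma$ and $x$ to be related or bounded in any particular way beyond what is stated, and in particular $\gamma$ may depend on $t$ since the approximations are multiplicative.
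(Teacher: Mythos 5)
Your argument is correct and is exactly the route the paper takes: the paper's proof is the one-line remark that the fact is a direct consequence of Proposition \ref{approxparliid}, and your write-up simply fills in the bookkeeping (intersecting with the good events $\mathcal{A}^2_t \cap \mathcal{A}^3_t$ and using the elementary inequalities $1/(1-a)\leq 1+2a$ and $(1+a)(1-2a)\leq 1$) that the paper leaves implicit.
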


\begin{proof}
This is a direct consequence of Proposition \ref{approxparliid}. 
\end{proof}


\begin{fact} \label{analogue5.3}

Fix $\epsilon$ as in Proposition \ref{approxparliid} {and recall the definition of $\mathcal{D}_j$ in \eqref{defdj}}. 
We define the distribution functions, depending on $t$, 
\begin{align*}
f_{\gamma}(x) & := \mathbb{P} \left ( \mathcal{L}_{X_{\tilde m_1}}(t(1-x), \tilde m_1) \leq \gamma t, H_{X_{\tilde m_1}}(\tilde L_1)>t(1-x), H_{X_{\tilde m_1}}(\tilde L_{1})< H_{X_{\tilde m_1}}(\tilde L_{0}) \right ), \\
\tilde f_{\gamma}(x) & := \mathbb{P} \left ( \sup_{y \in \mathcal{D}_1} \mathcal{L}_{X_{\tilde m_1}}(t(1-x), y) \leq \gamma t, H_{X_{\tilde m_1}}(\tilde L_1)>t(1-x), H_{X_{\tilde m_1}}(\tilde L_{1})< H_{X_{\tilde m_1}}(\tilde L_{0}) \right ), \\
f_{\gamma}^{\pm}(x) & := \mathbb{P} \left( 1/R_1^t \leq \gamma (1\pm 2 e^{- \epsilon h_t/7}) / (1-x), \ e_1 S_1^t R_1^t > t(1-x)(1 \mp 2 e^{- \epsilon h_t/7}) \right). 
\end{align*}
Fix $\eta \in ]0, 1/2[$. Then, there is a positive constant $c$ such that for all $t$ large enough: 
\[ \forall \ x \in ]\eta, 1-\eta[, \forall \ \gamma>0, \ f^-_{\gamma}(x) - e^{-c h_t} \leq \tilde f_{\gamma}(x) \leq f_{\gamma}(x) \leq f^+_{\gamma}(x) + e^{-c h_t}. \]


\end{fact}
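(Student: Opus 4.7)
The middle inequality $\tilde f_\gamma(x) \leq f_\gamma(x)$ is immediate: by construction $\tilde m_1 \in \mathcal{D}_1 = [\tilde \tau_1^-((\phi(t))^2), \tilde \tau_1^+((\phi(t))^2)]$, so the event inside $\tilde f_\gamma(x)$ is a sub-event of the one inside $f_\gamma(x)$.

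For the two outer inequalities, I would translate the events involving $X_{\tilde m_1}$ into events on its driving Brownian motion $B^1$ via the representation \eqref{expretl1}. Setting $s_\star := T_{V,1}^{-1}(t(1-x))$, where $T_{V,1}$ denotes the time change associated with $X_{\tilde m_1}$, we have $\mathcal{L}_{X_{\tilde m_1}}(t(1-x), \tilde m_1) = \mathcal{L}_{B^1}(s_\star, 0)$ and $\{H_{X_{\tilde m_1}}(\tilde L_1) > t(1-x)\} = \{s_\star < \tau(B^1, A^1(\tilde L_1))\}$. The occupation-times formula for $T_{V,1}$ yields
\begin{align*}
t(1-x) = \int_{\mathbb{R}} e^{-\tilde V^{(1)}(y)}\, \mathcal{L}_{B^1}(s_\star, A^1(y))\, dy.
\end{align*}
On the event $\{s_\star < \tau(B^1, A^1(\tilde L_1)) \wedge \tau(B^1, A^1(\tilde L_0))\}$, the Brownian local time $\mathcal{L}_{B^1}(s_\star, \cdot)$ is supported in $(A^1(\tilde L_0), A^1(\tilde L_1))$, and a first-order Ray--Knight estimate gives $\mathcal{L}_{B^1}(s_\star, A^1(y)) = \mathcal{L}_{B^1}(s_\star, 0)(1 + o(1))$ uniformly in $y \in \mathcal{D}_1$, since $A^1(\mathcal{D}_1)$ is exponentially small compared to $A^1(\tilde L_1)$ (by Lemma \ref{truccentral} and Lemma \ref{approxa}). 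The contribution of $y \notin \mathcal{D}_1$ to the above integral is negligible thanks to the exponential functional bounds in Lemmas \ref{boundj0} and \ref{boundj2}. Together, on an event of probability at least $1 - e^{-c h_t}$,
\begin{align*}
\mathcal{L}_{X_{\tilde m_1}}(t(1-x), \tilde m_1) = \frac{t(1-x)}{R_1^t}\bigl(1 + O(e^{-\epsilon h_t/7})\bigr).
\end{align*}

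Combining this approximation with Proposition \ref{approxparliid} (which gives $H_{X_{\tilde m_1}}(\tilde L_1) = H(\tilde L_1) - H(\tilde m_1) = e_1 S_1^t R_1^t (1 + O(e^{-\epsilon h_t/7}))$ after the Markov shift at $H(\tilde m_1)$), the event in $f_\gamma(x)$ forces $1/R_1^t \leq \gamma(1 + 2 e^{-\epsilon h_t/7})/(1-x)$ and $e_1 S_1^t R_1^t > t(1-x)(1 - 2 e^{-\epsilon h_t/7})$, which is precisely the event defining $f^+_\gamma(x)$; this yields $f_\gamma(x) \leq f^+_\gamma(x) + e^{-c h_t}$. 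The converse bound $f^-_\gamma(x) - e^{-c h_t} \leq \tilde f_\gamma(x)$ follows symmetrically, after checking (i) that the sup over $\mathcal{D}_1$ of $\mathcal{L}_{X_{\tilde m_1}}(t(1-x), \cdot)$ still equals $\mathcal{L}_{X_{\tilde m_1}}(t(1-x), \tilde m_1)(1 + o(1))$ by the same Ray--Knight control, and (ii) that $\tau(B^1, A^1(\tilde L_1)) < \tau(B^1, A^1(\tilde L_0))$ holds with probability tending to one, since $A^1(\tilde L_1) \gg |A^1(\tilde L_0)|$ on the event $\mathcal{V}_{n_t, h_t}$.

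The main obstacle is the quantitative control of the oscillation of $\mathcal{L}_{B^1}(s_\star, \cdot)$ on $A^1(\mathcal{D}_1)$ before $B^1$ exits $(A^1(\tilde L_0), A^1(\tilde L_1))$, since $s_\star$ is itself random. I would address this by conditioning on $\mathcal{L}_{B^1}(s_\star, 0)$ lying in a dyadic range (using the exponential tail of Brownian local time at $0$) and then invoking standard Ray--Knight bounds for the transverse fluctuations; this suffices because the relative spatial scale $A^1(\mathcal{D}_1)/A^1(\tilde L_1)$ is exponentially small in $h_t$, which is in turn controlled by the estimates on $A^1$ already established in the proofs of Lemma \ref{approxa} and Lemma \ref{truccentral}.
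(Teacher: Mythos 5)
Your reduction of the events to statements about $R_1^t$ and $e_1S_1^tR_1^t$, the trivial inclusion $\tilde f_\gamma \le f_\gamma$ (since $\tilde m_1 \in \mathcal{D}_1$), and the list of supporting estimates (Lemmas \ref{truccentral}, \ref{approxa}, \ref{boundj0}, \ref{boundj2}, Proposition \ref{approxparliid}, and Lemma \ref{sortparladroite} for the exit-to-the-right event) all match what the paper actually uses. The gap is in the central step. You work at the fixed time $t(1-x)$ and claim $\mathcal{L}_{B^1}(s_\star, A^1(y)) = \mathcal{L}_{B^1}(s_\star,0)(1+o(1))$ uniformly on $\mathcal{D}_1$ by "a first-order Ray--Knight estimate"; but $s_\star = T_{V,1}^{-1}(t(1-x))$ is a random time that is neither a first passage time of a level nor an inverse local time at $0$, so neither Ray--Knight theorem applies to the profile $\mathcal{L}_{B^1}(s_\star,\cdot)$. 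You flag this yourself as "the main obstacle", but the proposed repair (conditioning $\mathcal{L}_{B^1}(s_\star,0)$ into dyadic blocks and invoking "standard Ray--Knight bounds") does not resolve it: the conditioning event depends on the whole path, and a sandwich $\sigma_{B^1}(\ell-,0)\le s_\star\le\sigma_{B^1}(\ell,0)$ with $\ell=\mathcal{L}_{B^1}(s_\star,0)$ random would force you to control the profile uniformly over all levels $\ell$, with a relative error as sharp as $e^{-\epsilon h_t/7}$ and an exceptional probability $e^{-ch_t}$. That quantitative step is the entire content of the fact, and it is asserted rather than proved.

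The paper (following Lemma 5.3 of \cite{advech}) sidesteps this by inverting the problem: since the event of interest is $\{\mathcal{L}_{X_{\tilde m_1}}(t(1-x),\tilde m_1)\le\gamma t\}=\{\sigma_{X_{\tilde m_1}}(\gamma t,\tilde m_1)\ge t(1-x)\}$, one never needs the value of the local time at time $t(1-x)$ --- only its comparison with the fixed threshold $\gamma t$. The inverse local time $\sigma_{X_{\tilde m_1}}(\gamma t,\tilde m_1)$ corresponds, through \eqref{expretl1}, to an inverse local time of $B^1$ at $0$ at a level that is deterministic given $V$, i.e.\ a genuine stopping time; it is then written as an integral $I$ over $[\tilde L_0,\tilde L_1]$, split into a bottom part $I_1\approx \gamma t R_1^t$ and a remainder $I_2$ which is controlled exactly by your Lemmas \ref{boundj0} and \ref{boundj2}, with $R_1^t$ bounded below via \eqref{approxdeu0.3}. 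If you replace your fixed-time approximation by this $\sigma$-device, the rest of your argument (including the treatment of $H_{X_{\tilde m_1}}(\tilde L_1)$ via Proposition \ref{approxparliid} and of $H_{X_{\tilde m_1}}(\tilde L_1)<H_{X_{\tilde m_1}}(\tilde L_0)$ via Lemma \ref{sortparladroite} --- note that the latter needs an error $e^{-ch_t}$, not merely a probability tending to one) goes through as you describe.
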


For the justification of Facts \ref{analogue(5.22)} and \ref{analogue5.3} in our context, we give some details in Subsection \ref{justoffacts}. 

Let us recall the definitions of the functionals $\tilde K_{I, a}$ and $\tilde K_{I, a}^-$ defined in Section 4.3 of \cite{advech}: 
\[ \forall a > 0, \ {\forall (f_1, f_2) \in D([0, +\infty[, \mathbb{R}^2),} \ \tilde K_{I, a}(f_1, f_2) := f_2 (f_2^{-1}(a)), \ \tilde K_{I, a}^-(f_1, f_2) := f_2 (f_2^{-1}(a)-). \]
{In the above expression we have used the notations defined just before Theorem \ref{cvdutl}.} Note that the functionals $\tilde K_{I, a}$ and $\tilde K_{I, a}^-$ actually do not involve $f_1$. According to Lemma 4.5 of \cite{advech}, {a realization of the $\kappa$-stable subordinator $(\mathcal{Y}_1,\mathcal{Y}_2)$, defined just before Theorem \ref{cvdutl}}, is almost surely a point of continuity of these functionals. Thanks to this we can prove: 

{
\begin{lemme} \label{analogue5.4}
\begin{align}
& \underset{\eta \rightarrow 0}{\lim} \ \underset{t \rightarrow +\infty}{\limsup} \left ( s(\eta,t) := \sum_{k=1}^{+\infty} \mathbb{P} \left ( 1-5\eta/4 < \frac1{t} \sum_{i=1}^{k-1} e_i S_i^t R_i^t \leq 1-3\eta/4, \ \frac1{t} e_k S_k^t R_k^t > 3\eta/4 \right ) \right )= 0. \label{rec1factsim} \\
& \underset{\eta \rightarrow 0}{\lim} \ \underset{t \rightarrow +\infty}{\limsup} \left [ \tilde s(\eta,t) := 1 - \mathbb{P} \left( \eta t \leq  H(m_{N_t}) \leq (1-\eta) t \right) \right ] = 0 \label{NNeps}. 
\end{align}
Recall that we use the convention $\sum_{i=1}^{0}... = 0$ (therefore the first term in the sum defining $s(\eta,t)$ is actually equal to $0$). 
\end{lemme}



\begin{proof}
Recall the definition of $(Y_1, Y_2)^t$ in \eqref{defy1y2t} and the definition of $\mathcal{N}_a$ in the beginning of Subsection \ref{proofmainth}. For any $\eta \in ]0, 4/5[$, $t > 0$ and $k \geq 1$, we have 
\begin{align*}
& \left \{ 1-5\eta/4 < \frac1{t} \sum_{i=1}^{k-1} e_i S_i^t R_i^t \leq 1-3\eta/4, \ \frac1{t} e_k S_k^t R_k^t > 3\eta/4 \right \} \\
\subset & \left \{ 1-5\eta/4 < \frac1{t} \sum_{i=1}^{k-1} e_i S_i^t R_i^t \leq 1-3\eta/4, \ \frac1{t} \sum_{i=1}^{k} e_i S_i^t R_i^t > 1-\eta/2 \right \} \\
= & \left \{ \mathcal{N}_{(1-\eta/2)t} = k, \ 1-5\eta/4 < \frac1{t} \sum_{i=1}^{\mathcal{N}_{(1-\eta/2)t}-1} e_i S_i^t R_i^t \leq 1-3\eta/4 \right \} \\
\subset & \left \{ \mathcal{N}_{(1-\eta/2)t} = k, \ \frac1{t} \sum_{i=1}^{\mathcal{N}_{(1-\eta/2)t}-1} e_i S_i^t R_i^t > 1-5\eta/4 \right \} =: E_k. 
\end{align*}
The events $E_k, \ k \geq 1$ are clearly disjoint. We thus have 
\begin{align*}
s(\eta,t) & \leq \sum_{k=1}^{+\infty} \mathbb{P} \left ( E_k \right ) = \mathbb{P} \left ( \cup_{k=1}^{+\infty} E_k \right ) = \mathbb{P} \left ( \frac1{t} \sum_{i=1}^{\mathcal{N}_{(1-\eta/2)t}-1} e_i S_i^t R_i^t > 1-5\eta/4 \right ) \\
& = \mathbb{P} \left ( Y_2^t(Y_2^{t, -1}(1-\eta/2)-) > 1-5\eta/4 \right ) = \mathbb{P} \left ( \tilde K_{I, 1-\eta/2}^-[(Y_1, Y_2)^t] > 1-5\eta/4 \right ). 
\end{align*}
Now, according to Proposition \ref{propcvsub} (to be proved in the following subsection), $(Y_1, Y_2)^t$ converges to $(\mathcal{Y}_1, \mathcal{Y}_2)$, defined just before Theorem \ref{cvdutl}, for the convergence in distribution in $D(\mathbb{R}_+, \mathbb{R}^2)$ with the $J_1$ topology. Moreover we have that for any fixed $\eta \in ]0, 4/5[$, $(\mathcal{Y}_1, \mathcal{Y}_2)$ is almost surely a point of continuity for $\tilde K_{I, 1-\eta/2}^-$. Using the \textit{continuous mapping theorem} we thus get
\begin{eqnarray}
\underset{t \rightarrow +\infty}{\limsup} \ s(\eta,t) \leq \mathbb{P} \left ( \mathcal{Y}_2(\mathcal{Y}_2^{-1}(1-\eta/2)-) > 1-5\eta/4 \right ). \label{contdestrucs1}
\end{eqnarray}
We now study the limit when $\eta$ goes to $0$. Since $\mathcal{Y}_2$ is a $\kappa$-stable subordinator, it is known that almost surely $Z := 1 - \mathcal{Y}_2(\mathcal{Y}_2^{-1}(1)-) > 0$ so $\mathcal{Y}_2^{-1}(1-\eta/2) = \mathcal{Y}_2^{-1}(1)$ for all $0 < \eta < 2 Z$. Then, for all $0 < \eta < 4Z/5 < 2Z$, $\mathcal{Y}_2(\mathcal{Y}_2^{-1}(1-\eta/2)-) = \mathcal{Y}_2(\mathcal{Y}_2^{-1}(1)-) = 1 - Z \leq 1-5\eta/4$. This proves that for almost every realization of $\mathcal{Y}_2$, the event in the probability in \eqref{contdestrucs1} fails to happen for all $\eta$ small enough, so by dominated convergence, this probability converges to $0$ when $\eta$ goes to $0$. This proves \eqref{rec1factsim}. 

We now prove \eqref{NNeps}. Let us fix $\epsilon$ as in Proposition \ref{approxparliid}, $\eta \in ]0, 1/3[$, put $\epsilon_t := e^{- \epsilon h_t/7}$ and choose $t$ large enough so that: $2 /\log h_t \leq \eta, (1-\epsilon_t)^{-1} \leq 2, (1- 3\eta) \leq (1- 2\eta) (1+\epsilon_t)^{-1}$ and Lemma \ref{overshoot} applies for our choice of $\eta$. {Recall the definitions of the events $\mathcal{V}_{n_t, h_t}$, $\mathcal{A}^1_t$ and $\mathcal{A}^5_t$ introduced in respectively Lemma \ref{minimacoincide}, Fact \ref{lemtps} and Proposition \ref{approxparliid}.} Using successively the definitions of $\mathcal{V}_{n_t, h_t} \cap \{ N_t \leq n_t \}$, $\mathcal{A}^1_t \cap \{ N_t \leq n_t \}$, $\mathcal{A}^5_t \cap \{ N_t \leq n_t \}$, and then Lemma \ref{overshoot}, we get that $\mathbb{P} ( \eta t \leq  H(m_{N_t}) \leq (1-\eta) t )$ is greater than
\begin{align*}
& \mathbb{P} \left( \eta t \leq  H(\tilde m_{N_t}) \leq (1-\eta) t, \ \mathcal{V}_{n_t, h_t}, \ N_t \leq n_t, \ \mathcal{A}^1_t, \mathcal{A}^5_t \right) \\
\geq & \mathbb{P} \left( \eta t \leq \sum_{i=1}^{N_t-1}\left ( H(\tilde L_i)-H(\tilde m_i) \right ) \leq (1- 2\eta) t \leq \left (1-\eta - \frac{2}{\log h_t} \right ) t, \ \mathcal{V}_{n_t, h_t}, \ N_t \leq n_t, \ \mathcal{A}^1_t, \mathcal{A}^5_t \right) \\
\geq & \mathbb{P} \left( \eta (1-\epsilon_t)^{-1} t \leq 2 \eta t \leq \sum_{i=1}^{N_t-1} e_i S_i^t R_i^t \leq (1- 3\eta) t \leq (1- 2\eta) (1+\epsilon_t)^{-1} t, \ \mathcal{V}_{n_t, h_t}, \ N_t \leq n_t, \ \mathcal{A}^1_t, \mathcal{A}^5_t \right) \\
\geq & \mathbb{P} \left( 2 \eta t \leq \sum_{i=1}^{\mathcal{N}_{(1-\eta)t}-1} e_i S_i^t R_i^t \leq \sum_{i=1}^{\mathcal{N}_{(1+\eta)t}-1} e_i S_i^t R_i^t \leq (1- 3\eta) t, \ \mathcal{V}_{n_t, h_t}, \ N_t \leq n_t, \ \mathcal{A}^1_t, \mathcal{A}^5_t \right) \\
= & \mathbb{P} \left( 2\eta \leq Y_2^t(Y_2^{t, -1}(1-\eta)-) \leq Y_2^t(Y_2^{t, -1}(1+\eta)-) \leq (1-3\eta), \ \mathcal{V}_{n_t, h_t}, \ N_t \leq n_t, \ \mathcal{A}^1_t, \mathcal{A}^5_t \right) \\
= & \mathbb{P} \left( 2\eta \leq \tilde K_{I, 1-\eta}^-[(Y_1, Y_2)^t] \leq \tilde K_{I, 1+\eta}^-[(Y_1, Y_2)^t] \leq (1-3\eta), \ \mathcal{V}_{n_t, h_t}, \ N_t \leq n_t, \ \mathcal{A}^1_t, \mathcal{A}^5_t \right). 
\end{align*}

According to Lemma \ref{minimacoincide} (applied with $n = n_t, h=h_t$), Fact \ref{lemtps}, Proposition \ref{approxparliid} and Lemma \ref{nbvalleesvisit} of the following subsection, we have 
\[ \mathbb{P} \left (\mathcal{V}_{n_t, h_t} \cap \{ N_t \leq n_t \} \cap \mathcal{A}^1_t \cap \mathcal{A}^5_t \right ) \underset{t \rightarrow + \infty}{\longrightarrow} 1. \]
Then, combining with the convergence of $(Y_1, Y_2)^t$ to $(\mathcal{Y}_1, \mathcal{Y}_2)$, the continuity of $\tilde K_{I, 1-\eta}^-$ and $\tilde K_{I, 1+\eta}^-$ at $(\mathcal{Y}_1, \mathcal{Y}_2)$, and the \textit{continuous mapping theorem}, we get
\begin{eqnarray}
\underset{t \rightarrow +\infty}{\limsup} \ \tilde s(\eta,t) \leq 1 - \mathbb{P} \left ( 2 \eta \leq \mathcal{Y}_2(\mathcal{Y}_2^{-1}(1-\eta)-) \leq \mathcal{Y}_2(\mathcal{Y}_2^{-1}(1+\eta)-) \leq (1-3\eta) \right ). \label{contdestrucs2}
\end{eqnarray}
We have almost surely $E_2 := \min \{ \mathcal{Y}_2(\mathcal{Y}_2^{-1}(1)) - 1, \ 1 - \mathcal{Y}_2(\mathcal{Y}_2^{-1}(1)-), \ \mathcal{Y}_2(\mathcal{Y}_2^{-1}(1)-) \} > 0$ so $\mathcal{Y}_2^{-1}(1 + \eta) = \mathcal{Y}_2^{-1}(1)$ for all $0 \leq \eta \leq E_2$, and for $\eta \leq E_2 /3$: $\mathcal{Y}_2(\mathcal{Y}_2^{-1}(1+\eta)-) = \mathcal{Y}_2(\mathcal{Y}_2^{-1}(1)-) \leq 1 - E_2 \leq 1-3\eta$. Also, $\mathcal{Y}_2^{-1}(1 - \eta) = \mathcal{Y}_2^{-1}(1)$ for all $0 \leq \eta \leq E_2$, and for $0 \leq \eta \leq E_2/2$: $\mathcal{Y}_2(\mathcal{Y}_2^{-1}(1-\eta)-) = \mathcal{Y}_2(\mathcal{Y}_2^{-1}(1)-) \geq E_2 \geq 2\eta$. This proves that, almost surely, the event in the probability in \eqref{contdestrucs2} happens for all $\eta$ small enough, so by dominated convergence, this probability converges to $1$ when $\eta$ goes to $0$. This proves \eqref{NNeps}. 

\end{proof}
}

{
\begin{proof} of Proposition \ref{analogue5.1}

The proof follows the one of Proposition 5.1 in \cite{advech}. However, we have to use systematically our estimates and notations instead of the ones in \cite{advech} and we can make some simplifications of their arguments. For the sake of clarity we here give the details. 

\textbf{Upper bound: }

\textit{Main ideas:} The main stake of the proof is to get independence between what happens in the last valley and what happened before. The idea is to first apply the Markov property at $H(\tilde m_{N_t})$ in order to get independence in the quenched setting and then to approximate each of the two parts by objects depending on portions of the environment that are independent from one another. We then get true independence and use Facts \ref{analogue5.2} and \ref{analogue5.3} to approximate the distribution function of the quantity of interest by the distribution function of a functional of the sequence $( e_i S_i^t,  e_i S_i^t R_i^t)_{i \geq 1}$, leading to the result. 

We have 
\[ \mathbb{P} \left( \mathcal{L}_X^*(t)/t \leq \alpha \right) = E \left [ P^V \left ( \mathcal{L}_X^*(t)/t \leq \alpha \right ) \right ] \leq E \left [ P^V \left ( \max_{1 \leq j \leq N_t} \mathcal{L}_X(t, \tilde m_j)/t \leq \alpha \right ) \right ]. \]
Recall the notations $X_{\tilde m_j} := X(. + H(\tilde m_j))$, $H_{X_{\tilde m_j}}(r) := \tau(X_{\tilde m_j}, r)$ and $\mathcal{L}_{X_{\tilde m_j}}(.,.)$ introduced in Subsection \ref{maincontibcommeiid}. Let us define $X_{\tilde L_j} := X(H(\tilde L_j) + .)$ 
which is, according to the Markov property at $H(\tilde L_j)$, a diffusion in the environment $V$ starting from $\tilde L_j$. We also define for any $r \in \mathbb{R}$, $H_{X_{\tilde L_j}}(r) := \tau(X_{\tilde L_j}, r)$, the hitting time of $r$ by $X_{\tilde L_j}$. Now let us define the event 
\begin{eqnarray}
\mathcal{A}^6_t := \bigcap_{j=1}^{n_t} \left \{ H_{X_{\tilde m_j}}(\tilde L_j) < H_{X_{\tilde m_j}}(\tilde L_{j-1}), \ H_{X_{\tilde L_j}}(+\infty) < H_{X_{\tilde L_j}}(\tilde \tau_j(h_t)) \right \}. \label{defA6}
\end{eqnarray}
According to the combination of Lemmas \ref{noreturn} and \ref{sortparladroite}, there is a positive constant $c$ such that for $t$ large enough we have $\mathbb{P} ( \overline{\mathcal{A}^6_t} ) \leq e^{-c h_t}$. On $\{ N_t < n_t \} \cap \mathcal{V}_{n_t, h_t}$ the sequences $(m_j)_{j \geq 1}$ and $(\tilde m_j)_{j \geq 1}$ coincide until the index $j=n_t > N_t$, so in particular $\tilde m_{N_t}$ is the last $\tilde m_j$ visited by the diffusion before instant $t$. On $\mathcal{A}^6_t \cap \{ N_t < n_t \} \cap \mathcal{V}_{n_t, h_t}$, for any $j < N_t$, $\tilde m_j$ is no longer visited after $H(\tilde L_j)$ which is strictly less than $t$ so we have $\mathcal{L}_X(t, \tilde m_j) = \mathcal{L}_X(H(\tilde L_j), \tilde m_j)$. For $\eta \in ]0, 1/2[$ we thus have 
\begin{align}
\mathbb{P} \left( \mathcal{L}_X^*(t)/t \leq \alpha \right) \leq & E \left [ P^V \left ( \mathcal{L}_X(t, \tilde m_{N_t}) \vee \max_{1 \leq j \leq N_t-1} \mathcal{L}_X(H(\tilde L_j), \tilde m_j) \leq \alpha t, \right. \right. \nonumber \\
& \left. \left. \eta t \leq  H(\tilde m_{N_t}) \leq (1-\eta) t, \mathcal{V}_{n_t, h_t}, N_t < n_t, \mathcal{A}^6_t, \mathcal{A}^1_t \right ) \right ] + u_1(t, \eta) \nonumber \\ 
& =: E \left [ F^V \right ] + u_1(t, \eta), \label{reecriturede5.1-0}
\end{align}
where we have put $u_1(t, \eta) = 1-\mathbb{P}(\eta t \leq  H(m_{N_t}) \leq (1-\eta) t, \mathcal{V}_{n_t, h_t}, N_t < n_t, \mathcal{A}^6_t, \mathcal{A}^1_t)$. Note that $F^V$ also depends on $\eta$ and $t$. According to \eqref{NNeps}, Lemma \ref{minimacoincide} (applied with $n=n_t, h=h_t$), Lemma \ref{nbvalleesvisit} of the following subsection, the estimate $\mathbb{P} ( \overline{\mathcal{A}^6_t} ) \leq e^{-c h_t}$, and Fact \ref{lemtps}, 
we have $\lim_{\eta \rightarrow 0} \limsup_{t \rightarrow +\infty} u_1(t,\eta) = 0$. For any fixed trajectory of $V$ and $k \in \{1,...,n_t\}$ let us define 
\begin{align*}
h_1^{V,k}(y) := P^V & \left ( \max_{1 \leq j \leq k-1} \mathcal{L}_X(H(\tilde L_j), \tilde m_k) \leq \alpha t, \right. \\ 
& \left. 0\leq H(\tilde m_{k})-\sum_{i=1}^{k-1}\left ( H(\tilde L_i)-H(\tilde m_i) \right ) \leq \frac{2t}{\log h_t}, \ H(\tilde m_k) \leq yt \right ) \\
h_2^{V,k}(y) := P^V & \left ( \mathcal{L}_{X_{\tilde m_k}}(t(1-y), \tilde m_k) \leq \alpha t, \right. \\ 
& \left. t(1-y) < H_{X_{\tilde m_k}}(\tilde m_{k+1}) < H_{X_{\tilde m_k}}(\tilde L_{k-1}), \ H_{X_{\tilde m_k}}(\tilde m_{k+1}) - H_{X_{\tilde m_k}}(\tilde L_{k}) < \frac{2t}{\log h_t} \right ). 
\end{align*}
Recall that by convention quantities such as $\sum_{i=1}^{k-1}...$ and $\max_{1 \leq j \leq k-1} ...$ are set to equal $0$ in the case $k=1$. 


Now, in $F^V$, we partition on the possible values of $N_t$ in $\{1,...,n_t\}$ and $H(\tilde m_{N_t})$ in $[\eta, 1-\eta]$ (recall that $\tilde m_{N_t}$ is the last $\tilde m_j$ visited by the diffusion before instant $t$ since we are on $\{ N_t < n_t \} \cap \mathcal{V}_{n_t, h_t}$), and we apply the strong Markov property at $H(\tilde m_k)$ under the quenched probability measure $P^V$. We obtain 
\begin{eqnarray}
F^V \leq \sum_{k=1}^{n_t} \int_{\eta}^{1-\eta} h_2^{V,k}(y) dh_1^{V,k}(y). \label{reecriturede5.1-1}
\end{eqnarray}
We can approximate the $H(\tilde m_k)$ in $h_1^{V,k}(y)$ by $\sum_{i=1}^{k-1}( H(\tilde L_i)-H(\tilde m_i) )$. Let us define 
\begin{align*}
\tilde h_1^{V,k}(y) := P^V & \left ( \max_{1 \leq j \leq k-1} \mathcal{L}_X(H(\tilde L_j), \tilde m_k) \leq \alpha t, \ 0\leq H(\tilde m_{k})-\sum_{i=1}^{k-1}\left ( H(\tilde L_i)-H(\tilde m_i) \right ) \leq \frac{2t}{\log h_t}, \right. \\
& \left. \sum_{i=1}^{k-1}\left ( H(\tilde L_i)-H(\tilde m_i) \right ) + \frac{2t}{\log h_t} \leq yt \right ). 
\end{align*}
Clearly we have $\tilde h_1^{V,k}(y) \leq h_1^{V,k}(y)$. In order to replace $h_1^{V,k}(y)$ by $\tilde h_1^{V,k}(y)$ in \eqref{reecriturede5.1-1} we need to proceed an integration by parts, which we can do since the functions $\tilde h_1^{V,k}$, $h_1^{V,k}$ and $h_2^{V,k}$ are positive increasing. 
\begin{align}
\int_{\eta}^{1-\eta} h_2^{V,k}(y) dh_1^{V,k}(y) & = \left [ h_1^{V,k}(y) h_2^{V,k}(y) \right ]_{\eta}^{1-\eta} - \int_{\eta}^{1-\eta} h_1^{V,k}(y) dh_2^{V,k}(y) \nonumber \\
& \leq \left [ h_1^{V,k}(y) h_2^{V,k}(y) \right ]_{\eta}^{1-\eta} - \int_{\eta}^{1-\eta} \tilde h_1^{V,k}(y) dh_2^{V,k}(y) \nonumber \\
& = \left [ \left ( h_1^{V,k}(y) - \tilde h_1^{V,k}(y) \right ) h_2^{V,k}(y) \right ]_{\eta}^{1-\eta} + \int_{\eta}^{1-\eta} h_2^{V,k}(y) d\tilde h_1^{V,k}(y) \nonumber \\
& \leq \left ( h_1^{V,k}(1-\eta) - \tilde h_1^{V,k}(1-\eta) \right ) h_2^{V,k}(1-\eta) + \int_{\eta}^{1-\eta} h_2^{V,k}(y) d\tilde h_1^{V,k}(y). \label{reecriturede5.1-1.1}
\end{align}
Then, note that we have $d\tilde h_1^{V,k}(y) \leq d h_3^{V,k}(y)$ where 
\[ h_3^{V,k}(y) := P^V \left ( \max_{1 \leq j \leq k-1} \mathcal{L}_X(H(\tilde L_j), \tilde m_k) \leq \alpha t, \ \sum_{i=1}^{k-1}\left ( H(\tilde L_i)-H(\tilde m_i) \right ) + \frac{2t}{\log h_t} \leq yt \right ). \]
We thus get that 
\[ \int_{\eta}^{1-\eta} h_2^{V,k}(y) dh_1^{V,k}(y) \leq \left ( h_1^{V,k}(1-\eta) - \tilde h_1^{V,k}(1-\eta) \right ) h_2^{V,k}(1-\eta) + \int_{\eta}^{1-\eta} h_2^{V,k}(y) d h_3^{V,k}(y). \]
The interest of the previous manipulations is that the two functions in the integral now depend on independent parts of the environment: $h_3^{V,k}(y)$ is measurable with respect to the $\sigma$-field $\sigma (V(x), x \leq \tilde L_{k-1})$ while $h_2^{V,k}(y)$ is measurable with respect to the $\sigma$-field $\sigma (V^{\tilde L_{k-1}}(x), x \geq 0)$. These two $\sigma$-fields are independent according to Remark \ref{iid}. Integrating with respect to the environment and summing over $k$ we thus obtain 
\begin{align}
\sum_{k=1}^{n_t} E \left [ \int_{\eta}^{1-\eta} h_2^{V,k}(y) dh_1^{V,k}(y) \right ] & \leq \sum_{k=1}^{n_t} E \left [ \left ( h_1^{V,k}(1-\eta) - \tilde h_1^{V,k}(1-\eta) \right ) h_2^{V,k}(1-\eta) \right ] \nonumber \\
& + \sum_{k=1}^{n_t} \int_{\eta}^{1-\eta} E[h_2^{V,k}(y)] dE[ h_3^{V,k}(y)]. \label{reecriturede5.1-2}
\end{align}

Let us now prove that the sum over $k$ of the terms $E [ ( h_1^{V,k}(1-\eta) - \tilde h_1^{V,k}(1-\eta) ) h_2^{V,k}(1-\eta) ] $ is negligible. By the definitions of $h_1^{V,k}(1-\eta)$ and $\tilde h_1^{V,k}(1-\eta)$ we have 
\[ 0 \leq h_1^{V,k}(1-\eta) - \tilde h_1^{V,k}(1-\eta) \leq P^V \left ( (1-\eta)t - \frac{2t}{\log h_t} < \sum_{i=1}^{k-1}\left ( H(\tilde L_i)-H(\tilde m_i) \right ) \leq (1-\eta)t \right ) =: h_4^{V,k}(1-\eta). \]
Note that $h_4^{V,k}(1-\eta)$ is measurable with respect to the $\sigma$-field $\sigma (V(x), x \leq \tilde L_{k-1})$ while $h_2^{V,k}(1-\eta)$ is measurable with respect to the $\sigma$-field $\sigma (V^{\tilde L_{k-1}}(x), x \geq 0)$. Integrating with respect to the environment and summing over $k$ we thus obtain 
\begin{eqnarray}
\sum_{k=1}^{n_t} E \left [ \left ( h_1^{V,k}(1-\eta) - \tilde h_1^{V,k}(1-\eta) \right ) h_2^{V,k}(1-\eta) \right ] \leq \sum_{k=1}^{n_t} E[h_4^{V,k}(1-\eta)] \times E[h_2^{V,k}(1-\eta)]. \label{reecriturede5.1-2.1}
\end{eqnarray}
Let us fix an $\epsilon$, chosen as in Proposition \ref{approxparliid}, for the rest of this proof. Using the definition of $h_4^{V,k}$ and Proposition \ref{approxparliid} we get, for some $c$ and when $t$ is large enough, 
\begin{align}
E[h_4^{V,k}(1-\eta)] & = \mathbb{P} \left ( (1-\eta)t - \frac{2t}{\log h_t} < \sum_{i=1}^{k-1}\left ( H(\tilde L_i)-H(\tilde m_i) \right ) \leq (1-\eta)t \right ) \nonumber \\
& \leq \mathbb{P} \left ( (1+e^{- \epsilon h_t/7})^{-1} [(1-\eta) - 2/\log h_t]t < \sum_{i=1}^{k-1} e_i S_i^t R_i^t \leq (1-e^{- \epsilon h_t/7})^{-1} (1-\eta)t \right ) + e^{-c h_t}. \label{reecriturede5.1-3}
\end{align}
Similarly, since $H_{X_{\tilde m_k}}(\tilde L_k) = H(\tilde L_k)-H(\tilde m_k)$, we have for $E[h_2^{V,k}(1-\eta)]$: 
\begin{align}
E[h_2^{V,k}(1-\eta)] & \leq \mathbb{P} \left ( H(\tilde L_k)-H(\tilde m_k) > (\eta - 2/\log h_t) t \right ) \nonumber \\
& \leq \mathbb{P} \left ( e_k S_k^t R_k^t > (1+e^{- \epsilon h_t/7})^{-1} (\eta - 2/\log h_t) t \right ) + e^{-c h_t}. \label{reecriturede5.1-4}
\end{align}

Now, if $t$ is large enough so that $(1+e^{- \epsilon h_t/7})^{-1} [(1-\eta) - 2/\log h_t] > 1-5\eta/4$, $(1-e^{- \epsilon h_t/7})^{-1} (1-\eta) < 1-3\eta/4$, and $(1+e^{- \epsilon h_t/7})^{-1} (\eta - 2/\log h_t) > 3\eta/4$, then we get from \eqref{reecriturede5.1-3} and \eqref{reecriturede5.1-4} that $E[h_4^{V,k}(1-\eta)] \times E[h_2^{V,k}(1-\eta)]$ is less than 
\begin{align*}
& \mathbb{P} \left ( 1-5\eta/4 < \frac1{t} \sum_{i=1}^{k-1} e_i S_i^t R_i^t \leq 1-3\eta/4 \right ) \times \mathbb{P} \left ( \frac1{t} e_k S_k^t R_k^t > 3\eta/4 \right ) + 3 e^{-c h_t} \\
= & \mathbb{P} \left ( 1-5\eta/4 < \frac1{t} \sum_{i=1}^{k-1} e_i S_i^t R_i^t \leq 1-3\eta/4, \ \frac1{t} e_k S_k^t R_k^t > 3\eta/4 \right ) + 3 e^{-c h_t}. 
\end{align*}
In the second line, we have used the fact that the sequence $( e_j S_j^t R_j^t)_{j \geq 1}$ is \textit{iid}, according to Proposition \ref{approxparliid} (also, note that the convention $\sum_{i=1}^{0}... = 0$ implies that the above probability is equal to $0$ when $k=1$, in the rest of the proof we shall not distinguish the case $k=1$ even though this case is often trivial). 
In conclusion, for $t$ large enough, 
\[ \sum_{k=1}^{n_t} E[h_4^{V,k}(1-\eta)] \times E[h_2^{V,k}(1-\eta)] \leq s(\eta,t) + 3 n_t e^{-c h_t}, \]
where $s(\eta,t)$ is defined in Lemma \ref{analogue5.4}. Since $n_t \sim e^{\kappa (1+\delta) \phi(t)}$ (see the definition of $n_t$ in the beginning of this section) where $\phi(t) << \log(t) \sim h_t$, $n_t$ is negligible compared to quantities of the type $e^{-c h_t}$. The term $3 n_t e^{-c h_t}$ thus converges to $0$ when $t$ goes to infinity. Using \eqref{rec1factsim} we thus get 
\begin{eqnarray}
\underset{\eta \rightarrow 0}{\lim} \ \underset{t \rightarrow +\infty}{\limsup} \ \sum_{k=1}^{n_t} E[h_4^{V,k}(1-\eta)] \times E[h_2^{V,k}(1-\eta)] = 0. \label{reecriturede5.1-5}
\end{eqnarray}


Putting together \eqref{reecriturede5.1-1}, \eqref{reecriturede5.1-2}, \eqref{reecriturede5.1-2.1} and \eqref{reecriturede5.1-5} we obtain 
\begin{eqnarray}
E[F_V] \leq \sum_{k=1}^{n_t} \int_{\eta}^{1-\eta} E[h_2^{V,k}(y)] dE[ h_3^{V,k}(y)] + u_2(t,\eta), \label{reecriturede5.1-5.0}
\end{eqnarray}
where $u_2(t,\eta)$ is such that $\lim_{\eta \rightarrow 0} \limsup_{t \rightarrow +\infty} u_2(t,\eta) = 0$. 

We now study the integral on $[\eta, 1-\eta]$. Note that by definitions of $h_3^{V,k}(y)$ and of the annealed probability measure we have $E[ h_3^{V,k}(y)] = F_{\alpha, k}(y - 2/\log h_t) \geq F^-_{\alpha, k}(y - 2/\log h_t) -e^{-c h_t}$ where $F_{\alpha, k}$ and $F^-_{\alpha, k}$ are defined in Fact \ref{analogue5.2} and $c$ is some positive constant. The last inequality comes from that Fact and is true simultaneously for all $k \in \{ 1,..., n_t \}$ and $y \in [\eta, 1-\eta]$ when $t$ is large enough. Integrating by part twice as in \eqref{reecriturede5.1-1.1} (which we can do since the functions $E[h_2^{V,k}(.)]$, $E[ h_3^{V,k}(.)]$ and $F^-_{\alpha, k}$ are positive increasing) and proceeding the change of variable $u = y - 2/\log h_t$ we get 
\begin{align}
& \sum_{k=1}^{n_t} \int_{\eta}^{1-\eta} E[h_2^{V,k}(y)] dE[ h_3^{V,k}(y)] \leq \sum_{k=1}^{n_t} \int_{\eta - 2/\log h_t}^{1-\eta - 2/\log h_t} E[h_2^{V,k}(u + 2/\log h_t)] dF^-_{\alpha, k}(u) \nonumber \\
& + n_t e^{-c h_t} + \sum_{k=1}^{n_t} \left ( F_{\alpha, k}(1-\eta - 2/\log h_t) - F^-_{\alpha, k}(1-\eta - 2/\log h_t) \right ) E[h_2^{V,k}(1-\eta)]. \label{reecriturede5.1-5.1}
\end{align}

Let us discuss the factors $E[h_2^{V,k}(.)]$ that appear in two terms of the right hand side. Since the diffusion conditionally to the environment is Markovian and the valleys are \textit{iid} (by Remark \ref{iid}), we see that $E[h_2^{V,k}(y)] = E[h_2^{V,1}(y)]$. Then, by definition of $h_2^{V,1}$ we have 
\begin{align}
E[h_2^{V,1}(y)] = & \mathbb{P} \left ( \mathcal{L}_{X_{\tilde m_1}}(t(1-y), \tilde m_1) \leq \alpha t, \right. \nonumber \\ 
& \left. t(1-y) < H_{X_{\tilde m_1}}(\tilde m_{2}) < H_{X_{\tilde m_1}}(\tilde L_{0}), \ H_{X_{\tilde m_1}}(\tilde m_{2}) - H_{X_{\tilde m_1}}(\tilde L_{1}) < \frac{2t}{\log h_t} \right ) \nonumber \\
\leq & \mathbb{P} \left ( \mathcal{L}_{X_{\tilde m_1}}(t(1-y), \tilde m_1) \leq \alpha t, \ t(1-y - 2/\log h_t) < H_{X_{\tilde m_1}}(\tilde L_{1}) < H_{X_{\tilde m_1}}(\tilde L_{0}) \right ) \nonumber \\
\leq & \mathbb{P} \left ( \mathcal{L}_{X_{\tilde m_1}}(t(1-y - 2/\log h_t), \tilde m_1) \leq \alpha t, \ t(1-y - 2/\log h_t) < H_{X_{\tilde m_1}}(\tilde L_{1}) < H_{X_{\tilde m_1}}(\tilde L_{0}) \right ) \nonumber \\
= & f_{\alpha}(y + 2/\log h_t) \leq f^+_{\alpha}(y + 2/\log h_t) + e^{-c h_t}, \label{reecriturede5.1-5.2}
\end{align}
where $f_{\alpha}$ and $f^+_{\alpha}$ are defined in Fact \ref{analogue5.3} and $c$ is some positive constant. The last inequality comes from that Fact and is true uniformly in $y \in [\eta, 1-\eta]$ for $t$ large enough. 

We now prove that the third term in the right hand side of \eqref{reecriturede5.1-5.1} is negligible. According to Fact \ref{analogue5.2}, there is a positive constant $c$ such that for any $z \in ]0,1[$ 
\begin{align*}
F_{\alpha, k}(z) - F^-_{\alpha, k}(z) \leq F^+_{\alpha, k}(z) - F^-_{\alpha, k}(z) + e^{-c h_t} & \leq \mathbb{P} \left(\max_{ 1\leq j \leq k-1} \frac{e_j S_j^t}{t} \in [\alpha (1 - 2e^{- \epsilon h_t/7}), \alpha (1 + 2e^{- \epsilon h_t/7})], \right ) \\
& + \mathbb{P} \left( \frac1{t} \sum_{j=1}^{k-1} e_j S_j^t R_j^t \in [z (1 - 2 e^{- \epsilon h_t/7}), z (1 + 2 e^{- \epsilon h_t/7})] \right) \\
& + e^{-c h_t}. 
\end{align*}
Combining this with \eqref{reecriturede5.1-5.2} we can bound the third term in the right hand side of \eqref{reecriturede5.1-5.1} as follows 
\begin{align}
& \sum_{k=1}^{n_t} \left ( F_{\alpha, k}(1-\eta - 2/\log h_t) - F^-_{\alpha, k}(1-\eta - 2/\log h_t) \right ) E[h_2^{V,k}(1-\eta)] \nonumber \\
\leq & \left ( f^+_{\alpha}(1-\eta + 2/\log h_t) + e^{-c h_t} \right ) \times \left [ \sum_{k=1}^{n_t} \mathbb{P} \left(\max_{ 1\leq j \leq k-1} \frac{e_j S_j^t}{t} \in [\alpha (1 - 2e^{- \epsilon h_t/7}), \alpha (1 + 2e^{- \epsilon h_t/7})] \right ) \right. \nonumber \\
+ & \left. \sum_{k=1}^{n_t} \mathbb{P} \left( \frac1{t} \sum_{j=1}^{k-1} e_j S_j^t R_j^t \in [(1-\eta - 2/\log h_t) (1 - 2 e^{- \epsilon h_t/7}), (1-\eta - 2/\log h_t) (1 + 2 e^{- \epsilon h_t/7})] \right) + n_t e^{-c h_t} \right ] \nonumber \\
\leq & \sum_{k=1}^{n_t} (k-1) \mathbb{P} \left( \frac{e_1 S_1^t}{t} \in [\alpha (1 - 2e^{- \epsilon h_t/7}), \alpha (1 + 2e^{- \epsilon h_t/7})] \right ) \nonumber \\
+ & f^+_{\alpha}(1-\eta + 2/\log h_t) \times \sum_{k=1}^{n_t} \mathbb{P}\left( \frac1{t} \sum_{j=1}^{k-1} e_j S_j^t R_j^t \in I(t,\eta) \right) + 4n_t e^{-c h_t} =: S_1 + S_2 + 4n_t e^{-c h_t}. \label{reecriturede5.1-5.3}
\end{align}
For the last inequality we have used the fact that $f^+_{\alpha}(.) \leq 1$ (since it's a probability), that the sequences $( e_j )_{j \geq 1}$ and $( S_j^t )_{j \geq 1}$ are \textit{iid}, and we have set $I(t,\eta) := [(1-\eta - 2/\log h_t) (1 - 2 e^{- \epsilon h_t/7}), (1-\eta - 2/\log h_t) (1 + 2 e^{- \epsilon h_t/7})]$. We have 
\begin{align*}
S_1 & \leq n_t^2 \mathbb{P} \left( e_1 \in \left [ \frac{\alpha (1 - 2e^{- \epsilon h_t/7})}{S_1^t/t}, \frac{\alpha (1 + 2e^{- \epsilon h_t/7})}{S_1^t/t} \right ], \ S_1^t/t > e^{- \epsilon h_t/14} \right ) + n_t^2 \mathbb{P} \left( S_1^t/t \leq e^{- \epsilon h_t/14} \right ) \\
& \leq n_t^2 \mathbb{E} \left [ \frac1{2} \int_{\frac{\alpha (1 - 2e^{- \epsilon h_t/7})}{S_1^t/t}}^{\frac{\alpha (1 + 2e^{- \epsilon h_t/7})}{S_1^t/t}} e^{-u/2} du \times \mathds{1}_{\{ S_1^t/t > e^{- \epsilon h_t/14} \}} \right ] + n_t^2 \mathbb{P} \left( S_1^t/t \leq e^{- \epsilon h_t/14} \right ) \\
& \leq n_t^2 \frac{4 \alpha e^{- \epsilon h_t/7}}{2 e^{- \epsilon h_t/14}} + n_t^2 o(e^{- \epsilon h_t/15}) = o(1). 
\end{align*}
For the first term we have used that $e_1$ follows an exponential distribution with parameter $1/2$. The upper bound for the second term can be deduced from the fact that $S_1^t/t$ is stochastically greater than $e^{-\phi(t)} \int_0^{\tau(V, ]-\infty, -h_t/2])} e^{V(y)} dy$ (see \eqref{cvmeasure4.1} and \eqref{cvmeasure1} in the next subsection), a simple manipulation on the truncated functional $\int_0^{\tau(V, ]-\infty, -h_t/2])} e^{V(y)} dy$, and the first assertion of Lemma \ref{foncexpov}. The fact that we eventually obtain $o(1)$ comes from the fact that, as we said before, $n_t$ is negligible compared to quantities of the type $e^{-c h_t}$. 

We now turn to $S_2$. Since the sequences $( e_j )_{j \geq 1}$, $( S_j^t )_{j \geq 1}$ and $(R_j^t)_{j \geq 1}$ are \textit{iid} (by Proposition \ref{approxparliid}), in the definition of $f^+_{\alpha}$ the index $1$ can be replaced by $k$ without changing the value of the function. We can thus write 
\begin{align*}
S_2 = \sum_{k=1}^{n_t} \mathbb{P} & \left( \frac1{t} \sum_{i=1}^{k-1} e_i S_i^t R_i^t \in I(t,\eta), \ 1/R_k^t \leq \alpha (1+ 2 e^{- \epsilon h_t/7}) / (\eta - 2/\log h_t), \right. \\
& \left. e_k S_k^t R_k^t > t(\eta - 2/\log h_t)(1- 2 e^{- \epsilon h_t/7}) \right) \\
\leq \sum_{k=1}^{n_t} \mathbb{P} & \left( \frac1{t} \sum_{i=1}^{k-1} e_i S_i^t R_i^t \in I(t,\eta), \ e_k S_k^t R_k^t > t(\eta - 2/\log h_t)(1- 2 e^{- \epsilon h_t/7}) \right). 
\end{align*}
If $t$ is large enough so that $I(t,\eta) \subset [1-5\eta/4,1-3\eta/4]$ and $(\eta - 2/\log h_t)(1- 2 e^{- \epsilon h_t/7}) > 3\eta/4$, we have clearly 
\[ S_2 \leq \sum_{k=1}^{n_t} \mathbb{P} \left ( 1-5\eta/4 < \frac1{t} \sum_{i=1}^{k-1} e_i S_i^t R_i^t \leq 1-3\eta/4, \ \frac1{t} e_k S_k^t R_k^t > 3\eta/4 \right ) \leq s(\eta,t), \]
where $s(\eta,t)$ is defined in Lemma \ref{analogue5.4}. Putting the previous bounds on $S_1$ and $S_2$ into \eqref{reecriturede5.1-5.3} we get that for $t$ large enough, the third term in the right hand side of \eqref{reecriturede5.1-5.1} is less than $o(1) + s(\eta,t) + 4n_t e^{-c h_t}$. We can thus rewrite \eqref{reecriturede5.1-5.1} as 
\begin{align*}
\sum_{k=1}^{n_t} \int_{\eta}^{1-\eta} E[h_2^{V,k}(y)] dE[ h_3^{V,k}(y)] & \leq \sum_{k=1}^{n_t} \int_{\eta - 2/\log h_t}^{1-\eta - 2/\log h_t} E[h_2^{V,k}(u + 2/\log h_t)] dF^-_{\alpha, k}(u) \\
& + o(1) + s(\eta,t) + 5n_t e^{-c h_t} \\
& \leq \sum_{k=1}^{n_t} \int_{\eta - 2/\log h_t}^{1-\eta - 2/\log h_t} f^+_{\alpha}(u + 4/\log h_t) dF^-_{\alpha, k}(u) \\
& + o(1) + s(\eta,t) + 6n_t e^{-c h_t}, 
\end{align*}
where we have used \eqref{reecriturede5.1-5.2} for the second inequality. Recall that $n_t e^{-c h_t}$ converges to $0$ when $t$ goes to infinity. Using \eqref{rec1factsim} we thus get 
\begin{align}
\sum_{k=1}^{n_t} \int_{\eta}^{1-\eta} E[h_2^{V,k}(y)] dE[ h_3^{V,k}(y)] \leq \sum_{k=1}^{n_t} \int_{\eta - 2/\log h_t}^{1-\eta - 2/\log h_t} f^+_{\alpha}(u + 4/\log h_t) dF^-_{\alpha, k}(u) + u_3(t,\eta), \label{reecriturede5.1-5.4}
\end{align}
where $u_3(t,\eta)$ is such that $\lim_{\eta \rightarrow 0} \limsup_{t \rightarrow +\infty} u_3(t,\eta) = 0$. 

Recall that, in the definition of $f^+_{\alpha}$, the index $1$ can be replaced by $k$ without changing the value of the function. We can thus write 
\begin{align}
& \sum_{k=1}^{n_t} \int_{\eta - 2/\log h_t}^{1-\eta - 2/\log h_t} f^+_{\alpha}(u + 4/\log h_t) dF^-_{\alpha, k}(u) \nonumber \\
= & \sum_{k=1}^{n_t} \mathbb{P} \left( \max_{ 1\leq j \leq k-1} e_j S_j^t \leq \alpha t (1 - 2e^{- \epsilon h_t/7}), \ \eta - 2/\log h_t \leq \frac1{(1 - 2e^{- \epsilon h_t/7})t} \sum_{i=1}^{k-1} e_i S_i^t R_i^t \leq 1-\eta - 2/\log h_t, \right. \label{reecriturede5.1-6} \\ 
& \left. \left (1 - \frac{4}{\log h_t} - \frac1{(1 - 2e^{- \epsilon h_t/7})t} \sum_{i=1}^{k-1} e_i S_i^t R_i^t \right )/R_k^t \leq \alpha (1+ 2 e^{- \epsilon h_t/7}), \right. \label{reecriturede5.1-7} \\ 
& \left. e_k S_k^t R_k^t/t > \left (1 - \frac{4}{\log h_t} - \frac1{(1 - 2e^{- \epsilon h_t/7})t} \sum_{i=1}^{k-1} e_i S_i^t R_i^t \right )(1 - 2 e^{- \epsilon h_t/7}) \right). \label{reecriturede5.1-8}
\end{align}

Note that $(1+ 2 e^{- \epsilon h_t/7})(1- 2 e^{- \epsilon h_t/7})<1$ so multiplying both sides by $(1- 2 e^{- \epsilon h_t/7})$ in \eqref{reecriturede5.1-7} we get that the expression in \eqref{reecriturede5.1-7} implies 
\[ \left (1 - \frac1{t} \sum_{i=1}^{k-1} e_i S_i^t R_i^t \right )/R_k^t \leq \alpha (1 + \tilde \epsilon(k)), \]
where $\tilde \epsilon(k) := (2 e^{- \epsilon h_t/7} + 4(1- 2 e^{- \epsilon h_t/7})/\log h_t)/R_k^t$. Note that $(1-\eta - 2/\log h_t)(1 - 2e^{- \epsilon h_t/7}) \leq 1-\eta$ and that, for $t$ large enough, $(1-4/\log h_t)(1 - 2e^{- \epsilon h_t/7}) > 1-\eta$. Then the expressions in \eqref{reecriturede5.1-6} and \eqref{reecriturede5.1-8} imply respectively 
\[ \max_{ 1\leq j \leq k-1} e_j S_j^t \leq \alpha t, \ \frac1{t} \sum_{i=1}^{k-1} e_i S_i^t R_i^t \leq 1-\eta \ \ \ \text{and} \ \ \ \frac1{t} \sum_{i=1}^{k} e_i S_i^t R_i^t > 1-\eta. \]
We thus get that for large $t$, $\sum_{k=1}^{n_t} \int_{\eta - 2/\log h_t}^{1-\eta - 2/\log h_t} f^+_{\alpha}(u + 4/\log h_t) dF^-_{\alpha, k}(u)$ is less than 
\begin{align*}
\sum_{k=1}^{n_t} \mathbb{P} & \left( \max_{ 1\leq j \leq k-1} e_j S_j^t \leq \alpha t, \ \frac1{t} \sum_{i=1}^{k-1} e_i S_i^t R_i^t \leq 1-\eta, \right. \\
& \left. \left (1 - \frac1{t} \sum_{i=1}^{k-1} e_i S_i^t R_i^t \right )/R_k^t \leq \alpha (1 + \tilde \epsilon(k)), \ \frac1{t} \sum_{i=1}^{k} e_i S_i^t R_i^t > 1-\eta \right ). 
\end{align*}


Recall the definition of the overshoots $\mathcal{N}_a$ in the beginning of Subsection \ref{proofmainth} and note that $\{ \frac1{t} \sum_{i=1}^{k-1} e_i S_i^t R_i^t \leq 1-\eta, \ \frac1{t} \sum_{i=1}^{k} e_i S_i^t R_i^t > 1-\eta \} = \{ \mathcal{N}_{(1-\eta)t} = k\}$ so in particular the events in the above sum of probabilities are disjoint and the above sum equals 
\[ \mathbb{P} \left( \max_{ 1\leq j \leq \mathcal{N}_{(1-\eta)t}-1} \frac{e_j S_j^t}{t} \leq \alpha, \ \left (1 - \frac1{t} \sum_{i=1}^{\mathcal{N}_{(1-\eta)t}-1} e_i S_i^t R_i^t \right )/R_{\mathcal{N}_{(1-\eta)t}}^t \leq \alpha \left (1 + \tilde \epsilon(\mathcal{N}_{(1-\eta)t}) \right ), \ \mathcal{N}_{(1-\eta)t} \leq n_t \right ). \]
We thus deduce that 
\begin{eqnarray}
\sum_{k=1}^{n_t} \int_{\eta - 2/\log h_t}^{1-\eta - 2/\log h_t} f^+_{\alpha}(u + 4/\log h_t) dF^-_{\alpha, k}(u) \leq \mathcal{P}_{\eta, t}^{+}(\alpha) + u_4(t), \label{reecriturede5.1-9}
\end{eqnarray}
where $\mathcal{P}_{\eta, t}^{+}(\alpha)$ is defined in the statement of the proposition and where $u_4(t) := \mathbb{P} (\tilde \epsilon(\mathcal{N}_{(1-\eta)t}) \geq 1/\sqrt{\log (\log t)})$. Using the definition of $\tilde \epsilon(k)$, partitioning on the values of $\mathcal{N}_{(1-\eta)t}$ and using the fact that $(R_j^t)_{j \geq 1}$ is \textit{iid} and that $\log h_t \sim_{t \rightarrow +\infty} \log (\log t)$, we get that for $t$ large enough $u_4(t) \leq C \mathbb{P} (R_1^t \leq 2/\sqrt{\log (\log t)})$, where $C$ is some positive constant. According to Proposition \ref{cvr}, $R_1^t$ converges in distribution to $\mathcal{R}$ (defined in the Introduction, just after Theorem \ref{kappa>1favsite}) which is almost surely positive. We deduce that $\lim_{t \rightarrow +\infty} u_4(t) = 0$. 

Finally, the combination of \eqref{reecriturede5.1-0}, \eqref{reecriturede5.1-5.0}, \eqref{reecriturede5.1-5.4}, and \eqref{reecriturede5.1-9} yields the upper bound. 

\textbf{Lower bound: } 

For the proof of the upper bound we have used the obvious inequality $\max_{1 \leq j \leq N_t} \mathcal{L}_X(t, \tilde m_j)/t \leq \mathcal{L}_X^*(t)/t$. For the lower bound we need a converse inequality (at least for the distribution functions) that we prove thanks to the localisation in the bottom of the valleys of the main contributions to the local time. 




Recall the definition of $\mathcal{A}^6_t$ in \eqref{defA6}. On $\mathcal{A}^6_t \cap \{ N_t < n_t \} \cap \mathcal{V}_{n_t, h_t}$ we have 
\begin{align*}
\forall j < N_t, \ \forall x \in [\tilde{L}_{j-1}, \tilde{m}_j], \ \mathcal{L}_X (t,x) = \mathcal{L}_X (H(\tilde L_j),x) & \leq \left ( \mathcal{L}_X (H(\tilde L_j),x) - \mathcal{L}_X (H(\tilde m_{j}),x) \right ) \\
& + \left ( \mathcal{L}_X (H(\tilde m_{j}),x) - \mathcal{L}_X (H(\tilde L_{j-1}),x) \right ), \\
\forall j < N_t, \ \forall x \in [\tilde{m}_j, \tilde{L}_{j}], \ \mathcal{L}_X (t,x) = \mathcal{L}_X (H(\tilde m_{j+1}),x) & \leq \left ( \mathcal{L}_X (H(\tilde m_{j+1}),x) - \mathcal{L}_X (H(\tilde L_{j}),x) \right ) \\
& + \left ( \mathcal{L}_X (H(\tilde L_{j}),x) - \mathcal{L}_X (H(\tilde m_{j}),x) \right ). 
\end{align*}
Recall the event $\mathcal{A}^2_t$ defined in Fact \ref{analog3.3}. The above shows that on $\mathcal{A}^6_t \cap \mathcal{A}^2_t \cap \{ N_t < n_t \} \cap \mathcal{V}_{n_t, h_t}$ we have 
\[ \sup_{x \in [\tilde{L}_{0}, \tilde{L}_{N_t - 1}]} \mathcal{L}_X(t, x) \leq \sup_{1 \leq j \leq N_t-1} \sup_{x \in [\tilde{L}_{j-1}, \tilde{L}_j]} \left ( \mathcal{L}_X (H(\tilde L_j),x) - \mathcal{L}_X (H(\tilde m_{j}),x) \right ) + t e^{(\kappa (1+3\delta )-1)\phi(t)}. \]
Similarly, on $\mathcal{A}^6_t \cap \mathcal{A}^2_t \cap \{ N_t < n_t \} \cap \mathcal{V}_{n_t, h_t}$ we also have $\sup_{x \leq \tilde{L}_{0}} \mathcal{L}_X(t, x) \leq t e^{(\kappa (1+3\delta )-1)\phi(t)}$ and 
\[ \sup_{x \geq \tilde{L}_{N_t - 1}} \mathcal{L}_X(t, x) \leq \sup_{x \in [\tilde{L}_{N_t-1}, \tilde{L}_{N_t}]} \left ( \mathcal{L}_X (t,x) - \mathcal{L}_X (H(\tilde m_{N_t}),x) \right ) + t e^{(\kappa (1+3\delta )-1)\phi(t)}. \] We deduce that, for $t$ large enough so that $e^{(\kappa (1+3\delta )-1)\phi(t)}) < \alpha$, $\mathbb{P} \left( \mathcal{L}_X^*(t)/t \leq \alpha \right)$ is greater than 
\[ \mathbb{P} \left ( \max_{1 \leq j \leq N_t} \sup_{x \in [\tilde{L}_{j-1}, \tilde{L}_j]} \left ( \mathcal{L}_X (H(\tilde L_j) \wedge t,x) - \mathcal{L}_X (H(\tilde m_{j}),x) \right ) \leq t \alpha_t^1, \mathcal{V}_{n_t, h_t}, \ N_t < n_t, \ \mathcal{A}^6_t, \mathcal{A}^2_t \right ), \]
where $\alpha_t^1 := \alpha - e^{(\kappa (1+3\delta )-1)\phi(t)})$. Note that $H(\tilde L_j) \wedge t = t$ only for $j = N_t$. We have clearly $\alpha_t^1 \geq e^{-2 \phi(t)}$ for $t$ large enough. Recall the definition of $\mathcal{D}_j$ in \eqref{defdj}. Using the definition of $\mathcal{A}^3_t$ from Fact \ref{analog3.3} (and the fact that we are on $\{N_t < n_t\}$), we get that for such large $t$ the above is more than 
\[ \mathbb{P} \left ( \max_{1 \leq j \leq N_t} \sup_{x \in \mathcal{D}_j} \left ( \mathcal{L}_X (H(\tilde L_j) \wedge t,x) - \mathcal{L}_X (H(\tilde m_{j}),x) \right ) \leq t \alpha_t^1, \mathcal{V}_{n_t, h_t}, N_t < n_t, \mathcal{A}^6_t, \mathcal{A}^2_t, \mathcal{A}^3_t \right ). \]
Note that $\mathcal{L}_X (H(\tilde L_j),x) - \mathcal{L}_X (H(\tilde m_{j}),x) = \mathcal{L}_{X_{\tilde m_j}}(H_{X_{\tilde m_j}}(\tilde L_j),x)$. From the definition of $\mathcal{A}^7_t$ in Fact \ref{analogue(5.22)} (and the fact that we are on $\{N_t < n_t\}$) the above is more than
\[ \mathbb{P} \left ( \sup_{x \in \mathcal{D}_{N_t}} \mathcal{L}_X(t, x) \vee \max_{1 \leq j \leq N_t-1} \mathcal{L}_X(H(\tilde L_j), \tilde m_j) \leq t \alpha_t^2, \mathcal{V}_{n_t, h_t}, N_t < n_t, \ \mathcal{A}^6_t, \mathcal{A}^2_t, \mathcal{A}^3_t, \mathcal{A}^7_t \right ), \]
where $\alpha_t^2 := \alpha_t^1 / (1+e^{-h_t/9}) \underset{t \rightarrow +\infty}{\longrightarrow} \alpha$. In conclusion we have 
\[ \mathbb{P} \left( \mathcal{L}_X^*(t)/t \leq \alpha \right) \geq \mathbb{P} \left ( \sup_{x \in \mathcal{D}_{N_t}} \mathcal{L}_X(t, x) \vee \max_{1 \leq j \leq N_t-1} \mathcal{L}_X(H(\tilde L_j), \tilde m_j) \leq t \alpha_t^2 \right ) - \tilde u_1(t), \]
where $\tilde u_1(t) := 1-\mathbb{P} \left ( \mathcal{V}_{n_t, h_t}, N_t < n_t, \mathcal{A}^6_t, \mathcal{A}^2_t, \mathcal{A}^3_t, \mathcal{A}^7_t \right )$. The combination of Lemma \ref{minimacoincide} (applied with $n=n_t, h=h_t$), Lemma \ref{nbvalleesvisit} from the next subsection, the estimate $\mathbb{P} ( \overline{\mathcal{A}^6_t} ) \leq e^{-c h_t}$ (see just after \eqref{defA6}), \eqref{negltl2}, \eqref{negltl3}, and Fact \ref{analogue(5.22)} yields $\tilde u_1(t) \underset{t \rightarrow +\infty}{\longrightarrow} 0$. 

We are left to study the distribution function of $[\sup_{x \in \mathcal{D}_{N_t}} \mathcal{L}_X(t, x) \vee \max_{1 \leq j \leq N_t-1} \mathcal{L}_X(H(\tilde L_j), \tilde m_j)] /t$. Note that the latter quantity is quite similar to the quantity $[\mathcal{L}_X(t, \tilde m_{N_t}) \vee \max_{1 \leq j \leq N_t-1} \mathcal{L}_X(H(\tilde L_j), \tilde m_j)]/t$ studied in the proof of the upper bound. Reasoning similarly as in the proof of the upper bound, which allows to use this time the upper bound in Fact \ref{analogue5.2} and the lower bound in Fact \ref{analogue5.3}, we get 
\[ \mathbb{P} \left ( \sup_{x \in \mathcal{D}_{N_t}} \mathcal{L}_X(t, x) \vee \max_{1 \leq j \leq N_t-1} \mathcal{L}_X(H(\tilde L_j), \tilde m_j) \leq t \alpha_t^2 \right ) \geq \sum_{k=1}^{n_t} \int_{\eta}^{1-\eta} f^-_{\alpha_t^2}(u) dF^+_{\alpha_t^2, k}(u) - \tilde u_2(t, \eta), \]
where $\tilde u_2(t,\eta)$ is such that $\lim_{\eta \rightarrow 0} \limsup_{t \rightarrow +\infty} \tilde u_2(t,\eta) = 0$. Reasoning again as in the proof of the upper bound we get 
\[ \sum_{k=1}^{n_t} \int_{\eta}^{1-\eta} f^-_{\alpha_t^2}(u) dF^+_{\alpha_t^2, k}(u) \geq \mathcal{P}_{\eta, t}^{-}(\alpha) - \tilde u_3(t, \eta), \]
where $\tilde u_3(t,\eta)$ is such that $\lim_{\eta \rightarrow 0} \limsup_{t \rightarrow +\infty} \tilde u_3(t,\eta) = 0$. This yields the result. 






\end{proof}
}

\subsection{Proof of Proposition \ref{propcvsub} and consequences} \label{reprise}

This proposition relies on: 
\begin{lemme} \label{lemproba}
{Recall the constant $\mathcal{C}'$ and the random variable $\mathcal{R}$ defined a little before Theorem \ref{cvdutl}, recall also the function $\phi$ defined in \eqref{defhtetphi}.} Fix $\eta \in ]0, 1/3[$, we have
\begin{align}
\underset{t \rightarrow +\infty}{\lim} & \sup_{x \in \left [ e^{-(1-2 \eta)\phi(t)}, + \infty \right [} \ \left | x^{\kappa} e^{\kappa \phi(t)} \mathbb{P} \left ( e_1 S_1^t /t > x \right ) - \mathcal{C}' \right | & = 0, \label{cvmeasure7.1} \\
\underset{t \rightarrow +\infty}{\lim} & \sup_{y \in \left [ e^{-(1-3\eta)\phi(t)}, + \infty \right [} \ \left | y^{\kappa} e^{\kappa \phi(t)} \mathbb{P} \left ( e_1 S_1^t R_1^t /t > y \right ) - \mathcal{C}' \mathbb{E}\left [ \mathcal{R}^{\kappa} \right ] \right | & = 0. \label{cvmeasure9.1}
\end{align}
For any positive $\alpha$, $e^{\kappa \phi(t)} \mathbb{P} (e_1 S_1^t /t \geq x, \ e_1 S_1^t R_1^t /t \geq y )$ converges when $t$ goes to infinity, uniformly in $(x,y) \in [\alpha,+ \infty [ \times [\alpha,+ \infty [$, to $\mathcal{C}' y^{-\kappa} \mathbb{E}[\mathcal{R}^{\kappa}\mathds{1}_{\mathcal{R}\leq y/x}] + \mathcal{C}' x^{-\kappa}  \mathbb{P} (\mathcal{R} >y/x)$.

\end{lemme}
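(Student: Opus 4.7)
The strategy is to first establish a regularly-varying tail asymptotic for $S_1^t$ alone, then integrate against the independent exponential $e_1$ to produce the constant $\mathcal{C}'$, and finally condition on the independent factor $R_1^t$ (using the convergence of $R_1^t = J(h_t)$ to $\mathcal{R}$ from Proposition~\ref{cvr}) to obtain the product and joint statements. Mutual independence of $e_1$, $S_1^t$ and $R_1^t$ is already in Lemma~\ref{approxa}.

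The key technical step is to show that uniformly in $z \geq e^{h_t + 2\eta\phi(t)}$,
\begin{equation*}
\mathbb{P}(S_1^t > z) = (1+o(1))\,\mathcal{C}\,e^{\kappa h_t}/z^\kappa.
\end{equation*}
Split $S_1^t = A_t + B_t$ at the peak $\tilde\tau_1(h_t)$. By Proposition~\ref{standardwilliams}, $A_t \egloi \int_{\tau(V^\uparrow,h_t/2)}^{\tau(V^\uparrow,h_t)} e^{V^\uparrow(u)}\,du$, and since $V^\uparrow$ has no positive jumps the integrand is bounded by $e^{h_t}$, so $A_t/e^{h_t}$ is dominated by $\tau(V^\uparrow,h_t) - \tau(V^\uparrow,h_t/2)$, which has a uniformly (in $t$) finite exponential moment by Lemma~\ref{vposlapltpsatt}. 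Again by Proposition~\ref{standardwilliams} and the Markov property at $\tilde\tau_1(h_t)$, $B_t \egloi e^{h_t}\int_0^{T_t}e^{V'(u)}\,du$ with $T_t := \tau(V',\,]-\infty,-h_t/2])$ and $V'$ an independent copy of $V$. Writing $\int_0^{T_t}e^{V'} = \int_0^\infty e^{V'} - \int_{T_t}^\infty e^{V'}$ and using the Markov property at $T_t$ to bound the second integral by $e^{-h_t/2}$ times an independent copy of $\int_0^\infty e^V$, I transfer the tail estimate of Corollary~5 of~\cite{Bertoinyor} from $\int_0^\infty e^{V'}$ to $\int_0^{T_t}e^{V'}$, obtaining $\mathbb{P}(B_t/e^{h_t} > v) = (1+o(1))\mathcal{C}/v^\kappa$ uniformly in $v \to \infty$. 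The classical heavy/light tail convolution argument (using the bounded exponential moment of $A_t/e^{h_t}$) then yields the same asymptotic for $S_1^t/e^{h_t}$.

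For~\eqref{cvmeasure7.1}, integrate against $e_1$:
\begin{equation*}
\mathbb{P}(e_1 S_1^t > z) = \tfrac{1}{2}\int_0^\infty \mathbb{P}(S_1^t > z/u)\,e^{-u/2}\,du.
\end{equation*}
Cut the integral at $u_\star := z e^{-h_t}/v_0(t)$ with $v_0(t)\to\infty$ chosen much slower than $e^{2\eta\phi(t)}$: for $u \leq u_\star$ the asymptotic of the previous step applies and the integrand is $(1+o(1))\mathcal{C}e^{\kappa h_t}u^\kappa/(2z^\kappa)\,e^{-u/2}$, which integrates to $(1+o(1))\mathcal{C}\,\mathbb{E}[e_1^\kappa]\,e^{\kappa h_t}/z^\kappa = (1+o(1))\mathcal{C}'e^{\kappa h_t}/z^\kappa$; for $u > u_\star$, the trivial bound $\mathbb{P}(S_1^t > z/u) \leq 1$ combined with $\int_{u_\star}^\infty e^{-u/2}\,du$ being super-exponentially small (since $u_\star \geq e^{2\eta\phi(t)}/v_0(t)\to\infty$) makes this region negligible compared to $e^{\kappa h_t}/z^\kappa$. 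Substituting $z = xt$ and using $e^{\kappa h_t}/t^\kappa = e^{-\kappa\phi(t)}$ yields~\eqref{cvmeasure7.1}.

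For~\eqref{cvmeasure9.1}, conditioning on the independent $R_1^t$ gives $\mathbb{P}(e_1S_1^tR_1^t/t>y) = \mathbb{E}[\mathbb{P}(e_1S_1^t/t>y/R_1^t\,|\,R_1^t)]$. On $\{R_1^t \leq e^{\eta\phi(t)}\}$ the ratio $y/R_1^t$ lies in the admissible range of~\eqref{cvmeasure7.1} as soon as $y\geq e^{-(1-3\eta)\phi(t)}$; the complement has super-exponentially small probability by the exponential moment bound of Proposition~\ref{cvr}. The required limit reduces to $\mathbb{E}[(R_1^t)^\kappa \mathds{1}_{R_1^t \leq e^{\eta\phi(t)}}] \to \mathbb{E}[\mathcal{R}^\kappa]$, which holds by the moment convergence of $R_1^t$ to $\mathcal{R}$ and its uniform integrability (both from Proposition~\ref{cvr}). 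For the joint statement, split on the sign of $R_1^t - y/x$ to write
\begin{equation*}
\mathbb{P}(e_1S_1^t/t\geq x,\, e_1S_1^tR_1^t/t\geq y) = \mathbb{E}[\mathbb{P}(e_1S_1^t/t\geq y/R_1^t\,|\,R_1^t)\mathds{1}_{R_1^t\leq y/x}] + \mathbb{E}[\mathbb{P}(e_1S_1^t/t\geq x\,|\,R_1^t)\mathds{1}_{R_1^t > y/x}],
\end{equation*}
apply~\eqref{cvmeasure7.1} to each piece (uniformity on $[\alpha,+\infty[^2$ is easy since both $x$ and $y/R_1^t$ remain bounded below after a common cutoff on $R_1^t$), and conclude by the convergence of $R_1^t$ to $\mathcal{R}$. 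The main obstacle is carrying out Step~1 carefully: one must transfer the Bertoin--Yor constant $\mathcal{C}$ from $\int_0^\infty e^V$ to the truncated integral $\int_0^{T_t}e^V$ and then through the convolution $A_t/e^{h_t}+B_t/e^{h_t}$ without losing the leading coefficient, and verify that all this uniformity survives in the wide range $z\geq e^{h_t+2\eta\phi(t)}$.
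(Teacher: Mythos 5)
Your proposal follows essentially the same route as the paper: the paper likewise splits $S_1^t$ at $\tilde\tau_1(h_t)$, controls the ascending piece by $e^{-\phi(t)}\tau(V^{\uparrow},h_t)$ via Lemma \ref{vposlapltpsatt}, transfers the Bertoin--Yor constant from $\int_0^{\infty}e^{V}$ to the truncated integral by the Markov property at $\tau(V,]-\infty,-h_t/2])$, and then handles $e_1$ and $R_1^t$ exactly as you do (the paper delegates this last part to Lemma 4.1 of \cite{advech}, invoking Proposition \ref{cvr} for the moment convergence of $R_1^t$). One small imprecision: $\tau(V^{\uparrow},h_t)$ does \emph{not} have a uniformly-in-$t$ bounded exponential moment (the bound $e^{c_1 h_t - c_2 r}$ grows with $h_t$), but since in your range $v\epsilon_t \geq e^{\eta\phi(t)} \gg h_t$ the tail estimate still makes the light part negligible, so the convolution step goes through as you indicate.
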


\begin{proof}

We start by proving an intermediary result analogous to (4.3) of \cite{advech}: 
\begin{eqnarray}
\underset{t \rightarrow +\infty}{\lim} \ \sup_{x \in \left [ e^{-(1-\eta)\phi(t)}, + \infty \right [} \ \left | x^{\kappa} e^{\kappa \phi(t)} \mathbb{P} \left ( S_1^t /t > x \right ) - \mathcal{C} \right | = 0, \label{cvmeasure5}
\end{eqnarray}
where $\mathcal{C} := K / \Psi_V'(\kappa)$ is the constant defined a little before Theorem \ref{cvdutl}. We will then deduce the Lemma. First, 
\begin{eqnarray}
S_1^t = \int_{\tilde \tau_1^+ (h_t / 2)}^{\tilde \tau_1 (h_t)} e^{\tilde V^{(1)}(u)} du + \int_{\tilde \tau_1 (h_t)}^{\tilde L_1} e^{\tilde V^{(1)}(u)} du. \label{cvmeasure4.1}
\end{eqnarray}
Recall from \eqref{defhtetphi} that $t = e^{h_t} \times e^{\phi(t)}$. From Proposition \ref{standardwilliams} (applied with $h=h_t$) we get
\begin{eqnarray}
\frac1{t} \int_{\tilde \tau_1 (h_t)}^{\tilde L_1} e^{\tilde V^{(1)}(u)} du \overset{\mathcal{L}}{=} \frac{e^{h_t}}{t} \int_0^{\tau(V, ]-\infty, -h_t/2])} e^{V(y)} dy = e^{-\phi(t)} \int_0^{\tau(V, ]-\infty, -h_t/2])} e^{V(y)} dy, \label{cvmeasure1}
\end{eqnarray}
and
\begin{eqnarray}
\frac1{t} \int_{\tilde \tau_1 (h_t / 2)}^{\tilde \tau_1 (h_t)} e^{\tilde V^{(1)}(u)} du \overset{\mathcal{L}}{=} \frac{1}{t} \int_{\tau(V^{\uparrow}, h_t/2)} ^{\tau(V^{\uparrow}, h_t)} e^{V^{\uparrow}(y)} dy \leq \frac{e^{h_t}}{t} \tau(V^{\uparrow}, h_t) = e^{-\phi(t)} \tau(V^{\uparrow}, h_t). \label{cvmeasure1.1}
\end{eqnarray}

By the second assertion of Lemma \ref{foncexpov}, 
\begin{eqnarray}
\mathbb{P} \left ( \int_0^{+\infty} e^{V(y)} dy > u \right ) \underset{u \rightarrow +\infty}{\sim} \mathcal{C} u^{-\kappa}. \label{cvmeasure2}
\end{eqnarray}

We define $\epsilon_t := e^{-\eta \phi(t)/2}$, and we only consider values of $x$ in $\left [ e^{- (1-\eta)\phi(t)}, + \infty \right [$ (this implies $x e^{\phi(t)} \geq e^{\eta \phi(t)}$ which is large when $t$ is). From \eqref{cvmeasure4.1}, \eqref{cvmeasure1}, and \eqref{cvmeasure1.1}, $x^{\kappa} e^{\kappa \phi(t)} \mathbb{P} \left ( S_1^t /t > x \right )$ is less than
\begin{eqnarray}
x^{\kappa} e^{\kappa \phi(t)} \mathbb{P} \left ( \tau(V^{\uparrow}, h_t) > x e^{\phi(t)} \epsilon_t \right ) + x^{\kappa} e^{\kappa \phi(t)} \mathbb{P} \left ( \int_0^{+\infty} e^{V(y)} dy > (1 - \epsilon_t ) x e^{\phi(t)} \right ). \label{cvmeasure2.1}
\end{eqnarray}
From Lemma \ref{vposlapltpsatt} {applied with $y=h_t$, $r=x e^{\phi(t)} \epsilon_t$, the first term is less than $x^{\kappa} e^{\kappa \phi(t)} \exp(c_1 h_t - c_2 x e^{\phi(t)} \epsilon_t)$ where $c_1$ and $c_2$ are some positive constants. Since $x e^{\phi(t)} \epsilon_t \geq e^{\eta \phi(t)/2}$, $\phi(t) >> \log(\log(t))$, and $h_t \sim \log(t)$, we get that for all $t$ large enough and $x \in [ e^{- (1-\eta)\phi(t)}, + \infty [$, $c_1 h_t - c_2 x e^{\phi(t)} \epsilon_t \leq - c_2 x e^{\phi(t)} \epsilon_t/2$. We deduce that 
\[ \underset{t \rightarrow +\infty}{\limsup} \ \sup_{x \in \left [ e^{- (1-\eta)\phi(t)}, + \infty \right [} \ x^{\kappa} e^{\kappa \phi(t)} \mathbb{P} \left ( \tau(V^{\uparrow}, h_t) > x e^{\phi(t)} \epsilon_t \right ) = 0. \]
From \eqref{cvmeasure2}, the second term in \eqref{cvmeasure2.1} is close to $\mathcal{C}$ when $t$ is large and $x \in \left [ e^{- (1-\eta)\phi(t)}, + \infty \right [$.} We thus get
\begin{eqnarray}
\underset{t \rightarrow +\infty}{\limsup} \ \sup_{x \in \left [ e^{- (1-\eta)\phi(t)}, + \infty \right [} \ x^{\kappa} e^{\kappa \phi(t)} \mathbb{P} \left ( S_1^t /t > x \right ) \leq \mathcal{C}. \label{cvmeasure3}
\end{eqnarray}
We now prove a lower bound. From the Markov property applied at time $\tau(V, ]-\infty, -h_t/2])$ we have
\begin{align*}
e^{-\phi(t)} \int_0^{+\infty} e^{V(y)} dy & \overset{\mathcal{L}}{=} e^{-\phi(t)} \int_0^{\tau(V, ]-\infty, -h_t/2])} e^{V(y)} dy \\
& + e^{V(\tau(V, ]-\infty, -h_t/2])) - \phi(t)} \int_0^{+\infty} e^{\tilde V(y)} dy \\
& \leq e^{-\phi(t)} \int_0^{\tau(V, ]-\infty, -h_t/2])} e^{V(y)} dy \\
& + e^{-h_t/2 - \phi(t)} \int_0^{+\infty} e^{\tilde V(y)} dy, 
\end{align*}
where $\tilde V$ is an independent copy of $V$ and where we have used that $V(\tau(V, ]-\infty, -h_t/2])) \leq -h_t/2$. We now put $\epsilon_t := e^{-h_t /4}$. Then, $\mathbb{P} ( e^{-\phi(t)} \int_0^{+\infty} e^{V(y)} dy > x(1+\epsilon_t) )$ is less than
\begin{align*}
& \mathbb{P} \left ( e^{-\phi(t)} \int_0^{\tau(V, ]-\infty, -h_t/2])} e^{V(y)} dy > x \right ) + \mathbb{P} \left ( e^{-h_t/2 - \phi(t)} \int_0^{+\infty} e^{\tilde V(y)} dy > x \epsilon_t \right ) \\
\leq & \mathbb{P} \left ( S_1^t /t > x \right ) + \mathbb{P} \left ( \int_0^{+\infty} e^{\tilde V(y)} dy > e^{h_t/4} x e^{ \phi(t)} \right ). 
\end{align*}
{For the first term, we have used the fact that $S_1^t /t$ is stochastically greater than 

\noindent $e^{-\phi(t)} \int_0^{\tau(V, ]-\infty, -h_t/2])} e^{V(y)} dy$ (because of \eqref{cvmeasure1} and \eqref{cvmeasure4.1}). 
By \eqref{cvmeasure2} we get that when $t$ is large and $x \in \left [ e^{- (1-\eta)\phi(t)}, + \infty \right [$, $x^{\kappa} e^{\kappa \phi(t)} \mathbb{P} ( e^{-\phi(t)} \int_0^{+\infty} e^{V(y)} dy > x(1+\epsilon_t) )$ is close to $\mathcal{C}$ while $x^{\kappa} e^{\kappa \phi(t)} \mathbb{P} ( \int_0^{+\infty} e^{\tilde V(y)} dy > e^{h_t/4} x e^{ \phi(t)} )$ is less than $2 \mathcal{C} e^{-\kappa h_t/4}$. Combing with the above inequality we get 
}
\[ \mathcal{C} \leq \underset{t \rightarrow +\infty}{\liminf} \ \inf_{x \in \left [ e^{- (1-\eta)\phi(t)}, + \infty \right [} \ x^{\kappa} e^{\kappa \phi(t)} \mathbb{P} \left ( S_1^t /t > x \right ) + 0. \]
Combining this with \eqref{cvmeasure3} we get \eqref{cvmeasure5}. 

Now that we have proved \eqref{cvmeasure5}, the rest of the proof is exactly the same as the proof of Lemma 4.1 in \cite{advech} once they have proved (4.3). The argument needs: 1) the fact that $e_1$, $S_1^t$ and $R_1^t$ are mutually independent, which is true according to Proposition \ref{approxparliid}, 2) the fact that $( R_1^t )_{t > 1}$ converges in distribution to $\mathcal{R}$ and is bounded in all $L^p$ spaces, which is true from Proposition \ref{cvr} applied with $h=h_t$. {In the probability in \eqref{cvmeasure5}, we can therefore add successively the factors $e_1$ and $R_1^t$ (and each time one of these factors is added we add an $\eta$ in $e^{- (1-\eta)\phi(t)}$) so that we obtain successively \eqref{cvmeasure7.1} and \eqref{cvmeasure9.1}, and then the convergence of $e^{\kappa \phi(t)} \mathbb{P} (e_1 S_1^t /t \geq x, \ e_1 S_1^t R_1^t /t \geq y )$.} Therefore the lemma is proved.

\end{proof}

In \cite{advech}, the proof of Proposition 1.4 (that is, the convergence of $(Y_1, Y_2)^t$ toward $(\mathcal{Y}_1,\mathcal{Y}_2)$) relies only on their Lemma 4.1 from which is proved the tightness of the family $((Y_1, Y_2)^t, t > 0)$ and also the unicity and the identification of the limit distribution. Using Lemma \ref{lemproba} instead of Lemma 4.1 of \cite{advech}, and using the fact that, here also, the sequence $( e_i S_i^t,  e_i S_i^t R_i^t)_{i \geq 1}$ is \textit{iid} (see Proposition \ref{approxparliid}), the proof of Proposition 1.4 of \cite{advech} can be repeated here and we get Proposition \ref{propcvsub}.

As an other consequence of Lemma \ref{lemproba} {we can prove that $N_t$, the number of $h_t$-valleys visited until instant $t$ (see \eqref{defnt}), is less than $n_t$ with a large probability.} This fact has already be used and is fundamental, since most of the estimates we have proved are true not for all but for the $n_t$ first $h_t$-valleys. 

\begin{lemme} \label{nbvalleesvisit}

There is a positive constant $c$ such that for all $t$ large enough, 
\[ \mathbb{P} \left( N_t \geq n_t \right) \leq e^{- c h_t}. \]

\end{lemme}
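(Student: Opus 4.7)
The plan is to reduce $\{N_t \geq n_t\}$ to a large-deviation event for the iid sequence $(e_i S_i^t R_i^t)_{i\geq 1}$ and then exploit the right tail computed in Lemma \ref{lemproba}. The key observation is that a pathwise lower bound on $H(\tilde m_{n_t})$ can be extracted from the spatial continuity of $X$ alone, without invoking $\mathcal{A}^1_t$ (whose complement has probability only $\leq e^{-c\phi(t)}$, which is too weak for the target bound $e^{-ch_t}$).

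Work on $\mathcal{V}_{n_t, h_t} \cap \mathcal{A}^3_t$. On $\mathcal{V}_{n_t, h_t}$ we have $m_{n_t} = \tilde m_{n_t}$, and from the construction in Subsection \ref{coin} the points $0 = \tilde L_0 < \tilde L_1 < \dots < \tilde L_{n_t-1} < \tilde m_{n_t}$ are ordered (since $\tilde L_{i-1} < \tilde L_i^{\sharp} \leq \tilde m_i$). Because $X$ is continuous and transient to $+\infty$, the map $r \mapsto H(r)$ is non-decreasing on $[0, \tilde m_{n_t}]$, so by telescoping and the inequalities $H(\tilde L_{i-1}) \leq H(\tilde m_i)$,
\[
H(\tilde m_{n_t}) \geq H(\tilde L_{n_t-1}) = \sum_{i=1}^{n_t-1} (H(\tilde L_i) - H(\tilde L_{i-1})) \geq \sum_{i=1}^{n_t-1} (H(\tilde L_i) - H(\tilde m_i)).
\]
Using the lower bound in $\mathcal{A}^3_t$ from Proposition \ref{approxparliid}, the right-hand side is at least $(1-e^{-\epsilon h_t/7})\sum_{i=1}^{n_t-1} e_i S_i^t R_i^t \geq \tfrac{1}{2}\sum_{i=1}^{n_t-1} e_i S_i^t R_i^t$ for $t$ large. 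Hence
\[
\{N_t \geq n_t\} \cap \mathcal{V}_{n_t,h_t} \cap \mathcal{A}^3_t \subset \Big\{ \sum_{i=1}^{n_t-1} e_i S_i^t R_i^t \leq 2t \Big\}.
\]

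Next, since $(e_i S_i^t R_i^t)_{i\geq 1}$ is iid (Proposition \ref{approxparliid}), bound the sum by the max:
\[
\mathbb{P}\Big(\sum_{i=1}^{n_t-1} e_i S_i^t R_i^t \leq 2t\Big) \leq \big(1 - \mathbb{P}(e_1 S_1^t R_1^t > 2t)\big)^{n_t-1} \leq \exp\!\big(-(n_t-1)\,\mathbb{P}(e_1 S_1^t R_1^t > 2t)\big).
\]
Apply Lemma \ref{lemproba} with $y = 2$ (which lies in the uniform convergence range for $t$ large), yielding $\mathbb{P}(e_1 S_1^t R_1^t > 2t) \geq c_1 e^{-\kappa \phi(t)}$, whence the right-hand side is at most $\exp(-c_1' e^{\kappa \delta \phi(t)})$ since $n_t - 1 \sim e^{\kappa(1+\delta)\phi(t)}$. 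Because $\phi(t) \gg \log\log t$, this double-exponentially decaying bound is $o(e^{-ch_t})$ for any fixed $c > 0$. Combining with $\mathbb{P}(\mathcal{V}_{n_t,h_t}^c) \leq n_t e^{-\delta \kappa h_t/3} \leq e^{-\delta \kappa h_t/6}$ from Lemma \ref{minimacoincide} (valid for $t$ large since $\kappa(1+\delta)\phi(t) = o(h_t)$) and $\mathbb{P}(\mathcal{A}^3_t^c) \leq e^{-c'' h_t}$ from Proposition \ref{approxparliid}, we obtain $\mathbb{P}(N_t \geq n_t) \leq e^{-ch_t}$ for a suitable $c > 0$.

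The principal delicate point is the first paragraph: one must avoid relying on $\mathcal{A}^1_t$, whose bound $Ce^{-\phi(t)} n_t \log h_t$ does not reach the desired $e^{-ch_t}$. The monotonicity argument sidesteps this issue and is essentially the only nontrivial ingredient; the remaining steps are a standard heavy-tail max--vs--sum estimate.
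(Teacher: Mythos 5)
Your proof is correct and follows essentially the same route as the paper's: the deterministic inequality $\sum_{i=1}^{n_t-1}(H(\tilde L_i)-H(\tilde m_i)) \leq H(\tilde m_{n_t})$ on $\mathcal{V}_{n_t,h_t}$, conversion to the iid sum via $\mathcal{A}^3_t$, the max-versus-sum bound, and the tail estimate from Lemma \ref{lemproba} giving a stretched-exponential $\exp(-c\,e^{\kappa\delta\phi(t)})$ that beats $e^{-ch_t}$. (The paper likewise never invokes $\mathcal{A}^1_t$ here, so the point you flag as the principal delicacy is already how the original argument proceeds.)
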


\begin{proof}

We have $ \{ N_t \geq n_t \} = \{ H(m_{n_t}) \leq t \}$ and $\sum_{i=1}^{n_t - 1} H(\tilde L_i)-H(\tilde m_i) \leq H(\tilde m_{n_t}) = H(m_{n_t})$ on $\mathcal{V}_{n_t, h_t}$ {(the event defined in Lemma \ref{minimacoincide})}. Let us fix $\epsilon$ as in Proposition \ref{approxparliid}. Using, from this proposition, the definition of $\mathcal{A}^5_t$ and the fact that the sequence $(e_i S_i^t R_i^t)_{i \geq 1}$ is \textit{iid}, we get that $\mathbb{P} ( \{ N_t \geq n_t \} \cap \mathcal{V}_{n_t, h_t} \cap \mathcal{A}^5_t )$ is less than
\begin{align*}
& \mathbb{P} \left( \sum_{i=1}^{n_t - 1} H(\tilde L_i)-H(\tilde m_i) \leq t, \ \mathcal{A}^5_t \right) \leq \mathbb{P} \left( \sum_{i=1}^{n_t - 1} e_i S_i^t R_i^t \leq t (1- e^{- \epsilon h_t/7})^{-1} \right) \\
\leq & \mathbb{P} \left( \sup_{1 \leq i \leq n_t-1} e_i S_i^t R_i^t \leq t (1- e^{- \epsilon h_t/7})^{-1} \right) \leq \left [ 1 - \mathbb{P} \left( e_1 S_1^t R_1^t /t > (1- e^{- \epsilon h_t/7})^{-1} \right) \right ]^{n_t-1} \\
\leq & \exp \left [ - (n_t-1) \mathbb{P} \left( e_1 S_1^t R_1^t /t > (1- e^{- \epsilon h_t/7})^{-1} \right) \right ],
\end{align*}
where the last inequality comes from $1-x \leq \exp(-x)$ for $x \in [0, 1[$. According to Lemma \ref{lemproba} and the fact that $n_t \sim e^{\kappa (1+\delta) \phi(t)}$ {(see the definition of $n_t$ in the beginning of this section)}, we have that 
\[ (n_t-1) \mathbb{P} ( e_1 S_1^t R_1^t /t > (1- e^{- \epsilon h_t/7})^{-1} ) \underset{t \rightarrow +\infty}{\sim} \mathcal{C}' \mathbb{E}\left [ \mathcal{R}^{\kappa} \right ] e^{\kappa \delta \phi(t)}. \] 
Since $\phi(t) >> \log(\log(t))$ and $h_t \sim \log(t)$ we deduce that $\mathbb{P} ( \{ N_t \geq n_t \} \cap \mathcal{V}_{n_t, h_t} \cap \mathcal{A}^5_t ) \leq e^{-h_t}$ for $t$ large enough. {Also, when $t$ is large enough, $\mathbb{P} ( \mathcal{V}_{n_t, h_t}^c ) \leq n_t e^{- \delta \kappa h_t /3}$ according to Lemma \ref{minimacoincide} (applied with $n=n_t, h=h_t$) and $\mathbb{P} ( \mathcal{A}^{3, c}_t ) \leq e^{-c_1 h_t}$, for some positive constant $c_1$, according to Proposition \ref{approxparliid}. We thus get, for $t$ large enough, 
\[ \mathbb{P} \left( N_t \geq n_t \right) \leq \mathbb{P} ( \{ N_t \geq n_t \} \cap \mathcal{V}_{n_t, h_t} \cap \mathcal{A}^5_t ) + \mathbb{P} ( \mathcal{V}_{n_t, h_t}^c ) + \mathbb{P} ( \mathcal{A}^{3, c}_t ) \leq e^{-h_t} + n_t e^{- \delta \kappa h_t /3} + e^{-c_1 h_t}. \]
Recall that, since $n_t \sim e^{\kappa (1+\delta) \phi(t)}$ where $\phi(t) << \log(t) \sim h_t$, the $n_t$ before the exponential is not significant. The result follows.} 
\end{proof}

\section{Some estimates on $V$, $V^{\uparrow}$, $\hat V^{\uparrow}$ and the diffusion in $V$} \label{estimates}

In this section, we prove some estimates for the processes $V$, $V^{\uparrow}$ and $\hat V^{\uparrow}$, especially about the hitting times and the exponential functionals of these processes. We also prove some facts used in Section \ref{genedesres}. Even though Section \ref{genedesres} gives the main ideas, the estimates we prove here actually represent the biggest part of the work for the proof of Theorem \ref{cvdutl}. 
Some of them are rather classical but some others, especially those of Subsection \ref{new technical}, are new and technical. 

\subsection{Estimates on $V$}


\begin{lemme} \label{leaveanint}

Let $a$, $b$ be positive numbers and define $T := \inf \{ x \geq 0, \ V(x) \notin [-a, b] \}$, then
\[ (1-e^{-\kappa a}) e^{-\kappa b} \leq \mathbb{P} \left ( V(T) = b \right ) \leq e^{-\kappa b}. \]

\end{lemme}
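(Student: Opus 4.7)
The plan is to compare the event $\{V(T) = b\}$ with the event $\{\sup V \geq b\}$, whose probability is known to equal $e^{-\kappa b}$ (as recalled in Subsection \ref{factsandnotations}, the supremum of $V$ is exponentially distributed with parameter $\kappa$). Because $V$ is spectrally negative, it has no positive jumps, so it reaches $b$ continuously whenever it does. In particular, $\{V(T) = b\} = \{\tau_b < \tau_{-a}\}$ where $\tau_b := \tau(V, b+) = \tau(V, b)$ and $\tau_{-a} := \tau(V, ]-\infty, -a])$, and the upper bound is immediate from the inclusion $\{V(T) = b\} \subset \{\tau_b < +\infty\} = \{\sup V \geq b\}$.

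For the lower bound, I would write the complementary decomposition
\[
e^{-\kappa b} = \mathbb{P}(\tau_b < +\infty) = \mathbb{P}(V(T) = b) + \mathbb{P}(\tau_{-a} < \tau_b < +\infty),
\]
and then control the last term by the strong Markov property at $\tau_{-a}$. Conditionally on $\tau_{-a} < \tau_b$ and on $V(\tau_{-a})$, the probability of subsequently reaching $b$ equals the probability that the supremum of a shifted copy of $V$ exceeds $b - V(\tau_{-a})$, which is $e^{-\kappa(b - V(\tau_{-a}))}$. Thus
\[
\mathbb{P}(\tau_{-a} < \tau_b < +\infty) = e^{-\kappa b}\, \mathbb{E}\!\left[ e^{\kappa V(\tau_{-a})} \mathds{1}_{\tau_{-a} < \tau_b} \right].
\]
Since $V(\tau_{-a}) \leq -a$ on $\{\tau_{-a} < +\infty\}$, the expectation is bounded above by $e^{-\kappa a}$, which yields $\mathbb{P}(V(T) = b) \geq e^{-\kappa b} - e^{-\kappa(a+b)} = (1-e^{-\kappa a})e^{-\kappa b}$.

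There is no real obstacle here: the only subtlety is making sure one uses the spectrally negative property in two places, namely (i) that $V$ hits positive levels continuously, so that exit through the upper boundary means $V(T) = b$ exactly, and (ii) that the law of $\sup V$ under $\mathbb{P}$ and, by the strong Markov property, of $\sup(V(\tau_{-a} + \cdot) - V(\tau_{-a}))$ is $\mathcal{E}(\kappa)$. Everything else is just an application of the strong Markov property combined with the trivial bound $V(\tau_{-a}) \leq -a$.
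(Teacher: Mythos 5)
Your proof is correct, and the upper bound is obtained exactly as in the paper. For the lower bound, however, you take a genuinely different (though closely related) route. The paper applies optional stopping to the bounded martingale $e^{\kappa V(\cdot \wedge T)}$: letting $x \to \infty$ and using dominated convergence, it writes $1 = \mathbb{E}[e^{\kappa V(T)}] \leq e^{-\kappa a} + \mathbb{P}(V(T)=b)e^{\kappa b}$, since $V(T) \leq -a$ on the event of downward exit. You instead start from the identity $\mathbb{P}(\sup V \geq b) = e^{-\kappa b}$ (itself a consequence of that same martingale) and subtract off, via the strong Markov property at $\tau_{-a}$, the probability of first exiting below $-a$ and later climbing back to $b$; the overshoot bound $V(\tau_{-a}) \leq -a$ then gives $\mathbb{P}(\tau_{-a} < \tau_b < +\infty) \leq e^{-\kappa(a+b)}$. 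The two arguments produce the same inequality — your term $e^{-\kappa b}\,\mathbb{E}[e^{\kappa V(\tau_{-a})}\mathds{1}_{\tau_{-a}<\tau_b}]$ is precisely the "exit-below" contribution in the paper's martingale identity — but yours requires one extra input (the law of the supremum, which the paper has anyway recalled in Subsection 1.2) in exchange for avoiding the convergence/domination discussion needed to justify $\mathbb{E}[e^{\kappa V(T)}]=1$ when $T$ may be infinite on neither side; the paper's version is slightly more self-contained. Both correctly use spectral negativity only through upward creeping and the exponential law of the maximum, so there is no gap in your argument.
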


\begin{proof}

For the upper bound, $\mathbb{P} \left ( V(T) = b \right ) \leq \mathbb{P} ( \sup_{[0, +\infty[} V \geq b ) = e^{-\kappa b}$ {because, as mentioned in Subsection \ref{factsandnotations}, $\sup_{[0, +\infty[} V$ follows an exponential distribution with parameter $\kappa$.} 

For the lower bound, note that the process $e^{\kappa V(. \wedge T)}$ is a bounded martingale so, by the $L^1$-convergence theorem for martingales 
we get
\begin{align*}
1 = \mathbb{E} \left [ e^{\kappa V(0 \wedge T)} \right ] = \mathbb{E} \left [ e^{\kappa V(T)} \right ] & = \mathbb{P} \left ( V(T) \leq -a \right ) \mathbb{E} \left [ e^{\kappa V(T)} | V(T) \leq - a \right ] + \mathbb{P} \left ( V(T) = b \right ) e^{\kappa b} \\
& \leq e^{-\kappa a} + \mathbb{P} \left ( V(T) = b \right ) e^{\kappa b}. 
\end{align*}
This yields the result. 

\end{proof}

We now study how $V$ leaves an intervalle from below. More precisely, we control the moments of $V \left (\tau (V, ]-\infty, -1]) \right )$. This is through the next lemma that the assumption "$V (1) \in L^p$ for some $p > 1$" is useful for the proof of Theorem \ref{cvdutl}. 

\begin{lemme} \label{leaveanintbelow}

\[ \forall p \geq 1, \ V (1) \in L^p \Rightarrow V \left (\tau (V, ]-\infty, -1]) \right ) \in L^p. \]

\end{lemme}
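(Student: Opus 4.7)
Since $V$ has no positive jumps, one has $V(\tau-) \geq -1$ where $\tau := \tau(V, ]-\infty, -1])$. Writing $V(\tau) = V(\tau-) + \Delta V(\tau)$ with $\Delta V(\tau) \leq 0$ and performing a short case analysis on the sign of $V(\tau-)$ yields the pointwise bound $|V(\tau)| \leq 1 + |\Delta V(\tau)|$. It therefore suffices to prove $|\Delta V(\tau)| \in L^p$.

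The strategy is to decompose $V$ via its L\'evy-It\^o representation into a small-jump part and a big-jump part. Fix some $M > 1$ (ignoring the degenerate case $\nu(]-\infty, -M[) = 0$, in which $V$ has jumps in $[-M, 0]$ and $|\Delta V(\tau)| \leq M$ trivially), and write $V = V_M + V^M$, where $V^M$ is the compound Poisson process gathering the jumps of $V$ strictly less than $-M$ --- with jump times $(S_k)_{k \geq 1}$ of rate $\lambda_M := \nu(]-\infty, -M[)$ and \textit{iid} jump sizes $(\xi_k)_{k \geq 1}$ of distribution $\nu\vert_{]-\infty, -M[}/\lambda_M$ --- and $V_M$, independent of $V^M$, has jumps in $[-M, 0]$. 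At time $\tau$, the crossing below $-1$ is caused either by a jump of $V_M$, in which case $|\Delta V(\tau)| \leq M$, or by some $\xi_K$; in both cases
\[ |\Delta V(\tau)|^p \leq M^p + \sum_{k \geq 1} |\xi_k|^p \un_{S_k \leq \tau}. \]
The key observation is that $\{S_k \leq \tau\}$ depends only on $V_M$, on $(\xi_j)_{j < k}$ and on $S_1, \dots, S_k$, and is therefore independent of $\xi_k$. Taking expectations and then applying optional stopping to the martingale $N^M(t) - \lambda_M t$ (where $N^M(t) := \#\{k : S_k \leq t\}$) yields
\[ \mathbb{E}\bigl[|\Delta V(\tau)|^p\bigr] \leq M^p + \mathbb{E}\bigl[|\xi_1|^p\bigr]\, \lambda_M\, \mathbb{E}[\tau]. \]
Finiteness of $\mathbb{E}[|\xi_1|^p]$ will follow from the classical equivalence between $V(1) \in L^p$ and $\int_{|x| > 1} |x|^p \nu(dx) < +\infty$ (Theorem 25.3 of \cite{Sato}).

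The main obstacle is thus to establish $\mathbb{E}[\tau] < +\infty$. The plan for this is to apply Doob's optional stopping to the $L^1$-martingale $V(t) - t \mathbb{E}[V(1)]$ at $T_n := \tau \wedge n$, yielding $\mathbb{E}[V(T_n)] = \mathbb{E}[V(1)]\, \mathbb{E}[T_n]$. Splitting $V(T_n) = V(\tau) \un_{\tau \leq n} + V(n) \un_{\tau > n}$: on $\{\tau \leq n\}$, $V(\tau) \leq -1$ gives $\mathbb{E}[V(\tau) \un_{\tau \leq n}] \leq -\mathbb{P}(\tau \leq n) \to -1$; on $\{\tau > n\}$, one has $-1 < V(n) \leq \sup V$, and $\sup V$ is exponentially distributed with parameter $\kappa$ (cited in Subsection \ref{factsandnotations}) and hence integrable, so dominated convergence yields $\mathbb{E}[V(n) \un_{\tau > n}] \to 0$. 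Using $\mathbb{E}[V(1)] < 0$ (Corollary VI.2 of \cite{Bertoin}) and monotone convergence, one concludes that $\mathbb{E}[\tau] \leq 1/|\mathbb{E}[V(1)]| < +\infty$. Combined with the previous step, this gives the result.
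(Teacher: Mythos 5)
Your overall architecture is sound and genuinely different from the paper's: you bound the $p$-th moment of the overshoot directly, by splitting off the compound Poisson part $V^M$ of jumps below $-M$, using that $\{S_k\leq\tau\}$ is independent of $\xi_k$, and reducing everything to $\mathbb{E}[|\xi_1|^p]<+\infty$ (Sato) and $\mathbb{E}[\tau]<+\infty$. All of that is correct. The gap is in your proof of $\mathbb{E}[\tau]<+\infty$. Writing $\mathbb{E}[V(1)]=-a$ with $a>0$, optional stopping gives $\mathbb{E}[T_n]=-\mathbb{E}[V(T_n)]/a$, so to bound $\mathbb{E}[T_n]$ \emph{from above} you need to bound $\mathbb{E}[V(T_n)]$ \emph{from below}, i.e. to bound $\mathbb{E}[|V(\tau)|\mathds{1}_{\tau\leq n}]$ from above --- which is exactly the first moment of the overshoot, the $p=1$ case of the lemma you are proving. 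The estimates you actually establish ($V(\tau)\leq -1$ on $\{\tau\leq n\}$, and $\mathbb{E}[V(n)\mathds{1}_{\tau>n}]\to 0$) only upper-bound $\mathbb{E}[V(T_n)]$, hence only yield the \emph{lower} bound $\mathbb{E}[\tau]\geq 1/a$; indeed Wald's identity reads $a\,\mathbb{E}[\tau]=\mathbb{E}[|V(\tau)|]\geq 1$, and your claimed bound $\mathbb{E}[\tau]\leq 1/a$ is false whenever the overshoot is not a.s. zero. So as written the argument is circular.

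The hole is easy to fill without touching the rest of your proof: for $c_1\in\,]0,\kappa[$ one has $\Psi_V(c_1)<0$ and $\mathbb{P}(\tau>r)\leq\mathbb{P}(V(r)>-1)\leq e^{c_1}e^{r\Psi_V(c_1)}$ by Markov's inequality, so $\tau$ even has finite exponential moments (this is precisely Lemma \ref{tpsatteinthatv} of the paper, whose proof does not use the present lemma). For comparison, the paper's own proof avoids $\mathbb{E}[\tau]$ altogether: it bounds the tail $\mathbb{P}(|1+V(\tau)|>x)$ by the probability that the first jump of size $<-x$ occurs while $V-V^{<-r}$ is still above $-1$; that jump time is exponential with rate $\nu(]-\infty,-x])$ and independent of $V-V^{<-r}$, so a Chernoff bound in the time variable gives a tail $\leq C\,\nu(]-\infty,-x])$, and one integrates $x^{p-1}$ against it using the same Theorem 25.3 of Sato. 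Both routes rest on the same idea (a large overshoot forces a large jump before $\tau$, and the drift limits the time available); yours trades the exponential-first-jump-time trick for the extra input $\mathbb{E}[\tau]<+\infty$, which must be supplied separately.
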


\begin{proof}

We use $V^{< -r}$ as defined in Subsection \ref{factsandnotations}, where $r$ is chosen such that $V - V^{< -r}$ drifts to $-\infty$. Let $\kappa_r$ denote the non trivial zero of $\Psi_{V - V^{< -r}}$, {the Laplace exponent of $V - V^{< -r}$. Recall that $\Delta V(t) := V(t)-V(t-)$, that is, $\Delta V$ is the process of the jumps of $V$.} We fix $x \geq r$. We have
\begin{align*}
\mathbb{P} \left ( \left | 1 + V \left (\tau (V, ]-\infty, -1]) \right ) \right | > x \right ) & \leq \mathbb{P} \left ( \tau (\Delta V, ]-\infty, -x]) \leq \tau (V, ]-\infty, -1]) \right ) \\
& \leq \mathbb{P} \left ( V \left ( \tau (\Delta V, ]-\infty, -x]) - \right ) \geq -1 \right ) \\
& \leq \mathbb{P} \left ( [V - V^{< -r}] \left ( \tau (\Delta V, ]-\infty, -x]) - \right ) \geq -1 \right ). 
\end{align*}
Then, since $x > r$, we have $\tau (\Delta V, ]-\infty, -x]) = \tau (\Delta V^{< -r}, ]-\infty, -x])$ {and the latter is independent from $V - V^{< -r}$, because the processes $V - V^{< -r}$ and $V^{< -r}$ are independent.} We thus get that $\mathbb{P} ( | 1 + V (\tau (V, ]-\infty, -1]) ) | > x )$ is less than
\begin{align*}
\mathbb{P} \left ( [V - V^{< -r}] \left ( \tau (\Delta V^{< -r}, ]-\infty, -x]) \right ) \geq -1 \right ) & \leq e^{\kappa_r /2} \mathbb{E} \left [ e^{\kappa_r [V - V^{< -r}] \left ( \tau (\Delta V^{< -r}, ]-\infty, -x]) \right )/2} \right ] \\
& = e^{\kappa_r /2} \mathbb{E} \left [ e^{\Psi_{V - V^{< -r}} (\kappa_r /2) \tau (\Delta V^{< -r}, ]-\infty, -x])} \right ]. 
\end{align*}
We have used Markov's inequality {and the independence between $\tau (\Delta V^{< -r}, ]-\infty, -x])$ and the process $V - V^{< -r}$. Then, note that $\Psi_{V - V^{< -r}} (\kappa_r /2) < 0$ thanks to the definition of $\kappa_r$, and that $\tau (\Delta V^{< -r}, ]-\infty, -x])$ follows an exponential distribution with parameter $\nu (]-\infty, -x])$, where we recall that $\nu$ is the L\'evy measure of $V$}. We thus get
\begin{align}
\mathbb{P} \left ( \left | 1 + V \left (\tau (V, ]-\infty, -1]) \right ) \right | > x \right ) \leq \frac{e^{\kappa_r /2}}{1 - \Psi_{V - V^{< -r}} (\kappa_r /2)/\nu (]-\infty, -x])} \leq C \nu (]-\infty, -x]), \label{leaveanintbelow1}
\end{align}
where we put $C := - e^{\kappa_r /2}/\Psi_{V - V^{< -r}} (\kappa_r /2) > 0$. We now choose $p \geq 1$ and assume $V (1) \in L^p$. Theorem 25.3 in \cite{Sato} implies that $\int_{-\infty}^{-r} |x|^p \nu (dx) < + \infty$, or equivalently $\int_{r}^{+\infty} x^{p-1} \nu (]-\infty, -x]) dx < +\infty$. Using \eqref{leaveanintbelow1} we deduce that
\[ \int_{-\infty}^{-r} x^{p-1} \mathbb{P} \left ( \left | 1 + V \left (\tau (V, ]-\infty, -1]) \right ) \right | > x \right ) dx < +\infty, \]
so $V (\tau (V, ]-\infty, -1]) ) \in L^p$. 

\end{proof}

{The next lemma is a combination of known results on tail asymptotics for exponential functionals of L\'evy processes. Despite being classical it is fundamental.} In Section \ref{genedesres}, it allows us to compute precisely the right tails of the contributions to the local and to the time spent by the diffusion in the bottoms of the valleys. This allows to prove that the sum of these contributions converges to the $\kappa$-stable subordinator $(\mathcal{Y}_1, \mathcal{Y}_2)$ from which is constructed the limit distribution in Theorem \ref{cvdutl}. 

\begin{lemme} \label{foncexpov}

\begin{itemize}
\item For small $x$ 
\[ {\mathbb{P} \left ( \int_0^{+\infty} e^{V(u)} du \leq x \right ) = o(x).} \]
\item If $0 < \kappa < 1$ we have 
\[ \mathbb{P} \left ( \int_0^{+\infty} e^{V(u)} du \geq x \right ) \underset{x \rightarrow +\infty}{\sim} \mathcal{C} x^{-\kappa}, \]
\end{itemize}
where {$\mathcal{C} := K / \Psi_V'(\kappa)$ is the constant defined a little before Theorem \ref{cvdutl}. }
\end{lemme}

{
\begin{remarque}
The problem of studying asymptotics related to the distributions of exponential functionals of L\'evy processes has a great interest for applications and has been the object of a lot of researches. We want to point out that an exact equivalent can be given not only for the asymptotic tail distribution of an exponential functional, as in the second point of Lemma \ref{foncexpov}, but also, under some conditions on the characteristic exponent of the underlying L\'evy process, for the density of the exponential functional and for the successive derivatives of that density. Such equivalents are established in Theorem 2.14 of Patie, Savov \cite{bersteingamma}. 
\end{remarque}
}

\begin{proof} of Lemma \ref{foncexpov}

{The first assertion is an application to $-V$ of Theorem 2.19 in \cite{bersteingamma}. That result asserts that, when $x$ goes to $0$, $\mathbb{P} ( \int_0^{+\infty} e^{V(u)} du \leq x )/x$ converges to $-\varphi(0)$, where $\varphi : i \mathbb{R} \rightarrow \mathbb{C}$ is the Laplace exponent of $-V$. Note that $-\varphi(0)$ is in fact the killing rate of the associated L\'evy process. In this paper $V$, and therefore also $-V$, is not a killed L\'evy process, which is equivalent to $\varphi(0) = 0$. This justifies the first point of the Lemma. }

{We now justify the second assertion which is only an application of Lemma 4 of \cite{rivero2005} (see also Corollary 5 of \cite{Bertoinyor} applied to $-V$). We thus only need to check that the hypothesis $(H2)$ in \cite{rivero2005} is satisfied by $V$. First, if $V$ was both spectrally negative and arithmetic then it would be the opposite of a subordinator. By assumption $V$ is spectrally negative and not the opposite of a subordinator. Condition $(H2a)$ is thus satisfied. Since $V$ is spectrally negative, we know that $V(1)$ has a Laplace transform defined on $[0, +\infty[$ and the $\theta$ in condition $(H2b)$ is the non-trivial zero of the Laplace exponent of $V$, that is, $\kappa$. Then, the condition $(H2c)$ is trivially satisfied since the restriction of the L\'evy measure of $V$ is null on $]0, +\infty[$. 
The condition $(H2)$ in \cite{rivero2005} is thus satisfied by $V$. As a consequence, Lemma 4 of \cite{rivero2005} can be applied in the case $0 < \kappa < 1$ with $\alpha = 1$. This yields $\mathbb{P} ( \int_0^{+\infty} e^{V(u)} du \geq x ) \underset{x \rightarrow +\infty}{\sim} (K / \Psi_V'(\kappa)) x^{-\kappa}$. The second assertion is thus proved.} 

\end{proof}

\begin{lemme} \label{tpsatteinthatv}

There are two positive constants $c_1, c_2$ such that
\[ \forall y, r > 0, \ \mathbb{P} \left ( \tau(V, ]-\infty, -y]) > r \right ) \leq e^{c_1 y -c_2 r}. \]

\end{lemme}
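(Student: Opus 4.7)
The plan is to use a standard Markov-inequality bound together with the sign of the Laplace exponent $\Psi_V$ on $(0,\kappa)$. The key observation is that on the event $\{\tau(V,]-\infty,-y]) > r\}$, the process has not yet reached $-y$ at time $r$, so in particular $V(r) > -y$, hence $e^{\lambda V(r)} > e^{-\lambda y}$ for any $\lambda > 0$.

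I would pick some fixed $\lambda \in ]0,\kappa[$ (for instance $\lambda = \kappa/2$). Since $\Psi_V(0) = \Psi_V(\kappa) = 0$, $\Psi_V$ is convex on $[0,\kappa]$ and $\Psi_V'(0) = \mathbb{E}[V(1)] < 0$ (as $V$ drifts to $-\infty$), we have $\Psi_V(\lambda) < 0$. Then Markov's inequality yields
\[
\mathbb{P}\left(\tau(V,]-\infty,-y]) > r\right) \leq \mathbb{P}\left(V(r) > -y\right) = \mathbb{P}\left(e^{\lambda V(r)} > e^{-\lambda y}\right) \leq e^{\lambda y}\,\mathbb{E}\left[e^{\lambda V(r)}\right] = e^{\lambda y + r\Psi_V(\lambda)},
\]
which gives the result with $c_1 := \lambda$ and $c_2 := -\Psi_V(\lambda) > 0$.

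There is no real obstacle here; the whole argument is a one-line Chernoff-type bound, and the spectrally negative nature of $V$ is only used implicitly through the fact that $\Psi_V$ is well defined and finite on $[0,+\infty[$. The constants $c_1, c_2$ are not optimal (one could optimize over $\lambda \in ]0,\kappa[$), but the statement only requires their existence.
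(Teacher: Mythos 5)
Your proposal is correct and is essentially identical to the paper's proof: the paper also bounds $\mathbb{P}(\tau(V,]-\infty,-y])>r)$ by $\mathbb{P}(V(r)>-y)$, applies Markov's inequality with an exponent $c_1\in]0,\kappa[$, and sets $c_2:=-\Psi_V(c_1)>0$. Your extra remark on the convexity of $\Psi_V$ just makes explicit why $\Psi_V(\lambda)<0$ on $]0,\kappa[$, which the paper attributes to the definition of $\kappa$.
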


\begin{proof}

Let us choose $c_1 \in ]0, \kappa[$ and define $c_2 := - \Psi_V(c_1)$. $c_2$ is positive because of the definition of $\kappa$. We have
\[ \mathbb{P} \left ( \tau(V, ]-\infty, -y]) > r \right ) \leq \mathbb{P} \left ( V(r) > -y \right ) \leq e^{c_1 y} \mathbb{E} \left [ e^{c_1 V(r)} \right ] = e^{c_1 y -c_2 r}, \]
where we have used Markov's inequality. 

\end{proof}

{Recall that $V - \underline{V}$, defined in Subsection \ref{factsandnotations}, denotes the process $V$ reflected at its infimum. We have :} 

\begin{lemme} \label{monteavantdescente}

Choose $\eta \in ]0, 1[$, then
\[ \forall a, b > 0, \ \mathbb{P} \left ( \tau(V-\underline{V}, a) < \tau(V, ]-\infty, -b]) \right ) \leq (b/\eta a +1) e^{-\kappa (1-\eta) a}. \]

\end{lemme}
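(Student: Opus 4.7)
The plan is to partition the descent of $\underline{V}$ from $0$ to $-b$ into pieces of amplitude $\eta a$ and apply a union bound. Set $N := \lceil b/(\eta a) \rceil \leq b/(\eta a) + 1$ and define the stopping times
\[ T_k := \tau(\underline{V}, ]-\infty, -k\eta a]), \ k=0,\ldots,N, \]
so that $T_N \leq \tau(\underline{V}, ]-\infty, -b])$ and the event in the lemma is contained in $\{\tau(V-\underline{V}, a) < T_N\} = \bigcup_{k=1}^N E_k$, where $E_k := \{T_{k-1} \leq \tau(V-\underline{V}, a) < T_k\}$.

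On $E_k$, I will apply the strong Markov property at $T_{k-1}$. Because $V$ is spectrally negative, it reaches the level $-(k-1)\eta a$ continuously, so $V(T_{k-1}) = \underline{V}(T_{k-1}) = -(k-1)\eta a$. Letting $\tilde V := V^{T_{k-1}}$, one checks that for $t \in [T_{k-1}, T_k)$,
\[ \underline{V}(t) = V(T_{k-1}) + \underline{\tilde V}(t-T_{k-1}), \quad (V-\underline{V})(t) = (\tilde V - \underline{\tilde V})(t-T_{k-1}), \]
and moreover, since $t < T_k$, we have $\underline{\tilde V}(t-T_{k-1}) > -\eta a$. Consequently, on $E_k$ there exists $s \geq 0$ with $\underline{\tilde V}(s) > -\eta a$ and $(\tilde V-\underline{\tilde V})(s) \geq a$, which forces $\tilde V(s) > (1-\eta)a$. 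Hence $E_k \subset \{\sup_{[0,+\infty[}\tilde V > (1-\eta)a\}$, and by the strong Markov property together with the fact (recalled in Subsection \ref{factsandnotations}) that $\sup_{[0,+\infty[} V$ is exponentially distributed with parameter $\kappa$, we get $\mathbb{P}(E_k) \leq e^{-\kappa(1-\eta)a}$.

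Summing over $k = 1, \ldots, N$ and using $N \leq b/(\eta a) + 1$ then yields the asserted bound. There is no real obstacle here; the only subtle point is the identification of $\underline{V}$ in terms of the shifted process, which relies crucially on the continuous crossing $V(T_{k-1}) = -(k-1)\eta a$ provided by the absence of positive jumps.
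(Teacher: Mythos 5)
Your proof follows essentially the same route as the paper's: both cut the descent of the infimum into roughly $b/(\eta a)$ stages of depth $\eta a$, observe that on each stage the reflected process can reach $a$ only if the restarted process rises above $(1-\eta)a$ (an event of probability $e^{-\kappa(1-\eta)a}$ by the exponential law of $\sup_{[0,+\infty[}V$), and conclude by a union bound; the paper merely parametrizes the stages by recursively defined renewal times rather than by the deterministic levels $-k\eta a$. One correction, though: your claim that spectral negativity forces $V$ to reach $-(k-1)\eta a$ continuously, so that $V(T_{k-1})=-(k-1)\eta a$, is false --- the absence of positive jumps gives continuous \emph{upward} crossings only, and $V$ may jump strictly below $-(k-1)\eta a$ at $T_{k-1}$. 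What you actually need, and what does hold, is that $T_{k-1}$ is a time at which $V$ attains a new infimum, so that $\underline{V}(T_{k-1})=V(T_{k-1})\leq -(k-1)\eta a$; with this, the identification of $\underline{V}$ after $T_{k-1}$ is unchanged, and the possible overshoot only strengthens the inequality $\underline{\tilde V}(t-T_{k-1})=\underline{V}(t)-V(T_{k-1})> -k\eta a+(k-1)\eta a=-\eta a$, so your argument survives intact.
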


\begin{proof}

We first remark that 
\[ \left \{ \tau(V-\underline{V}, a) < \tau(V, ]-\infty, - \eta a]) \right \} \subset \left \{ \sup_{[0, +\infty[} V > (1-\eta) a \right \}, \] so
\begin{eqnarray}
\mathbb{P} \left ( \tau(V-\underline{V}, a) < \tau(V, ]-\infty, - \eta a]) \right ) \leq e^{-\kappa (1-\eta) a}. \label{minimacoincide2}
\end{eqnarray}
{We have used the fact that, as mentioned in Subsection \ref{factsandnotations}, $\sup_{[0, +\infty[} V$ follows an exponential distribution with parameter $\kappa$.} In order to establish a boundary with $b$ instead of $\eta a$, we define the sequence of stopping times $(T_i)_{i \geq 0}$ by $T_0 := 0$
and 
\[ T_{i+1} := T_{i} + \min \left \{ \tau(V^{T_i}-\underline{V}^{T_i}, a), \tau \left (V^{T_i}, \left ]-\infty, -\eta a \right ] \right ) \right \}. \] 
We have 
\[ \overset{ \lfloor b/\eta a \rfloor}{\underset{i=0}{\cap}} \left \{ \tau(V^{T_i}-\underline{V}^{T_i},a) > \tau(V^{T_i}, ]-\infty, - \eta a]) \right \} \subset \left \{ \tau(V-\underline{V}, a) > \tau(V, ]-\infty, -b]) \right \}. \]
By the Markov property applied to $V$ at the stopping times $T_i$, the $\lfloor b/\eta a + 1 \rfloor$ events in the intersection have all the same probability equal to $\mathbb{P} (\tau(V-\underline{V},a) > \tau(V, ]-\infty, - \eta a]))$. Taking the complementary in the above expression and combining with \eqref{minimacoincide2} we get 
\begin{align*}
\mathbb{P} \left ( \tau(V-\underline{V}, a) < \tau(V, ]-\infty, -b]) \right ) & \leq \lfloor b/ \eta a +1 \rfloor  \mathbb{P} \left ( \tau(V-\underline{V}, a) < \tau(V, ]-\infty, - \eta a]) \right ) \\
& \leq (b/\eta a +1) e^{-\kappa (1-\eta) a}. 
\end{align*}

\end{proof}

\subsection{Estimates on $V^{\uparrow}$} \label{estimvup}


{Recall from Subsection \ref{extrema} the definition of the spectrally negative L\'evy process $V^{\sharp}$, known as \textit{$V$ conditioned to drift to $+\infty$}. Recall from there that $V^{\sharp}$ drifts to $+\infty$ (equivalently $\Psi'_{V^{\sharp}}(0) > 0$) and that for any $x > 0$, $V^{\uparrow}_x$ is equal in law to $V^{\sharp}_x$ conditionally on $\{ \inf_{[0, +\infty[} V^{\sharp}_x > 0 \}$. 
Also, note that from \eqref{convsgsharp} we have }
\begin{eqnarray}
\forall \lambda > - \kappa, \ \mathbb{E} \left [ e^{\lambda V^{\sharp}(1)} \right ] = \mathbb{E} \left [ e^{(\lambda + \kappa) V(1)} \right ] < + \infty. \label{eqintro1}
\end{eqnarray}
In particular, $V^{\sharp}(1)$ has its Laplace transform, as well as its Laplace exponent $\Psi_{V^{\sharp}}$, defined on the half-plane $\{ \mathcal{R}(z) > - \kappa \}$. 

\begin{lemme} \label{vposlapltpsatt}

There are two positive constants $c_1, c_2$ such that 
\[ \forall y, r > 0, \ \mathbb{P} \left ( \tau(V^{\uparrow}, y) > r \right ) \leq e^{c_1 y - c_2 r}. \]

\end{lemme}

\begin{proof}

Fix $y$ and $r > 0$. From the first point of Lemma 2.6 in \cite{foncexpovech} we have
\begin{eqnarray}
\mathbb{P} \left ( \tau(V^{\uparrow}, y) > r \right ) \leq \mathbb{P} \left ( \tau(V^{\sharp}, y) > r \right ). \label{newproof}
\end{eqnarray}

Then, $V^{\sharp}$ is a spectrally negative L\'evy process so, according to Theorem VII.1 in \cite{Bertoin}, the process $\tau ( V^{\sharp}, . )$ is a subordinator. Its Laplace exponent $\Phi_{V^{\sharp}}$ is defined for $\lambda \geq 0$ by 
\[ \Phi_{V^{\sharp}}(\lambda) := - \log \left ( \mathbb{E} \left [ e^{-\lambda \tau \left ( V^{\sharp}, 1 \right )} \right ] \right ) \]
and, according to Theorem VII.1 in \cite{Bertoin}, we have $\Phi_{V^{\sharp}} = \Psi_{V^{\sharp}}^{-1}$. 

From the discussion before the lemma, we know that $V^{\sharp}(1)$ has its Laplace transform, as well as its Laplace exponent $\Psi_{V^{\sharp}}$, defined in a neighborhood of $0$. Then, since $\Psi'_{V^{\sharp}}(0) > 0$, the holomorphic local inversion theorem yields that $\Psi_{V^{\sharp}}^{-1}$, that is, $\Phi_{V^{\sharp}}$, extends on a neighborhood of $0$. 

Therefore, $\tau ( V^{\sharp}, 1 )$ has a Laplace transform defined on a neighborhood of $0$. From Markov's inequality we get, for a positive $c_2$ in this neighborhood, 
\[ \mathbb{P} \left ( \tau ( V^{\sharp}, y ) > r \right ) \leq e^{-c_2 r} \mathbb{E} \left [ e^{c_2 \tau \left ( V^{\sharp}, y \right )} \right ] = e^{- y \Phi_{V^{\sharp}}(- c_2)} e^{-c_2 r}. \]

Note that $c_1 := - \Phi_{V^{\sharp}}(- c_2)$ is positive. Combining with \eqref{newproof} we get the result. 

\end{proof}

\begin{lemme} \label{vuprestegrand}

There are two positive constants $c_1, c_2$ such that, for all $1 < a < b$, we have 
\[ \mathbb{P} \left ( \inf_{[0, +\infty[} V_b^{\uparrow} < a \right ) \leq c_2 e^{-c_1 (b-a)}. \]

\end{lemme}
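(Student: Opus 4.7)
The plan is to exploit the remark already made in Subsection \ref{extrema}: since $V^{\sharp}$ drifts to $+\infty$ and $V^{\uparrow}=(V^{\sharp})^{\uparrow}$, for any $b>0$ the law of $V^{\uparrow}_b$ is just the law of $V^{\sharp}_b$ conditioned in the usual sense on the event $\{\underline{V^{\sharp}_b}>0\}=\{\underline{V^{\sharp}}>-b\}$. Thus
\[
\mathbb{P}\!\left(\inf_{[0,+\infty[} V^{\uparrow}_b < a\right)
=\frac{\mathbb{P}\!\left(\inf_{[0,+\infty[} V^{\sharp}_b<a\,,\ \underline{V^{\sharp}}>-b\right)}{\mathbb{P}\!\left(\underline{V^{\sharp}}>-b\right)}
\leq \frac{\mathbb{P}\!\left(\underline{V^{\sharp}}<-(b-a)\right)}{\mathbb{P}\!\left(\underline{V^{\sharp}}>-b\right)},
\]
where I used stationarity $\inf V^{\sharp}_b=b+\underline{V^{\sharp}}$. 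It thus suffices to show the single exponential estimate
\begin{equation}\label{planEq}
\mathbb{P}\!\left(\underline{V^{\sharp}}\leq -r\right)\leq e^{-\kappa r},\qquad r>0.
\end{equation}

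For \eqref{planEq}, I would use the exponential martingale $M_t:=e^{-\kappa V^{\sharp}(t)}$; this is a genuine martingale because $\Psi_{V^{\sharp}}(-\kappa)=\Psi_V(0)=0$ and $\Psi_{V^{\sharp}}$ is defined on $\{\mathcal{R}(z)>-\kappa\}$ by \eqref{eqintro1}. Setting $\tau_{-r}:=\tau(V^{\sharp},]-\infty,-r])$ and applying optional stopping at $\tau_{-r}\wedge t$, one gets $\mathbb{E}[M_{\tau_{-r}\wedge t}]=1$. On $\{\tau_{-r}\leq t\}$, spectral negativity only ensures $V^{\sharp}(\tau_{-r})\leq -r$, hence $M_{\tau_{-r}}\geq e^{\kappa r}$; on $\{\tau_{-r}>t\}$ we have $M_t\leq e^{\kappa r}$ and $V^{\sharp}(t)\to+\infty$ a.s., so $\mathbb{E}[M_t\mathds{1}_{\tau_{-r}>t}]\to 0$ by bounded convergence. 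Letting $t\to\infty$ gives $1\geq e^{\kappa r}\mathbb{P}(\tau_{-r}<\infty)$, which is \eqref{planEq}.

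Plugging \eqref{planEq} back in: the numerator is at most $e^{-\kappa(b-a)}$, and for the denominator, since $b>1$, $\mathbb{P}(\underline{V^{\sharp}}>-b)\geq 1-e^{-\kappa b}\geq 1-e^{-\kappa}$. Combining yields
\[
\mathbb{P}\!\left(\inf_{[0,+\infty[} V^{\uparrow}_b<a\right)\leq \frac{e^{-\kappa(b-a)}}{1-e^{-\kappa}},
\]
so the lemma holds with $c_1=\kappa$ and $c_2=1/(1-e^{-\kappa})$.

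The only real subtlety is the possible overshoot $V^{\sharp}(\tau_{-r})<-r$ (spectral negativity kills positive jumps but not negative ones, so the process can jump strictly below $-r$), but since \eqref{planEq} is only an upper bound this overshoot is harmless — it only strengthens the inequality $M_{\tau_{-r}}\geq e^{\kappa r}$. Everything else (stationarity, the explicit conditioning identification of $V^{\uparrow}_b$, the fact that $-\kappa$ is the second root of $\Psi_{V^{\sharp}}$) is already recorded in Subsection \ref{extrema}.
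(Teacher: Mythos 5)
Your proof is correct. The first half — identifying $V^{\uparrow}_b$ as $V^{\sharp}_b$ conditioned in the usual sense to stay positive, and reducing the problem to bounding $\mathbb{P}(\inf_{[0,+\infty[}V^{\sharp}<a-b)/\mathbb{P}(\inf_{[0,+\infty[}V^{\sharp}>-b)$ — is exactly the paper's argument. Where you diverge is in the tail estimate for $\inf_{[0,+\infty[}V^{\sharp}$: the paper invokes the explicit Laplace transform $\mathbb{E}[e^{\lambda \inf V^{\sharp}}]=\Psi'_{V^{\sharp}}(0)\,\lambda/\Psi_{V^{\sharp}}(\lambda)$ from Bertoin, argues that this extends analytically to a neighbourhood of $0$ (so the infimum has a finite exponential moment of some small negative order), and concludes by Chernoff's inequality with an unspecified constant $c_1$. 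You instead run the exponential martingale $e^{-\kappa V^{\sharp}(t)}$ (legitimate since $\Psi_{V^{\sharp}}(-\kappa)=\Psi_V(0)=0$; equivalently, this is just undoing the Esscher transform, as $\mathbb{P}(\tau(V^{\sharp},]-\infty,-r])<\infty)=\mathbb{E}[e^{\kappa V(\tau(V,]-\infty,-r]))}]\le e^{-\kappa r}$), and your handling of the overshoot and of the limit $t\to\infty$ is sound. The payoff of your route is the explicit and sharp exponent: you get $c_1=\kappa$ and $c_2=1/(1-e^{-\kappa})$, whereas the paper's argument only yields some $c_1>0$ determined by the domain of analyticity of $\Psi_{V^{\sharp}}$ near $-\kappa$. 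Either version suffices for the applications in the paper, which only need some exponential decay. The one pedantic point: \eqref{eqintro1} is stated for $\lambda>-\kappa$, so the boundary evaluation $\mathbb{E}[e^{-\kappa V^{\sharp}(1)}]=\mathbb{E}[e^{0\cdot V(1)}]=1$ deserves the one-line justification via the defining change of measure, but this is immediate.
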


\begin{proof}

Recall that $V^{\uparrow}_{b}$ is equal in law to $V^{\sharp}_{b}$ conditioned in the usual {way} to stay positive, so
\begin{align*}
\mathbb{P} \left( \inf_{[0, +\infty[} V_b^{\uparrow} < a \right) & = \mathbb{P} \left( 0 < \inf_{[0, +\infty[} V_b^{\sharp} < a \right) / \mathbb{P} \left( \inf_{[0, +\infty[} V^{\sharp}_{b} > 0 \right) \\
& \leq \mathbb{P} \left( \inf_{[0, +\infty[} V^{\sharp} < a-b \right) / \mathbb{P} \left( \inf_{[0, +\infty[} V^{\sharp} > -b \right) \\
& \leq \mathbb{P} \left( \inf_{[0, +\infty[} V^{\sharp} < a-b \right) / \mathbb{P} \left( \inf_{[0, +\infty[} V^{\sharp} > -1 \right). 
\end{align*}
The formula page 192 of \cite{Bertoin} gives the Laplace transform of $\inf_{[0, +\infty[} V^{\sharp}$: 
\begin{eqnarray}
\forall \lambda \geq 0, \ \mathbb{E} \left [ e^{\lambda \inf_{[0, +\infty[} V^{\sharp}} \right ] = \Psi_{V^{\sharp}}'(0) \frac{\lambda}{\Psi_{V^{\sharp}}(\lambda)}. \label{vuprestegrand1}
\end{eqnarray}
As we said in the beginning of this subsection, $\Psi_{V^{\sharp}}$ extends analytically on a neighborhood of $0$ and has a non null derivative at $0$. As a consequence, \eqref{vuprestegrand1} extends analytically too and $\inf_{[0, +\infty[} V^{\sharp}$ admits a Laplace transform on a neighborhood of $0$. We choose $c_1 > 0$ such that $-c_1$ is in this neighborhood and $c_2 := \mathbb{E} [ e^{-c_1 \inf_{[0, +\infty[} V^{\sharp}} ] / \mathbb{P} \left ( \inf_{[0, +\infty[} V^{\sharp} > -1 \right )$. {We then have 
\begin{align*}
\mathbb{P} \left( \inf_{[0, +\infty[} V_b^{\uparrow} < a \right) & \leq \mathbb{P} \left( e^{-c_1 \inf_{[0, +\infty[} V^{\sharp}} > e^{-c_1 (a-b)} \right) / \mathbb{P} \left( \inf_{[0, +\infty[} V^{\sharp} > -1 \right) \\
& \leq e^{-c_1 (b-a)} \mathbb{E} \left [ e^{-c_1 \inf_{[0, +\infty[} V^{\sharp}} \right ] / \mathbb{P} \left( \inf_{[0, +\infty[} V^{\sharp} > -1 \right) \\
& = c_2 e^{-c_1 (b-a)}, 
\end{align*}
where we have used Markov's inequality.} 

\end{proof}

\begin{lemme} \label{estpourvup}
There are two positive constants $c_1$ and $c_2$ such that for any $0< \alpha  <\omega $, $\eta \in ]0, 1[$ and all $h$ large enough, we have
\begin{eqnarray}
    \mathbb{P} \left( \tau ( V^{\uparrow}, \omega  h ) - \tau ( V^{\uparrow}, \alpha h ) \leq 1 \right)
& \leq &
    e^{- c_1 (\omega - \alpha) h }, 
    \label{03.10b}
\\
\mathbb{P} \left ( \int_0^{\tau ( V^{\uparrow}, h )} e^{V^{\uparrow}(u)} d u \geq e^{(1-\eta) h} \right )
& \geq  &
    1-e^{- c_2 \eta h},
\label{0MinorationAVallee}
\\
\mathbb{P} \left ( \int_0^{\tau ( V^{\uparrow}, h )} e^{V^{\uparrow}(u)} d u \leq e^{(1+\eta) h} \right )
& \geq  &
    1-e^{- h}.
\label{0MajorationAVallee}
\end{eqnarray}
\end{lemme}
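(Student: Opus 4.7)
The plan is to establish the three estimates sequentially, exploiting the strong Markov property of $V^{\uparrow}$ and the absolute continuity relation: for $x > 0$, the law of $V^{\uparrow}_x$ is that of $V^{\sharp}_x$ conditioned on staying positive, where $V^{\sharp}$ is the ``$V$ conditioned to drift to $+\infty$'' introduced at the beginning of this subsection, whose Laplace exponent $\Psi_{V^{\sharp}}$ is finite on $\{\mathcal{R}(z) > -\kappa\}$ by \eqref{eqintro1}.

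For \eqref{03.10b}, I apply the strong Markov property at $\tau(V^{\uparrow}, \alpha h)$. Since $V^{\uparrow}$ has no positive jumps, it reaches $\alpha h$ continuously, so the shifted process has the law of $V^{\uparrow}_{\alpha h}$ and the event in question becomes $\{\sup_{[0,1]} V^{\uparrow}_{\alpha h} \geq \omega h\}$. The absolute continuity with $V^{\sharp}_{\alpha h}$, combined with the fact that $\mathbb{P}(\inf_{[0,+\infty[} V^{\sharp} > -\alpha h)$ is bounded below by a positive constant for $h$ large enough (since $V^{\sharp}$ drifts to $+\infty$), gives
\[ \mathbb{P}\bigl(\sup_{[0,1]} V^{\uparrow}_{\alpha h} \geq \omega h \bigr) \leq C\, \mathbb{P}\bigl(\sup_{[0,1]} V^{\sharp} \geq (\omega - \alpha) h \bigr). \]
I then apply Doob's maximal inequality to the exponential martingale $\exp(V^{\sharp}(t) - t\Psi_{V^{\sharp}}(1))$ (integrable thanks to \eqref{eqintro1}) to produce the advertised exponential bound.

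For \eqref{0MinorationAVallee}, the natural idea is to restrict the integration to the sub-interval $[T_1, T_2]$ where $T_1 := \tau(V^{\uparrow}, (1 - \eta/2)h)$ and $T_2 := \tau(V^{\uparrow}, h)$. There the integral is at least $(T_2 - T_1)\,\exp(\inf_{[T_1,T_2]} V^{\uparrow})$. By \eqref{03.10b} applied with $\alpha := 1 - \eta/2$ and $\omega := 1$ one has $T_2 - T_1 > 1$ off an event of probability at most $e^{-c_1 \eta h/2}$; and by the strong Markov property at $T_1$ together with Lemma \ref{vuprestegrand} applied with $b := (1 - \eta/2)h$ and $a := (1-\eta)h$, one has $\inf_{[T_1,T_2]} V^{\uparrow} \geq (1-\eta)h$ with probability at least $1 - c' e^{-c''\eta h/2}$. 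Combining these two events and adjusting constants yields \eqref{0MinorationAVallee}.

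For \eqref{0MajorationAVallee}, spectral negativity of $V^{\uparrow}$ ensures $V^{\uparrow}(u) \leq h$ for all $u \leq \tau(V^{\uparrow}, h)$, so the integral is bounded above by $e^{h}\tau(V^{\uparrow}, h)$. It then suffices to apply Lemma \ref{vposlapltpsatt} with $r := e^{\eta h}$, yielding $\mathbb{P}(\tau(V^{\uparrow}, h) > e^{\eta h}) \leq e^{c_1 h - c_2 e^{\eta h}} \leq e^{-h}$ for $h$ large enough. The main obstacle is \eqref{03.10b}: one must ensure that the constant $C$ coming from the absolute continuity can be taken uniform over the admissible range of $\alpha$, which is handled by observing that $\alpha h$ tends to infinity for any fixed positive $\alpha$, so the conditioning denominator $\mathbb{P}(\inf_{[0,+\infty[} V^{\sharp} > -\alpha h)$ stabilizes at $1$.
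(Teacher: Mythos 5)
Your proposal is correct and follows essentially the same route as the paper: reduce \eqref{03.10b} via the strong Markov property and the relation between $V^{\uparrow}_{\alpha h}$ and $V^{\sharp}_{\alpha h}$ conditioned to stay positive, deduce \eqref{0MinorationAVallee} by combining \eqref{03.10b} (with $\alpha=1-\eta/2$, $\omega=1$) with Lemma \ref{vuprestegrand} (with $a=(1-\eta)h$, $b=(1-\eta/2)h$), and get \eqref{0MajorationAVallee} from the crude bound $e^h\tau(V^{\uparrow},h)$ and Lemma \ref{vposlapltpsatt}. The only (harmless) divergence is the final tail estimate in \eqref{03.10b}: you apply Doob's maximal inequality to the exponential martingale $\exp(V^{\sharp}(t)-t\Psi_{V^{\sharp}}(1))$, whereas the paper uses Chernoff's inequality together with the fact that $\tau(V^{\sharp},\cdot)$ is a subordinator with Laplace exponent $\Psi_{V^{\sharp}}^{-1}$; both give the required decay $e^{-c_1(\omega-\alpha)h}$.
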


\begin{proof}

%
From the Markov property at time $\tau ( V^{\uparrow}, \alpha h )$ and the fact that $V^{\uparrow}_{\alpha h}$ is equal in law to $V^{\sharp}_{\alpha h}$ conditioned in the usual {way} to stay positive, we have that $\mathbb{P} ( \tau ( V^{\uparrow}, \omega  h ) - \tau ( V^{\uparrow}, \alpha h ) \leq 1 )$ equals
\begin{align*}
\mathbb{P} \left( \tau ( V^{\uparrow}_{\alpha h}, \omega  h ) \leq 1 \right) & = \mathbb{P} \left( \tau ( V^{\sharp}_{\alpha h}, \omega  h ) \leq 1, \ \inf_{[0, +\infty[} V^{\sharp}_{\alpha h} > 0  \right) / \mathbb{P} \left( \inf_{[0, +\infty[} V^{\sharp}_{\alpha h} > 0 \right) \\
& = \mathbb{P} \left( \tau ( V^{\sharp}, (\omega - \alpha) h ) \leq 1, \ \inf_{[0, +\infty[} V^{\sharp} > -\alpha h  \right) / \mathbb{P} \left( \inf_{[0, +\infty[} V^{\sharp} > - \alpha h \right) \\
& \leq 2 \mathbb{P} \left( \tau ( V^{\sharp}, (\omega - \alpha) h ) \leq 1 \right). 
\end{align*}
This last inequality is true for $h$ large enough. Then, the latter equals 
\[ 2 \mathbb{P} \left( e^{-\tau ( V^{\sharp}, (\omega - \alpha) h )} \geq e^{-1} \right) \leq 2 e \times \mathbb{E} \left [ e^{- \tau ( V^{\sharp}, (\omega - \alpha) h )} \right ] = 2 e \times e^{- (\omega - \alpha) h \Phi_{V^{\sharp}}(1) }. \]
We have used Markov's inequality and the fact that $\tau ( V^{\sharp}, . )$ is a subordinator with Laplace exponent $\Phi_{V^{\sharp}} = \Psi_{V^{\sharp}}^{-1}$ (see the proof of Lemma \ref{vposlapltpsatt}). Note that $\Phi_{V^{\sharp}}(1) > 0$ (because $\tau ( V^{\sharp}, . )$ is a subordinator). This proves \eqref{03.10b}. 

{Because of the Markov property at time $\tau ( V^{\uparrow}, (1 - \eta/2)h )$, $V^{\uparrow}(.+\tau ( V^{\uparrow}, (1 - \eta/2)h ))$ is equal in law to $V^{\uparrow}_{(1 - \eta/2)h}$. We can thus apply Lemma \ref{vuprestegrand} with $a = (1 - \eta)h$, $b = (1 - \eta/2)h$ and we get $\mathbb{P} ( \inf_{[\tau ( V^{\uparrow}, (1 - \eta/2)h ), +\infty[} V^{\uparrow} \geq (1 - \eta)h ) \geq 1 - K_2 e^{-K_1 \eta h/2}$ for some positive constants $K_1$ and $K_2$. \eqref{03.10b} applied with $\alpha = 1-\eta / 2$, $\omega = 1$ yields $\mathbb{P} \left( \tau ( V^{\uparrow}, h ) - \tau ( V^{\uparrow}, (1 - \eta/2)h ) > 1 \right) \geq 1 - e^{- c_1 \eta h/2 }$ for large $h$. Combining all this we get \eqref{0MinorationAVallee} when $h$ is large. }

Then, we have obviously 
\[ \int_{0}^{\tau(V^{\uparrow}, h)} e^{V^{\uparrow}(u)} du \leq e^h \tau(V^{\uparrow}, h), \]
so
\[ \mathbb{P} \left ( \int_{0}^{\tau(V^{\uparrow}, h)} e^{V^{\uparrow}(u)} du > e^{(1+\eta)h} \right ) \leq \mathbb{P} \left ( \tau(V^{\uparrow}, h) > e^{\eta h} \right ) \leq \exp \left( K_3 h - K_4 e^{\eta h} \right ). \]
{In the last inequality we have used Lemma \ref{vposlapltpsatt} with $y=h$, $r=e^{\eta h}$, and $K_3$, $K_4$ are some positive constants. For $h$ large enough we have $\exp(K_3 h - K_4 e^{\eta h}) \leq e^{-h}$. This yields \eqref{0MajorationAVallee}. }

\end{proof}

\subsection{Estimates on $\hat V^{\uparrow}$}


The aim of this subsection is to get, for $\hat V^{\uparrow}$, estimates similar to those that we have proved for $V^{\uparrow}$ in Subsection \ref{estimvup}. We start with a generalization of Lemma \ref{vuprestegrand}. 

\begin{lemme} \label{vdownrestegrand}

For all $z \geq 0$ and $0 < a < b$ with $b>z$, we have, 
\[ \mathbb{P} \left ( \inf_{[\tau(\hat V_z^{\uparrow}, [b, +\infty[), +\infty[} \hat V_z^{\uparrow} < a \right ) \leq e^{-\kappa (b-a)} / (1-e^{-\kappa b}). \]

\end{lemme}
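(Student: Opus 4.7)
\textbf{Proof plan for Lemma \ref{vdownrestegrand}.} The plan is to condition on the value $y$ of $\hat V_z^{\uparrow}$ at its first passage above $b$, then apply the strong Markov property, reducing the question to bounding $\mathbb{P}(\inf_{[0,+\infty[}\hat V^{\uparrow}_y < a)$ uniformly in $y \geq b$.

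For the reduction, since $\hat V$ drifts to $+\infty$ (because $V$ drifts to $-\infty$), for any $y > 0$ the process $\hat V^{\uparrow}_y$ is just $\hat V_y$ conditioned in the usual sense to stay positive, as recalled in Subsection \ref{extrema}. Hence
\[
\mathbb{P}\bigl(\inf\nolimits_{[0,+\infty[} \hat V^{\uparrow}_y < a\bigr)
= \frac{\mathbb{P}(0 < \inf\nolimits_{[0,+\infty[} \hat V_y < a)}{\mathbb{P}(\inf\nolimits_{[0,+\infty[} \hat V_y > 0)}.
\]

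To evaluate the right-hand side, I would use that $\hat V \egloi -V$, so $\hat V$ is spectrally positive (it has no negative jumps, hence hits every lower level continuously) and $-\inf_{[0,+\infty[}\hat V \egloi \sup_{[0,+\infty[} V$, which, by the result recalled in Subsection \ref{factsandnotations}, is exponential with parameter $\kappa$. Thus for any $u > 0$, $\mathbb{P}_u(\inf \hat V > 0) = 1 - e^{-\kappa u}$, and for $y > a > 0$, the strong Markov property applied at the (continuous) hitting time of $a$ by $\hat V_y$ gives
\[
\mathbb{P}(0 < \inf\nolimits_{[0,+\infty[} \hat V_y < a) = \mathbb{P}_y(\tau(\hat V,]-\infty,a])<+\infty)\,\mathbb{P}_a(\inf \hat V > 0) = e^{-\kappa(y-a)}(1-e^{-\kappa a}).
\]
Combining these two identities yields $\mathbb{P}(\inf \hat V^{\uparrow}_y < a) = e^{-\kappa(y-a)}(1-e^{-\kappa a})/(1-e^{-\kappa y})$, and since the numerator is decreasing in $y$ while $1 - e^{-\kappa y}$ is increasing, for every $y \geq b$ this is bounded by $e^{-\kappa(b-a)}/(1-e^{-\kappa b})$.

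Finally, I would apply the strong Markov property for $\hat V_z^{\uparrow}$ at the stopping time $T := \tau(\hat V_z^{\uparrow},[b,+\infty[)$, which is a.s.\ finite because $\hat V^{\uparrow}_z$ inherits the drift to $+\infty$ from $\hat V$. Conditionally on $\hat V_z^{\uparrow}(T) = y \geq b$, the shifted process has the law of $\hat V^{\uparrow}_y$, so integrating the uniform bound above gives
\[
\mathbb{P}\bigl(\inf\nolimits_{[T,+\infty[} \hat V_z^{\uparrow} < a\bigr) \leq \frac{e^{-\kappa(b-a)}}{1-e^{-\kappa b}},
\]
which is the claim. The only delicate point is making sure the strong Markov property at $T$ is applied correctly (the post-$T$ process starts from a random level $\geq b$, possibly strictly above $b$ since $\hat V$ can jump upward), but the uniformity in $y \geq b$ of the intermediate bound absorbs this.
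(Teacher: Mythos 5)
Your proposal is correct and follows essentially the same route as the paper: apply the strong Markov property at $T=\tau(\hat V_z^{\uparrow},[b,+\infty[)$ (averaging over the possibly-above-$b$ entry level), represent $\hat V^{\uparrow}_y$ for $y\geq b$ as $\hat V_y$ conditioned in the usual sense to stay positive, and conclude from $-\inf_{[0,+\infty[}\hat V \egloi \sup_{[0,+\infty[}V \egloi \mathcal{E}(\kappa)$. The only cosmetic difference is that you evaluate $\mathbb{P}(\inf \hat V^{\uparrow}_y<a)$ exactly and then use monotonicity in $y$, whereas the paper bounds the numerator and denominator directly by their values at $y=b$; both yield the stated bound.
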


\begin{proof}

Let $T := \tau(\hat V_z^{\uparrow}, [b, +\infty[)$. $T$ is a stopping time for $\hat V_z^{\uparrow}$ so, if $U$ is a random variable having the same law as $\hat V^{\uparrow}_z(T)$, then $\hat V^{\uparrow}_z(T + .)$ is equal in law to $\hat V^{\uparrow}_U$, that is, the Markov process that conditionally on $\{ U=u \}$ has law $\hat V^{\uparrow}_u$. 
We have
\begin{eqnarray}
\mathbb{P} \left ( \inf_{[T, +\infty[} \hat V^{\uparrow}_z < a \right ) = \mathbb{P} \left ( \inf_{[0, +\infty[} \hat V^{\uparrow}_U < a \right ) = \int_{b}^{+\infty} \mathbb{P} \left ( \inf_{[0, +\infty[} \hat V^{\uparrow}_u < a \right ) \times \mathbb{P} \left ( U \in du \right ). \label{tlf3}
\end{eqnarray}
Note that we have used that almost surely, $U \geq b$. Now, since $\hat V^{\uparrow}_u$ is equal in law to $\hat V_u$ conditioned in the usual {way} to remain positive, we have for any $u \geq b$, 
\begin{align*}
\mathbb{P} \left ( \inf_{[0, +\infty[} \hat V^{\uparrow}_u < a \right ) & = \mathbb{P} \left ( 0 < \inf_{[0, +\infty[} \hat V_u < a \right ) / \mathbb{P} \left ( \inf_{[0, +\infty[} \hat V_u > 0 \right ) \nonumber \\
& \leq \mathbb{P} \left ( \inf_{[0, +\infty[} \hat V < a-u \right ) / \mathbb{P} \left ( \inf_{[0, +\infty[} \hat V > -u \right ) \nonumber \\
& \leq \mathbb{P} \left ( \inf_{[0, +\infty[} \hat V < a-b \right ) / \mathbb{P} \left ( \inf_{[0, +\infty[} \hat V > -b \right ). 
\end{align*}
For the last inequality we have used the fact that $a -u \leq a-b$ for the numerator and the fact that $u \geq b$ for the denominator. Since $\hat V$ is equal in law to $-V$, the above estimate can be re-written 
\[ \mathbb{P} \left ( \inf_{[0, +\infty[} \hat V^{\uparrow}_u < a \right ) \leq \mathbb{P} \left ( \sup_{[0, +\infty[} V > b-a \right ) / \mathbb{P} \left ( \sup_{[0, +\infty[} V < b \right ) = e^{-\kappa (b-a)} / (1-e^{-\kappa b}). \]
{The last equality comes from the fact that, as mentioned in Subsection \ref{factsandnotations}, $\sup_{[0, +\infty[} V$ follows an exponential distribution with parameter $\kappa$.} Putting the above expression into \eqref{tlf3} we get the result. 

\end{proof}

The proof of Lemma \ref{vposlapltpsatt} relies on the fact that the hitting times of $V^{\uparrow}$ are stochastically smaller than the hitting times of $V^{\sharp}$, for which we have precise estimates. Here, the procedure to link the hitting times of $\hat V^{\uparrow}$ and $\hat V$ is different. This is what we do now. 

Let us fix $x > 0$. Let $m_x$ be the point where the process $\hat V^{\uparrow}_x$ reaches its infimum, $m_x := \sup \{ s \geq 0, \ \hat V^{\uparrow}_x(s-) \wedge \hat V^{\uparrow}_x(s) = \inf_{[0, +\infty[} \hat V^{\uparrow}_x \}$. Note that from the absence of negative jumps, the infimum is always reached at least at $m_x-$ so $\hat V^{\uparrow}_x(m_x-) = \inf_{[0, +\infty[} \hat V^{\uparrow}_x$. The next lemma is contained in Theorem 25 of \cite{Doney}. 

\begin{lemme} \label{chaumont}

%
Assume that $V$ has unbounded variation, then 
\[ \left ( \hat V^{\uparrow}_x(m_x + s) - \hat V^{\uparrow}_x(m_x-), \ s \geq 0 \right ) \egloi \hat V^{\uparrow}. \]

\end{lemme}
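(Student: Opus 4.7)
The plan is to reduce the statement to a direct application of Theorem 24 of \cite{Doney}, which provides a Williams-type path decomposition at the overall infimum for a L\'evy process conditioned to stay positive.

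First I would verify that the hypotheses of Doney's theorem are met in our setting. Since $V$ is a spectrally negative L\'evy process that is not the opposite of a subordinator, the dual process $\hat{V}$ is spectrally positive (it has only positive jumps) and is not itself a subordinator. The assumption that $V$ has unbounded variations transfers to $\hat{V}$, so by Fact \ref{vnb} the half-line $]0,+\infty[$ is regular for $\hat{V}$. Combined with the fact that $\hat{V}$ drifts to $+\infty$ (since $V$ drifts to $-\infty$), this places us exactly in the framework of Chapter~8 of \cite{Doney}, where $\hat{V}^{\uparrow}_x$ is a well-defined Markov process for every $x \geq 0$ (and in particular $\hat{V}^{\uparrow} = \hat{V}^{\uparrow}_0$ makes sense, as already noted in Subsection~\ref{extrema}).

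Next I would check that $m_x$ is almost surely well-defined and unique as the time at which $\hat{V}^{\uparrow}_x$ attains its global infimum on $[0,+\infty[$. Since $\hat{V}^{\uparrow}_x$ starts from $x>0$, drifts to $+\infty$, and its excursions away from the past infimum cannot vanish at a dense set of points (by the regularity properties inherited from $\hat{V}$), the infimum is attained and the supremum in the definition of $m_x$ picks out the last such time; the absence of negative jumps forces $\hat{V}^{\uparrow}_x(m_x-) = \inf_{[0,+\infty[} \hat{V}^{\uparrow}_x$, as already observed in the paragraph preceding the lemma.

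Finally I would invoke Theorem 24 of \cite{Doney}, which in our notation asserts precisely that the post-infimum process $\bigl(\hat{V}^{\uparrow}_x(m_x+s) - \hat{V}^{\uparrow}_x(m_x-),\ s\geq 0\bigr)$ is independent of the pre-infimum fragment and has the law of $\hat{V}^{\uparrow}$; the sought equality in distribution is then immediate. The only potentially delicate point is the choice of centering by the left-limit $\hat{V}^{\uparrow}_x(m_x-)$ rather than by $\hat{V}^{\uparrow}_x(m_x)$, but this is exactly what Doney's formulation requires in the spectrally positive case, where a strictly positive jump may occur at $m_x$; this subtlety is the reason $m_x$ was defined through the left-limit from the outset.
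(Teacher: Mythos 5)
Your proposal is correct and follows exactly the paper's route: the paper gives no independent argument for this lemma, stating only that it "is contained in Theorem 24 of \cite{Doney}", after having verified in Subsection \ref{extrema} that the regularity and drift conditions needed for Doney's framework hold. Your additional remarks on the well-definedness of $m_x$ and on centering by the left-limit $\hat V^{\uparrow}_x(m_x-)$ match the paper's own comments preceding the lemma.
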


We can thus obtain $\hat V^{\uparrow}$ from $\hat V^{\uparrow}_x$, and $\hat V^{\uparrow}_x$ is equal in law to $\hat V_x$ conditioned in the usual {way} to remain positive. This allows us to link $\hat V^{\uparrow}$ and $\hat V$, so we are now able to prove our estimate :

\begin{lemme} \label{tpsatteintvdown}

Assume that $V$ has unbounded variation. There are three positive constants $c_0, c_1$ and $c_2$ such that
\[ \forall y > 1, \ r > 0, \ \mathbb{P} \left ( \tau(\hat V^{\uparrow}, [y, +\infty[) > r \right ) \leq c_0 \left ( e^{-\kappa y} \ + \ e^{c_1 y -c_2 r} \right ). \]

\end{lemme}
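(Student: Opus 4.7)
The plan is to apply Lemma \ref{chaumont} at a fixed starting point $x=1$, which converts the problem about $\hat V^{\uparrow}$ into one about $\hat V^{\uparrow}_1$, and then to unwind the definition of $\hat V^{\uparrow}_1$ as $\hat V_1$ conditioned in the usual sense to stay positive so that we can apply a direct Chernoff-type estimate on $\hat V$ itself.

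More precisely, by Lemma \ref{chaumont} applied with $x=1$, the process $(\hat V^{\uparrow}_1(m_1+s)-\hat V^{\uparrow}_1(m_1-))_{s\geq 0}$ has the same law as $\hat V^{\uparrow}$, so
\[ \tau(\hat V^{\uparrow},[y,+\infty[) \;\egloi\; \tau\!\left(\hat V^{\uparrow}_1,\,[y+\hat V^{\uparrow}_1(m_1-),+\infty[\right) - m_1. \]
Since $\hat V^{\uparrow}_1(m_1-)=\inf_{[0,+\infty[}\hat V^{\uparrow}_1 \in[0,1]$, the right hand side is dominated by $\tau(\hat V^{\uparrow}_1,[y+1,+\infty[)$. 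This yields
\[ \mathbb{P}\!\left(\tau(\hat V^{\uparrow},[y,+\infty[)>r\right) \;\leq\; \mathbb{P}\!\left(\tau(\hat V^{\uparrow}_1,[y+1,+\infty[)>r\right). \]

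Next, since $\hat V^{\uparrow}_1$ is $\hat V_1$ conditioned in the usual sense to remain positive, the latter probability equals $\mathbb{P}(\tau(\hat V_1,[y+1,+\infty[)>r,\ \inf \hat V_1 > 0)/\mathbb{P}(\inf \hat V_1>0)$, which is at most
\[ \frac{\mathbb{P}(\sup_{[0,r]}\hat V < y)}{\mathbb{P}(\inf \hat V > -1)} \;=\; \frac{\mathbb{P}(\sup_{[0,r]}\hat V < y)}{1-e^{-\kappa}}, \]
where the denominator is computed using $\hat V \egloi -V$ together with the exponential law of $\sup V$ with parameter $\kappa$ recalled in Subsection \ref{factsandnotations}.

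It remains to bound the numerator. Trivially $\mathbb{P}(\sup_{[0,r]}\hat V < y)\leq \mathbb{P}(\hat V(r)<y)$, and by Chernoff with any $\lambda\in\,]0,\kappa[\,$,
\[ \mathbb{P}(\hat V(r)<y) \;\leq\; e^{\lambda y}\,\mathbb{E}\!\left[e^{-\lambda \hat V(r)}\right] \;=\; e^{\lambda y + r\Psi_V(\lambda)}, \]
since $\hat V \egloi -V$. By definition of $\kappa$, $\Psi_V(\lambda)<0$ for $\lambda\in\,]0,\kappa[\,$, so setting $c_1:=\lambda$ and $c_2:=-\Psi_V(\lambda)>0$ gives $\mathbb{P}(\hat V(r)<y)\leq e^{c_1 y - c_2 r}$. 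Chaining the estimates yields $\mathbb{P}(\tau(\hat V^{\uparrow},[y,+\infty[)>r)\leq (1-e^{-\kappa})^{-1} e^{c_1 y - c_2 r}$, and setting $c_0:=(1-e^{-\kappa})^{-1}$ produces the desired bound (the $e^{-\kappa y}$ term is slack but costs nothing).

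The only subtle step is verifying the reduction to $\hat V^{\uparrow}_1$ via Lemma \ref{chaumont}; the rest is routine. The hypothesis of unbounded variations is used precisely so that Lemma \ref{chaumont} is available.
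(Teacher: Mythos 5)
There is a genuine gap at the reduction step, and it is precisely the step you flag as "the only subtle step." Lemma \ref{chaumont} gives that $\bigl(\hat V^{\uparrow}_1(m_1+s)-\hat V^{\uparrow}_1(m_1-)\bigr)_{s\geq 0} \egloi \hat V^{\uparrow}$, hence
\[ \tau(\hat V^{\uparrow},[y,+\infty[) \;\egloi\; \inf\bigl\{s\geq 0:\ \hat V^{\uparrow}_1(m_1+s)\geq y+\hat V^{\uparrow}_1(m_1-)\bigr\}, \]
which is the first passage of the \emph{post-minimum} process. Your identification of this with $\tau\bigl(\hat V^{\uparrow}_1,[y+\hat V^{\uparrow}_1(m_1-),+\infty[\bigr)-m_1$, and the ensuing domination by $\tau(\hat V^{\uparrow}_1,[y+1,+\infty[)$, both require that $\hat V^{\uparrow}_1$ first reaches the level $y+1$ \emph{after} time $m_1$. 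This can fail: $\hat V^{\uparrow}_1$ may climb above $y+1$, then descend to its global infimum (attained at $m_1$) and only much later return above $y+1$. On that event $\tau(\hat V^{\uparrow}_1,[y+1,+\infty[)<m_1$ (so your claimed equality in law would give a negative quantity), and the post-minimum passage time can be strictly larger than $\tau(\hat V^{\uparrow}_1,[y+1,+\infty[)$, so the inequality $\mathbb{P}(\tau(\hat V^{\uparrow},[y,+\infty[)>r)\leq\mathbb{P}(\tau(\hat V^{\uparrow}_1,[y+1,+\infty[)>r)$ is not justified.

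This is exactly why the statement carries the extra $e^{-\kappa y}$ term, which you dismiss as slack: the paper sets $T:=\tau(\hat V^{\uparrow}_x,[y+x,+\infty[)$ and observes that the desired domination $\tau(\hat V^{\uparrow},[y,+\infty[)\leq T$ holds on the event that $\hat V^{\uparrow}_x$ never returns below $x$ after $T$ (since then $m_x\leq T$), and it bounds the probability of the complementary event by $c_0 e^{-\kappa y}$ using Lemma \ref{vdownrestegrand}. The remainder of your argument — unwinding $\hat V^{\uparrow}_1$ as $\hat V_1$ conditioned to stay positive, dividing by $\mathbb{P}(\inf\hat V_1>0)=1-e^{-\kappa}$, and the Chernoff bound $\mathbb{P}(\sup_{[0,r]}\hat V<y)\leq e^{\lambda y+r\Psi_V(\lambda)}$ for $\lambda\in\,]0,\kappa[$ — matches the paper's treatment of the term $\mathbb{P}(T>r)$ and is correct; to complete the proof you only need to add the event-splitting at $T$ and the control of $\mathbb{P}\bigl(\inf_{[T,+\infty[}\hat V^{\uparrow}_x<x\bigr)$.
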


\begin{proof}

Let us fix $r > 0$, $y >1$ and choose $x \in ]0, 1[$ (for example $x :=1/2$). According to Lemma \ref{chaumont} we have
\begin{align*}
\mathbb{P} \left ( \tau(\hat V^{\uparrow}, [y, +\infty[) > r \right ) & = \mathbb{P} \left ( \tau(\hat V^{\uparrow}_x(m_x + .) - \hat V^{\uparrow}_x(m_x-), [y, +\infty[) > r \right ) \\
& \leq \mathbb{P} \left ( \tau(\hat V^{\uparrow}_x(m_x + .), [y+x, +\infty[) > r \right ), 
\end{align*}
because $\hat V^{\uparrow}_x(m_x-) \leq x$. Now, if $\hat V^{\uparrow}_x$ never reaches $[0, x]$ after the instant $T := \tau(\hat V^{\uparrow}_x, [y+x, +\infty[)$, then the minimum $m_x$ is reached before $T$. In that case 
\[ \tau \left ( \hat V^{\uparrow}_x(m_x + .), [y+x, +\infty[ \right ) = \tau \left ( \hat V^{\uparrow}_x, [y+x, +\infty[ \right ) - m_x = T - m_x \leq T. \]
We deduce that
\begin{eqnarray} \mathbb{P} \left ( \tau(\hat V^{\uparrow}, [y, +\infty[) > r \right ) \leq \mathbb{P} \left (T > r \right ) + \mathbb{P} \left ( \inf_{[T, +\infty[} \hat V^{\uparrow}_x < x \right ). \label{tlf1}
\end{eqnarray}

We now bound the two terms of the right hand side. For the first term, since $\hat V^{\uparrow}_x$ is equal in law to $\hat V_x$ conditioned in the usual {way} to remain positive, {we have
\begin{align*}
\mathbb{P} \left (T > r \right ) & = \mathbb{P} \left ( \tau(\hat V_x, [y+x, +\infty[) > r, \ \inf_{[0, +\infty[} \hat V_x > 0 \right ) / \mathbb{P} \left ( \inf_{[0, +\infty[} \hat V_x > 0 \right ) \\
& \leq \mathbb{P} \left ( \tau(\hat V_x, [y+x, +\infty[) > r \right ) / \mathbb{P} \left ( \inf_{[0, +\infty[} \hat V_x > 0 \right ) \\
& = \mathbb{P} \left ( \tau(\hat V, [y, +\infty[) > r \right ) / \mathbb{P} \left ( \inf_{[0, +\infty[} \hat V > -x \right ) \\
& = \mathbb{P} \left ( \tau(V, ]-\infty, -y]) > r \right ) / \mathbb{P} \left ( \sup_{[0, +\infty[} V < x \right ) \\
& = \mathbb{P} \left ( \tau(V, ]-\infty, -y]) > r \right ) / (1 - e^{-\kappa x}).
\end{align*}
We have used that $\hat V$ is equal in law to $-V$ and that, as mentioned in Subsection \ref{factsandnotations}, $\sup_{[0, +\infty[} V$ follows an exponential distribution with parameter $\kappa$. }
Combining with Lemma \ref{tpsatteinthatv} we get
\begin{eqnarray}
\mathbb{P} \left (T > r \right ) \leq c_0 e^{c_1 y -c_2 r}, \label{tlf2}
\end{eqnarray}
where $c_1$ and $c_2$ are the constants in the lemma and $c_0 := 1/ (1 - e^{-\kappa x})$. We now turn to the second term of \eqref{tlf1}. According to Lemma \ref{vdownrestegrand} applied with $z = x$, $a = x$ and $b = x + y$ we get 
\begin{eqnarray}
\mathbb{P} \left ( \inf_{[T, +\infty[} \hat V^{\uparrow}_x < x \right ) \leq e^{-\kappa y} /(1-e^{-\kappa (x+y)}) \leq e^{-\kappa y} /(1-e^{-\kappa x}) = c_0 e^{-\kappa y}. \label{tlf4}
\end{eqnarray}

Now, combining \eqref{tlf2} and \eqref{tlf4} with \eqref{tlf1} we get the result. 

\end{proof}

We can now use the previous estimate on the hitting times of $\hat V^{\uparrow}$ to bound an exponential functional of $\hat V^{\uparrow}$. 

\begin{lemme} \label{estpourvdown}
Assume that $V$ has unbounded variation and fix $\eta >0$. There is $h_{\eta} > 0$ such that 
\begin{eqnarray}
\forall h \geq h_{\eta}, \ \mathbb{P} \left ( \int_0^{\tau ( \hat V^{\uparrow}, [h, + \infty[ )} e^{\hat V^{\uparrow}(u)} d u \leq e^{(1+\eta) h} \right ) \geq 1-e^{- \kappa h /2}. \label{d0MajorationAVallee}
\end{eqnarray}
\end{lemme}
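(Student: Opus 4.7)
The plan is to mimic directly the strategy used for \eqref{0MajorationAVallee} in Lemma \ref{estpourvup}: replace the integrand by its deterministic upper bound on the interval of integration and then control the length of that interval by a tail estimate on the hitting time. The key observation is that, by the very definition of $\tau(\hat V^{\uparrow}, [h, +\infty[)$ as the first entry time into the set $[h, +\infty[$, we have $\hat V^{\uparrow}(u) < h$ for all $u \in [0, \tau(\hat V^{\uparrow}, [h, +\infty[))$, so that
\[ \int_0^{\tau(\hat V^{\uparrow}, [h, +\infty[)} e^{\hat V^{\uparrow}(u)}\, du \leq e^h\, \tau(\hat V^{\uparrow}, [h, +\infty[). \]

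Consequently, the event that the integral exceeds $e^{(1+\eta)h}$ is contained in the event $\{\tau(\hat V^{\uparrow}, [h, +\infty[) > e^{\eta h}\}$. I would then apply Lemma \ref{tpsatteintvdown} (this is where the unbounded variations hypothesis is used) with the choices $y = h$ and $r = e^{\eta h}$, which gives the bound
\[ \mathbb{P}\left(\tau(\hat V^{\uparrow}, [h, +\infty[) > e^{\eta h}\right) \leq c_0\left(e^{-\kappa h} + e^{c_1 h - c_2 e^{\eta h}}\right). \]

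For $h$ large enough, the second term decays doubly-exponentially in $h$ and is therefore negligible compared to the first, so the whole expression is bounded by $e^{-c h}$ for any fixed constant $c \in ]0, \kappa[$. This yields the announced estimate. There is no real obstacle here beyond checking that the trivial upper bound on the integrand is tight enough, which it is (we lose only a factor $e^h$, absorbed into the exponent $(1+\eta)h$), and that Lemma \ref{tpsatteintvdown} is applicable, which it is precisely under the assumption that $V$ has unbounded variations.
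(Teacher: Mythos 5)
Your proof is correct and follows essentially the same route as the paper: bound the integrand by $e^h$ on $[0, \tau(\hat V^{\uparrow}, [h,+\infty[))$ (valid since the process stays strictly below $h$ before its first entry into $[h,+\infty[$), reduce to the tail of the hitting time, and apply Lemma \ref{tpsatteintvdown} with $r = e^{\eta h}$, so that the resulting bound is dominated by the $e^{-\kappa h}$ term. No gaps.
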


\begin{proof}

We have 
\[ \int_{0}^{\tau(\hat V^{\uparrow}, [h, + \infty[)} e^{\hat V^{\uparrow}(u)} du \leq e^h \tau(\hat V^{\uparrow}, [h, + \infty[), \]
so
\[ \mathbb{P} \left ( \int_{0}^{\tau(\hat V^{\uparrow}, [h, + \infty[)} e^{\hat V^{\uparrow}(u)} du > e^{(1+\eta)h} \right ) \leq \mathbb{P} \left ( \tau(\hat V^{\uparrow}, [h, + \infty[) > e^{\eta h} \right ). \]
{According to Lemma \ref{tpsatteintvdown} applied with $y=h$, $r=e^{\eta h}$, the latter is less than $c_0 [ \exp(-\kappa h) + \exp(c_1 h -c_2 e^{\eta h}) ]$, where $c_0, c_1, c_2$ are the constants in the lemma. We have $c_0 [ \exp(-\kappa h) + \exp(c_1 h -c_2 e^{\eta h}) ] \leq e^{- \kappa h /2}$ whenever $h$ is large enough (how large depends on $\eta$). This yields \eqref{d0MajorationAVallee}. }

\end{proof}

Note that we do not prove an analogous of \eqref{0MinorationAVallee} for $\hat V^{\uparrow}$, even if it would have been needed to repeat readily the arguments of \cite{advech} in our context. This is because the existence of possibly large negative jumps for $V$ do not allow such an estimate to hold in general. Because of this, we have to take some precautions, and in particular, to prove some extra technical estimates in Subsection \ref{new technical}. 

\subsection{Estimates on the first ascend of $h$ from the minimum}

In order to bound the local time and the amount of time spent by the diffusion between two valleys, we have to study the expectation of some functionals of $V$ involving the first ascend of $h$ from the minimum. This subsection uses the notations and estimates of Subsection \ref{firstmin}. 

\begin{lemme} \label{esptpsmonte}

There is a positive constant $C$ such that for $h$ large enough, 
\[ \mathbb{E} \left [ \tau^*(h) \right ] \leq C e^{\kappa h}. \]

\end{lemme}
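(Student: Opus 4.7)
The plan is to decompose $\tau^*(h)$ into the time $m^*(h)$ spent before the overall minimum and the remaining excursion $\tau^*(h) - m^*(h)$, and to show that the first piece is $O(e^{\kappa h})$ in expectation while the second is only $O(h)$.

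First, I would invoke Lemma \ref{asympminsubexp} to write $m^*(h) \overset{\mathcal{L}}{=} S^{h,-}(T_h)$, with $T_h$ an independent exponential variable of parameter $\mathcal{N}(\mathcal{F}_{h,+})$. By independence of $S^{h,-}$ and $T_h$, Wald-type reasoning (or a direct computation by conditioning on $T_h$ and using the fact that $S^{h,-}$ has stationary independent increments with finite mean) gives $\mathbb{E}[S^{h,-}(T_h)] = \mathbb{E}[T_h]\,\mathbb{E}[S^{h,-}(1)]$. The L\'evy measure of $S^{h,-}$ is $\zeta\mathcal{N}(\mathcal{F}_{h,-}\cap\cdot)$, which is dominated by $\zeta\mathcal{N}$, the L\'evy measure of $S \egloi L^{-1}$. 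Hence
\[ \mathbb{E}[S^{h,-}(1)] \leq \mathbb{E}[S(1)] = \mathbb{E}[L^{-1}(1)], \]
which is finite by Lemma \ref{asympminesp}. Meanwhile $\mathbb{E}[T_h] = 1/\mathcal{N}(\mathcal{F}_{h,+})$, and by Lemma \ref{asympminparam} this is equivalent, as $h\to\infty$, to a constant multiple of $e^{\kappa h}$. Combining these two estimates, there exists $C_1>0$ such that $\mathbb{E}[m^*(h)] \leq C_1 e^{\kappa h}$ for $h$ large enough.

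For the second piece, Lemma \ref{asympminsubexp} also gives $\tau^*(h)-m^*(h) \overset{\mathcal{L}}{=} \tau(V^{\uparrow},h)$. By Lemma \ref{vposlapltpsatt}, there exist $c_1,c_2>0$ with $\mathbb{P}(\tau(V^{\uparrow},h) > r) \leq e^{c_1 h - c_2 r}$ for every $r\geq 0$. Combining with the trivial bound by $1$ and integrating the tail, this yields $\mathbb{E}[\tau(V^{\uparrow},h)] \leq c_1 h/c_2 + 1/c_2 = O(h)$, which is negligible compared to $e^{\kappa h}$. Adding the two contributions and adjusting $C$ if necessary, one obtains $\mathbb{E}[\tau^*(h)] \leq C e^{\kappa h}$ for $h$ large enough.

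There is no real obstacle: the main input is the asymptotic of the intensity $\mathcal{N}(\mathcal{F}_{h,+})$ given by Lemma \ref{asympminparam}, together with the integrability of $L^{-1}(1)$ given by Lemma \ref{asympminesp}. The only minor care is to justify the independence-based identity $\mathbb{E}[S^{h,-}(T_h)] = \mathbb{E}[T_h]\,\mathbb{E}[S^{h,-}(1)]$, but this follows by conditioning on $T_h$ and using that $\mathbb{E}[S^{h,-}(t)] = t\,\mathbb{E}[S^{h,-}(1)]$, since $S^{h,-}$ is a subordinator with finite mean.
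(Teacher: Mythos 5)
Your proof is correct and follows essentially the same route as the paper: the decomposition $\tau^*(h)=m^*(h)+(\tau^*(h)-m^*(h))$ via Lemma \ref{asympminsubexp}, the identity $\mathbb{E}[S^{h,-}(T_h)]=\mathbb{E}[T_h]\,\mathbb{E}[S^{h,-}(1)]$ combined with Lemmas \ref{asympminesp} and \ref{asympminparam} for the dominant term. The only (harmless) difference is in the negligible second term, where the paper bounds $\mathbb{E}[\tau(V^{\uparrow},h)]\leq\mathbb{E}[\tau(V^{\sharp},h)]=h\,\mathbb{E}[\tau(V^{\sharp},1)]$ via stochastic domination, while you integrate the tail bound of Lemma \ref{vposlapltpsatt}; both give $O(h)$.
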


\begin{proof}

We have $\tau^*(h) = m^*(h) + [\tau^*(h) - m^*(h)]$, and using Lemma \ref{asympminsubexp} we get 
\begin{eqnarray}
\mathbb{E} \left [ \tau^*(h) \right ] = \mathbb{E} \left [ S^{h, -}(T_h) \right ] + \mathbb{E} \left [ \tau(V^{\uparrow}, h) \right ]. \label{esptpsmonte1}
\end{eqnarray}

For the first term, note that $\mathbb{E} [ S^{h, -}(T_h) ] = \mathbb{E} [ S^{h, -}(1)] \times \mathbb{E} [ T_h ]$ since $S^{h, -}$ is a subordinator and is independent from $T_h$. Also, recall from Subsection \ref{firstmin} that $S^{h, -}(1) \leq S(1)$ where $S(1)$ has finite expectation according to Lemma \ref{asympminesp}. Then, $T_h$ follows an exponential distribution with parameter $\mathcal{N}(\mathcal{F}_{h, +}) \sim c e^{-\kappa h}$, according to Lemma \ref{asympminparam}, where $c$ is the positive constant obtained in the lemma. We thus get
\begin{eqnarray}
\mathbb{E} \left [ S^{h, -}(T_h) \right ] \leq \mathbb{E} [ S(1)] / \mathcal{N}(\mathcal{F}_{h, +}) \leq (2 \mathbb{E} [ S(1)] / c)e^{\kappa h}, \label{esptpsmonte2}
\end{eqnarray}
where the last inequality holds for $h$ large enough. 

For the second term in \eqref{esptpsmonte1} we use the first point of Lemma 2.6 in \cite{foncexpovech} which yields $\mathbb{E} [ \tau(V^{\uparrow}, h) ] \leq \mathbb{E} [ \tau(V^{\sharp}, h) ]$, {where $V^{\sharp}$ is \textit{$V$ conditioned to drift to $+\infty$}, as in Subsection \ref{extrema}.} Then, recall from the proof of Lemma \ref{vposlapltpsatt} that $\tau(V^{\sharp}, .)$ is a subordinator, in particular we have $\mathbb{E} [ \tau(V^{\sharp}, h) ] = h \mathbb{E} [ \tau(V^{\sharp}, 1) ]$. Then, since $V^{\sharp}$ drifts to $+\infty$, Proposition 17(ii) of \cite{Bertoin} ensures that $\mathbb{E} [ \tau(V^{\sharp}, 1) ] < +\infty$. Putting all this together we obtain 
\begin{eqnarray}
\mathbb{E} \left [ \tau(V^{\uparrow}, h) \right ] \leq \mathbb{E} \left [ \tau(V^{\sharp}, h) \right ] = h \mathbb{E} \left [ \tau(V^{\sharp}, 1) \right ] < +\infty. \label{esptpsmonte3}
\end{eqnarray}
Combining \eqref{esptpsmonte1}, \eqref{esptpsmonte2} and \eqref{esptpsmonte3} we get the result. 

\end{proof}

\begin{lemme} \label{espintmonte}

There is a positive constant $C$ such that
\[ \forall h > 0, \ \mathbb{E} \left [ \int_0^{\tau^*(h)} e^{V(u)} du \right ] \leq C e^{(1-\kappa) h}. \]

\end{lemme}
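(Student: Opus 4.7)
The plan is to split at the minimum point $m^*(h)$: writing $\int_0^{\tau^*(h)} e^{V(u)} du = \mathcal{J}_1 + \mathcal{J}_2$ with $\mathcal{J}_1 := \int_0^{m^*(h)} e^{V(u)} du$ and $\mathcal{J}_2 := \int_{m^*(h)}^{\tau^*(h)} e^{V(u)} du$, I would analyze each part using the excursion structure developed in Subsection \ref{firstmin}.

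For $\mathcal{J}_2$, the second assertion of Lemma \ref{asympminsubexp} says that the post-minimum trajectory $(V(m^*(h)+r)-V(m^*(h)))_{0 \leq r \leq \tau^*(h)-m^*(h)}$ is equal in law to $(V^{\uparrow}(r))_{0 \leq r \leq \tau(V^{\uparrow}, h)}$; as the last (``large'') excursion, it is moreover independent of $V(m^*(h)) = -I(T_h-)$, which is measurable with respect to the preceding small excursions. Hence $\mathbb{E}[\mathcal{J}_2] = \mathbb{E}[e^{V(m^*(h))}] \cdot \mathbb{E}[\int_0^{\tau(V^{\uparrow}, h)} e^{V^{\uparrow}(r)} dr]$. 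Optional stopping for the bounded (on $[0,\tau^*(h)]$) martingale $e^{\kappa V(\cdot)}$ at $\tau^*(h)$ gives $\mathbb{E}[e^{\kappa V(\tau^*(h))}] = 1$, hence $\mathbb{E}[e^{\kappa V(m^*(h))}] = e^{-\kappa h}$ via $V(\tau^*(h)) = V(m^*(h))+h$. Since $V(m^*(h)) \leq 0$ and $\kappa < 1$, this forces $\mathbb{E}[e^{V(m^*(h))}] \leq \mathbb{E}[e^{\kappa V(m^*(h))}] = e^{-\kappa h}$. A layering / occupation-time argument (the potential density of $V^{\uparrow}$ is uniformly bounded on levels $\geq 1$, while the time spent below level $1$ has finite expectation) yields $\mathbb{E}[\int_0^{\tau(V^{\uparrow}, h)} e^{V^{\uparrow}(r)} dr] \leq Ce^h$, so $\mathbb{E}[\mathcal{J}_2] \leq Ce^{(1-\kappa)h}$.

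For $\mathcal{J}_1$, I would use the Poisson structure of the excursions $(e_s)$ of $V - \underline{V}$ above $0$ together with the fact that $T_h \sim \mathrm{Exp}(\mathcal{N}(\mathcal{F}_{h,+}))$, the local time at $\tau^*(h)$, is independent of the small-excursion PPP. With $I^{h,-}$ denoting the subordinator of descending-ladder jumps produced by small excursions only,
\[
\mathcal{J}_1 = \sum_{s < T_h} e^{-I^{h,-}(s-)} \int_0^{\zeta(e_s)} e^{e_s(r)} dr,
\]
and the compensation formula combined with $\mathbb{E}[e^{-I^{h,-}(s-)}] = e^{-s\Phi^{h,-}(1)}$ yields
\[
\mathbb{E}[\mathcal{J}_1] = \frac{A_h}{\mathcal{N}(\mathcal{F}_{h,+}) + \Phi^{h,-}(1)}, \quad A_h := \int_{\mathcal{F}_{h,-}} \int_0^{\zeta(\xi)} e^{\xi(r)} dr \, \mathcal{N}(d\xi),
\]
where $\Phi^{h,-}$ is the Laplace exponent of $I^{h,-}$ at $1$. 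The denominator stays bounded below as $h \to \infty$, since $\mathcal{N}(\mathcal{F}_{h,+}) \to 0$ while $\Phi^{h,-}(1) \to \Phi(1) > 0$.

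The main obstacle is to show $A_h \leq Ce^{(1-\kappa)h}$. I would obtain it by decomposing over the excursion supremum: using the tail estimate $\mathcal{N}(\sup\xi \geq y) \leq Ce^{-\kappa y}$ (Lemma \ref{asympminparam}), together with the fact (from Proposition \ref{Fact_Williams}) that the ascending portion of an excursion reaching height $y$ is distributed as $V^{\uparrow}$ killed at $\tau(V^{\uparrow}, y)$ with expected area $\leq Ce^y$, and a symmetric control of the descending portion via the time-reversed (dual) decomposition, one gets $A_h \lesssim \int_0^h e^y \cdot e^{-\kappa y} dy \lesssim e^{(1-\kappa)h}$. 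Summing the bounds on $\mathbb{E}[\mathcal{J}_1]$ and $\mathbb{E}[\mathcal{J}_2]$ yields the lemma.
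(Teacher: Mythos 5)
Your reduction of $\mathbb{E}[\mathcal{J}_2]$ to $\mathbb{E}[e^{V(m^*(h))}]\cdot\mathbb{E}\big[\int_0^{\tau(V^{\uparrow},h)}e^{V^{\uparrow}(r)}dr\big]$ is correct, and the martingale step giving $\mathbb{E}[e^{\kappa V(m^*(h))}]=e^{-\kappa h}$, hence $\mathbb{E}[e^{V(m^*(h))}]\leq e^{-\kappa h}$ for $\kappa\leq 1$, is clean. But the two quantitative inputs you then rely on are not established. First, $\mathbb{E}\big[\int_0^{\tau(V^{\uparrow},h)}e^{V^{\uparrow}(r)}dr\big]\leq Ce^{h}$ rests on a uniform bound for the potential density of $V^{\uparrow}$ that is proved nowhere here; what the available estimates give (Lemma \ref{vposlapltpsatt}, or $\mathbb{E}[\tau(V^{\uparrow},h)]\leq Ch$ as in the proof of Lemma \ref{esptpsmonte}) is only $Che^{h}$, and that extra factor $h$ propagates to $Che^{(1-\kappa)h}$, strictly weaker than the stated bound. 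Second, and more seriously, your bound $A_h\leq Ce^{(1-\kappa)h}$ requires a moment bound on the $e^{\xi}$-weighted area of the \emph{descending} half of a generic excursion of $V-\underline V$. The ``symmetric control via time reversal'' points to $\hat V^{\uparrow}$, which is exactly where the paper warns the dual estimates break down: because of the unbounded negative jumps, the analogue of \eqref{0MinorationAVallee} fails for $\hat V^{\uparrow}$, and Lemma \ref{tpsatteintvdown} carries an irreducible $e^{-\kappa y}$ term in the tail, so it does not even yield $\mathbb{E}\big[\tau(\hat V^{\uparrow},[y,+\infty[)\big]<+\infty$, let alone the refined area estimate you need. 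The remaining assertions (the denominator of $\mathbb{E}[\mathcal{J}_1]$ bounded below, $\Phi^{h,-}(1)\to\Phi(1)>0$) are plausible but also unproved.

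The paper's own proof bypasses all of this in three lines: on $[0,\tau^*(h)]$ one has $V\leq h$, so $\int_0^{\tau^*(h)}e^{V(u)}du\leq\int_{-\infty}^{h}e^{x}\,\mathcal{L}_V(+\infty,x)\,dx$ where $\mathcal{L}_V$ is the occupation density of $V$; the strong Markov property and $\mathbb{P}(\tau(V,x)<+\infty)\leq e^{-\kappa x}$ for $x>0$ give $\mathbb{E}[\mathcal{L}_V(+\infty,x)]\leq e^{-\kappa\max(x,0)}\,\mathbb{E}[\mathcal{L}_V(+\infty,0)]$ with $\mathbb{E}[\mathcal{L}_V(+\infty,0)]<+\infty$, and integrating yields $Ce^{(1-\kappa)h}$ directly (this is also where $\kappa<1$ is used). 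Note that this occupation-density trick is precisely the most economical way to repair the gaps in your plan — which is a strong hint that it should be applied to $V$ itself from the outset rather than separately to $V^{\uparrow}$ and to the excursion halves.
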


\begin{proof}

Since $V$ is spectrally negative, single points are not essentially polar for $V$. As a consequence, according to Theorem V.1 of \cite{Bertoin}, there is $\mathcal{L}_V$, a local time that satisfies the density of occupations formula for $V$: almost surely, for all mesurable function $f$ and $t > 0$ we have $\int_0^t f(V(s)) ds = \int_{\mathbb{R}} f(x) \mathcal{L}_V(t, x) dx$. Therefore, as in the proof of the majoration of $\beta_0(h)$ in Lemma 3.6 of \cite{AndDev} we get: 
\[  \mathbb{E} \left [ \int_0^{\tau^*(h)} e^{V(u)} du \right ] \leq \int_{-\infty}^h e^x \mathbb{E} \left [ \mathcal{L}_V(+\infty, x) \right ] dx. \]
Then, according to the Markov property, 
\[ \forall x \in \mathbb{R}, \ \mathbb{E} \left [ \mathcal{L}_V(+\infty, x) \right ] {= \mathbb{E} \left [ \mathds{1}_{\tau(V,x) < +\infty} \mathcal{L}_{V^{\tau(V,x)}}(+\infty, 0) \right ]} = \mathbb{P} \left ( \tau(V,x) < +\infty \right ) \times \mathbb{E} \left [ \mathcal{L}_V(+\infty, 0) \right ]. \]
Note that 
\[ \forall x \in \mathbb{R}, \ \mathbb{P} \left ( \tau(V,x) < +\infty \right ) \leq \mathbb{P} \left ( \sup_{[0, +\infty[} V \geq x \right ) = \mathds{1}_{x \leq 0} + e^{-\kappa x}\mathds{1}_{x > 0}, \]
{where we have used that, as mentioned in Subsection \ref{factsandnotations}, $\sup_{[0, +\infty[} V$ follows an exponential distribution with parameter $\kappa$.} We thus get
\begin{align*}
\mathbb{E} \left [ \int_0^{\tau^*(h)} e^{V(u)} du \right ] & \leq {\mathbb{E} \left [ \mathcal{L}_V(+\infty, 0) \right ]} \times \int_{-\infty}^h e^x (\mathds{1}_{x \leq 0} + e^{-\kappa x}\mathds{1}_{x > 0}) dx \\
& = \frac{1}{1-\kappa} \mathbb{E} \left [ \mathcal{L}_V(+\infty, 0) \right ] \times \left ( e^{(1-\kappa)h} - \kappa \right ). 
\end{align*}
Then, considering the Poisson point process of excursions of $V$ away from $0$ associated with the local time $\mathcal{L}_V(., 0)$ (we denote $\eta_V(.)$ the associated excursion measure), we have that $\mathcal{L}_V(+\infty, 0)$ is only the time when occurs the infinite excursion of $V$, and this follows an exponential distribution with parameter $\eta_V(\xi, \ \zeta(\xi) = +\infty) > 0$. As a consequence, $\mathbb{E} [ \mathcal{L}_V(+\infty, 0) ] < +\infty$ and the result follows. 

\end{proof}

\subsection{Estimates on the valleys} \label{new technical}

We now use the previous results to prove some estimates on the standard valleys. 

\begin{lemme} \label{trucavecnouvelledef}

Let $\delta$ be as defined in Subsection \ref{coin} and Section \ref{genedesres}. 
For all $h$ large enough we have
\[ \forall j \geq 1, \ \mathbb{P} \left ( \int_{\tilde L_{j-1}}^{\tilde m_{j}} e^{\tilde V^{(j)}(u)} du \geq e^{e^{(1-2\delta)\kappa h}} \right ) \geq 1 -e^{-\kappa \delta h /4}. \]

\end{lemme}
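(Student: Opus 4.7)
The plan is to restrict the integral to the unit sub-interval $[\tilde L_{j-1},\tilde L_{j-1}+1]$ and to show that, on an event of probability at least $1-e^{-\kappa\delta h/4}$, the integrand exceeds $e^{e^{(1-2\delta)\kappa h}}$ uniformly there. The deterministic input comes straight from the construction in Subsection~\ref{coin}: the defining properties of $\tilde L_j^{\sharp}$ and $\tilde m_j$ force
\[
V(\tilde m_j)\leq V(\tilde L_j^{\sharp})\leq V(\tilde L_{j-1})-e^{(1-\delta)\kappa h}\quad\text{almost surely},
\]
whence $V(\tilde L_{j-1})-V(\tilde m_j)\geq e^{(1-\delta)\kappa h}$.

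Set $y_h:=e^{(1-\delta)\kappa h}-e^{(1-2\delta)\kappa h}$ and introduce the event
\[
E:=\Bigl\{\inf_{s\in[0,1]}\bigl(V(\tilde L_{j-1}+s)-V(\tilde L_{j-1})\bigr)\geq -y_h\Bigr\}.
\]
On $E$ the process has not dropped below $V(\tilde L_{j-1})-e^{(1-\delta)\kappa h}$ by time $\tilde L_{j-1}+1$, so $\tilde L_j^{\sharp}>\tilde L_{j-1}+1$ and a fortiori $\tilde m_j>\tilde L_{j-1}+1$; combining this with the deterministic lower bound on $V(\tilde L_{j-1})-V(\tilde m_j)$ gives
\[
\tilde V^{(j)}(\tilde L_{j-1}+s)=V(\tilde L_{j-1}+s)-V(\tilde m_j)\geq V(\tilde L_{j-1})-y_h-V(\tilde m_j)\geq e^{(1-2\delta)\kappa h}
\]
for every $s\in[0,1]$, which forces the integral to exceed $e^{e^{(1-2\delta)\kappa h}}$.

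What remains is to estimate $\mathbb{P}(E^c)$. By the strong Markov property applied at the stopping time $\tilde L_{j-1}$, the shifted process $V(\tilde L_{j-1}+\cdot)-V(\tilde L_{j-1})$ has the law of $V$, so $\mathbb{P}(E^c)=\mathbb{P}(\inf_{[0,1]}V<-y_h)\leq\mathbb{P}(\sup_{[0,1]}|V|\geq y_h)$. I would then center $V$ into the martingale $M(t):=V(t)-t\,\mathbb{E}[V(1)]$, which belongs to $L^p$ at time $1$ by the standing hypothesis $V(1)\in L^p$ of Theorem~\ref{cvdutl}, and invoke Doob's $L^p$ maximal inequality to get $\mathbb{E}[\sup_{[0,1]}|V|^p]<+\infty$. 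Markov's inequality then yields $\mathbb{P}(E^c)\leq C\,y_h^{-p}=O(e^{-p(1-\delta)\kappa h})$ as $h\to+\infty$, and since $p>1$ while $\delta$ is small enough that $p(1-\delta)>\delta/4$, this upper bound is dominated by $e^{-\kappa\delta h/4}$ for all $h$ large.

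The main obstacle is that, in contrast with the drifted Brownian setting, $V$ need not possess any negative exponential moment because its L\'evy measure may have only polynomial tails; this disqualifies the Chebyshev-type arguments powering Lemmas~\ref{tpsatteinthatv} and \ref{monteavantdescente}. The $L^p$ moment hypothesis of Theorem~\ref{cvdutl}, combined with Doob's $L^p$ inequality applied to the centered martingale, is precisely what converts the polynomial tail of the L\'evy measure into the polynomial-but-sufficient control on $\mathbb{P}(\inf_{[0,1]}V\leq -y_h)$ required above.
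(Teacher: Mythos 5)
Your proof is correct, and it takes a genuinely different and in fact shorter route than the paper's. The paper never isolates the unit interval $[\tilde L_{j-1},\tilde L_{j-1}+1]$: it reuses the descent decomposition from the proof of Lemma \ref{monteavantdescente} (steps in which the running minimum drops by $\delta h/2$), shows that with probability at least $1-e^{-h}$ one of the first $\lfloor e^{(1-2\delta)\kappa h}\rfloor$ descent intervals has length at least $1$, and that with probability at least $1-e^{-\kappa\delta h/3}$ at least $e^{(1-2\delta)\kappa h}$ further descents must still occur before $\tilde m_j$, so that this length-one interval sits at height at least $e^{(1-2\delta)\kappa h}$ above $V(\tilde m_j)$. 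Your argument replaces all of this by the deterministic gap $V(\tilde L_{j-1})-V(\tilde m_j)\geq e^{(1-\delta)\kappa h}$ built into the definition of $\tilde L_j^{\sharp}$ (correctly justified: $V(\tilde L_j^{\sharp})\leq V(\tilde L_{j-1})-e^{(1-\delta)\kappa h}$ by right-continuity and absence of positive jumps, and $V(\tilde m_j)\leq V(\tilde L_j^{\sharp})$ by definition of $\tilde m_j$), plus a maximal inequality over one unit of time; both the localization of the good interval and the height estimate are cleaner. The one thing your route costs is an integrability hypothesis: centering $V$ into a martingale and applying Doob requires $\mathbb{E}\left[|V(1)|\right]<+\infty$, which for a spectrally negative L\'evy process drifting to $-\infty$ is not automatic ($\Psi_V'(0+)$ may equal $-\infty$), whereas the paper's descent-counting proof uses no moment assumption and the lemma is stated in Section \ref{estimates} without one. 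Since the lemma is only invoked in Section \ref{genedesres}, where $V(1)\in L^p$ for some $p>1$ is a standing hypothesis, this does not affect any result of the paper; note moreover that the weak-type $L^1$ maximal inequality already gives $\mathbb{P}(E^c)=O(e^{-(1-\delta)\kappa h})$, amply below $e^{-\kappa\delta h/4}$, so your argument really only needs a finite first moment rather than $p>1$.
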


\begin{proof}

Thanks to Remark \ref{iid} we only need to prove the result for $j=1$. Recall the sequence of stopping times $(T_i)_{i \geq 0}$ from the proof of Lemma \ref{monteavantdescente} in which we set $a=h$, $b=e^{(1-\delta)\kappa h}$ and $\eta = \delta /2$. For $h \geq 1$ let us define: 
\[ p(h) := \mathbb{P} \left ( T_1 \geq 1 \right ) = \mathbb{P} \left ( \min \left \{ \tau(V-\underline{V}, h), \tau \left (V, \left ]-\infty, -\delta h/2 \right ] \right ) \right \} \geq 1 \right ) \geq p(1) > 0. \]
Thanks to the Markov property, the sequence $(T_{i} - T_{i-1})_{i \geq 1}$ is \textit{iid}. The probability that $T_{i} - T_{i-1} < 1$ for all $i \leq \lfloor e^{(1-2 \delta)h} \rfloor$ is then 
\[ (1-p(h))^{\lfloor e^{(1-2 \delta) \kappa h} \rfloor} \leq (1-p(1))^{\lfloor e^{(1-2 \delta) \kappa h} \rfloor} \leq e^{-h}, \]
where the last inequality holds for $h$ large enough. {As a consequence, when $h$ is large enough, the probability that there exists an index $1 \leq i \leq \lfloor e^{(1-2 \delta)h} \rfloor$ such that $T_{i} - T_{i-1} \geq 1$ is greater than $1-e^{-h}$. We have thus proved that for all $h$ large enough 
\begin{eqnarray}
\mathbb{P} \left ( T_{\lfloor e^{(1-2 \delta)h} \rfloor} > 1 \right ) \geq 1-e^{-h}. \label{trucavecnouvelledef1}
\end{eqnarray}

We now explain why the event $\{ \inf_{[0, T_{\lfloor e^{(1-2 \delta)h} \rfloor}[} \tilde V^{(1)} \geq e^{(1-2 \delta) \kappa h}, T_{\lfloor e^{(1-2 \delta)h} \rfloor} \leq \tilde m_1 \}$ has a large probability. Let $I_0 := \min \{ i \geq 1, \ T_i = \tau(V^{T_{i-1}} - \underline{V}^{T_{i-1}}, h) \}$. It is not difficult to see that $T_{I_0} = \tau(V - \underline{V}, h)$. We know from the proof of Lemma \ref{monteavantdescente} that 
\begin{align}
\mathbb{P} \left ( I_0 > 2 e^{(1-\delta) \kappa h}/ \delta h \right ) & \geq 1- (b/\eta a +1) e^{-\kappa (1-\eta) a} \nonumber \\
& = 1- (1+ 2e^{(1-\delta)\kappa h}/\delta h) e^{-(1-\delta/2)\kappa h} \nonumber \\
& \geq 1 -e^{-\kappa \delta h /3}, \label{trucavecnouvelledef2}
\end{align}
where the last inequality holds for $h$ is large enough. 


Let us assume that $h$ is large enough so that $2 e^{(1-\delta) \kappa h}/ \delta h - \lfloor e^{(1-2 \delta) \kappa h} \rfloor \geq e^{(1-2 \delta) \kappa h}$ and $\delta h/2 \geq 1$. Note that 
\[ T_{I_0 - 1} \leq \inf \{ x \geq 0, \  V(x) = \underline{V} (\tau(V - \underline{V}, h)) \} \leq \tilde m_1, \]
where the last inequality comes from the definition of $\tilde m_1$ {(in Subsection \ref{coin}) and from the fact that, by definition, $\tilde L_0 = 0$}. As a consequence $\{I_0 > 2 e^{(1-\delta) \kappa h}/ \delta h\} \subset \{ T_{\lfloor e^{(1-2 \delta)h} \rfloor} \leq \tilde m_1 \}$. Then, from the definition of $(T_i)_{i \geq 0}$ and $I_0$ we have 
\[ \forall i < I_0, \ \inf_{[0, T_{i}[} \tilde V^{(1)} \geq (I_0 - 1 - i) \delta h/2 \geq I_0 - 1 - i. \]
As a consequence $\{I_0 > 2 e^{(1-\delta) \kappa h}/ \delta h\} \subset \{ \inf_{[0, T_{\lfloor e^{(1-2 \delta)h} \rfloor}[} \tilde V^{(1)} \geq e^{(1-2 \delta) \kappa h} \}$. Combining all this with \eqref{trucavecnouvelledef2} we get 
\begin{align}
\mathbb{P} \left ( \inf_{[0, T_{\lfloor e^{(1-2 \delta)h} \rfloor}[} \tilde V^{(1)} \geq e^{(1-2 \delta) \kappa h}, T_{\lfloor e^{(1-2 \delta)h} \rfloor} \leq \tilde m_1 \right ) \geq \mathbb{P} \left ( I_0 > 2 e^{(1-\delta) \kappa h}/ \delta h \right ) \geq 1 -e^{-\kappa \delta h /3}. \label{trucavecnouvelledef3}
\end{align}


{Using that $\tilde L_0 = 0$ and} putting together \eqref{trucavecnouvelledef1} and \eqref{trucavecnouvelledef3} we obtain that the following is true with probability greater than $1-e^{-h}-e^{-\kappa \delta h /3}$ (for $h$ large enough): 
\[ \int_{\tilde L_{0}}^{\tilde m_1} e^{\tilde V^{(1)}(u)} du = \int_{0}^{\tilde m_1} e^{\tilde V^{(1)}(u)} du \geq \int_{0}^{T_{\lfloor e^{(1-2 \delta)h} \rfloor}} e^{\tilde V^{(1)}(u)} du \geq e^{e^{(1-2 \delta)\kappa h}} \times T_{\lfloor e^{(1-2 \delta)h} \rfloor} \geq e^{e^{(1-2 \delta)\kappa h}}. \]
The result follows.} 

\end{proof}

\begin{lemme} \label{minoprevalley}

Fix $0 < \eta < \alpha \leq 1$. For $h$ large enough we have
\[ \forall j \geq 1, \ \mathbb{P} \left ( \tilde{L}_{j}^{\sharp} < \tilde \tau_j^-(\alpha h), \ \inf_{[\tilde{L}_{j}^{\sharp}, \tilde \tau_j^-(\alpha h)]} \tilde V^{(j)} > \left ( \alpha - \eta \right )h\right ) \geq 1 - e^{-\kappa \eta h /3}. \]
\end{lemme}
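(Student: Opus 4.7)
The plan is to reduce the complement event $A^c$ to something controllable via Lemma \ref{monteavantdescente}. By the iid structure of standard valleys (Remark \ref{iid}), it suffices to prove the bound for $j=1$. The first step is to apply the strong Markov property at $\tilde L_j^{\sharp}$: let $V'' := V(\tilde L_j^{\sharp}+\cdot) - V(\tilde L_j^{\sharp})$, a fresh copy of $V$. Set $b := V(\tilde L_j^{\sharp}) - V(\tilde m_j) = -\underline{V''}(g) \geq 0$, where $g := \tilde m_j - \tilde L_j^{\sharp}$ is the left endpoint (relative to $\tilde L_j^{\sharp}$) of the first excursion of $V''-\underline{V''}$ of height $\geq h$; the quantity $b$ measures the additional descent from the big-jump time down to the valley bottom.

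Next I write $A^c = E_{\mathrm{desc}} \cup E_{\mathrm{osc}}$, where $E_{\mathrm{desc}} := \{b \leq (\alpha-\eta)h\}$ (insufficient additional descent) and $E_{\mathrm{osc}} := A^c \setminus E_{\mathrm{desc}}$. The inclusion $E_{\mathrm{desc}} \subseteq A^c$ is immediate: on this event $V^{(j)}(\tilde L_j^{\sharp}) = b \leq (\alpha-\eta)h$, so whenever $\tilde L_j^{\sharp} < \tilde\tau_j^-(\alpha h)$ the infimum of $V^{(j)}$ on $[\tilde L_j^{\sharp}, \tilde\tau_j^-(\alpha h)]$ is at most $(\alpha-\eta)h$, making $A$ fail; when the interval is empty, $A$ fails by its first condition.

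The event $E_{\mathrm{desc}}$ rewrites in $V''$-coordinates as $\{\tau(V''-\underline{V''},h) < \tau(\underline{V''},]-\infty,-(\alpha-\eta)h])\}$: the reflected process reaches height $h$ (triggering the ``big excursion'' of the valley) before $\underline{V''}$ descends to $-(\alpha-\eta)h$. Applying Lemma \ref{monteavantdescente} with $a := h$, $b_L := (\alpha-\eta)h$, and the lemma's parameter $\eta' := \eta/3$ yields
\[ \mathbb{P}(E_{\mathrm{desc}}) \leq \Bigl(\tfrac{3(\alpha-\eta)}{\eta}+1\Bigr) e^{-\kappa(1-\eta/3)h} \leq \tfrac{1}{2}\, e^{-\kappa\eta h/3} \]
for $h$ large enough, since $1-\eta/3 > \eta/3$ (as $\eta<\alpha<1$) absorbs the polynomial prefactor.

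The hardest step will be bounding $\mathbb{P}(E_{\mathrm{osc}})$ by the remaining $\tfrac{1}{2}e^{-\kappa \eta h/3}$. On $E_{\mathrm{osc}}$ the descent $b > (\alpha-\eta)h$ is large enough, yet $V$ must oscillate: it must dip below $V(\tilde m_j)+(\alpha-\eta)h$ somewhere on $[\tilde L_j^{\sharp},\tilde\tau_j^-(\alpha h)]$ and then rebound to at least $V(\tilde m_j)+\alpha h$ at $\tilde\tau_j^-(\alpha h)$. The intended tool is the time-reversal approximation of Proposition \ref{standardwilliams}: on the descending slope $[\tilde\tau_j^-(h),\tilde m_j]$ the reversed process has a law with density bounded by $2$ relative to $\hat V^{\uparrow}$ stopped at hitting $[h,+\infty[$; since $\hat V^{\uparrow}$ has no negative jumps, an optional-stopping argument with the martingale $e^{\kappa \hat V^{\uparrow}}$ (in the spirit of Lemma \ref{vdownrestegrand}) shows that $\hat V^{\uparrow}$ dips below $(\alpha-\eta)h$ between hitting $\alpha h$ and $h$ with probability of order $e^{-\kappa(1-\alpha+\eta)h}$, which is easily absorbed. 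The subtle case is $\tilde L_j^{\sharp}<\tilde\tau_j^-(h)$, where the big drop precedes the descending slope and the approximation does not apply directly: here direct excursion-theoretic estimates for $V-\underline V$ after $\tilde L_j^{\sharp}$, together with a second application of Lemma \ref{monteavantdescente}, are needed to control the potential rebound of size $\eta h$.
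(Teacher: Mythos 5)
Your treatment of $E_{\mathrm{desc}}$ is sound: the insufficient-descent event does reduce, via the strong Markov property at $\tilde L_j^{\sharp}$, to the event that the reflected process reaches $h$ before the infimum descends by $(\alpha-\eta)h$, and Lemma \ref{monteavantdescente} kills it with room to spare. The gap is in $E_{\mathrm{osc}}$, which is the entire substance of the lemma, and it is twofold. First, the time-reversal route you propose cannot deliver the stated bound: for the \emph{standard} valley the paper only controls $\tilde P_1^{(j)}$ through $d_{VT}(\tilde P_1^{(j)}, P_1^{(2)}) \leq 2e^{-\delta\kappa h/3}$ (Proposition \ref{standardwilliams}); the density-$2$ comparison with $\hat V^{\uparrow}$ is available only for the classical slope $P_1^{(2)}$, so passing through it costs an additive error $2e^{-\delta\kappa h/3}$. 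Since $\delta$ is a fixed small constant while the lemma must hold for every $\eta<\alpha$ (it is invoked with $\eta=1/4$ and with $\eta=\epsilon$ of order a constant), this error dominates the target $e^{-\kappa\eta h/3}$ whenever $\eta>\delta$. The paper's remark preceding its proof warns precisely that time reversal ``is not possible here.''

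Second, and more seriously, the part you label ``the subtle case'' --- controlling the dip-and-rebound on $[\tilde L_j^{\sharp}, \tilde\tau_j^-(h)]$ --- is not a corner case but the generic situation, and you do not prove it; you only assert that ``direct excursion-theoretic estimates\dots are needed.'' The difficulty is that the bad event is phrased relative to the \emph{future} minimum $V(\tilde m_j)$, so no single application of Lemma \ref{monteavantdescente} at a stopping time applies. The paper's proof supplies exactly the missing argument: it introduces interlaced stopping times $T_k$ (successive rebounds of $V-\underline V$ by $\eta h$) and $S_k$ (successive undershoots of the running minimum), observes that the two resulting sequences of path fragments are independent and \textit{iid}, identifies the bad event inside $\{V(S_{J_0-1}) \leq V(T_{J_0}) + (\alpha-2\eta)h\}$ where $J_0$ is the index of the excursion reaching height $h$, and decouples $J_0$ from the fragment it conditions on so that the probability collapses to $\mathbb{P}(\tau(V-\underline V,\eta h) < \tau(V,\,]-\infty,-(\alpha-\eta)h[))$, bounded by Lemma \ref{monteavantdescente} with $a=\eta h$, $b=(\alpha-\eta)h$. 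Without this renewal construction (or an equivalent), your proposal does not establish the lemma.
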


It would seem convenient to use the time-reversal property to prove this lemma. However this is not possible here so we have to show that $V$ cannot get too close to its future minimum before time $\tilde \tau_j^-(\alpha h)$. 

\begin{proof}
Thanks to Remark \ref{iid} we only need to prove the result for $j=1$. Recall from Subsection \ref{coin} that, by definition, $\tilde \tau_1(h)$ is the first time after $\tilde{L}_{1}^{\sharp}$ when $V - \underline V$ reaches $h$ and $\tilde m_1$ is the associated minimum ($V(\tilde m_1) = \underline V(\tilde \tau_1(h))$). {Recall also that by definition of $\tau_1^-(\alpha h)$, $\sup_{[\tilde \tau_1^-(\alpha h), \tilde m_1]} V - V(\tilde m_1) \leq \alpha h \leq h$. As a consequence 
\begin{align*}
\mathbb{P} \left ( \tilde{L}_{1}^{\sharp} \geq \tilde \tau_1^-(\alpha h) \right ) = \mathbb{P} \left ( \tilde \tau_1^-(\alpha h) \leq \tilde{L}_{1}^{\sharp} \leq \tilde m_1 \right ) & \leq \mathbb{P} \left ( V(\tilde{L}_{1}^{\sharp}) - V(\tilde m_1) \leq h \right ) \\
& \leq \mathbb{P} \left ( \tau(V^{\tilde{L}_{1}^{\sharp}}-\underline{V}^{\tilde{L}_{1}^{\sharp}}, h) < \tau(V^{\tilde{L}_{1}^{\sharp}}, ]-\infty, -h[) \right ). 
\end{align*}
Then, from the Markov property, $V^{\tilde{L}_{1}^{\sharp}}$ is equal in law to $V$. Combining with Lemma \ref{monteavantdescente} applied with $a = b = h$, $\eta = 1/2$ we get 
\begin{eqnarray}
\mathbb{P} \left ( \tilde{L}_{1}^{\sharp} \geq \tilde \tau_1^-(\alpha h) \right ) \leq \mathbb{P} \left ( \tau(V-\underline{V}, h) < \tau(V, ]-\infty, -h[) \right ) \leq 3 e^{-\kappa h /2}. \label{minoprevalley0}
\end{eqnarray}}
When $\tilde{L}_{1}^{\sharp} < \tilde \tau_1^-(\alpha h)$, let $u_1$ be the unique point where $\tilde V^{(1)}$ reaches its minimum on $[\tilde{L}_{1}^{\sharp}, \tilde \tau_1^-(\alpha h)]$. On the event $\{ \tilde{L}_{1}^{\sharp} < \tilde \tau_1^-(\alpha h), \ \inf_{[\tilde{L}_{1}^{\sharp}, \tilde \tau_1^-(\alpha h)]} \tilde V^{(1)} \leq \left ( \alpha - \eta \right )h \}$ we have 
\begin{eqnarray}
V(\tilde m_1) \leq V(u_1) \leq V(\tilde m_1) + \left ( \alpha - \eta \right )h \leq V(\tilde \tau_1^-(\alpha h)-) - \eta h. \label{minoprevalley0.1}
\end{eqnarray}
By the definition of $\tilde{L}_{1}^{\sharp}$ in Subsection \ref{coin} we have $V(\tilde{L}_{1}^{\sharp}) = \inf_{[\tilde L_0, \tilde{L}_{1}^{\sharp}]} V = \inf_{[0, \tilde{L}_{1}^{\sharp}]} V$ {(because $\tilde L_0 = 0$ by definition)}, so $V(u_1) = \underline V(u_1) = \underline V(\tilde \tau_1^-(\alpha h))$. This and \eqref{minoprevalley0.1} imply that $(V - \underline V)(u_1) = 0$ and $(V - \underline V)(\tilde \tau_1^-(\alpha h)) \geq \eta h$. As a consequence, $V - \underline V$ reaches $\eta h$ between $u_1$ and $\tilde \tau_1^-(\alpha h)$, then $V$ descends lower than its past minimum level $V(u_1)$ (because $V(\tilde m_1) < V(u_1)$) and then, $V - \underline V$ reaches $h$ before $V$ reaches $]-\infty, V(u_1) - ( \alpha - \eta )h[$ (because otherwise we would not have $V(\tilde m_1) \geq V(u_1) - ( \alpha - \eta )h$). We thus consider the times when $V - \underline V$ reaches $\eta h$ and separate them by the times when $V$ gets lower than its previous minimum. We introduce the sequences of stopping times $(S_j)_{j \geq 0}$ and $(T_j)_{j \geq 1}$ where $S_0 := \tilde{L}_{1}^{\sharp}$ and 
\[ T_{j} := \inf \left \{ t \geq S_{j-1}, \ (V - \underline V)(t) = \eta h \right \}, \ \ \ S_{j}  := \inf \left \{ t \geq T_j, \ V(t) < \underline V(T_j) \right \}. \]
Note that for all $j \geq 1$, $\underline V(T_j) = V(T_j) - \eta h$ and $V(S_j) = \underline V(S_j)$. Combining with the Markov property, this implies that the sequences of truncated processes $(V(t + S_{j-1}) - V(S_{j-1}), \ 0 \leq t \leq T_j - S_{j-1})_{j \geq 1}$ and $(V(t + T_j) - V(T_j), \ 0 \leq t \leq S_j - T_j)_{j \geq 1}$ are both \textit{iid}, and the two sequences are independent. 

We see that $\tilde \tau_1^+(\eta h) = T_{J_0}$, where we define $J_0$ to be the first index $j \geq 1$ for which $\inf  \{ t \geq T_j, \ (V - \underline V)(t) = h \} < S_j$, and that $\tilde m_1$ is the minimum of $V$ before $T_{J_0}$ ($V(\tilde m_1) = \underline V(T_{J_0}) = V(T_{J_0}) - \eta h$). Moreover, we saw before that, on the event $\{ \tilde{L}_{1}^{\sharp} < \tilde \tau_1^-(\alpha h), \ \inf_{[\tilde{L}_{1}^{\sharp}, \tilde \tau_1^-(\alpha h)]} \tilde V^{(1)} \leq ( \alpha - \eta )h \}$, we have $J_0 \geq 2$ and $u_1$ is the minimum of $V$ before $T_{K}$ for some random $K < J_0$ ($V(u_1) = \underline V(T_{K}) = V(T_{K}) - \eta h$). Using $J_0 -1 \geq K$ and \eqref{minoprevalley0.1} we get 
\[ \underline V (T_{J_0 - 1}) \leq \underline V(T_{K}) = V(u_1) \leq V(\tilde m_1) + ( \alpha - \eta )h =\underline V(T_{J_0}) + ( \alpha - \eta )h = V(T_{J_0}) + ( \alpha - 2 \eta )h. \]
From the definition of $S_j$ we have $V(S_{J_0 - 1}) \leq \underline V(T_{J_0 - 1})$ so we get
\begin{eqnarray}
\left \{ \tilde{L}_{1}^{\sharp} < \tilde \tau_1^-(\alpha h), \ \inf_{[\tilde{L}_{1}^{\sharp}, \ \tilde \tau_1^-(\alpha h)]} \tilde V^{(1)} \leq \left ( \alpha - \eta \right )h \right \} \subset \left \{ V(S_{J_0 - 1}) \leq V(T_{J_0}) + ( \alpha - 2 \eta )h \right \}. \label{minoprevalley1new}
\end{eqnarray}
For any $k \geq 1$, the event $\{ V(S_{k - 1}) \leq V(T_{k}) + ( \alpha - 2 \eta )h \}$ only depends on $(V(t + S_{k-1}) - V(S_{k-1}), \ 0 \leq t \leq T_k - S_{k-1})$ whereas the event $\{ J_0 = k \}$ only depends on the sequence $(V(t + T_j) - V(T_j), \ 0 \leq t \leq S_j - T_j)_{j \geq 1}$. Partitioning on the possible values for $J_0$ and using the fact that the sequences $(V(t + S_{j-1}) - V(S_{j-1}), \ 0 \leq t \leq T_j - S_{j-1})_{j \geq 1}$ and $(V(t + T_j) - V(T_j), \ 0 \leq t \leq S_j - T_j)_{j \geq 1}$ are independent and both \textit{iid}, we get
\begin{align*}
\mathbb{P} \left ( V(S_{J_0 - 1}) \leq V(T_{J_0}) + ( \alpha - 2 \eta )h \right ) & = \mathbb{P} \left ( V(S_{0}) \leq V(T_{1}) + ( \alpha - 2 \eta )h \right ) \\
& = \mathbb{P} \left ( V^{S_0} \left (\tau ( V^{S_0} - \underline V^{S_0}, \eta h ) \right ) \geq - ( \alpha - 2 \eta )h \right ). 
\end{align*} 
Thanks to the Markov property at $S_{0} = \tilde{L}_{1}^{\sharp}$, the above equals 
\begin{align*}
\mathbb{P} \left ( V \left (\tau ( V - \underline V, \eta h ) \right ) \geq - ( \alpha - 2 \eta )h \right ) & = \mathbb{P} \left ( \tau \left ( V - \underline V, \eta h \right ) < \tau \left ( V, ]- \infty, -( \alpha - \eta )h[ \right ) \right ) \\
& \leq [(2 \alpha / \eta) - 1] e^{-\kappa \eta h /2}. 
\end{align*} 
The last inequality comes from Lemma \ref{monteavantdescente} applied with $a = \eta h$, $b =( \alpha - \eta )h$, $\eta = 1/2$. Combining with \eqref{minoprevalley1new} we obtain 
\[ \mathbb{P} \left ( \tilde{L}_{1}^{\sharp} < \tilde \tau_1^-(\alpha h), \ \inf_{[\tilde{L}_{1}^{\sharp}, \ \tilde \tau_1^-(\alpha h)]} \tilde V^{(1)} \leq \left ( \alpha - \eta \right )h \right ) \leq [(2 \alpha / \eta) - 1] e^{-\kappa \eta h /2}. \]
Then, putting together with \eqref{minoprevalley0} we get the result for $h$ large enough. 

\end{proof}

\begin{lemme} \label{majoprevalley}

Let $\delta$ be as defined in Subsection \ref{coin} and Section \ref{genedesres}. For all $h$ large enough we have
\[ \forall j \geq 1, \ \mathbb{P} \left ( \tilde \tau_j^-(h) - \tilde{L}_{j-1} > e^{(1+\delta)\kappa h} \right ) \leq e^{-\delta \kappa h /2}. \]

\end{lemme}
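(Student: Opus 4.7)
By Remark \ref{iid} the distribution of $\tilde \tau_j^-(h) - \tilde L_{j-1}$ does not depend on $j$, so it suffices to bound the probability for $j=1$ (where $\tilde L_0 = 0$). The key observation is that we may split the interval $[\tilde L_{j-1}, \tilde \tau_j^-(h)]$ into two pieces at the stopping time $\tilde L_j^{\sharp}$, and moreover $\tilde \tau_j^-(h) \leq \tilde m_j \leq \tilde \tau_j(h)$ by the very definitions, so the right piece satisfies
\[ \tilde \tau_j^-(h) - \tilde L_j^{\sharp} \ \leq \ \tilde \tau_j(h) - \tilde L_j^{\sharp}. \]
Thus the event $\{\tilde \tau_j^-(h) - \tilde L_{j-1} > e^{(1+\delta)\kappa h}\}$ is contained in the union of
$\{\tilde L_j^{\sharp} - \tilde L_{j-1} > e^{(1+\delta)\kappa h}/2\}$ and $\{\tilde \tau_j(h) - \tilde L_j^{\sharp} > e^{(1+\delta)\kappa h}/2\}$, and it remains to bound each.

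First I would handle the piece $\tilde L_j^{\sharp} - \tilde L_{j-1}$. Since $\tilde L_{j-1}$ is a stopping time, the strong Markov property shows that this duration has the same law as $\tau(V, ]-\infty, -e^{(1-\delta)\kappa h}])$. Applying Lemma \ref{tpsatteinthatv} with $y = e^{(1-\delta)\kappa h}$ and $r = e^{(1+\delta)\kappa h}/2$ yields
\[ \mathbb{P} \left ( \tilde L_j^{\sharp} - \tilde L_{j-1} > e^{(1+\delta)\kappa h}/2 \right ) \ \leq \ \exp \left ( c_1 e^{(1-\delta)\kappa h} - (c_2/2) e^{(1+\delta)\kappa h} \right ), \]
which is super-exponentially small in $h$, and in particular smaller than $e^{-\delta \kappa h/2}/2$ for all $h$ large enough.

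Next, for the second piece, the strong Markov property at $\tilde L_j^{\sharp}$ shows that $\tilde \tau_j(h) - \tilde L_j^{\sharp}$ has the same law as $\tau^*(h)$, the first ascending time of $h$ for $V - \underline V$ studied in Subsection \ref{firstmin}. Lemma \ref{esptpsmonte} gives $\mathbb{E}[\tau^*(h)] \leq C e^{\kappa h}$ for some positive constant $C$ and $h$ large enough, so Markov's inequality yields
\[ \mathbb{P} \left ( \tilde \tau_j(h) - \tilde L_j^{\sharp} > e^{(1+\delta)\kappa h}/2 \right ) \ \leq \ 2 C e^{-\delta \kappa h}, \]
which is at most $e^{-\delta \kappa h/2}/2$ for all $h$ large enough.

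Adding the two bounds gives the claim. The proof is fairly short once the right decomposition is identified; the only technical point to check carefully is the chain of inequalities $\tilde \tau_j^-(h) \leq \tilde m_j \leq \tilde \tau_j(h)$, which follow directly from the definitions and the spectral negativity of $V$. I do not anticipate any serious obstacle, as the two ingredients (Lemma \ref{tpsatteinthatv} for the descent, Lemma \ref{esptpsmonte} for the ascent) have already been established in earlier subsections.
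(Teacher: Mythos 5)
Your proposal is correct and follows essentially the same route as the paper: reduce to $j=1$, bound $\tilde\tau_1^-(h)$ by $\tilde\tau_1(h)$, decompose at the stopping time $\tilde L_1^{\sharp}$, and control the descent piece with Lemma \ref{tpsatteinthatv} and the ascent piece with Lemma \ref{esptpsmonte} plus Markov's inequality. The paper applies the bound $\tilde\tau_1^-(h)\leq\tilde\tau_1(h)$ before splitting rather than after, but this is only a cosmetic difference.
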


\begin{proof}

Here again, thanks to Remark \ref{iid}, we only need to prove the result for $j=1$. From the definitions of $\tilde \tau_1^-(h)$ and $\tilde \tau_1(h)$ in Subsection \ref{coin}, we have 
\begin{eqnarray}
\tilde \tau_1^-(h) - \tilde{L}_{0} \leq \tilde \tau_1(h) - \tilde{L}_{0} = \left ( \tilde \tau_1(h) - \tilde{L}_{1}^{\sharp} \right ) + \left ( \tilde{L}_{1}^{\sharp} - \tilde{L}_{0} \right ). \label{majoprevalley0}
\end{eqnarray}
For the first term {recall that, by definition, $\tilde \tau_1(h) = \tau ( V^{\tilde{L}_{1}^{\sharp}}-\underline{V}^{\tilde{L}_{1}^{\sharp}}, h) + \tilde{L}_{1}^{\sharp}$. According to the Markov property at the stopping time $\tilde{L}_{1}^{\sharp}$ and to the definition of $\tau^*(h)$ in the beginning of Subsection \ref{firstmin}}, we see that $\tilde \tau_1(h) - \tilde{L}_{1}^{\sharp}$ is equal in law to $\tau^*(h)$. We thus have $\mathbb{P} ( \tilde \tau_1(h) - \tilde{L}_{1}^{\sharp} > e^{(1+\delta)\kappa h}/2 ) = \mathbb{P} ( \tau^*(h) > e^{(1+\delta)\kappa h}/2 )$. Combining with Markov's inequality and Lemma \ref{esptpsmonte}, we get 
\begin{eqnarray}
\mathbb{P} \left ( \tilde \tau_1(h) - \tilde{L}_{1}^{\sharp} > e^{(1+\delta)\kappa h}/2 \right ) \leq 2 e^{-(1+\delta)\kappa h} \mathbb{E} \left [ \tau^*(h) \right ] \leq 2 e^{-(1+\delta)\kappa h} \times C e^{\kappa h} = 2C e^{-\delta\kappa h}. \label{majoprevalley1}
\end{eqnarray}
The last inequality in the above holds for $h$ large enough and $C$ is the constant in Lemma \ref{esptpsmonte}. For the second term in the right hand side of \eqref{majoprevalley0} recall {from the beginning of Subsection \ref{coin} that, by definition, $\tilde L_0 = 0$ and that} $\tilde{L}_{1}^{\sharp} = \tau(V^{\tilde L_0}, ]-\infty, -e^{(1-\delta)\kappa h}]) + \tilde L_0 = \tau(V, ]-\infty, -e^{(1-\delta)\kappa h}])$. We can thus apply Lemma \ref{tpsatteinthatv} with $y=e^{(1-\delta)\kappa h}$, $r=e^{(1+\delta)\kappa h}/2$ and we get, for $h$ large enough, 
\begin{align}
\mathbb{P} \left ( \tilde{L}_{1}^{\sharp} - \tilde{L}_{0} > \frac{e^{(1+\delta)\kappa h}}{2}  \right ) & = \mathbb{P} \left ( \tau \left ( V, ]-\infty, -e^{(1-\delta)\kappa h}] \right ) > \frac{e^{(1+\delta)\kappa h}}{2} \right ) \nonumber \\
& \leq \exp \left ( c_1 e^{(1-\delta)\kappa h} -c_2 e^{(1+\delta)\kappa h}/2 \right ) \leq e^{-h}. \label{majoprevalley2}
\end{align}
In the above, $c_1, c_2$ are the constants in Lemma \ref{tpsatteinthatv} and the last inequality is true for $h$ large enough. 

The combination of \eqref{majoprevalley0}, \eqref{majoprevalley1} and \eqref{majoprevalley2} yields the result. 

\end{proof}

\subsection{Estimates on the diffusion in potential $V$} \label{neghalfline}

We now give an upper bound for the amount of time spent by the diffusion and the local time in the negative half-line. Restricted to the drifted Brownian case, our result is a little stronger than Lemma 3.5 of \cite{AndDev}. It is the only estimate that we need for both cases $0 < \kappa < 1$ and $\kappa > 1$. 

\begin{lemme} \label{tpsdanslesnegl}

Recall the definition of $H_{-}$ in Subsection \ref{factsandnotations}. There is a positive constant $C$ such that for $r$ large enough, 
\[ \mathbb{P} \left ( H_-(+\infty) > r \right ) \leq C r^{-\kappa/(2+\kappa)} \ \ \ \text{and} \ \ \ \mathbb{P} \left ( {\sup_{x \in ]-\infty, 0]} \mathcal{L}_X(+\infty, x)} > r \right ) \leq C r^{-\kappa/(2+\kappa)}. \]

\end{lemme}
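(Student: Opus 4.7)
The strategy combines the time-change representations \eqref{expretl1}--\eqref{expretl2} with the second Ray--Knight theorem and a three-way balancing of tail probabilities. (I interpret the $\inf$ in the statement as a $\sup$, the $\inf$ being almost surely $0$ since $X$ only visits a bounded portion of $]-\infty, 0]$ by transience.)

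First I would introduce three random variables: $a := A_V(+\infty)$, $\tilde a := \int_{-\infty}^0 e^{-V(x)}\,dx$, and $v := \sup_{x \leq 0}(-V(x))$. Using the independence of the two halves of the two-sided L\'evy process $V$ together with the identity in law $(-V(-u))_{u \geq 0} \stackrel{\mathcal{L}}{=} V$ (from the standard construction of two-sided L\'evy processes), I would obtain $\tilde a \stackrel{\mathcal{L}}{=} a$, independent of $a$, and that $v$ is exponential with parameter $\kappa$. This yields $\mathbb{P}(a > s), \mathbb{P}(\tilde a > s) \leq C_0 s^{-\kappa}$ (by the second point of Lemma~\ref{foncexpov}) and $\mathbb{P}(e^v > s) = s^{-\kappa}$ for all $s \geq 1$.

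Next, setting $M := \sup_{y \leq 0} \mathcal{L}_B(\tau(B,a), y)$, the second Ray--Knight theorem identifies, conditionally on $V$, the process $u \mapsto \mathcal{L}_B(\tau(B,a), -u)$ on $\mathbb{R}_+$ as a squared Bessel process of dimension $0$ starting at $2aE$, where $E$ is an exponential random variable of parameter $1$ independent of $V$. Since this process is a non-negative continuous martingale eventually absorbed at $0$, optional stopping at $\tau_0 \wedge \tau_t$ gives $\mathbb{P}(M > t \mid a, E) = \min(1, 2aE/t)$, and hence $\mathbb{P}(M > t \mid a) \leq 2a/t$.

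Both bounds then follow from a three-way split. For the first, \eqref{expretl2} extended to $r = +\infty$ gives
\[
H_-(+\infty) \;=\; \int_{-\infty}^0 e^{-V(x)}\, \mathcal{L}_B(\tau(B,a), A_V(x))\,dx \;\leq\; M\,\tilde a,
\]
because $A_V(x) \leq 0$. Splitting according to $\{\tilde a > s\}$, $\{a > s\}$ and, on the intersection of their complements, bounding $\mathbb{P}(M > r/s \mid a \leq s) \leq 2s^2/r$, I get $\mathbb{P}(H_-(+\infty) > r) \leq 2 C_0 s^{-\kappa} + 2 s^2/r$; the choice $s = r^{1/(2+\kappa)}$ balances the two scales and yields $\mathbb{P}(H_-(+\infty) > r) \leq C\, r^{-\kappa/(2+\kappa)}$. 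For the second bound, $\sup_{x \leq 0} \mathcal{L}_X(+\infty, x) \leq e^v M$, and the identical three-way split with $e^v$ replacing $\tilde a$ produces
\[
\mathbb{P}\Bigl(\sup\nolimits_{x \leq 0} \mathcal{L}_X(+\infty, x) > r\Bigr) \;\leq\; s^{-\kappa} + \mathbb{P}(a > s) + 2 s^2/r,
\]
each of the three terms being equal to $r^{-\kappa/(2+\kappa)}$ at $s = r^{1/(2+\kappa)}$ (the constant in Lemma~\ref{foncexpov} being absorbed by the ``$r$ large enough'' qualifier), summing to the announced $3\, r^{-\kappa/(2+\kappa)}$.

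\textbf{Main obstacle.} The technical crux is the clean identification of $M$ through the second Ray--Knight theorem---both its initial law $2aE$ and its martingale character---and the recognition that the problem factors into three independent pieces: the two halves of the L\'evy environment $V$ and the driving Brownian motion $B$. Once this factorization and the Markov-type inequality $\mathbb{P}(M > t \mid a) \leq 2a/t$ are in place, the balancing $s = r^{1/(2+\kappa)}$ is forced and produces the (sub-optimal but sufficient) exponent $\kappa/(2+\kappa)$ in both bounds.
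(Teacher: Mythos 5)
Your proof is correct and is essentially the paper's argument: both bound $H_-(+\infty)$ (resp.\ the supremum of the local time on $]-\infty,0]$) by a product of three factors — the supremum of the Brownian local time on the negative half-line at a hitting time, $A_V(+\infty)$, and $\int_{-\infty}^0 e^{-V}$ (resp.\ $e^{\sup_{x\le 0}(-V(x))}$) — and then balance the three tails at the threshold $s=r^{1/(2+\kappa)}$. The only cosmetic difference is that the paper first rescales $B$ so as to quote the tail of $\sup_{y\le 0}\mathcal{L}_B(\tau(B,1),y)$ from estimate (7.13) of \cite{advech}, whereas you keep the level $A_V(+\infty)$ and re-derive that tail via Ray--Knight and optional stopping (which is the content of that estimate).
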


\begin{proof}

From the definition of the local time and formula \eqref{expretl2}, we have
\begin{align*}
H_-(+\infty) & = \int_{-\infty}^{0} \mathcal{L}_X(+\infty, x) dx = \int_{-\infty}^{0} e^{-V(x)} \mathcal{L}_{B}{[\tau (B, A_V(+\infty)),A_V(x)]} dx \\
& = A_V(+\infty) \int_{-\infty}^{0} e^{-V(x)} \mathcal{L}_{B'}{[\tau (B', 1), A_V(x)/A_V(+\infty)]} dx,
\end{align*}
where $B' := B((A_V(+\infty))^2 .) /A_V(+\infty)$. By scale invariance, we see that conditionally to $V$, $B'$ is a Brownian motion so
\begin{align}
H_-(+\infty) & \egloi A_V(+\infty) \int_{-\infty}^{0} e^{-V(x)} \mathcal{L}_{B}{[\tau (B, 1), A_V(x)/A_V(+\infty)]} dx \\
& \leq \left ( \sup_{y \leq 0} \mathcal{L}_{B}(\tau (B, 1), y) \right ) \times A_V(+\infty) \times \int_{-\infty}^{0} e^{-V(x)} dx \nonumber \\
& = \left ( \sup_{y \leq 0} \mathcal{L}_{B}(\tau (B, 1), y) \right ) \times A_V(+\infty) \times \int_{0}^{+\infty} e^{\tilde V(x)} dx, \label{tpsdanslesnegl1}
\end{align}
where $\tilde V(x) := -V(-(x-))$. Recall that, by the time-reversal property, the processes $(\tilde V(x), \ x \geq 0)$ and $(V(x), \ x \geq 0)$ have the same law. As a consequence the right hand side of \eqref{tpsdanslesnegl1} features three factors, the last two of them being equal in law to $\int_0^{+\infty} e^{V(u)} du$. We thus have
\[ \mathbb{P} \left ( H_-(+\infty) > r \right ) \leq \mathbb{P} \left ( \sup_{y \leq 0} \mathcal{L}_{B}(\tau (B, 1), y) > r^{\kappa/(2+\kappa)} \right ) + 2 \mathbb{P} \left ( \int_0^{+\infty} e^{V(u)} du > r^{1/(2+\kappa)} \right ). \]
{Thanks to the inequality $(7.13)$ of \cite{advech} applied with $x = r^{\kappa/(2+\kappa)}$, the first term is less than $4r^{-\kappa/(2+\kappa)}$. By the second assertion of Lemma \ref{foncexpov}, the second term is less than $4 \mathcal{C} r^{-\kappa/(2+\kappa)}$ when $r$ is large enough, and where $\mathcal{C}$ is the constant in the lemma. We thus get that $\mathbb{P} ( H_-(+\infty) > r )$ is less than $4 (1+\mathcal{C}) r^{-\kappa/(2+\kappa)}$ when $r$ is large enough, as asserted.} The proof can be repeated for the assertion about the local time, we only replace the integrals on $]-\infty, 0]$ by supremums on $]-\infty, 0]$ and the integrals of $e^{\tilde V(.)}$ and $e^{V(.)}$ on $[0, +\infty[$ by supremums on $[0, +\infty[$. Since, as mentioned in Subsection \ref{factsandnotations}, $\sup_{[0, +\infty[} V$ (and $\sup_{[0, +\infty[} \tilde V$) follows an exponential distribution with parameter $\kappa$, the result follows similarly. 


\end{proof}

The next lemma provides a useful estimate to bound the amount of time spent by the diffusion between two standard valleys. It generalizes a part of Lemma 3.6 of \cite{AndDev}. 

\begin{lemme}\label{LemTpsVal}
Recall the definition of $H_{+}$ in Subsection \ref{factsandnotations} and the definition of $\tau^*(h)$ in the beginning of Subsection \ref{firstmin} (in particular, $\tau^*(h)$ coincide with $\tilde \tau_{1}^*(h)$ defined in \eqref{ascend}). There exists a constant $C>0$ such that for $h$ large enough,
\begin{equation} \label{eqLemmaInegH1}
\mathbb{E}[H_+(\tau^*(h))]\leq C e^h. 
\end{equation}
\end{lemme}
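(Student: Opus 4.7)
The strategy is to combine the Ray--Knight representation of the quenched expectation $\mathbb{E}[H_+(\tau^*(h))\mid V]$ with a Markov decomposition of the reflected process $R:=V-\underline{V}$, thereby reducing the problem to estimates already established. Using the time-change representation \eqref{expretl1} together with the first Ray--Knight theorem (which gives $\mathbb{E}[\mathcal{L}_B(\tau(B,a),b)\mid V] = 2(a-b)$ for $0\leq b\leq a$), one obtains the quenched formula
\[ \mathbb{E}[H_+(\tau^*(h))\mid V] = 2\int_0^{\tau^*(h)}\!\int_x^{\tau^*(h)} e^{V(y)-V(x)}\,dy\,dx. \]

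Fix $x\geq 0$. On $\{\tau^*(h)>x\}$, put $a:=R(x)=V(x)-\underline{V}(x)\in[0,h)$ and let $V'(t):=V(x+t)-V(x)$, a fresh L\'evy process distributed as $V$ and independent of the past up to time $x$. The Markov property of $R$ (see Subsection~\ref{factsandnotations}) gives $R(x+t)=V'(t)+\max(a,-\underline{V'}(t))$, so that $\tau^*(h)-x$ equals the first time this process reaches $h$, call it $\tau_{R_a}(h)$. The key observation is that $R(x+t)<h$ forces $V'(t)<h-a$, so $V'(t)\leq h-a$ throughout $[0,\tau_{R_a}(h)]$. Consequently
\[ \int_x^{\tau^*(h)} e^{V(y)-V(x)}\,dy = \int_0^{\tau_{R_a}(h)} e^{V'(t)}\,dt, \]
and the occupation-time argument underlying Lemma~\ref{espintmonte}, applied to $V'$ restricted to levels below $h-a$, yields
\[ \mathbb{E}_a\!\left[\int_0^{\tau_{R_a}(h)} e^{V'(t)}\,dt\right] \leq \int_{-\infty}^{h-a} e^{y}\,\mathbb{E}[\mathcal{L}_{V}(+\infty,y)]\,dy \leq C_1\,e^{(1-\kappa)(h-a)}, \]
using $\mathbb{P}(\tau(V,y)<+\infty)=e^{-\kappa y}$ for $y>0$ and the fact that $\kappa<1$ in the regime of Section~\ref{genedesres}, the only place this lemma is invoked.

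Integrating in $x$ and taking expectation over $V$,
\[ \mathbb{E}[H_+(\tau^*(h))] \leq 2C_1\,e^{(1-\kappa)h}\,\mathbb{E}\!\left[\int_0^{\tau^*(h)} e^{-(1-\kappa)R(x)}\,dx\right] \leq 2C_1\,e^{(1-\kappa)h}\cdot\mathbb{E}[\tau^*(h)], \]
the last step since $R\geq 0$ and $1-\kappa>0$ imply $e^{-(1-\kappa)R}\leq 1$. Applying Lemma~\ref{esptpsmonte} to bound $\mathbb{E}[\tau^*(h)]\leq C_2\,e^{\kappa h}$ then yields $\mathbb{E}[H_+(\tau^*(h))]\leq C\,e^h$ with $C:=2C_1C_2$. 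The main obstacle is the Markov decomposition in the second step, namely realizing rigorously the conditional law of $V|_{[x,\tau^*(h)]}$ through the fresh copy $V'$ and establishing the identity $\tau^*(h)-x=\tau_{R_a}(h)$; once this is in place, the rest is a routine assembly of Lemmas~\ref{espintmonte} and~\ref{esptpsmonte}.
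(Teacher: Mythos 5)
Your proof is correct and follows essentially the same route as the paper, which simply invokes inequality (3.37) of Andreoletti--Devulder for the factorization $\mathbb{E}[H_+(\tau^*(h))]\leq 2\,\mathbb{E}[\tau^*(h)]\,\mathbb{E}[\int_0^{\tau^*(h)}e^{V(u)}du]$ and then applies Lemmas \ref{esptpsmonte} and \ref{espintmonte}. You merely re-derive that factorization in detail (Ray--Knight formula plus the Markov property at the intermediate point $x$, with the harmless refinement of keeping and then discarding the factor $e^{-(1-\kappa)R(x)}$), so the two arguments coincide in substance.
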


\begin{proof}

The beginning of the proof is similar to the beginning of the proof of Lemma 3.6 in \cite{AndDev}. In fact $(3.37)$ of \cite{AndDev} is still true in our setting, with $V$ instead of the drifted Brownian motion: 
\[ \mathbb{E}[H_+(\tau^*(h))]\leq 2 \mathbb{E} \left [ \tau^*(h) \right ] \times \mathbb{E} \left [ \int_0^{\tau^*(h)} e^{V(u)} du \right ]. \]
Combining this with Lemmas \ref{esptpsmonte} and \ref{espintmonte}, we get the result. 

\end{proof}

A fundamental point to make appear the renewal structure is the fact that the diffusion never goes back to a previous valley. Recall the notations $X_{\tilde L_i} := X(H(\tilde L_i) + .)$ and $H_{X_{\tilde L_i}}(r) := \tau(X_{\tilde L_i}, r)$ introduced in the proof of Proposition \ref{analogue5.1}, and recall from Subsection \ref{factsandnotations} the definition of the probability measure $P^V(.)$. The following lemma proves that, with high probability, the diffusion does not go back to a valley from which it has already escaped: 

\begin{lemme} \label{noreturn}

There is a positive constant $c$ such that for $h$ large enough, 
\begin{align*}
\forall i \geq 1, \ P \left ( P^V \left ( H_{X_{\tilde L_i}}(\tilde \tau_i(h)) > H_{X_{\tilde L_i}}(+\infty) \right ) \geq 1 - e^{-h/4} \right ) & \geq 1 - e^{-ch}, \\
\forall i \geq 1, \ \mathbb{P} \left ( H_{X_{\tilde L_i}}(\tilde \tau_i(h)) > H_{X_{\tilde L_i}}(+\infty) \right ) & \geq 1 - e^{-c h}.
\end{align*}
{Note that we have almost surely $H_{X_{\tilde L_i}}(+\infty) = +\infty$ so the above proves that $H_{X_{\tilde L_i}}(\tilde \tau_i(h))$ is actually infinite with large probability. }
\end{lemme}

\begin{proof}

Let us fix $i \geq 1$. {Recall from Subsection \ref{factsandnotations} the expression of the scale function of the diffusion. Then,} at fixed environment $V$, we have that $P^V ( H_{X_{\tilde L_i}}(\tilde \tau_i(h)) < H_{X_{\tilde L_i}}(+\infty) )$ equals
\begin{align}
& \frac{\int_{\tilde L_i}^{+ \infty} e^{V(u)} du}{\int_{\tilde \tau_i(h)}^{+ \infty} e^{V(u)} du} = e^{V(\tilde L_i) - V(\tilde \tau_i(h))} \frac{\int_0^{+ \infty} e^{V(u + \tilde L_i) - V(\tilde L_i)} du}{\int_0^{+ \infty} e^{V(u + \tilde \tau_i(h)) - V(\tilde \tau_i(h))} du} := e^{V(\tilde L_i) - V(\tilde \tau_i(h))} I_1/I_2. \label{lemtps1}
\end{align}
Since $\tilde L_i$ and $\tilde \tau_i(h)$ are stopping times for $V$, both $I_1$ and $I_2$ have the same law as $\int_0^{+ \infty} e^{V(u)} du$ (note that $I_1$ and $I_2$ are not independent). Then, 
\[ \mathbb{P} \left ( I_1 / I_2 > e^{h/4} \right ) \leq \mathbb{P} \left ( I_1 > e^{h/8} \right ) + \mathbb{P} \left ( I_2 < e^{- h/8} \right ) \leq 2 \mathcal{C} e^{-\kappa h/8} + e^{- h/8}, \]
where the last inequality and the constant $\mathcal{C}$ come from the two assertions of Lemma \ref{foncexpov}. This last inequality holds for $h$ large enough. From the definition of $\tilde L_i$ {in the beginning of Subsection \ref{coin}} we have $V(\tilde L_i) - V(\tilde \tau_i(h)) \leq -h / 2$ so, thanks to \eqref{lemtps1}, we have that $P^V ( H_{X_{\tilde L_i}}(\tilde \tau_i(h)) < H_{X_{\tilde L_i}}(+\infty) )$ is less than $e^{-h / 2} \times e^{h / 4} = e^{-h / 4}$ {on $\{ I_1 / I_2 \leq e^{h/4} \}$ and the latter event has probability greater than $1 - 2 \mathcal{C} e^{-\kappa h/8} - e^{- h/8}$, for large $h$. 
This proves the first assertion. 

On the complementary event $\{ I_1 / I_2 > e^{h/4} \}$, that has probability less than $2 \mathcal{C} e^{-\kappa h/8} + e^{- h/8}$ when $h$ is large, $P^V ( H_{X_{\tilde L_i}}(\tilde \tau_i(h)) < H_{X_{\tilde L_i}}(+\infty) )$ is bounded by $1$.} Integrating $P^V ( H_{X_{\tilde L_i}}(\tilde \tau_i(h)) < H_{X_{\tilde L_i}}(+\infty) )$ with respect to $V$ we thus get 
\[ \mathbb{P} \left ( H_{X_{\tilde L_i}}(\tilde \tau_i(h)) < H_{X_{\tilde L_i}}(+\infty) \right ) \leq e^{-h / 4} + 2 \mathcal{C} e^{-\kappa h/8} + e^{- h/8}, \]
and the second assertion follows. 

\end{proof}

Recall the notations $X_{\tilde m_i} := X(. + H(\tilde m_i))$ and $H_{X_{\tilde m_i}}(r) := \tau(X_{\tilde m_i}, r)$ introduced in Subsection \ref{maincontibcommeiid}. 
The next lemma proves that, with high probability, the standard valleys are left from the right. 

\begin{lemme} \label{sortparladroite}

Let $\delta$ be as defined in Subsection \ref{coin} and Section \ref{genedesres}. For $h$ large enough, 
\[ \forall i \geq 1, \ \mathbb{P} \left ( H_{X_{\tilde m_i}}(\tilde L_{i-1}) < H_{X_{\tilde m_i}}(\tilde L_i) \right ) \leq e^{- \kappa \delta h/6}. \]

\end{lemme}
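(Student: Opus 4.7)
The plan is to exploit the classical hitting-probability formula for the diffusion $X$ in the quenched environment. Since $\tilde L_{i-1} < \tilde m_i < \tilde L_i$, the scale-function representation gives
\[ P^V\bigl(H_{X_{\tilde m_i}}(\tilde L_{i-1}) < H_{X_{\tilde m_i}}(\tilde L_i)\bigr) = \frac{\int_{\tilde m_i}^{\tilde L_i} e^{V(u)}\,du}{\int_{\tilde L_{i-1}}^{\tilde L_i} e^{V(u)}\,du} \leq \frac{\int_{\tilde m_i}^{\tilde L_i} e^{\tilde V^{(i)}(u)}\,du}{\int_{\tilde L_{i-1}}^{\tilde m_i} e^{\tilde V^{(i)}(u)}\,du}, \]
where the last step follows from $a/(a+b)\le a/b$ and from factoring out $e^{V(\tilde m_i)}$. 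The problem thus reduces to showing that the numerator is much smaller than the denominator, with overwhelming probability in $V$.

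For the denominator, Lemma \ref{trucavecnouvelledef} provides the doubly exponential lower bound $\int_{\tilde L_{i-1}}^{\tilde m_i} e^{\tilde V^{(i)}(u)}\,du \geq \exp(e^{(1-2\delta)\kappa h})$, with probability at least $1-e^{-\kappa\delta h/4}$ (after noting, via Lemma \ref{minimacoincide} applied to the indices we need, that $\tilde V^{(i)}$ and $V^{(i)}$ coincide on the relevant interval up to an event of probability at most $e^{-\kappa\delta h/3}$). For the numerator, I split
\[ \int_{\tilde m_i}^{\tilde L_i} e^{\tilde V^{(i)}(u)}\,du = \int_{\tilde m_i}^{\tilde \tau_i(h)} e^{\tilde V^{(i)}(u)}\,du + \int_{\tilde \tau_i(h)}^{\tilde L_i} e^{\tilde V^{(i)}(u)}\,du. \]
By Proposition \ref{standardwilliams}, the first term is equal in law to $\int_0^{\tau(V^{\uparrow},h)} e^{V^{\uparrow}(u)}\,du$, hence bounded by $e^{2h}$ with probability at least $1-e^{-h}$ thanks to \eqref{0MajorationAVallee} (applied with $\eta=1$). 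The second term is bounded by $e^h(\tilde L_i - \tilde \tau_i(h))$ since $\tilde V^{(i)} \le h$ on this interval by definition of $\tilde L_i$; Proposition \ref{standardwilliams} again gives $\tilde L_i - \tilde\tau_i(h) \egloi \tau(V, ]-\infty,-h/2])$, which by Lemma \ref{tpsatteinthatv} is at most $e^h$ with probability at least $1-e^{-ch}$ for some $c>0$. Thus the numerator is at most $3e^{2h}$ outside an event of probability at most $e^{-c'h}$.

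Combining these estimates, outside an event of probability at most $e^{-\kappa\delta h/4} + e^{-c'h}$, the quenched probability is bounded by $3e^{2h}\exp(-e^{(1-2\delta)\kappa h})$, which is negligible compared to any exponential in $h$. Integrating over $V$ and bounding the quenched probability trivially by $1$ on the bad event gives
\[ \mathbb{P}\bigl(H_{X_{\tilde m_i}}(\tilde L_{i-1}) < H_{X_{\tilde m_i}}(\tilde L_i)\bigr) \leq 3e^{2h}\exp\bigl(-e^{(1-2\delta)\kappa h}\bigr) + e^{-\kappa\delta h/4} + e^{-c'h}, \]
which is at most $e^{-\kappa\delta h/6}$ for $h$ large enough. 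The whole argument is uniform in $i$ since Lemmas \ref{trucavecnouvelledef} and \ref{minimacoincide} (applied with $n=i$, but we only need the events for the single valley containing $\tilde m_i$ thanks to Remark \ref{iid}) and Proposition \ref{standardwilliams} hold for all $i\geq 1$. The only mild obstacle is being careful with the swap between $V^{(i)}$ and $\tilde V^{(i)}$ when importing Lemma \ref{trucavecnouvelledef}, which is handled by Lemma \ref{minimacoincide}; the large gap between the doubly-exponential denominator and the merely exponential numerator means there is no loss in absorbing the extra error terms.
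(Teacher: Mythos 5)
Your overall strategy is exactly the paper's: the quenched scale-function formula for $P^V(H_{X_{\tilde m_i}}(\tilde L_{i-1}) < H_{X_{\tilde m_i}}(\tilde L_i))$, the reduction to the ratio of $\int_{\tilde m_i}^{\tilde L_i} e^{\tilde V^{(i)}}$ over $\int_{\tilde L_{i-1}}^{\tilde m_i} e^{\tilde V^{(i)}}$, the doubly exponential lower bound on the denominator from Lemma \ref{trucavecnouvelledef}, and the split of the numerator at $\tilde\tau_i(h)$. However, one step is wrong: you assert that $\tilde V^{(i)} \le h$ on $[\tilde\tau_i(h), \tilde L_i]$ ``by definition of $\tilde L_i$'' and deduce $\int_{\tilde\tau_i(h)}^{\tilde L_i} e^{\tilde V^{(i)}(u)}\,du \le e^h(\tilde L_i - \tilde\tau_i(h))$. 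The definition of $\tilde L_i$ is a first-passage time: it is the first instant after $\tilde\tau_i(h)$ at which the recentered potential drops to $h/2$; it imposes no upper bound on $\tilde V^{(i)}$ before that instant. Indeed, by Proposition \ref{standardwilliams} this piece is equal in law to $h + V$ run until $V$ first enters $]-\infty, -h/2]$, and $V$ may rise above any level beforehand (its running supremum is unbounded, with exponential tail of parameter $\kappa$). So the claimed pointwise bound, and hence your bound on the second piece of the numerator, fails.

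The repair is routine and is what the paper does: write $\int_{\tilde\tau_i(h)}^{\tilde L_i} e^{\tilde V^{(i)}(u)}\,du \egloi e^h \int_0^{\tau(V, ]-\infty, -h/2])} e^{V(u)}\,du \le e^h \int_0^{+\infty} e^{V(u)}\,du$, and control the tail of the exponential functional by the second assertion of Lemma \ref{foncexpov}, which gives $\mathbb{P}\bigl(\int_0^{+\infty} e^{V} > e^{\delta h}/2\bigr) \le 2\mathcal{C}\, 2^{\kappa} e^{-\kappa\delta h}$ for $h$ large. Note that this error term is of order $e^{-\kappa\delta h}$, i.e.\ of the same order as the one coming from Lemma \ref{trucavecnouvelledef}; it is precisely what dictates the final exponent $\kappa\delta h/6$, so your remark that the numerator errors are absorbed ``with no loss'' is too optimistic, although the stated bound still holds. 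Two minor points: estimate \eqref{0MajorationAVallee} is stated for $\eta \in ]0,1[$, so take $\eta = \delta$ rather than $\eta = 1$ (this also matches the exponent $e^{(1+\delta)h}$ used in the paper); and the detour through Lemma \ref{minimacoincide} to pass between $V^{(i)}$ and $\tilde V^{(i)}$ is unnecessary, since Lemma \ref{trucavecnouvelledef} is established directly for the standard valleys via Remark \ref{iid}.
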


This is where appears a technical difference with the Brownian case. In Lemma 3.2 of \cite{AndDev}, they have a similar result with, instead of $\tilde L_{i-1}$, an other random time that they denote by $\tilde L_{i}^-$. In the context of a L\'evy environment, the existence of jumps may allow one of the $\tilde m_i - \tilde L_{i}^-$ to be quite small with a non negligible probability, which would allow some standard valleys to be left from $\tilde L_{i}^-$. In fact, it would require an analogous of \eqref{0MinorationAVallee} for $\hat V^{\uparrow}$ to have the same result as in Lemma 3.2 of \cite{AndDev}, this is thus hopeless here. In order to still have the standard valleys left from the right, we have given here 
a different definition of "leave from the right" (replacing $\tilde L_{i}^-$ by $\tilde L_{i-1}$). A consequence of this is that the study of the descending parts of the standard $h$-valleys is more technical (since we have to consider a bigger part than in \cite{AndDev}, and we do not know its law precisely) and this study requires Subsection \ref{new technical}. In particular, our proof that the standard valleys are left from the right requires the technical Lemma \ref{trucavecnouvelledef}, but the idea of the result has no qualitative difference with the drifted Brownian case. 

\begin{proof} of Lemma \ref{sortparladroite}

Let us fix $i \geq 1$. {We use again the probability measure $P^V(.)$ and the expression of the scale function of the diffusion, both defined in Subsection \ref{factsandnotations}.} At fixed environment $V$, $P^V ( H_{X_{\tilde m_i}}(\tilde L_{i-1}) < H_{X_{\tilde m_i}}(\tilde L_i) )$ equals
\begin{align}
\frac{\int_{\tilde m_i}^{\tilde L_i} e^{\tilde V^{(i)}(u)} du}{\int_{\tilde L_{i-1}}^{\tilde L_i} e^{\tilde V^{(i)}(u)} du} = \frac1{1 + \int_{\tilde L_{i-1}}^{\tilde m_i} e^{\tilde V^{(i)}(u)} du / \int_{\tilde m_i}^{\tilde L_i} e^{\tilde V^{(i)}(u)} du} \leq \max \left \{ 1, \int_{\tilde m_i}^{\tilde L_i} e^{\tilde V^{(i)}(u)} du / \int_{\tilde L_{i-1}}^{\tilde m_i} e^{\tilde V^{(i)}(u)} du \right \}. \label{sortparladroite1}
\end{align}

We first provide an upper bound for
\[ \int_{\tilde m_i}^{\tilde L_i} e^{\tilde V^{(i)}(u)} du = \int_{\tilde m_i}^{\tilde \tau_i(h)} e^{\tilde V^{(i)}(u)} du + \int_{\tilde \tau_i(h)}^{\tilde L_i} e^{\tilde V^{(i)}(u)} du. \]
According to proposition \ref{standardwilliams}, the terms of the right hand side have respectively the same law as $\int_{0}^{\tau(V^{\uparrow}, h)} e^{V^{\uparrow}(u)} du$ and $e^{h} \int_{0}^{\tau(V, ] - \infty, -h/2])} e^{V(u)} du \leq e^{h} \int_{0}^{+\infty} e^{V(u)} du$. We thus have
\begin{align}
\mathbb{P} \left ( \int_{\tilde m_i}^{\tilde L_i} e^{\tilde V^{(i)}(u)} du > e^{(1+\delta)h} \right ) & \leq \mathbb{P} \left ( \int_{0}^{\tau(V^{\uparrow}, h)} e^{V^{\uparrow}(u)} du > e^{(1+\delta)h}/2 \right ) \nonumber \\
& + \mathbb{P} \left ( \int_{0}^{+\infty} e^{V(u)} du > e^{\delta h}/2 \right ). \label{retourarriere1}
\end{align}
For $h$ large enough, the first term of the right hand side is bounded by $e^{-h}$ because of \eqref{0MajorationAVallee} and the second is bounded by $2 \mathcal{C} \times 2^{\kappa} e^{-\kappa \delta h}$ because of the second assertion of Lemma \ref{foncexpov}, where $\mathcal{C}$ is the constant in the Lemma. As a consequence, for $h$ large enough, 
%
\begin{eqnarray}
\mathbb{P} \left ( \int_{\tilde m_i}^{\tilde L_i} e^{\tilde V^{(i)}(u)} du \leq e^{(1+\delta)h} \right ) > 1 - e^{-\kappa \delta h/3}. \label{retourarriere4}
\end{eqnarray}

{According to Lemma \ref{trucavecnouvelledef} we have that, for large $h$, $\int_{\tilde L_{i-1}}^{\tilde m_i} e^{\tilde V^{(i)}(u)} du \geq \exp [ \exp [(1-2\delta)\kappa h] ]$ with a probability greater than $1 -e^{-\kappa \delta h /4}$. Combining with \eqref{retourarriere4}, we get that for $h$ large enough, $\int_{\tilde m_i}^{\tilde L_i} e^{\tilde V^{(i)}(u)} du / \int_{\tilde L_{i-1}}^{\tilde m_i} e^{\tilde V^{(i)}(u)} du$ is smaller than $e^{-h}$ with a probability greater than $1 - e^{-\kappa \delta h/3} - e^{- \kappa \delta h/4}$. In conclusion, for $h$ large enough, }
\[ \mathbb{P} \left ( \int_{\tilde m_i}^{\tilde L_i} e^{\tilde V^{(i)}(u)} du / \int_{\tilde L_{i-1}}^{\tilde m_i} e^{\tilde V^{(i)}(u)} du \geq e^{-h} \right ) \leq e^{- \kappa \delta h/5}. \]
{Combining with \eqref{sortparladroite1}, we get that $P^V ( H_{X_{\tilde m_i}}(\tilde L_{i-1}) < H_{X_{\tilde m_i}}(\tilde L_i) )$ can be bounded by $e^{-h}$, except on an event of probability smaller than $e^{- \kappa \delta h/5}$ where it is bounded by $1$. Then,} integrating $P^V ( H_{X_{\tilde m_i}}(\tilde L_{i-1}) < H_{X_{\tilde m_i}}(\tilde L_i) )$ with respect to $V$ we get the result for $h$ large enough. 

\end{proof}

\subsection{Justification of some facts} \label{justoffacts}

We now justify Facts \ref{lemtps}, \ref{analog3.3}, \ref{analogue(5.22)} and \ref{analogue5.3}. They are taken from \cite{AndDev} and \cite{advech}, and their proofs use estimates that are true only for the drifted Brownian potential. This is why we have to adapt the proofs to our context and to use our estimates instead of the ones from \cite{AndDev} and \cite{advech}. We shall refer to \cite{AndDev} and \cite{advech} for the proofs and only precise what are the differences between their proofs and ours.

\begin{proof} of Fact \ref{lemtps}

This is Lemma 3.7 of \cite{AndDev}. Here are the modifications that we make on the original proof: 

{$W_{\kappa}$ is, off course, replaced here by $V$. The standard valleys and the random points $\tilde m_i$, $\tilde L_i$, $\tilde{L}_{i}^{\sharp}$, $\tilde\tau_{i}(h_t)$, $\tilde \tau_{i}^*(h_t)$ have to be considered as the ones defined in this paper (in Subsection \ref{coin}). 
Also, $\tilde L_i^*$ of \cite{AndDev} has to be replaced here by $\tilde\tau_{i}(h_t)$ so Lemma  3.3 of \cite{AndDev} can be replaced here by the first point of Lemma \ref{noreturn} (applied with $h=h_t$ for the first $n_t$ indices). 

For the "Step 1", the event $\mathcal{E}_2^{3.7}$ is replaced here by $\cap_{i=1}^{n_t} \{ \tilde \tau_{i}^*(h_t) = \tilde\tau_{i}(h_t) \}$ (note that this includes $\{ \tilde \tau_1^*(h_t) = \tilde\tau_1 (h_t) \}$) so we have $1-\mathbb{P}(\mathcal{E}_2^{3.7}) \leq n_t e^{-c_1 h_t}$ for some positive constant $c_1$ and large $t$, according to Lemma \ref{tpscoinc} (applied with $h=h_t$ for the first $n_t$ indices). 

For the "Step 2", the event $\mathcal{E}_1^{3.3}$ is replaced here by $\cap_{i=1}^{n_t} \{ H_{X_{\tilde L_i}}(\tilde\tau_{i}(h_t)) > H_{X_{\tilde L_i}}(\tilde m_{i+1}) \}$ so we have $1-\mathbb{P}(\mathcal{E}_1^{3.3}) \leq n_t e^{-c_2 h_t}$ for some positive constant $c_2$ and large $t$, according to the second point of Lemma \ref{noreturn} (applied with $h=h_t$ for the first $n_t$ indices). The notations $H_{+}$ and $H_{-}$, defined in Subsection \ref{factsandnotations}, have the same meaning here and in \cite{AndDev}. Then, estimate (3.34) of \cite{AndDev} (which is, in the proof of Lemma 3.7 there, referred to as "Lemma 3.6") has to be replaced here by Lemma \ref{LemTpsVal} (applied with $h=h_t$). For $H_+(\tilde m_1)$, it is obviously less than $H_+(\tilde \tau_1(h_t))$ which equals $H_+(\tilde \tau_1^*(h_t))$ on $\{ \tilde \tau_{1}^*(h_t) = \tilde\tau_{1}(h_t) \}$ so, here, the expectation $\mathbb{E}[H_+(\tilde m_1) \mathds{1}_{\mathcal{E}_1^{3.3}}]$ is bounded by $\mathbb{E}[H_+(\tau^*(h_t))]$, just as the other terms. In particular, there is no need, here, to bother with the event $\mathcal{E}_3^{3.7}$ nor to specify we are on the event $\mathcal{V}_t$ where the classical and standard valleys coincide. In place of (3.43) of \cite{AndDev} we thus get 
\begin{align}
& \mathbb{P} \left( H_+(\tilde m_1) + \sum_{i=1}^{n_t-1}\left ( H(\tilde m_{i+1}) - H(\tilde L_i) \right ) \geq \frac{t}{\log h_t}, \mathcal{E}_1^{3.3}, \mathcal{E}_2^{3.7} \right) \nonumber \\
\leq & \frac{\log h_t}{t} n_t \mathbb{E}[H_+(\tau^*(h_t))] \leq \frac{\log h_t}{t} n_t c_3 e^{h_t}, \label{analogue3.43}
\end{align}
for $t$ large enough and where $c_3$ is a positive constant. 

Then, to bound $H_-(\tilde m_1)$, we see from $t/\log(h_t) = e^{h_t} e^{\phi(t)}/\log(h_t)> e^{h_t}$ (which is true at least for large $t$) and from Lemma \ref{tpsdanslesnegl} applied with $r=e^{h_t}$ that, for $t$ large enough, 
\[ \mathbb{P} \left ( H_-(\tilde m_1) > t/\log(h_t) \right ) \leq \mathbb{P} \left ( H_-(\tilde m_1) > e^{h_t} \right ) \leq \mathbb{P} \left ( H_-(+\infty) > e^{h_t} \right ) \leq c_4 e^{-\kappa h_t /(2+\kappa)}, \]
where $c_4$ is a positive constant. 

Finally, in order to get the analogous of (3.44) from \cite{AndDev} we use our previous bound for $\mathbb{P} ( H_-(\tilde m_1) > t/\log(h_t) )$, \eqref{analogue3.43}, $1-\mathbb{P}(\mathcal{E}_1^{3.3}) \leq n_t e^{-c_2 h_t}$, and $1-\mathbb{P}(\mathcal{E}_2^{3.7}) \leq n_t e^{-c_1 h_t}$ (we do not need $\mathcal{E}_3^{3.7}$ and $\mathcal{V}_t$ here). }

\end{proof}

\begin{proof} of Fact \ref{analog3.3}

The first point is Lemma 3.3 of \cite{advech}. Here are the modifications that we make on the original proof: 

$W_{\kappa}$ is replaced here by $V$, $A(x)$ and $A_{\infty}$ from \cite{advech} are replaced here by respectively $A_V(x) = \int_0^x e^{V(u)} du$ and $A_V(+\infty) = \int_0^{+\infty} e^{V(u)} du$, and we use our notations $m^*(h_t)$ and $\tau^*(h_t)$, defined in Subsection \ref{firstmin}, instead of the notations $m_1^*(h_t)$ and $\tau_1^*(h_t)$ from \cite{advech}. We change a little the definition of $b(t)$, that is, here $b(t) := 8R \phi(t) e^{\kappa h_t} / (1-e^{-\kappa})$ where $R := - \mathbb{E} [V (\tau (V, ]-\infty, -1]) )]$. Note that $R$ is finite according to Lemma \ref{leaveanintbelow} and the hypothesis assumed on $V$ (we have assumed that $V (1) \in L^p$ for some $p>1$). {The functions $k(t)$, $a(t)$ and the event $\mathcal{A}_0$ are defined similarly as in the original proof of \cite{advech} with, of course, $V$ instead of $W_{\kappa}$ in the definition of $\mathcal{A}_0$: 
\[ k(t) := e^{2 \kappa^{-1} \phi(t)}, \ \ \ a(t) := 4 \phi(t), \ \ \ \mathcal{A}_0 := \left \{ \int_0^{+\infty} e^{V(u)} du \leq k(t) \right \}. \] 
We have the upper bound for $\mathbb{P} (\mathcal{A}_0^c)$, when $t$ is large enough, thanks to the second assertion of Lemma \ref{foncexpov}: $\mathbb{P} (\mathcal{A}_0^c) \leq 2 \mathcal{C} e^{-2 \phi(t)}$, where $\mathcal{C}$ is the constant in the lemma. $\mathcal{A}_1$ has the same meaning as in the original proof of \cite{advech}, it is an inequality satisfied (with high probability) by the Brownian local time identified as a squared Bessel process $Q_2^2$: 
\[ \mathcal{A}_1 := \left \{ \forall u \in ]0, k(t)], \ Q_2^2(u) \leq 2eu \left [ a(t) + 4 \log \log[ek(t)/u] \right ] \right \}. \] 
In particular we still have $\mathbb{P} (\mathcal{A}_1^c) \leq C e^{-2 \phi(t)}$ as in there, for some positive constant $C$. $\mathcal{A}_2$ is also defined similarly as in the original proof of \cite{advech} with, of course, $V$ instead of $W_{\kappa}$: 
\[ \mathcal{A}_2 := \left \{ \inf_{[0, \tau^*(h_t)]} V \geq - b(t) \right \}. \]} 
Because of the negative jumps we have, before bounding $\mathbb{P} (\mathcal{A}_2^c)$, to give a slightly different definition for the stopping times $f_i$ used in the original proof of \cite{advech}, and to define a new event $\mathcal{A'}_2$. First, $f_0 := 0$ and
\[ \forall i \geq 1, \ f_i := \inf \left \{ x \geq f_{i-1}, \ V(x) \leq V(f_{i-1}) - 1 \right \}. \]
Let $I(t) := \max \{ i \in \mathbb{N}, \ f_{i-1} \leq m^*(h_t) \}$, we now define $\mathcal{A'}_2$ by
\[ \mathcal{A'}_2 := \left \{ I(t) \leq b(t)/2R \right \}. \]
We define, for $i \geq 1$, $E_i := \{ \sup_{[f_{i-1}, f_i]} V - \underline{V} \geq h_t \}$. Since $\inf_{[0, f_{i-1}]} V = V(f_{i-1})$  we have
\[ E_i = \left \{ \sup_{[0, f_i-f_{i-1}]} V^{f_{i-1}} - \underline{V^{f_{i-1}}} \geq h_t \right \}. \]
Because of the Markov property applied at $f_{i-1}$, the events $E_i$ are independent and have the same probability that we denote by $p_t$. Also, $I(t)$ is the smallest $i \geq 1$ for which $E_i$ is realized, so $I(t)$ follows a geometric distribution with parameter $p_t$. Then, 
\[ p_t := \mathbb{P} \left ( E_1 \right ) \geq \mathbb{P} \left ( V \text{ leaves } [-1, h_t] \text{ from above} \right ) \geq (1-e^{-\kappa}) e^{-\kappa h_t}, \]
where, for the last inequality, we have applied Lemma \ref{leaveanint} with $a=1$ and $b=h_t$. We deduce 
\begin{eqnarray}
\mathbb{P} (\mathcal{A'}_2^c) = (1-p_t)^{\lfloor b(t)/2R \rfloor} = e^{\lfloor b(t)/2R \rfloor \log(1-p_t)} \leq e^{\lfloor b(t)/2R \rfloor \log(1-(1-e^{-\kappa}) e^{-\kappa h_t})} \leq e^{-3 \phi(t)}, \label{mvbe0}
\end{eqnarray}
where we have used that $b(t) = 8R \phi(t) e^{\kappa h_t} / (1-e^{-\kappa})$ in the last inequality that holds for $t$ large enough. Then, 
\[ \left \{ V \left ( f_{\lfloor b(t)/2R \rfloor} \right ) < - b(t) \right \} \subset \left \{ \sum_{i=1}^{\lfloor b(t)/2R \rfloor} \left ( V(f_i) - V(f_{i-1}) + R \right ) < - b(t)/2 \right \}. \]
The random variables $V(f_i) - V(f_{i-1}) + R$ are \textit{iid}, having the same law as $V (\tau (V, ]-\infty, -1]) ) + R$. In particular they have null expectation and they belong to $L^p$ because of Lemma \ref{leaveanintbelow}, where $p$ is such that $V (1) \in L^p$ (by our assumptions on $V$ such a $p > 1$ exists {and by decreasing it if necessary, we assume that $p \in ]1,2[$}). We can thus apply successively Markov's inequality and Von Barh-Esseen's inequality (with $p$): 
\begin{align}
\mathbb{P} \left ( V \left ( f_{\lfloor b(t)/2R \rfloor} \right ) < - b(t) \right ) & \leq \left ( \frac{2}{b(t)} \right )^p \mathbb{E} \left [ \left | \sum_{i=1}^{\lfloor b(t)/2R \rfloor} \left ( V(f_i) - V(f_{i-1}) + R \right ) \right |^p \right ] \nonumber \\
& \leq 2 \left ( \frac{2}{b(t)} \right )^p \times \left ( \frac{b(t)}{2R} \right ) \times \mathbb{E} \left [ \left | V (\tau (V, ]-\infty, -1]) ) + R \right |^p \right ] \nonumber \\
& \leq e^{-(p-1) \kappa h_t}. \label{mvbe}
\end{align}
The last inequality comes from the definition of $b(t)$ and holds for $t$ large enough. Now, recall that for all $i \geq 0$ we have $\inf_{[0, f_{i}]} V = V(f_{i})$ and that, by the definitions, we have $f_{I(t)-1} < m^*(h_t) < \tau^*(h_t) < f_{I(t)}$. We thus have 
\begin{align*}
\mathcal{A'}_2 \cap \left \{ V \left ( f_{\lfloor b(t)/2R \rfloor} \right ) \geq - b(t) \right \} & = \left \{ f_{I(t)} \leq f_{\lfloor b(t)/2R \rfloor} \right \} \cap \left \{ \inf_{[0, f_{\lfloor b(t)/2R \rfloor}]} V \geq - b(t) \right \} \\
& \subset \left \{ \inf_{[0, \tau^*(h_t)]} V \geq - b(t) \right \} = \mathcal{A}_2. 
\end{align*}
Taking the complementary and combining with \eqref{mvbe0} and \eqref{mvbe} we get 
\[ \mathbb{P} (\mathcal{A}_2^c) \leq \mathbb{P} (\mathcal{A'}_2^c) + \mathbb{P} \left ( V \left ( f_{\lfloor b(t)/2R \rfloor} \right ) < - b(t) \right ) \leq e^{-3 \phi(t)} + e^{-(p-1) \kappa h_t} \leq e^{-2 \phi(t)}, \]
where the last inequality holds for $t$ large enough. We thus have $\mathbb{P} (\mathcal{A}_2^c) \leq e^{-2 \phi(t)}$ for large $t$. 

We then have to use our own definition of $(f_i)_{i \geq 0}$ given above, rather than the one given in the original proof of \cite{advech}. For any $x \in [0, \tau^*(h_t)]$ we still have $f_i \leq x < f_{i+1}$ for some $i$, and we still have that this $i$ is less than $b(t)$ because of $\mathcal{A}_2$. According to our definition of the sequence $(f_i)_{i \geq 0}$, we still have $e^{-V(x)} \leq e^{-V(f_i) +1}$ (for the index $i$ such that $f_i \leq x < f_{i+1}$) so we have the analogous of $(3.8)$ from \cite{advech} in our setting: 
{ \[ e^{-V(x)} [A_V(\tau^*(h_t)) - A_V(x)] \leq e^{-V(x)} \int_{x}^{\tau^*(h_t)} e^{V(u)} du \leq e \int_{f_i}^{\tau^*(h_t)} e^{V(u) - V(f_i)} du. \]}

In our setting $\mathcal{A}_3$ is defined similarly as in the original proof of \cite{advech} with, of course, $V$ instead of $W_{\kappa}$: 
{ \[ \mathcal{A}_3 := \overset{\lfloor b(t) \rfloor}{\underset{i=0}{\cap}} \left \{ \int_{f_i}^{\tau^*(h_t)} e^{V(u) - V(f_i)} du \leq e^{(1-\kappa) h_t} b(t) n_t e^{\kappa \delta \phi(t)} \right \}, \]}
where $\delta$ is as defined in Subsection \ref{coin} and Section \ref{genedesres}. Our upper bound for $\mathbb{E} [ \int_0^{\tau^*(h)} e^{V(u)} du ]$ (denoted by $\beta_0(h)$ in the proof of Lemma 3.3 of \cite{advech}) is given by Lemma \ref{espintmonte} and we thus still have an upper bound of the type $\mathbb{P} (\mathcal{A}_3^c) \leq C/n_t e^{\kappa \delta \phi(t)}$ in our setting.  
%

In our setting $\mathcal{A}_4$ is defined similarly as in the original proof of \cite{advech}: 
{\[ \mathcal{A}_4 := \left \{ \tau^*(h_t) - m^*(h_t) \geq 1 \right \}, \]}
and to bound $\mathbb{P} (\mathcal{A}_4^c) = \mathbb{P} (\tau^*(h_t) - m^*(h_t) < 1)$, we use Lemma \ref{asympminsubexp} (applied with $h=h_t$). We thus have 
\[ \mathbb{P} (\mathcal{A}_4^c) = \mathbb{P} \left ( \tau(V^{\uparrow}, h_t) < 1 \right ) \leq \mathbb{P} \left ( \tau(V^{\uparrow}, h_t) - \tau(V^{\uparrow}, h_t/2) < 1 \right ) \leq e^{-c h_t/2}. \]
The last inequality holds for some positive constant $c$ and $t$ large enough, according to \eqref{03.10b} applied with $h=h_t, \omega = 1$ and $\alpha = 1/2$. 


%
%


%
%
%

We now justify the second point of Fact \ref{analog3.3}. {Let us define $r_t := t e^{(\kappa (1+3\delta )-1)\phi(t)} = e^{h_t + \kappa (1+3\delta ) \phi(t)}$, where  $\delta$ is as defined in Subsection \ref{coin} and Section \ref{genedesres}. We first treat separately the case $j = 0$. Since $H(\tilde L_0) = H(0) = 0$ {(because by definition $\tilde L_0 = 0$)}, the term $\mathcal{L}_X (H(\tilde L_0),x)$ can be omitted. Let us recall that $m^*(.)$ and $\tau^*(.)$ are defined in the beginning of Subsection \ref{firstmin} and that almost surely we have $m^*(h_t) = \tilde m_1^*$ (where $\tilde m_i^*$ is defined in \eqref{min}). $\mathbb{P} ( \sup_{x \in \mathbb{R}} \mathcal{L}_X (H(\tilde m_{1}),x) > r_t )$ is less than
\[ {\mathbb{P} \left ( \sup_{x \in ]-\infty, 0]} \mathcal{L}_X(+\infty, x) > r_t \right )} + \mathbb{P} \left( \sup_{x \in [0, m^*(h_t)]} \mathcal{L}_X [H(\tau^*(h_t)), x] > r_t \right) + \mathbb{P} \big ( \tilde m_1 \neq m^*(h_t) \big ). \]
By Lemma \ref{tpsdanslesnegl} applied with $r=r_t$ the first term is, for large $t$, less than $C e^{-\kappa(h_t + \kappa (1+3\delta ) \phi(t))/(2+\kappa)}$ which is less than $e^{-\kappa h_t /(2+\kappa)}$ when $t$ is large. By the first point of Fact \ref{analog3.3}, the second term is less than $C_1/n_t e^{\kappa \delta \phi(t)}$ when $t$ is large. By Lemma \ref{tpscoinc} (applied with $h=h_t$) the third term is less than $e^{-ch_t}$ for $t$ large enough, and where $c$ is some positive constant. Since $e^{-\kappa h_t /(2+\kappa)}$ and $e^{-ch_t}$ are ultimately smaller than $1/n_t e^{\kappa \delta \phi(t)}$, we can put all this together and we get that for some constant $C_2'$ and $t$ large enough,} \begin{eqnarray}
\mathbb{P} \left( \sup_{x \in \mathbb{R}} \mathcal{L}_X (H(\tilde m_{1}),x) > t e^{(\kappa (1+3\delta )-1)\phi(t)} \right) \leq \frac{C_2'}{n_t e^{\kappa \delta \phi(t)}}. \label{analog3.2eq1}
\end{eqnarray}

For $j \geq 1$, the proof has the same idea as the one of Lemma 3.2 in \cite{advech}. Recall the notations $X_{\tilde L_j}$ and $H_{X_{\tilde L_j}}$ introduced in the proof of Proposition \ref{analogue5.1} (and recalled before Lemma \ref{noreturn}). Thanks to the second point of Lemma \ref{noreturn} and to Lemma \ref{tpscoinc} (both applied with $h=h_t$ for $n_t - 1$ indices) we have, for some constant $c$ and $t$ large enough, 
\begin{align}
\mathbb{P} \left( \bigcap_{j=1}^{n_t - 1}\left\{ H_{X_{\tilde L_j}}(\tilde m_{j+1}) < H_{X_{\tilde L_j}}(\tilde \tau_j(h_t)), \ \tilde \tau_{j+1}^*(h_t) = \tilde\tau_{j+1}(h_t), \ \tilde m_{j+1}^* = \tilde m_{j+1} \right \} \right) \geq 1 - n_t e^{-c h_t}. \label{analog3.2eq2}
\end{align}
On this event we have that, for $i \in \{ 1, ..., n_t -1 \}$, the chain of inequalities (3.11) of \cite{advech} is still true when $\tilde m_{i+1}$, $\tilde L_i$, $\tilde m_{i+1}^*$ and $\tilde \tau_{i+1}^*(h_t)$ are defined as in this paper (in Subsection \ref{coin}) and when $\tilde L_i^*$ and $X_i^*$ of \cite{advech} are replaced by respectively $\tilde \tau_i(h_t)$ and $X(H(\tilde \tau_i(h)) + .)$: 
{
\begin{align*}
& \sup_{x \in \mathbb{R}} \left ( \mathcal{L}_X (H(\tilde m_{i+1}),x) - \mathcal{L}_X (H(\tilde L_i),x) \right ) \\
= & \sup_{\tilde \tau_i(h_t) \leq x \leq \tilde m_{i+1}} \left ( \mathcal{L}_X (H(\tilde m_{i+1}),x) - \mathcal{L}_X (H(\tilde L_i),x) \right ) \\
\leq & \sup_{\tilde \tau_i(h_t) \leq x \leq \tilde m_{i+1}} \mathcal{L}_{X(H(\tilde \tau_i(h)) + .)} \left ( H_{X(H(\tilde \tau_i(h)) + .)} (\tilde m_{i+1}),x \right ) \\
\leq & \sup_{\tilde \tau_i(h_t) \leq x \leq \tilde m_{i+1}^*} \mathcal{L}_{X(H(\tilde \tau_i(h)) + .)} \left ( H_{X(H(\tilde \tau_i(h)) + .)} (\tilde \tau_{i+1}^*(h_t)),x \right ). 
\end{align*}
Moreover, thanks to the Markov property applied to $X$ at $H(\tilde \tau_i(h))$ and to $V$ at $\tilde \tau_{i}(h_t)$, and to the definitions of $\tilde m_{i+1}^*$ and $\tilde \tau_{i+1}^*(h_t)$, the last term of this chain of inequality has the same law as $\sup_{x \in [0, m^*(h_t)]} \mathcal{L}_X [H(\tau^*(h_t)),x]$.} Therefore, combining \eqref{analog3.2eq2} and the first point of Fact \ref{analog3.3} we get
\begin{align}
\mathbb{P} \left( \bigcap_{j=1}^{n_t - 1}\left\{ \sup_{x \in \mathbb{R}} \left ( \mathcal{L}_X (H(\tilde m_{j+1}),x) - \mathcal{L}_X (H(\tilde L_j),x) \right ) \leq t e^{(\kappa (1+3\delta )-1)\phi(t)} \right \} \right) \geq 1 - \frac{C_2''(n_t - 1)}{n_t e^{\kappa \delta \phi(t)}}, \label{analog3.2eq3}
\end{align}
for some constant $C_2''$ and $t$ large enough. We have used the fact that, since $n_t \sim e^{\kappa (1+\delta) \phi(t)}$ where $\phi(t) << \log(t) \sim h_t$, we have $n_t e^{-c h_t} << e^{-\kappa \delta \phi(t)}$. The combination of \eqref{analog3.2eq1} and \eqref{analog3.2eq3} is the sought result. 


%
%
%
%
%

The third point of Fact \ref{analog3.3} is similar to Lemma 3.4 of \cite{advech}. Here are the modifications that we make on the original proof of \cite{advech}: 

$W_{\kappa}$ is replaced here by $V$. The standard valleys ($\tilde V^{(j)}(.)$ and the random points $\tilde m_j$, $\tilde L_j$), $A^j(.)$, $\mathcal{D}_j$ have to be considered as the ones defined in this paper (in Subsection \ref{coin} for the standard valleys, {a little before \eqref{approxiid2}} for $A^j(.)$, and in \eqref{defdj} for $\mathcal{D}_j$). Also, $\tilde \tau_j^-(h_t^+)$ of \cite{advech} has to be replaced here by $\tilde{L}_{j-1}$. 
We work with $r_t = (\phi(t))^2$ (instead of $r_t = C_0 \phi(t)$ in \cite{advech}). 

{$\mathcal{A}_1$ and $\mathcal{A}_2$ have the same meaning as in the original proof of \cite{advech}: 
\[ \mathcal{A}_1 := \left \{ \sup_{u \in \mathbb{R}} \mathcal{L}_B (\tau(B,1), u) \leq e^{2 \phi(t)} \right \}, \ \ \ \mathcal{A}_2 := \left \{ A^j(\tilde{L}_j) \leq 2 e^{h_t + 2 \phi(t)/\kappa} \right \}. \]
$\mathcal{A}_1$ is an inequality satisfied (with high probability) by the Brownian local time and in particular we still have $\mathbb{P} (\mathcal{A}_1^c) \leq 5 e^{-2 \phi(t)}$ as in the original proof of \cite{advech}.} For $\mathcal{A}_2$ we have that, in our setting, $\mathbb{P} ( \mathcal{A}_2^c )$ is less than
\begin{align*}
& \mathbb{P} \left( \int_{\tilde m_{j}}^{\tilde \tau_j(h_t)} e^{\tilde V^{(j)}(y)} dy > e^{h_t + 2\phi(t)/\kappa} \right ) + \mathbb{P} \left( \int_{\tilde \tau_j(h_t)}^{\tilde L_{j}} e^{\tilde V^{(j)}(y)} dy > e^{h_t + 2\phi(t)/\kappa} \right) \\
= & \mathbb{P} \left( \int_0^{\tau(V^{\uparrow}, h_t)} e^{V^{\uparrow}(y)} dy > e^{h_t + 2\phi(t)/\kappa} \right ) + \mathbb{P} \left( e^{h_t} \int_0^{\tau \left (V, \left ]-\infty, -\frac{h_t}{2} \right ] \right )} e^{V(y)} dy > e^{h_t + 2\phi(t)/\kappa} \right), 
\end{align*}
where we have used Proposition \ref{standardwilliams} (applied with $h=h_t$) for the laws of $\tilde P_2^{(j)}$ and $\tilde P_3^{(j)}$, 
\begin{align*}
& \leq \mathbb{P} \left( \tau(V^{\uparrow}, h_t) > e^{2\phi(t)/\kappa} \right ) + \mathbb{P} \left( \int_0^{+\infty} e^{V(y)} dy > e^{2\phi(t)/\kappa} \right), \\
& \leq e^{c_1 h_t - c_2 e^{2\phi(t)/\kappa}} + 2 \mathcal{C} e^{-2\phi(t)}. 
\end{align*}
The last inequality is true for $t$ large enough. It follows from Lemma \ref{vposlapltpsatt} applied with $y=h_t, r=e^{2\phi(t)/\kappa}$ for the first term, and from the second assertion of Lemma \ref{foncexpov} for the second term (the constants are the ones from these lemmas). Since $\phi(t) >> \log(\log(t))$ and $h_t \sim \log(t)$, we get that $\mathbb{P} ( \mathcal{A}_2^c ) \leq c e^{-2\phi(t)}$ for $t$ large enough, and where $c$ is some positive constant. 

The event $\mathcal{A}_3$ from the original proof of \cite{advech} is not needed here thanks to our definition of $\mathcal{D}_j$. {Recall the definitions of $\tilde \tau_j^-(.)$ and $\tilde \tau_j^+(.)$ in the beginning of Subsection \ref{coin}.} We redefine 
\[ \mathcal{A}_4 := \left \{ \inf_{[\tilde \tau_j^+((\phi(t))^2), \tilde \tau_j(h_t)]} \tilde V^{(j)} \geq (\phi(t))^2/2 \right \}. \]
{According to Proposition \ref{standardwilliams} (applied with $h=h_t$) $\inf_{[\tilde \tau_j^+((\phi(t))^2), \tilde \tau_j(h_t)]} \tilde V^{(j)}$ is equal in law to $\inf_{[\tau(V^{\uparrow}, (\phi(t))^2), \tau(V^{\uparrow}, h_t)]} V^{\uparrow}$ which, by the Markov property at time $\tau(V^{\uparrow}, (\phi(t))^2)$, is equal in law to $\inf_{[0, \tau(V_{(\phi(t))^2}^{\uparrow}, h_t)]} V_{(\phi(t))^2}^{\uparrow}$. We then apply Lemma \ref{vuprestegrand} with $a = (\phi(t))^2/2, b = (\phi(t))^2$ and we get $\mathbb{P} ( \mathcal{A}_4^c ) \leq c_2 e^{-c_1 (\phi(t))^2 /2}$, where $c_1, c_2$ are the constants in the lemma. For $t$ large enough we thus have $\mathbb{P} ( \mathcal{A}_4^c ) \leq e^{-2\phi(t)}$. } We also redefine
\[ \mathcal{A}_5 := \left \{ \inf_{[\tilde \tau^-_j(h_t), \tilde \tau^-_j((\phi(t))^2)]} \tilde V^{(j)} \geq (\phi(t))^2/2 \right \} \text{ and } \mathcal{A}_6 := \left \{ \inf_{[\tilde{L}_{j-1}, \tilde \tau_j^-(h_t / 2)]} \tilde V^{(j)} > h_t/4 \right \}. \]
{Note that $\inf_{[\tilde \tau^-_j(h_t), \tilde \tau^-_j((\phi(t))^2)]} \tilde V^{(j)}$ is a function of $\tilde P_1^{(j)}$ and, according to Proposition \ref{standardwilliams} (applied with $h=h_t$), we have $d_{VT}( \tilde P_1^{(j)}, P_1^{(2)}) \leq 2 e^{- \delta \kappa h_t /3}$ (where $\delta$ is as defined in Subsection \ref{coin} and Section \ref{genedesres}). Recall that, according to Proposition \ref{Fact_Williams} (applied with $h=h_t$), the law of $P_1^{(2)}$ is absolutely continuous with respect to the law of the process $(\hat{V}^{\uparrow}(x))_{0 \leq x \leq \tau(\hat{V}^{\uparrow}, h_t+)}$ and the density is bounded by $2$ when $h_t$ is large enough. As a consequence, we get that for $t$ large enough, 
\begin{align*}
\mathbb{P} ( \mathcal{A}_5^c ) & \leq 2 e^{- \delta \kappa h_t /3} + 2 \mathbb{P} \left ( \inf_{[\tau(\hat V^{\uparrow}, (\phi(t))^2+), \tau(\hat V^{\uparrow}, h_t+)]} \hat V^{\uparrow} < (\phi(t))^2/2 \right ) \\
& \leq 2 e^{- \delta \kappa h_t /3} + 2 \mathbb{P} \left ( \inf_{[\tau(\hat V^{\uparrow}, (\phi(t))^2+), +\infty[} \hat V^{\uparrow} < (\phi(t))^2/2 \right ) \\
& \leq 2 e^{- \delta \kappa h_t /3} + 2 e^{-\kappa (\phi(t))^2/2} / (1-e^{-\kappa (\phi(t))^2}) \leq e^{-2 \phi(t)}. 
\end{align*}
In the last line we have used Lemma \ref{vdownrestegrand} with $z=0, a=(\phi(t))^2/2, b=(\phi(t))^2$ and the last inequality holds for $t$ large enough. From the definition of $\tilde{L}_{j}^{\sharp}$ we have $V(\tilde L_{j}^{\sharp})=\inf_{[\tilde L_{j-1}, \tilde L_{j}^{\sharp}]} V$ so $\inf_{[\tilde{L}_{j-1}, \tilde \tau_j^-(h_t / 2)]} \tilde V^{(j)} = \inf_{[\tilde L_{j}^{\sharp}, \tilde \tau_j^-(h_t/2)]} \tilde V^{(j)}$. We can thus apply Lemma \ref{minoprevalley} with $\alpha = 1/2, \eta = 1/4, h=h_t$ and we get $\mathbb{P} ( \mathcal{A}_6^c ) \leq e^{-\kappa h_t/12}$ for large $t$. }

Finally, note that on $\cap_{i=4}^6 \mathcal{A}_i$ we have $e^{-\tilde V^{(j)}(x)} \leq e^{-(\phi(t))^2/2}, \ \forall x \in [\tilde{L}_{j-1}, \tilde{L}_{j}] \cap \mathcal{D}_j^c$, so the conclusion follows as in the proof of Lemma 3.4 in \cite{advech} (but here we do not need to intersect with the event $\mathcal{V}_t$ of \cite{advech}). 
\end{proof}

{
\begin{proof} of Fact \ref{analogue(5.22)}

This is the analogous of (5.23) from \cite{advech}. Let us fix $j \geq 1$. Expressing $\sup_{y \in \mathcal{D}_j} \mathcal{L}_{X_{\tilde m_j}}(H_{X_{\tilde m_j}}(\tilde L_j),y)$ in term of the Brownian motion $B^j$ appearing in \eqref{approxiid3} we get 
\begin{align}
\sup_{y \in \mathcal{D}_j} \mathcal{L}_{X_{\tilde m_j}}(H_{X_{\tilde m_j}}(\tilde L_j),y) & = \sup_{y \in \mathcal{D}_j} e^{-\tilde V^{(j)}(y)} A^j(\tilde{L}_j) \mathcal{L}_{B^j} \left [ \tau(B^j, 1),A^j(y)/A^j(\tilde{L}_j) \right ] \nonumber \\
& \leq A^j(\tilde{L}_j) \sup_{y \in \mathcal{D}_j} \mathcal{L}_{B^j} \left [ \tau(B^j, 1),A^j(y)/A^j(\tilde{L}_j) \right ]. \label{analogue(5.22)1}
\end{align}
It follows from the definition of $\mathcal{D}_j$ in \eqref{defdj} that $\mathcal{D}_j \subset [\tilde \tau_1^-(h_t/2), \tilde \tau_1(h_t/2)]$. Therefore, according to Lemma \ref{truccentral} applied with $\epsilon = 1/6$, there is a positive constant $c$ such that for $t$ large enough $|A^j(u) / A^j(\tilde{L}_j)|$ is bounded on $\mathcal{D}_j$ by $e^{-h_t/3}$, with probability at least $1 - e^{-c h_t}$. Combining with (7.11) from \cite{advech} applied to $\mathcal{L}_{B^j}$ with $\delta = e^{-h_t/3}$, $\epsilon = e^{-h_t/9}$, we get for $c$ possibly decreased and $t$ large enough : 
\[ \mathbb{P} \left ( \sup_{y \in \mathcal{D}_j} \mathcal{L}_{B^j} \left [ \tau(B^j, 1),A^j(y)/A^j(\tilde{L}_j) \right ] \leq (1 + e^{-h_t/9}) \mathcal{L}_{B^j} \left ( \tau(B^j, 1),0 \right ) \right ) \geq 1- e^{-c h_t}. \]
Putting this into \eqref{analogue(5.22)1} and using that, thanks to \eqref{approxiid2} and \eqref{approxiid3}, we have the expression $\mathcal{L}_X(H(\tilde{L}_j), \tilde m_j) = A^j(\tilde{L}_j) \mathcal{L}_{B^j} (\tau(B^j, 1),0 )$, we get 

\noindent $\mathbb{P} ( \sup_{y \in \mathcal{D}_j} \mathcal{L}_{X_{\tilde m_j}}(H_{X_{\tilde m_j}}(\tilde L_j),y) \leq (1+e^{-h_t/9}) \mathcal{L}_X(H(\tilde L_j),\tilde m_j) ) \geq 1 - e^{-ch_t}$ for some constant $c$ (not depending on $j$) and $t$ large enough. 

We thus obtain that $\mathbb{P} ( \mathcal{A}^7_t) \geq 1-n_t e^{-c h_t}$ for $t$ large enough. Finally, recall that $n_t \sim e^{\kappa (1+\delta) \phi(t)}$ (by the definition of $n_t$ in the beginning of Section \ref{genedesres}) where $\phi(t) << \log(t) \sim h_t$. By decreasing $c$ a little we thus obtain the asserted result. 

\end{proof}
}

\begin{proof} of Fact \ref{analogue5.3}

This is Lemma 5.3 of \cite{advech}. As in there, let $\sigma_{X}(a,b) := \inf \{ s \geq 0, \ \mathcal{L}_X(s, b) > a \}$ be the inverse of the local time of $X$ at $b$. Here are the modifications that we make on the proof of Lemma 5.3 of \cite{advech}: 

$W_{\kappa}$ is replaced here by $V$. {The standard valleys ($\tilde V^{(j)}(.)$ and the random points $\tilde m_j$, $\tilde L_j$, $\tilde \tau_j^-(h_t/2)$, $\tilde \tau_j^+(h_t/2)$), $A^j(.)$, $\mathcal{D}_j$, 
and $f_{\gamma}(.)$, $\tilde f_{\gamma}(.)$, $f_{\gamma}^{\pm}(.)$ have to be considered as the ones defined in this paper (in Subsection \ref{coin} for the standard valleys, {a little before \eqref{approxiid2}} for $A^j(.)$, in \eqref{defdj} for $\mathcal{D}_j$, 
and in the statement of Fact \ref{analogue5.3} for $f_{\gamma}(.)$, $\tilde f_{\gamma}(.)$, $f_{\gamma}^{\pm}(.)$). In this proof, we systematically replace $\tilde L_1^-$, $R_1$ and $\mathcal{H}_1$ of \cite{advech} by respectively $\tilde{L}_{0}$ {(which equals $0$ by definition)}, $R_1^t$ and $e_1 S_1^t R_1^t$.} In particular, the domain of integration in the integral $I$, which represents the inverse of the local time, is $[\tilde L_{0}, \tilde L_{1}]$: 
{ \[ I := \sigma_{X}(\gamma t, \tilde m_1) = \int_{\tilde L_{0}}^{\tilde L_{1}} e^{-\tilde V^{(1)}(z)} \mathcal{L}_B(\sigma_{B}(\gamma t,0), A^1(z)) dz = \gamma t \int_{\tilde L_{0}}^{\tilde L_{1}} e^{-\tilde V^{(1)}(z)} \mathcal{L}_{\tilde B}(\sigma_{\tilde B}(1,0), \tilde a(z)) dz. \] 
As in the original proof of \cite{advech}, $\tilde a (z) := (\gamma t)^{-1} A^1(z)$ and $\tilde B := B((\gamma t)^2 . )/(\gamma t)$ is a Brownian motion that we still denote $B$ in the sequel. } 

{Using Proposition \ref{approxparliid} we get the analogous of $(5.28)$ of \cite{advech}: 
\begin{eqnarray}
\mathbb{P} \left ( (1-e^{- \epsilon h_t/7}) e_1 S_1^t R_1^t \leq H_{X_{\tilde m_1}}(\tilde L_{1}) \leq (1+e^{- \epsilon h_t/7}) e_1 S_1^t R_1^t \right ) \geq 1 - e^{-c h_t}, \label{analogue5.28}
\end{eqnarray}
for $t$ large enough, and where $\epsilon$ and $c$ are as in Proposition \ref{approxparliid}. We recall the notations $X_{\tilde m_i} := X(. + H(\tilde m_i))$ and $H_{X_{\tilde m_i}}(r) := \tau(X_{\tilde m_i}, r)$ that we have used in the above expression. As in the original proof of \cite{advech} we put 
\[ I_1 := \int_{\tilde \tau_1^-(h_t/2)}^{\tilde \tau_1^+(h_t/2)} e^{-\tilde V^{(1)}(z)} \mathcal{L}_B(\sigma_{B}(1,0), \tilde a(z)) dz \ \ \ \text{and} \ \ \ I_2 := (\gamma t)^{-1} I - I_1. \]} 
A little after, our lower bound for $\mathbb{P} ( |A^1(\tilde \tau_1^-(h_t/2))| \leq e^{h_t(1 + \epsilon)/2}, |A^1(\tilde \tau_1^+(h_t /2))| \leq e^{h_t(1 + \epsilon)/2} )$ comes from the proof of Lemma \ref{truccentral}. {Indeed, it can be seen from that proof that there is a positive constant $c_1$ such that for $t$ large enough we have $\mathbb{P} ( |A^1(\tilde \tau_1^-(h_t/2))| \leq e^{h_t(1 + \epsilon)/2}, |A^1(\tilde \tau_1^+(h_t /2))| \leq e^{h_t(1 + \epsilon)/2} ) \geq 1-e^{-c_1 h_t}$. {As a consequence, in the exponential in the analogous of $(5.32)$ of \cite{advech}, we may have a constant $c_2$ that is different from the one from the original proof of \cite{advech} but it does not matter (note also that, from our definition of $R_1^t$ in Lemma \ref{approxdeu}, we can replace $\tilde R_1$ of \cite{advech} by our $R_1^t$). The analogous of $(5.32)$ of \cite{advech} is therefore 
\[ \mathbb{P} \left( |I_1 - R_1^t| \leq \hat \epsilon_t R_1^t \right ) \geq 1 - e^{-c_2 h_t}, \]
for $t$ large enough, and where $\hat \epsilon_t := t^{-(1-5\epsilon)/4}$ as in the original proof of \cite{advech}. }

We now give the details about how, here, we obtain the analogous of $(5.33)$ of \cite{advech}. To bound $I_2$ we first use $(7.16)$ of \cite{advech} together with the second Ray-Knight theorem and we get 
\[ \mathbb{P} \left( \sup_{y \in \mathbb{R}} \mathcal{L}_B(\sigma_{B}(1,0), y) \geq e^{\epsilon h_t /2} \right ) \leq 2 e^{-\epsilon h_t /2}. \]
Note that, compared with the original proof of \cite{advech}, we took $\epsilon h_t/2$ instead of $\epsilon \log t$.} Then, since the domain of integration in $I$ is $[\tilde L_{0}, \tilde L_{1}]$, we have in our case
\[ I_2 \leq e^{\epsilon h_t /2} \left(\int_{\tilde L_{0}}^{\tilde \tau_1^-(h_t/2)}e^{-\tilde V^{(1)}(x)}dx+ \int_{\tilde \tau_1^+(h_t/2) }^{\tilde L_{1}} e^{-\tilde V^{(1)}(x)}dx \right), \]
with probability larger than $1-2e^{-\epsilon h_t /2}$. Assume that $\epsilon$ has been chosen small enough so that Lemmas \ref{boundj0} and \ref{boundj2} apply. By these Lemmas we have that for $t$ large enough, each of these two integrals is less than $e^{-\epsilon h_t}$ with a probability greater than $1- e^{-c h_t}$, where $c$ is some positive constant. As a consequence there is a positive constant $c_3$ such that for $t$ large enough :
\begin{eqnarray}
\mathbb{P} \left( I_2 \leq 2 e^{-\epsilon h_t /2} \right) \geq 1- e^{-c_3 h_t}. \label{genelemmefctrep1}
\end{eqnarray}
Our lower bound for $R_1^t$ comes from \eqref{approxdeu0.3}. Combing with \eqref{genelemmefctrep1} we get that for some positive constant $c_4$ and $t$ large enough, 
\[ \mathbb{P} \left( I_2 \leq e^{-\epsilon h_t /4} R_1^t \right) \geq 1- e^{-c_4 h_t}, \]
which is the analogous of $(5.33)$ of \cite{advech}. As a consequence, in place of $(5.34)$ of \cite{advech} we have 
\begin{align}
\mathbb{P} \left ( | I - \gamma t R_1^t | \geq e^{-\epsilon h_t /5} \gamma t R_1^t, H_{X_{\tilde m_1}}(\tilde L_1)> \sigma_X (\gamma t, \tilde m_1), H_{X_{\tilde m_1}}(\tilde L_{1})< H_{X_{\tilde m_1}}(\tilde L_{0}) \right ) \leq e^{-c_5 h_t} \label{genelemmefctrep2}
\end{align}
for all $t$ large enough, and where $c_5$ is a positive constant. {Using \eqref{analogue5.28} and \eqref{genelemmefctrep2} instead of respectively $(5.28)$ and $(5.34)$ of \cite{advech} we obtain the analogous of $(5.35)$ of \cite{advech}: 
\begin{align}
& \left \{ H_{X_{\tilde m_1}}(\tilde L_1)> \sigma_X (\gamma t, \tilde m_1) \geq t(1-x), H_{X_{\tilde m_1}}(\tilde L_{1})< H_{X_{\tilde m_1}}(\tilde L_{0}) \right \} \nonumber \\
\subset & \left \{ \frac1{R_1^t} \leq \frac{\gamma}{1-x}(1+e^{-\epsilon h_t /5}), e_1 S_1^t R_1^t > t(1-x)(1+e^{- \epsilon h_t/7})^{-1}, H_{X_{\tilde m_1}}(\tilde L_{1})> \sigma_X (\gamma t, \tilde m_1) \right \} \cup \mathcal{E}_{\epsilon}^1, \label{analogue5.35}
\end{align}
where $\mathbb{P}(\mathcal{E}_{\epsilon}^1) \leq e^{-c h_t} + e^{-c_5 h_t}$ for all $t$ large enough (and where $c$ and $c_5$ are as in \eqref{analogue5.28} and \eqref{genelemmefctrep2}). }





Repeating the reasoning of the original proof of \cite{advech} and using our definition of $e_1$ from Subsection \ref{maincontibcommeiid} we still have
 \[ \sigma_X(\gamma t,\tilde m_1) > H_{X_{\tilde m_1}}(\tilde L_{1}) \Leftrightarrow  \gamma t > A^1(\tilde L_1) e_1 \Leftrightarrow \gamma t R_1^t > A^1(\tilde L_1) e_1 R_1^t. \]
 $(3.18)$ of \cite{advech} has to be replaced here by Lemma \ref{approxa} and, as we already mentioned, $(5.28)$ by \eqref{analogue5.28}. In place of $(5.37)$ of \cite{advech} we thus have
\[ \sigma_X(\gamma t,\tilde m_1) > H_{X_{\tilde m_1}}(\tilde L_{1}) \Rightarrow \gamma t R_1^t > (1+e^{- \epsilon h_t/7})^{-1} H_{X_{\tilde m_1}}(\tilde L_{1}), \]
except on an event whose probability is less than $e^{- c_6 h_t}$ (for some positive constant $c_6$). The fact that the constants are modified (with respect to the ones in the original proof of \cite{advech}) has no importance so we omit to mention it for the rest of the proof. {Combining the above with \eqref{analogue5.28} we obtain the analogous of $(5.38)$ of \cite{advech}: 
\begin{align}
& \left \{ \sigma_X (\gamma t, \tilde m_1) > H_{X_{\tilde m_1}}(\tilde L_1)> t(1-x), H_{X_{\tilde m_1}}(\tilde L_{1})< H_{X_{\tilde m_1}}(\tilde L_{0}) \right \} \nonumber \\
\subset & \left \{ \frac1{R_1^t} \leq \frac{\gamma}{1-x}(1+e^{- \epsilon h_t/7}), e_1 S_1^t R_1^t > t(1-x)(1+e^{- \epsilon h_t/7})^{-1}, \sigma_X (\gamma t, \tilde m_1) > H_{X_{\tilde m_1}}(\tilde L_{1}) \right \} \cup \mathcal{E}_{\epsilon}^2, \label{analogue5.38}
\end{align}
where $\mathbb{P}(\mathcal{E}_{\epsilon}^2) \leq e^{-c h_t} + e^{-c_6 h_t}$ for all $t$ large enough (and where $c$ is as in \eqref{analogue5.28} and $c_6$ as above). Then, the asserted upper bound for $f_{\gamma}(x)$ follows as in the original proof of \cite{advech}, with \eqref{analogue5.35} and \eqref{analogue5.38} instead of respectively $(5.35)$ and $(5.38)$ of \cite{advech}.}

For the proof of the lower bound for $\tilde f_{\gamma}(x)$, recall that $\mathcal{D}_1$ has to be considered as the one defined in this paper (in \eqref{defdj}). 

{We now explain how, in our case, we bound {$\hat a(z) := A^1(z) / t y = (1-4\hat \epsilon_t) R_1^t A^1(z) / t(1-x)$ on $\mathcal{D}_1$ (where, similarly as in the original proof of \cite{advech}, $y := (1-x)/(1-4\hat \epsilon_t) R_1^t$, $\hat \epsilon_t := t^{-(1-5\epsilon)/4}$)}. Chernoff's inequality together with Proposition \ref{cvr} yields $\mathbb{P} (R_1^t \geq h_t) \leq e^{-c_7 h_t}$ for some positive constant $c_7$ and $t$ large enough. According to our definition of $\mathcal{D}_1$ we have $\mathcal{D}_1 \subset [\tilde \tau_1^-(h_t/2), \tilde \tau_1^+(h_t/2)]$ for $t$ large enough, so we can use our previous estimate $\mathbb{P} ( |A^1(\tilde \tau_1^-(h_t/2))| \leq e^{h_t(1 + \epsilon)/2}, |A^1(\tilde \tau_1^+(h_t /2))| \leq e^{h_t(1 + \epsilon)/2} ) \geq 1-e^{-c_1 h_t}$. We thus get the existence of a positive constant $c_8$ such that for $t$ large enough the following inequalities hold with probability greater than $1-e^{-c_8 h_t}$: 
\[ \forall z \in \mathcal{D}_1, \ | \hat a(z)| = (1-4\hat \epsilon_t) R_1^t |A^1(z)| / t(1-x) \leq h_t e^{h_t(1 + \epsilon)/2} / t \epsilon \leq e^{-(\log t)(1-3\epsilon)/2}. \]
That is, we have $\mathbb{P} ( \forall {z \in \mathcal{D}_1}, \ |\hat a(z)| \leq e^{-(\log t) (1-3\epsilon)/2} ) \geq 1-e^{-c_8 h_t}$ for large $t$.} 

$(5.34)$ of \cite{advech} has, of course, to be replaced here by \eqref{genelemmefctrep2}. {$(5.23)$ from \cite{advech} has to be replaced here by Fact \ref{analogue(5.22)} (which imposes in particular that, here, we set $\tilde \gamma := \gamma(1+e^{-h_t/9})^{-1}$, instead of $\gamma(1+e^{-h_t/12})^{-1}$).} Finally, $(3.2)$ of \cite{advech} has to be replaced here by Lemma \ref{sortparladroite} (applied with $h=h_t$). $(5.28)$ of \cite{advech} and Proposition 3.5 of \cite{advech} have to be replaced here by \eqref{analogue5.28} and Proposition \ref{approxparliid}. 

\end{proof}

\bibliographystyle{plain}
\bibliography{thbiblio}

\end{document}